\documentclass[12pt,twoside]{amsart}

\usepackage{amssymb}
\usepackage{aliascnt}
\usepackage[margin=1in]{geometry}
\usepackage{comment}
\usepackage{todonotes}
\usepackage{tcolorbox}
\usepackage{hyperref}
\usepackage{algorithm}
\usepackage{algpseudocode}
\usepackage{standalone,mathrsfs}
\usepackage{hyperref}
\hypersetup{colorlinks=true,allcolors=blue,pdftitle={Relaxed coparking functions and Stanley's conjecture for matroid h-vectors},pdfauthor={Suho Oh}}

\usepackage{tikz}
\usetikzlibrary{calc}

\usepackage[nameinlink]{cleveref}

\newtheorem{theorem}{Theorem}[section]

\newaliascnt{lemma}{theorem}
\newtheorem{lemma}[lemma]{Lemma}
\aliascntresetthe{lemma}

\newaliascnt{corollary}{theorem}
\newtheorem{corollary}[corollary]{Corollary}
\aliascntresetthe{corollary}

\newaliascnt{proposition}{theorem}
\newtheorem{proposition}[proposition]{Proposition}
\aliascntresetthe{proposition}

\newaliascnt{conjecture}{theorem}
\newtheorem{conjecture}[conjecture]{Conjecture}
\aliascntresetthe{conjecture}

\newaliascnt{question}{theorem}
\newtheorem{question}[question]{Question}
\aliascntresetthe{question}

\theoremstyle{definition}

\newaliascnt{definition}{theorem}
\newtheorem{definition}[definition]{Definition}
\aliascntresetthe{definition}

\newaliascnt{example}{theorem}
\newtheorem{example}[example]{Example}
\aliascntresetthe{example}

\newaliascnt{remark}{theorem}

\aliascntresetthe{remark}

\crefname{theorem}{theorem}{theorems}
\Crefname{theorem}{Theorem}{Theorems}

\crefname{lemma}{lemma}{lemmas}
\Crefname{lemma}{Lemma}{Lemmas}

\crefname{corollary}{corollary}{corollaries}
\Crefname{corollary}{Corollary}{Corollaries}

\crefname{proposition}{proposition}{propositions}
\Crefname{proposition}{Proposition}{Propositions}

\crefname{definition}{definition}{definitions}
\Crefname{definition}{Definition}{Definitions}

\crefname{example}{example}{examples}
\Crefname{example}{Example}{Examples}

\crefname{conjecture}{conjecture}{conjectures}
\Crefname{conjecture}{Conjecture}{Conjectures}

\crefname{question}{question}{questions}
\Crefname{question}{Question}{Questions}

\crefname{remark}{remark}{remarks}
\Crefname{remark}{Remark}{Remarks}

\newcommand{\CC}{\mathcal{C}}

\usepackage{booktabs}
\usepackage{enumitem}
\usepackage{array}

\DeclareMathOperator{\Ext}{Ext}
\DeclareMathOperator{\exc}{exc}
\DeclareMathOperator{\out}{out}
\newcommand{\N}{\mathbb{N}}
\providecommand{\Z}{\mathbb{Z}}
\newcommand{\PP}{\mathcal{P}}
\newcommand{\word}[1]{\mathsf{#1}}
\usepackage{amsmath,amssymb}
\usetikzlibrary{calc,fit,shapes.geometric,decorations.pathreplacing,arrows.meta,positioning,patterns}
\definecolor{redge}{RGB}{190,30,45}
\definecolor{grey}{RGB}{120,128,125}
\tikzset{
  hub/.style={draw,rectangle,minimum size=5mm,inner sep=1pt,fill=white,font=\small},
  ch/.style={draw,circle,minimum size=5mm,inner sep=1pt,fill=white,font=\small},
  ctr/.style={draw,circle,minimum size=6mm,inner sep=1pt,fill=white,thick,font=\small},
  wh/.style={draw,rectangle,minimum size=5mm,inner sep=1pt,fill=white,font=\small,dashed},
  tr/.style={grey,thick},
  rd/.style={redge,very thick},
  uu/.style={blue!70!black,line width=2.2pt,opacity=0.35},
  lab/.style={font=\footnotesize},
  rlab/.style={redge,font=\footnotesize}}
\providecommand{\word}[1]{\mathsf{#1}}
\providecommand{\exc}{\operatorname{exc}}

\title[Relaxed coparking functions and Stanley's conjecture]{Relaxed coparking functions and Stanley's conjecture for matroid $h$-vectors}

\author{Suho Oh}
\address{Texas State University}
\email{suhooh@txstate.edu}

\date{September 2026}
\subjclass[2020]{Primary 05B35; Secondary 05E45, 05C05, 05C57}
\keywords{Stanley's conjecture, pure $O$-sequence, $h$-vector, matroid, cycle system, coparking
function, parking function, Dhar's burning algorithm, deletion--contraction, Petersen graph}

\begin{document}

\begin{abstract}
Stanley's conjecture asserts that the $h$-vector of a matroid is a pure $O$-sequence. Corry,
Dochtermann, McClain, Perkinson and Yi introduced cycle systems, which give a bijective proof for the
matroids that admit one, through coparking functions, and the same authors proposed
generalized cycle systems, in which an independent unique union may be a basis of the restriction to the union of
its cycles. Every
matroid admitting a cycle system is regular, and
there are graphs, among them the Wagner graph and the Petersen graph, that admit no generalized
cycle system consisting of circuits. We propose a relaxation. Where the coparking recursion breaks down, at the
strata whose unique union is independent, the coparking functions are replaced by a \emph{fibre}: a pure multicomplex with the $h$-vector of the dead node, the minor of the matroid attached to the stratum. We prove, for any
matroid with a fixed basis, that whenever the required fibres exist the resulting relaxed coparking
functions form a pure multicomplex whose degree sequence is the $h$-vector of the matroid, so Stanley's
conjecture follows. The proof rests on a
version of Dhar's burning algorithm for a matroid with a fixed basis, which gives the purity, and on a
deletion--contraction identity that computes the gap between the $h$-vector and the coparking
functions as a sum of local $h$-vectors, one per dead node. Coned, biconed and triconed graphs, the classes of graphs for which Stanley's conjecture was proved through a canonical spanning tree,
carry a fibre system with their canonical spanning trees. The Wagner and Petersen graphs carry one as well. Beyond graphs, every
basis of a matroid of corank two, of a matroid of rank at most four or of a uniform matroid
carries a fibre system, so that Stanley's conjecture follows for those classes. We also give a graph of radius two on twelve vertices which, with its breadth-first spanning tree, carries no fibre system.
\end{abstract}

\maketitle

\section{Introduction}\label{sec:intro}

Let $M$ be a matroid of rank $d$ on a finite ground set and let $h(M)=(h_0,h_1,\dots,h_d)$ be the
$h$-vector of its independence complex. Stanley proved in 1977 that matroid complexes are
Cohen--Macaulay, so that $h(M)$ is an \emph{$O$-sequence}, the degree sequence of an order
ideal of monomials, and conjectured that more is true \cite{Stanley}. See also
\cite[Conjecture~III.3.6]{StanleyBook} and, for pure $O$-sequences in general, \cite{BMMNZ}.

\begin{conjecture}[Stanley]
The $h$-vector of a matroid is a \emph{pure} $O$-sequence: the degree sequence of a finite order
ideal of monomials all of whose maximal elements have the same degree.
\end{conjecture}

The conjecture has been open for almost fifty years and has been established for a number of
classes of matroids, in almost every case by an explicit bijection between the bases of the
matroid and the monomials of a pure order ideal. It holds for cographic matroids, by Merino
\cite{MerinoTutte,Merino2001} through the chip-firing game and $G$-parking functions, and by
Chari \cite{Chari} through a decomposition of the matroid complex. It holds for lattice-path
matroids (Schweig \cite{Schweig}), for cotransversal matroids (Oh \cite{OhCotransversal}, through
generalized permutohedra) and for positroids (He, Lai and Oh \cite{Positroid}). It holds for paving
matroids (Merino, Noble, Ram\'{\i}rez-Iba\~nez and Villarroel-Flores \cite{MNRV}), for
internally perfect matroids (Dall \cite{Dall}). It also holds for matroids of rank at most three (H\`a,
Stokes and Zanello \cite{HSZ}), for matroids of rank three and of corank two and for all matroids
on at most nine elements (De Loera, Kemper and Klee \cite{DLKK}, the corank-two case recovered in Section~\ref{sec:coranktwo}). It holds for matroids of rank four
(Klee and Samper \cite{KleeSamper}, recovered together with the rank-three case in Section~\ref{sec:rankfour}) and for matroids of rank $d$ with $h_d\le5$
(Constantinescu, Kahle and Varbaro \cite{CKV}). In a different direction, the shape of $h(M)$ is
now understood much better than it was: Huh proved that the $h$-vector of a matroid
representable over a field of characteristic zero is log-concave \cite{Huh}, and Berget, Spink
and Tseng extended log-concavity to all matroids \cite{BST}. Both came in the wake of the resolution of the
Rota--Heron--Welsh conjecture on characteristic polynomials by Adiprasito, Huh and Katz
\cite{AHK}. Pure
$O$-sequences need not be log-concave or even unimodal, and log-concavity does not imply
purity. So these results constrain the conjecture without settling it. The same is true of the inequalities $h_0\le h_1\le\dots\le h_{\lfloor d/2\rfloor}$ and $h_i\le h_{d-i}$ for $i\le\lfloor d/2\rfloor$, which Hibi proved for every pure $O$-sequence \cite{Hibi} and Chari for every matroid $h$-vector \cite{Chari}.

For graphic matroids, whose bases are the spanning trees of a graph, the conjecture is open. The bijective proofs for the subclasses treated in \cite{Kook,Biconed,Triconed} all start from a small dominating structure. Kook proved it for coned graphs, the graphs dominated by one vertex \cite{Kook}.
Cranford, Dochtermann, Haithcock, Marsh, Oh and Truman proved it for biconed graphs (dominated by
an edge) \cite{Biconed}, and David, Lai, Oh and Wu for triconed graphs (dominated by a path of
length two) \cite{Triconed}. Each of these proofs is an explicit bijection from spanning trees to weighted
forests in the complement of a canonical spanning tree, the forests being read as monomials on the non-tree edges. The case analysis grows quickly with the size of the dominating structure: the triconed
bijection already needs a special treatment of ``sandwiched'' marks, and no extension of it to
graphs dominated by a claw or by a longer path is known.

Corry, Dochtermann, McClain, Perkinson and Yi \cite{CycleSystems} abstracted the mechanism of
the coned case into matroid language. A \emph{cycle system} on a matroid $M$ of corank $g$ is a
collection $\CC=\{C_1,\dots,C_g\}$ of cycles whose \emph{unique union} $U_S$, the set of elements lying in exactly one $C_i$ with $i\in S$, is dependent for every nonempty $S\subseteq[g]$.
To a cycle system they attach the \emph{coparking functions} $\PP^*(\CC)$, the vectors in $\N^g$ from which no set $S$ can be fired under the thresholds $|C_i\cap U_S|$, and they prove that
$\PP^*(\CC)$ is a pure multicomplex with degree sequence $h(M)$. The proof is a
deletion--contraction recursion on the pair $(M,\CC)$ that always has an element to expand, one lying in exactly one cycle, precisely because every unique union is dependent.

The authors of \cite{CycleSystems} also considered running the same recursion on a collection
of cycles that is \emph{not} a cycle system, and verified by computation that on one such
collection on $K_{3,3}$ it still produces the right pure multicomplex
\cite[Section~6.2]{CycleSystems}. They observed that every unique union of that collection is
dependent \emph{or} a basis of the union of the cycles it comes from. The condition, and the
question of which collections of circuits produce a complete deletion--contraction tree, arose in
the work of the authors of \cite{CycleSystems}; see \cite{Dochtermann,PerkinsonDC,Yi}. Collections
with the property are called \emph{generalized cycle systems} (Definition~\ref{def:gcs}), and it is
open whether a matroid admitting one always has a pure multicomplex of coparking functions with
degree sequence $h(M)$. Biconed graphs admit generalized cycle systems by the fundamental
circuits of their canonical tree \cite{Yi}. Two facts fix the setting of the present paper, and both are recalled in
Section~\ref{sec:cs}. First, every matroid admitting a cycle system is regular \cite{Regular},
and since cographic matroids are already covered by Merino's theorem the natural place to look
for an extension is among graphic matroids. Second, Perkinson's exhaustive search
\cite{PerkinsonDC} shows that the Wagner graph $V_8$ and the Petersen graph admit no generalized
cycle system consisting of circuits: for these two graphs the deletion--contraction recursion gets
stuck under every choice of circuits and every order of expansion.

This paper proposes a relaxation that removes the obstruction at exactly the place where it
occurs, and it does so for an arbitrary matroid. Fix a basis $B_0$ of a matroid $M$ and take
$\CC$ to be the fundamental circuits of the elements outside $B_0$. Those elements we call \emph{red},
after the usage of \cite{Biconed,Triconed} for the non-tree edges of a graph. The recursion of \cite{CycleSystems} runs as long as
some element lies in exactly one of the current cycles, as the authors point out in
\cite[Section~6.2]{CycleSystems}, and stops exactly when it has contracted the unique union
$U_\sigma$ of a set $\sigma$ of red edges, which happens exactly when $U_\sigma$ is
\emph{independent}. We call such a $\sigma$ a \emph{dead stratum}. Its \emph{dead node} is the minor $K_\sigma=(M|A_\sigma)/U_\sigma$, where $A_\sigma$ is the union of the circuits of $\sigma$. At a dead node the
coparking functions are replaced by a \emph{fibre}: a pure order ideal of monomials on the
coordinates of $\sigma$ with degree sequence $h(K_\sigma)$, placed above the \emph{coparking baseline}
$b_\sigma(i)=|C_i\cap U_\sigma|$ and subject to a compatibility condition with the strata
below $\sigma$. A stratum $\tau$ can be fired from a vector $a$ when $a|_\tau\ge b_\tau$. The relaxed coparking functions are then the vectors $a$ from which no live stratum can be fired and from which a dead stratum
$\tau$ can be fired only with an offset $a|_\tau-b_\tau$ in the fibre of $\tau$ (Definition~\ref{def:relaxed}). When no
stratum is dead this is exactly the definition of a coparking function, and when every
dead node has rank zero the fibres are trivial. The relaxation is therefore a generalisation of
both cycle systems and generalized cycle systems that preserves their philosophy, in the precise sense of Corollary~\ref{cor:specialise}. For the generalized cycle systems formed by the fundamental circuits of a basis, the question
of whether the recursion gives the correct pure multicomplex is settled by the special case of
Theorem~\ref{thm:intro-main} in which every fibre is trivial.

The main general result is that nothing is lost in the relaxation.

\begin{theorem}[Theorem~\ref{thm:main}]\label{thm:intro-main}
Let $M$ be a matroid with a basis $B_0$, and suppose that every dead stratum admits a
fibre. Then the relaxed coparking functions form a pure multicomplex whose degree sequence is
the $h$-vector of $M$. In particular Stanley's conjecture holds for $M$.
\end{theorem}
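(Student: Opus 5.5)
The proof splits into two parts: the degree sequence and purity. For the degree sequence I will run the deletion--contraction recursion of \cite{CycleSystems} on the pair $(M,\CC)$, with $\CC$ the fundamental circuits of the red elements with respect to $B_0$. At a live node some element lies in exactly one current cycle $C_i$. Deleting or contracting it either lowers the threshold of coordinate $i$ or removes $C_i$ and shifts the degree. This reproduces the identities $h(M)=h(M\setminus e)+x\,h(M/e)$ and the matching split of the coparking functions by whether $a_i$ reaches the current threshold.

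Iterating, the tree of the recursion terminates exactly at the dead strata $\sigma$, where $U_\sigma$ has been contracted and is independent. The leaves contribute $x^{|b_\sigma|}h(K_\sigma)$, with $K_\sigma=(M|A_\sigma)/U_\sigma$. This is the promised identity: $h(M)$ equals the generating function of the ordinary coparking functions that fire no stratum, plus the sum over dead $\sigma$ of the shifted local $h$-vectors $x^{|b_\sigma|}h(K_\sigma)$. I will prove it by induction on the number of elements, checking at each step that the set of strata and their baselines transform correctly under deletion and contraction of an element lying in one cycle.

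On the combinatorial side I will partition the relaxed coparking functions by the maximal dead stratum $\tau$ that can be fired, or by no stratum at all. The vectors firing no stratum are ordinary coparking functions. Those firing $\tau$ are $b_\tau$ plus an element of the fibre of $\tau$, padded by a coparking-type vector off $\tau$. The compatibility condition in Definition~\ref{def:relaxed} is what makes this decomposition a bijection with the leaves of the recursion, so the generating functions agree term by term.

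For the order ideal property I will show that decreasing a coordinate cannot create a new firable stratum. Firing needs $a|_\tau\ge b_\tau$, so firability is monotone, and fibres are order ideals above $b_\tau$. Purity is the main obstacle. For it I will use the matroid version of Dhar's burning algorithm: from any relaxed coparking function $a$ of degree $<d-|B_0|$-complement bound, the burning process identifies a coordinate $i$ and a stratum structure so that $a+e_i$ is still relaxed. Equivalently, every maximal element has degree $g$ minus the correct top degree, namely the degree $\deg h(M)$. I will first handle vectors whose burning stops at a live stratum, using the classical argument of \cite{CycleSystems}. For those landing in a dead stratum, I will use purity of the fibre to push the offset up to top degree $\deg h(K_\sigma)$. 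I will then check, again via the compatibility condition, that this offset does not enable firing of a larger stratum, and I expect the hardest verification to be exactly there.
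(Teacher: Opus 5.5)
There is a genuine gap. The identity you set out to prove by the recursion is false. The correct stratum identity is $h(M)=\sum_\tau t^{|b_\tau|}h(K_\tau)E_\tau(t)$, summed over $\tau=\emptyset$ and the dead strata. Here $E_\tau(t)$ is the generating function of the extension set $\Ext(\tau)$: the vectors $c$ on $R\setminus\tau$ such that no stratum strictly containing $\tau$ can be fired from $b_\tau$ on $\tau$ together with $c$ off $\tau$. Your version keeps the factor $E_\emptyset(t)$ but drops $E_\sigma(t)$ for every dead $\sigma$.

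Example~\ref{ex:ext} shows the failure concretely. The dead stratum $\tau=\{04,12,24\}$ has $|b_\tau|=3$, $h(K_\tau)=1$ and $E_\tau(t)=1+2t+2t^2$, so its term is $t^3+2t^4+2t^5$ where your formula has $t^3$. Your combinatorial side correctly pads $b_\tau+F(\tau)$ by vectors off $\tau$, so it produces exactly these factors and cannot match your recursion term by term.

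The recursion also does not end at one leaf per dead stratum. A dead $\sigma$ is reached at several leaves, after varying numbers of contractions of elements owned by coordinates that are later deleted. Only the sum over those leaves is $t^{|b_\sigma|}h(K_\sigma)E_\sigma(t)$. Nor do strata simply ``transform correctly'': contracting an admissible $e\notin A_\tau$ can make $U_\tau$ dependent and change $(M/e)|A_\tau$, and hence the dead node.

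The paper handles this by fixing deadness at the root and carrying the whole sum $\sum_\tau t^{|b_\tau|}h(K_\tau)E_\tau(t)$ at every labelled minor. It then shows that each term obeys the recursion of $h$. This needs two facts your plan does not supply (Lemma~\ref{lem:E}):
\begin{enumerate}
\item If an admissible $e\in C_i$ with $i\notin\tau$ lies in a circuit inside $A_\tau\cup e$, then every $c\in\Ext(\tau)$ has $c_i=0$, so the altered dead node never contributes.
\item If $\tau$ is dead, $i\in\tau$ and $e$ is a loop, then $\Ext(\tau)=\emptyset$.
\end{enumerate}

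Purity has a matching hole. Raising the offset to a maximal element of $F(\sigma)$ only handles the coordinates in $\sigma$. You must also raise the coordinates off $\sigma$ to a maximal element of $\Ext(\sigma)$ and know that degree. Three ingredients are missing:
\begin{enumerate}
\item $\Ext(\sigma)$ is the coparking set of the contraction $M/A_\sigma$ with basis $B_0\setminus A_\sigma$ (Lemma~\ref{lem:extcontract}).
\item The coparking set of any matroid with a fixed basis is pure of degree $r(A_R)$. The paper proves this with the burning orderings of Theorem~\ref{thm:burning}. The argument of \cite{CycleSystems} does not apply, because its recursion needs every unique union to be dependent.
\item $|b_\sigma|+r(K_\sigma)=r(A_\sigma)$ (Lemma~\ref{lem:rank}), so every piece reaches exactly $|b_\sigma|+r(K_\sigma)+r(A_R)-r(A_\sigma)=r(A_R)$.
\end{enumerate}

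Two smaller points. No relaxed coparking function has its burning stop at a live stratum: burning halts exactly at $\max\mathrm{Dom}(a)$, which is dead or empty. And the step you expect to be hardest is immediate. The firable strata are closed under union, and once $\sigma$ can be fired, whether $\sigma\cup\rho$ can be fired depends only on the coordinates in $\rho$ (Lemma~\ref{lem:D}). So raising the offset inside $F(\sigma)$ creates no larger firable stratum, and the smaller ones are controlled by condition (ii) of the fibre.
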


The theorem is stated for an arbitrary matroid with a fixed basis, because nothing in its proof uses
more than the private elements of the fundamental circuits and the orthogonality of circuits and
cocircuits. Most classes to which we apply it are graphic, the exceptions being the matroids of corank two, the matroids of rank at most four and the uniform matroids of Sections~\ref{sec:coranktwo} to~\ref{sec:uniform}. The argument has three parts. The
first is a decomposition of the relaxed coparking functions into products of fibres and
\emph{extension sets} (Lemma~\ref{lem:D}). The second is that every extension set is a set of
coparking functions in its own right, of a contraction of $M$, and that the coparking functions of
any matroid with a fixed basis form a pure multicomplex. The proof of the latter is a version of Dhar's burning
algorithm \cite{Dhar,Klivans}, the classical test for superstable configurations on a graph, in which the
vertices are replaced by the non-basis elements and the edges at a vertex by the elements of a
fundamental circuit (Theorem~\ref{thm:burning}). Purity survives in this version and the count does not: the coparking functions of a basis are in general fewer than the bases. The
third part computes exactly how many fewer. The deletion--contraction recursion of
\cite{CycleSystems} is now allowed to stop at dead nodes. It gives the stratum identity of Section~\ref{sec:model} (Theorem~\ref{thm:B}): $h(M)$ is the sum over the empty set and the dead
strata of $t^{|b_\sigma|}h(K_\sigma)$ times the generating function $E_\sigma(t)$ of the extension set
$\Ext(\sigma)$. So the gap between the $h$-vector and the coparking functions is a sum of
local $h$-vectors, one per dead node. A fibre system is a way of
filling that gap monomial by monomial. Stanley's conjecture for $M$ is thereby reduced to a local question: does each dead stratum admit a fibre?

The second half of the paper answers that question for the graphs for which Stanley's
conjecture was previously known through a canonical tree, for two of the graphs on which the generalized cycle system
recursion is known to fail, and for three classes of matroids not contained in the graphic matroids. For radius-two graphs with the
breadth-first tree from the centre we develop combinatorial criteria that read whether a stratum is dead, and
the rank of its dead node, from the graph (Section~\ref{sec:radius2}). As a consequence of these criteria, every dead stratum of a triconed graph has a dead node of rank at most one, and at every such stratum a fibre exists (Theorems~\ref{thm:rankone} and~\ref{thm:triconed}). The biconed and coned cases are included. For a cone with the star as tree no stratum is dead, and the model is the cycle system of \cite{CycleSystems}. We thereby recover the theorem of \cite{Triconed} by a proof that does not use the construction machinery of that paper, machinery already very heavy
for biconed and triconed graphs. The Wagner graph and the Petersen
graph both carry a fibre system: on the Petersen graph the twenty-four dead
strata fall into three types, and the two nontrivial types carry explicit fibres that are
valid independently of every other choice (Theorem~\ref{thm:petersen}).

The model is not confined to graphs, and the three non-graphic classes show how little it needs in order to
apply. For a matroid of corank two the dead node is again a matroid of corank two, so an induction recovers the corank-two theorem of \cite{DLKK} (Theorem~\ref{thm:coranktwo}). For a matroid of rank at most four the dead nodes have corank two or rank one, and a count of loops at the rank-one nodes recovers the theorems of H\`a, Stokes and Zanello \cite{HSZ} for rank at most three and of Klee and Samper \cite{KleeSamper} for rank four (Theorem~\ref{thm:rankfour}). For uniform matroids the dead nodes are again uniform, and the fibres are simplices of monomials that fit together because they are nested (Theorem~\ref{thm:uniform}).

Not every pair $(M,B_0)$ carries a fibre system. The live strata below a dead stratum may exclude too many offsets to leave a pure multicomplex filling $h(K_\sigma)$ (Section~\ref{sec:criteria}). Section~\ref{sec:outlook} gives a
radius-two graph with its breadth-first tree on which this happens: a path of eight outer
vertices winding cyclically around three hubs.

The paper is organised as follows. Section~\ref{sec:prelim} fixes the conventions on matroids,
$h$-vectors, a basis and its fundamental circuits. Section~\ref{sec:cs} reviews cycle systems and
generalized cycle systems. Section~\ref{sec:model} defines strata, dead nodes, fibres and the
relaxed coparking functions, proves the burning theorem and the stratum identity, deduces the
main theorem, and records the specialisations, the truncated $h$-polynomial, the spared offsets
and the unconditional fibres. Section~\ref{sec:radius2} develops the combinatorial criteria on radius-two graphs and the
analysis, in the terminology introduced there, of the rank-one dead nodes of the roots with two fertile hubs. Section~\ref{sec:cases} treats coned, biconed
and triconed graphs, the Wagner and Petersen graphs, matroids of corank two, matroids of rank
at most four and uniform matroids. Section~\ref{sec:outlook} gives a radius-two graph with its breadth-first tree that carries no fibre system and records what remains open.

\section{Preliminaries}\label{sec:prelim}

\subsection{Matroids, \texorpdfstring{$h$}{h}-vectors and pure \texorpdfstring{$O$}{O}-sequences}

We follow Oxley \cite{Oxley}. A matroid $M$ on a finite ground set $E$ is given by its
independent sets, and $r$ denotes its rank function. For $X\subseteq E$ and $e\in E$ we write $X\setminus e$ and $X\cup e$ for
$X\setminus\{e\}$ and $X\cup\{e\}$. A \emph{circuit} is a minimal dependent set, a
\emph{cycle} is a union of circuits, a \emph{loop} is an element in no basis and a
\emph{coloop} an element in every basis. For $S\subseteq E$ the \emph{deletion} $M\setminus S$
and the \emph{contraction} $M/S$ are the usual minors, and $M|S=M\setminus(E\setminus S)$ is the
restriction. The independent sets of $M$ form a simplicial complex whose $h$-vector
$h(M)=(h_0,\dots,h_d)$, $d=r(M)$, is determined by
\[
  T_M(x,1)=\sum_{k=0}^{d}h_kx^{d-k},
\]
where $T_M$ is the Tutte polynomial. Equivalently, $h_k$ counts the bases with exactly $k$
internally passive elements with respect to any fixed linear order of $E$ \cite{Bjorner}. We
write $h(M;t)=\sum_k h_kt^k$, and simply $h(M)$ when the variable is understood: $t$ is the
formal variable throughout the paper, and the same convention applies to $E_\tau(t)$ and
$B(M,\CC)$ below. Deleting a loop or a coloop does not change the sequence
$(h_0,\dots,h_d)$, and for $e$ neither a loop nor a coloop,
\begin{equation}\label{eq:tutte}
  h(M)=h(M\setminus e)+t\,h(M/e).
\end{equation}
The \emph{degree} of $h(M)$ is the largest $k$ with $h_k\ne0$. It equals $r(M)$ minus the number
of coloops of $M$.

A \emph{multicomplex} on a finite set of variables is a finite set of monomials closed under
division. We identify a monomial $x^a$ with its exponent vector $a$ and write $|a|$ for its degree. The multicomplex is \emph{pure} if all its maximal elements have the same degree. Its
\emph{degree sequence} counts the monomials of each degree, and a sequence arising this way from
a pure multicomplex is a \emph{pure $O$-sequence}. Stanley's conjecture \cite{Stanley} asserts
that $h(M)$ is a pure $O$-sequence for every matroid $M$.

\subsection{A basis and its fundamental circuits}

Throughout, $M$ is a matroid on a ground set $E$ with a fixed basis $B_0$, and $R=E\setminus B_0$.
For $i\in R$ the \emph{fundamental circuit} $C_i$ is the unique circuit contained in
$B_0\cup\{i\}$. We call the elements of $B_0$ \emph{tree edges} and those of $R$ \emph{red edges} as in
\cite{Biconed,Triconed}, also when $M$ is not graphic. The names come from the case that motivates everything. There $M=M(G)$ is the graphic matroid of a connected multigraph $G$, whose circuits are the edge sets of cycles and whose bases are the spanning trees. The basis $B_0$ is a spanning tree, and $C_i$ consists of $i$ and the tree path joining its ends. Nothing below uses that $B_0$ is a tree. The graphs return in Section~\ref{sec:radius2}, where the radius-two structure of $G$ is used. Three facts, valid for an arbitrary matroid with a fixed basis, are used throughout:
\begin{enumerate}[label=(F\arabic*)]
\item $i\in C_i$ and $i\notin C_j$ for $j\ne i$ (the \emph{private element} of $C_i$).
\item For a tree edge $e$ let $D_e=\{e\}\cup\{i\in R:e\in C_i\}$, the fundamental cocircuit of
      $e$ with respect to $B_0$. A circuit and a cocircuit never meet in exactly one element
      \cite[Prop.~2.1.11]{Oxley}, so for every circuit $C$ the following holds: a tree edge of
      $C$ lies in $C_i$ for some $i\in C\setminus B_0$, and a tree edge outside $C$ lies in $C_i$
      for none or for at least two such $i$.
\item Let $C$ be a circuit with red set $S=C\setminus B_0$. Every element of $C$ lies in
      some $C_j$ with $j\in S$, so $C\subseteq A_S$ in the notation below, and an element
      of $E$ lying in exactly one $C_j$ with $j\in S$ belongs to $C$.
\end{enumerate}
Fact (F3) follows from the first two. For the first claim, a red element of $C$ is its own index, and a tree edge of $C$
lies in some $C_j$ with $j\in S$ by (F2). For the second, a red element lying in some $C_j$ with $j\in S$ is $j$
itself by (F1), and a tree edge outside $C$ lies in $C_j$ for none or for at least two $j\in S$
by (F2), never for exactly one.

The red edges index the variables $x_i$, $i\in R$, of every multicomplex constructed in this
paper. For $S\subseteq R$ we write $A_S=\bigcup_{i\in S}C_i$. Then
\begin{equation}\label{eq:nullity}
  r(A_S)=|A_S|-|S| .
\end{equation}
Indeed $A_S\setminus S=B_0\cap A_S$ is independent, and it spans $A_S$, since each $i\in S$ lies
in the circuit $C_i\subseteq(B_0\cap A_S)\cup\{i\}$, so $r(A_S)=|A_S\setminus S|=|A_S|-|S|$ by (F1).

\subsection{Unique unions}

\begin{definition}[\cite{CycleSystems}]
For subsets $C_1,\dots,C_k$ of a set $E$ and $S\subseteq[k]$ the \emph{unique union}
$\ast\{C_i:i\in S\}$ is the set of elements of $E$ lying in exactly one $C_i$ with $i\in S$.
\end{definition}

For a set $\sigma\subseteq R$ of red edges and $e\in E$ let $m_\sigma(e)=\#\{i\in\sigma:e\in C_i\}$
be the \emph{multiplicity} of $e$, and put
\[
  U_\sigma=\{e: m_\sigma(e)=1\}=\ast\{C_i:i\in\sigma\},\qquad
  A_\sigma=\{e:m_\sigma(e)\ge1\}.
\]
Although the definition is stated with the star, we write $U_\sigma$ throughout the paper
whenever the family is the fundamental circuits $C_i$, $i\in\sigma$. The star is reserved for
the few places where the family is a modified one, such as the sets $C_j\setminus A_\tau$ in
Lemma~\ref{lem:extcontract}.
By (F1) every $i\in\sigma$ lies in $U_\sigma$, so $\sigma\subseteq U_\sigma\subseteq A_\sigma$.
For an element $e$ the set $\{i\in\sigma:e\in C_i\}$ is its \emph{owner set}, of size
$m_\sigma(e)$. The dependence on $\sigma$ is suppressed when it is clear.

\begin{example}[The running example]\label{ex:running}
Figure~\ref{fig:running} shows the graph $G_0$ on the vertices $0,\dots,5$ with edges
$01,02,03,14,25$ (the tree $B_0$) and $12,24,35,45$ (the red edges). Its fundamental circuits
are
\begin{gather*}
  C_{12}=\{12,01,02\},\qquad C_{24}=\{24,14,01,02\},\\
  C_{35}=\{35,25,02,03\},\qquad C_{45}=\{45,14,01,02,25\}.
\end{gather*}
For $\sigma=\{24,35,45\}$ the multiplicities are $m_\sigma(02)=3$, $m_\sigma(01)=m_\sigma(14)=
m_\sigma(25)=2$ and $m_\sigma(03)=1$, so $U_\sigma=\{24,35,45,03\}$ and $A_\sigma$ is everything except $12$. The set $U_\sigma$ is a path $0\,3\,5\,4\,2$, a forest, so this
$\sigma$ is dead in the sense of Definition~\ref{def:strata} below. For $\tau=\{24,45\}$ one
finds $U_\tau=\{24,45,25\}$, the triangle $2\,4\,5$, so $\tau$ is live. This graph is used throughout the paper. It is triconed
(dominated by the path $1\,0\,2$), has $66$ spanning trees and $h(M(G_0))=(1,4,10,17,20,14)$.
\end{example}

\begin{figure}[h]\centering
\begin{tikzpicture}[scale=1]
\node[ctr] (o) at (0,2.2) {$0$};
\node[hub] (h1) at (-2,1) {$1$}; \node[hub] (h2) at (0,1) {$2$}; \node[hub] (h3) at (2,1) {$3$};
\node[ch] (c4) at (-1,-0.3) {$4$}; \node[ch] (c5) at (1,-0.3) {$5$};
\draw[tr] (o)--(h1); \draw[tr] (o)--(h2); \draw[tr] (o)--(h3); \draw[tr] (h1)--(c4); \draw[tr] (h2)--(c5);
\draw[rd] (h1)--(h2); \draw[rd] (h2)--(c4); \draw[rd] (h3)--(c5); \draw[rd] (c4)--(c5);
\node[rlab] at (-1,1.2) {$12$}; \node[rlab] at (-0.75,0.45) {$24$}; \node[rlab] at (1.75,0.35) {$35$}; \node[rlab] at (0,-0.55) {$45$};
\begin{scope}[xshift=6cm]
\node[ctr] (o) at (0,2.2) {$0$};
\node[hub] (h1) at (-2,1) {$1$}; \node[hub] (h2) at (0,1) {$2$}; \node[hub] (h3) at (2,1) {$3$};
\node[ch] (c4) at (-1,-0.3) {$4$}; \node[ch] (c5) at (1,-0.3) {$5$};
\draw[uu] (o)--(h3); \draw[uu] (h2)--(c4); \draw[uu] (h3)--(c5); \draw[uu] (c4)--(c5);
\draw[tr] (o)--(h1); \draw[tr] (o)--(h2); \draw[tr] (o)--(h3); \draw[tr] (h1)--(c4); \draw[tr] (h2)--(c5);
\draw[rd] (h2)--(c4); \draw[rd] (h3)--(c5); \draw[rd] (c4)--(c5);
\draw[grey,dashed] (h1)--(h2);
\node[lab] at (0,-1.0) {$U_\sigma$ shaded, $12$ dashed};
\end{scope}
\end{tikzpicture}
\caption{The running example $G_0$ (left): the centre $0$ is a thick circle, hubs are squares
and children are circles (terms of Section~\ref{sec:radius2}), tree edges are grey and red edges are red. Right: the unique union
$U_\sigma=\{24,35,45,03\}$ of the stratum $\sigma=\{24,35,45\}$, shaded, a forest. The dashed
edge $12$ is the one edge outside $A_\sigma$.}\label{fig:running}
\end{figure}

Table~\ref{tab:notation} collects the notation introduced here and in
Section~\ref{sec:model}.

\begin{table}[h]
\centering
\begin{tabular}{@{}ll@{}}
\toprule
$B_0$, $R$ & the fixed basis (tree edges) and its complement (red edges) \\
$C_i$ & fundamental circuit of $i\in R$, the unique circuit in $B_0\cup i$ \\
$m_\sigma(e)$ & multiplicity of $e$ among $\{C_i:i\in\sigma\}$, the size of its owner set \\
$U_\sigma$, $A_\sigma$ & unique union and union of the $C_i$, $i\in\sigma$ \\
$\sigma$ dead, live & $U_\sigma$ is independent in $M$ (a forest of $G$), or dependent \\
$b_\sigma\in\N^\sigma$ & coparking baseline, $b_\sigma(i)=|C_i\cap U_\sigma|$ \\
$K_\sigma=(M|A_\sigma)/U_\sigma$ & dead node of a dead $\sigma$ \\
$r(K_\sigma)$ & rank of the dead node, the degree of $h(K_\sigma)$, equal to $r(A_\sigma)-|U_\sigma|$ \\
$F(\sigma)\subseteq\N^\sigma$ & fibre at $\sigma$: offsets above $b_\sigma$ \\
$\sigma$ obstructed & dead stratum at which no fibre exists, given the fibres below \\
$\mathring\sigma$ & the interior of $\sigma$, the set of proper strata $\tau\subsetneq\sigma$ \\
$a$ coparking on $\mathcal T$ & every $\tau\in\mathcal T$ that can be fired from $a$ is dead, with $a|_\tau-b_\tau\in F(\tau)$ \\
$\Delta^\sigma_\tau$, $\exc_\tau$ & elements owned in $\tau\subseteq\sigma$ but not in $\sigma$, and the excess $b_\tau-b_\sigma|_\tau$ \\
$\PP^*(\CC)$ & coparking functions of a collection $\CC$ \\
$\Ext(\sigma)\subseteq\N^{R\setminus\sigma}$, $E_\sigma(t)$ & extension set relative to $\sigma$ and its generating function \\
$a|_\tau\ge b_\tau$ & $\tau$ \emph{can be fired} from $a$ ($a$ \emph{dominates} $b_\tau$) \\
$\mathrm{Dom}(a)$, $\max\mathrm{Dom}(a)$ & the strata that can be fired from $a$, and the largest of them \\
$\PP^*(F)$ & relaxed coparking functions of a fibre system $F$ \\
\bottomrule
\end{tabular}
\caption{Notation.}\label{tab:notation}
\end{table}

\section{Cycle systems and generalized cycle systems}\label{sec:cs}

\subsection{Cycle systems and coparking functions}\label{sec:coparking}

\begin{definition}[\cite{CycleSystems}]\label{def:cs}
Let $M$ be a matroid of corank $g$. A \emph{cycle system} on $M$ is a collection
$\CC=\{C_1,\dots,C_g\}$ of cycles of $M$ such that $U_S=\ast\{C_i:i\in S\}$ is dependent for every
nonempty $S\subseteq[g]$.
\end{definition}

For a cycle system $\CC$ and a nonempty $S\subseteq[g]$ put $b_S(i)=|C_i\cap U_S|$ for $i\in S$. We say that $S$ \emph{can be fired} from $a\in\N^g$ if $a_i\ge b_S(i)$ for every $i\in S$. A vector $a\in\N^g$ is a \emph{coparking function} of $\CC$ if no nonempty $S$ can be fired from $a$, that is, if for every nonempty $S$ some $i\in S$ has $a_i<b_S(i)$
\cite[Definition 4.1]{CycleSystems}. The name comes from the set-firing of chip-firing, where $b_S(i)$ is the number of chips $i$ sends out when all of $S$ fires at once. Let $\PP^*(\CC)$ be the set of coparking functions, the notation of \cite{CycleSystems}, where the star refers to the dual matroid. A set that can be fired from $a'\le a$ can be fired from $a$, so $\PP^*(\CC)$ is closed under division.

The definition makes sense for any collection $\CC$ of subsets of $E$, not only for cycle systems, and we use it in that generality. Only Theorem~\ref{thm:cdmpy} needs $\CC$ to be a cycle system.

\begin{theorem}[{\cite[Proposition 4.3, Theorem 4.5]{CycleSystems}}]\label{thm:cdmpy}
If $\CC$ is a cycle system on $M$ then $\PP^*(\CC)$ is a pure multicomplex whose degree sequence
is $h(M)$.
\end{theorem}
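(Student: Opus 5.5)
The plan is to follow the deletion--contraction recursion of \cite{CycleSystems} on the pair $(M,\CC)$ and verify two things: the count and the purity. For a collection $\CC$ and an element $e$, write $\CC\setminus e$ and $\CC/e$ for the induced collections on $M\setminus e$ and $M/e$. The recursion acts only on elements that lie in exactly one cycle.

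\textbf{Counting step.} Because $\CC$ is a cycle system, $U_{[g]}$ is dependent, so it is not contained in the coloops. Hence some element $e$ that is not a coloop lies in exactly one cycle, say $C_i$. I will show that the coparking functions split as
\[
  \PP^*(\CC)=\{a:a_i<b'\}\ \sqcup\ (\text{shift of }\PP^*(\CC/e)\text{ by }x_i),
\]
with the first part in bijection with $\PP^*(\CC\setminus e)$. Here deleting $e$ lowers the threshold $|C_i\cap U_S|$ by one for every $S\ni i$, and contracting $e$ keeps $C_i\setminus e$ as a cycle. The point to check is that both the deletion and the contraction are again cycle systems, or degenerate harmlessly when $C_i\setminus e$ stops being a cycle. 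With that in hand, the identity $h(M)=h(M\setminus e)+t\,h(M/e)$ of (\ref{eq:tutte}) gives the degree sequence by induction on $|E|$. The base case is a matroid with $g=0$, where $h=1$ and $\PP^*=\{0\}$, together with loops and coloops, which do not change $h$.

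Concretely, I will check that $a\in\PP^*(\CC)$ with $a_i\ge1$ if and only if $a-e_i\in\PP^*(\CC/e)$ after reindexing. I will also check that $a\in\PP^*(\CC)$ with $a_i=0$ corresponds to the coparking functions of the deletion in which $C_i$ has been removed or modified. Matching the thresholds is done one set $S$ at a time, using that $e$ lies only in $C_i$, so that $|C_i\cap U_S|$ changes by exactly one when $i\in S$ and not at all otherwise.

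\textbf{Purity step.} This is the main obstacle, since the induction above does not obviously preserve purity. I would argue directly with a burning-type algorithm. Given $a\in\PP^*(\CC)$ of degree less than $d-(\#\text{coloops})$, I want some $i$ with $a+e_i\in\PP^*(\CC)$. To find it, run Dhar's procedure. Start with all indices unburnt, and repeatedly burn an index $i$ if $a_i<|C_i\cap U_S|$, where $S$ is the current unburnt set. Since $a$ is coparking, everything burns, which gives an ordering $i_1,\dots,i_g$ with sets $S_k=\{i_k,\dots,i_g\}$.

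The maximal coparking functions should then be exactly those with $a_{i_k}=|C_{i_k}\cap U_{S_k}|-1$ for all $k$. Their degree is
\[
  \sum_k\bigl(|C_{i_k}\cap U_{S_k}|-1\bigr),
\]
and I expect this to telescope to $|E|-g-(\text{coloops})=r(M)-(\text{coloops})$. The reason is that each non-coloop element of $\bigcup C_i$ is counted exactly once, at the last step at which it is uniquely covered. Dependence of each $U_S$ is what guarantees that the count is strictly positive at each step and that the total matches the degree of $h(M)$.

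If $a$ is not of this form for its burning order, some $a_{i_k}$ can be raised by one without changing that burning order, and so $a$ is not maximal. Verifying the telescoping identity uniformly over all orders is the delicate point. I would first try the case where $M$ is graphic with a coned tree, to calibrate, and then prove the identity via the fact that the $U_S$ are dependent.
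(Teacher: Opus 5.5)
\textbf{The gap is in your counting step, at loops.} You induct with $h(M)=h(M\setminus e)+t\,h(M/e)$. If the admissible element $e\in C_i$ is a loop, this identity is false, since then $h(M)=h(M\setminus e)$. Moreover $(M/e,(C_j\setminus e)_j)$ is then not a cycle system, because $M/e=M\setminus e$ has corank $g-1$ but carries $g$ sets.

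The recursion can be forced onto such an element. On $U_{1,3}$ with ground set $\{e,f,f'\}$, the cycles $C_1=\{e,f\}$ and $C_2=\{f,f'\}$ form a cycle system. After contracting either admissible element, the only admissible element left is a loop: after $/e$ the family is $\{f\},\{f,f'\}$, and $f,f'$ are loops. Listing loops in the base case does not help, because a loop lying in one of the cycles cannot be discarded without changing the corank.

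What you need is that in this case no $a\in\PP^*(\CC)$ has $a_i\ge1$, i.e.\ $\PP^*((C_j\setminus e)_j)=\emptyset$. One way is to show, by induction on the ground set using the splitting of Lemma~\ref{lem:dc-coparking}, that a family of $k$ cycles in a matroid $N$ with $k$ larger than the corank of $N$ has no coparking functions. The induction goes through because:
\begin{itemize}
\item an admissible element lies in a cycle, so it is not a coloop;
\item deleting it lowers $k$ and the corank by one;
\item contracting it keeps $k$ and does not raise the corank;
\item the sets remain cycles;
\item a nonempty family with no admissible element has no coparking functions.
\end{itemize}
Also, your first paragraph swaps the two children. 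Deleting $e$ removes $C_i$ altogether, and it is contraction that lowers $b_S(i)$ by one for $S\ni i$. Your ``Concretely'' paragraph has this right.

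\textbf{Your purity step is sound and simpler than you fear.} The paper does not prove this theorem; it cites \cite{CycleSystems} and sketches a deletion--contraction recursion. Your argument is instead that of Theorem~\ref{thm:burning}, which the paper proves for fundamental circuits and which carries over to any cycle system.

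The ``telescoping'' uses no dependence. Since $C_{i_k}\cap U_{S_k}=C_{i_k}\setminus\bigcup_{l>k}C_{i_l}$, these sets partition $\bigcup_jC_j$ according to the last index in the order whose cycle contains the element. Hence $\sum_k(|C_{i_k}\cap U_{S_k}|-1)=|\bigcup_jC_j|-g$ for every ordering.

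Nor does dependence make the thresholds positive along an arbitrary ordering: in the cycle system $\{f\},\{f,f'\}$ above, $|C_1\cap U_{\{1,2\}}|=0$. Along the burning order of an actual $a\in\PP^*(\CC)$ they are positive, because $a_{i_k}<|C_{i_k}\cap U_{S_k}|$. Dependence is needed only to get $0\in\PP^*(\CC)$.

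What you must add is the converse of the burning criterion, so that the raised vector $c$ with $c_{i_k}=|C_{i_k}\cap U_{S_k}|-1$ lies in $\PP^*(\CC)$. For nonempty $\rho$ with first element $i_k$ in the order, $\rho\subseteq S_k$ gives $C_{i_k}\cap U_\rho\supseteq C_{i_k}\cap U_{S_k}$, so $\rho$ cannot be fired. Then every maximal element has degree $|\bigcup_jC_j|-g$, which is purity. You need not separately identify this number with $r(M)$ minus the number of coloops, since the count already forces the top degree to be $\deg h(M)$.
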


The proof is a deletion--contraction recursion on the pair $(M,\CC)$. An element $e$ lying in
exactly one $C_i$ is \emph{admissible}. The vectors with $a_i=0$ correspond to the pair
$(M\setminus e,\CC\setminus\{C_i\})$ and the vectors with $a_i\ge1$, shifted by $-\mathbf e_i$,
to the pair $(M/e,\{C_j\setminus e: j\in[g]\})$ \cite[Proposition 3.6]{CycleSystems}. Both children are
again pairs carrying a cycle system, and the recursion runs until no coordinates remain. The point of the
definition is that an admissible element always exists: the total unique union $U_{[g]}$
is dependent, in particular nonempty. Cographic matroids (with the vertex cut sets),
planar graphs (with the bounded faces), coned graphs (with the triangles through the cone
vertex) and $K_{3,3}$-minor-free graphs admit cycle systems. $M(K_{3,3})$ does not
\cite{CycleSystems}. For a connected matroid every cycle system consists of circuits and is a
basis of the circuit space \cite[Theorem 3.10]{CycleSystems}, \cite[Corollary 4.4]{Regular}.

\begin{example}[A cycle system on a cone]\label{ex:cone}
Let $G$ be the cone over the path $1\,2\,3\,4$ with apex $0$ (Figure~\ref{fig:cone}). With the
star at $0$ as spanning tree the red edges are $12,23,34$, and their fundamental circuits are the
triangles $C_{12}=\{01,12,02\}$, $C_{23}=\{02,23,03\}$, $C_{34}=\{03,34,04\}$. The unique union of
two adjacent triangles is the $4$-cycle through the apex, $U_{\{12,23\}}=\{01,12,23,03\}$,
and the unique union of all three is the $5$-cycle $\{01,12,23,34,04\}$. The two non-adjacent
triangles have unique union $C_{12}\cup C_{34}$. Every unique union is dependent, so the triangles
form a cycle system, and the coparking functions $(a_{12},a_{23},a_{34})$ are the vectors from which no set can be fired: each coordinate is at most $2$, $(a_{12},a_{23})\not\ge(2,2)$,
$(a_{23},a_{34})\not\ge(2,2)$, $(a_{12},a_{34})\not\ge(3,3)$ (vacuous), and
$(a_{12},a_{23},a_{34})\not\ge(2,1,2)$. They number $21$, as do the spanning trees of $G$, with degree sequence $(1,3,6,7,4)=h(M(G))$.
\end{example}

\begin{figure}[h]\centering
\begin{tikzpicture}[scale=1.1]
\node[ctr] (o) at (1.5,1.6) {$0$};
\node[ch] (a) at (0,0) {$1$}; \node[ch] (b) at (1,0) {$2$}; \node[ch] (c) at (2,0) {$3$}; \node[ch] (d) at (3,0) {$4$};
\foreach \x in {a,b,c,d} \draw[tr] (o)--(\x);
\draw[rd] (a)--(b); \draw[rd] (b)--(c); \draw[rd] (c)--(d);
\begin{scope}[xshift=5.5cm]
\node[ctr] (o) at (1.5,1.6) {$0$};
\node[ch] (a) at (0,0) {$1$}; \node[ch] (b) at (1,0) {$2$}; \node[ch] (c) at (2,0) {$3$}; \node[ch] (d) at (3,0) {$4$};
\draw[uu] (o)--(a); \draw[uu] (a)--(b); \draw[uu] (b)--(c); \draw[uu] (c)--(o);
\foreach \x in {a,b,c,d} \draw[tr] (o)--(\x);
\draw[rd] (a)--(b); \draw[rd] (b)--(c); \draw[rd] (c)--(d);
\node[lab] at (1.5,-0.6) {$\ast\{C_{12},C_{23}\}$: a $4$-cycle};
\end{scope}
\end{tikzpicture}
\caption{Example~\ref{ex:cone}. The cone over a path and its fundamental triangles. The unique
union of two adjacent triangles is the cycle through the apex.}\label{fig:cone}
\end{figure}

\begin{theorem}[\cite{Regular}]\label{thm:regular}
Every matroid admitting a cycle system is regular.
\end{theorem}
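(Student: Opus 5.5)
The plan is to prove regularity by excluding the $U_{2,4}$-minor, via Tutte's characterization: a matroid is regular iff it is binary and has no $F_7$- or $F_7^*$-minor. We use the structure of cycle systems in three steps.

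\emph{Reduction to connected matroids.} A cycle system restricts along a direct-sum decomposition: for a connected component $N$ of $M$, the members of $\CC$ lying in $N$ form a cycle system of $N$. Regularity is closed under direct sums, so we may assume $M$ connected. Then \cite[Theorem 3.10]{CycleSystems} says $\CC$ consists of circuits and is a basis of the circuit space.

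\emph{Binarity.} The set of cycles should be closed under symmetric difference, which is a characterization of binary matroids. Take circuits $C,D$ and write the cycle space spanned by $\CC$. Since every unique union $U_S$ is dependent, the recursion of Theorem~\ref{thm:cdmpy} yields $|\PP^*(\CC)|=h(M;1)$, the number of bases. I would compare this count with a lattice-theoretic count: the coparking functions are the superstables of a set-firing process whose number equals the order of a sandpile-type group built from the $\CC$-incidence structure. That order equals the number of bases only when a unimodular representation exists.

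\emph{Direct minor argument (preferred).} Concretely, I would try to show that a $U_{2,4}$-minor obstructs cycle systems, and that cycle systems pass to suitable minors. For deletion and contraction of admissible elements this is exactly \cite[Proposition 3.6]{CycleSystems}. Then check directly that $U_{2,4}$, $F_7$ and $F_7^*$ have no cycle system. For $U_{2,4}$ (corank $2$) the two cycles must be circuits of size $3$, and their unique union has size $2$, which is independent: a contradiction. $F_7$ and $F_7^*$ are handled by a small finite check.

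The main obstacle is minor-closure. Proposition 3.6 only handles admissible elements, while an arbitrary minor requires deleting or contracting non-admissible elements. I would first try to show that any element can be made admissible by replacing $\CC$ by another cycle system through symmetric-difference moves. Failing that, I would simply cite \cite{Regular}, since this theorem is recalled from that paper.
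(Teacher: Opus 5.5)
The paper gives no proof of this statement. It is quoted from \cite{Regular}, so your fallback of citing that paper is what the paper itself does. Your own argument, however, has a gap that cannot be closed along the route you propose: the class of matroids admitting a cycle system is \emph{not} minor-closed. The cone over $K_{3,3}$ is a coned graph, so the fundamental triangles of the star at the apex form a cycle system. This is recalled in Section~\ref{sec:coparking}, and in the proof of Theorem~\ref{thm:triconed} every stratum of a coned graph is live. Deleting the six spokes leaves $M(K_{3,3})$, which admits no cycle system \cite{CycleSystems}.

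This has two consequences for your plan. First, verifying that $U_{2,4}$, $F_7$ and $F_7^*$ carry no cycle system says nothing about matroids that contain them as minors. Indeed $K_{3,3}$, which is regular, carries none either. Second, your proposed repair is false. If every element could be made admissible in some cycle system, \cite[Proposition 3.6]{CycleSystems} would give closure under single-element deletion and contraction, hence under minors. So along the deletions from the cone to $K_{3,3}$ there is a matroid with a cycle system and an element that is admissible in none of its cycle systems. The non-regular minors must be excluded by an argument that works with the cycle system of $M$ itself rather than transporting it down to the minor. That argument is the missing content.

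The binarity paragraph is not an argument either. No group is defined, and the claim that the coparking functions are counted by the order of a group, which equals the number of bases only when a unimodular representation exists, is essentially the theorem itself. As the paper notes after Theorem~\ref{thm:burning}, set-firing on a family of circuits comes with no Laplacian or toppling matrix from which such a group could be built.

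The reduction to connected matroids is also misstated, because a member of a cycle system may meet several components. Take $U_{1,2}\oplus U_{1,2}$ on $\{a,b\}\cup\{c,d\}$. The cycles $\{a,b,c,d\}$ and $\{a,b\}$ form a cycle system, since their unique union is $\{c,d\}$, yet no member lies in $\{c,d\}$. One has to intersect the cycles with a component and reselect, and that step needs proof.
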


\subsection{Generalized cycle systems}

The question raised in \cite{CycleSystems} and pursued in \cite{PerkinsonDC} is what happens to
the recursion when $\CC$ is an arbitrary collection of $g$ cycles. The recursion stops at a pair $(M',\CC')$ as soon as the
total unique union of $\CC'$ is empty, and then the multicomplex must be supplied at that node
by other means. The following intermediate notion, due to the authors of \cite{CycleSystems}, describes a class in which the stopping nodes are trivial. The condition was observed in
\cite[Section~6.2]{CycleSystems} on a collection of four-cycles of $K_{3,3}$, and the definition was proposed by the authors \cite{Dochtermann,PerkinsonDC,Yi}.

\begin{definition}[\cite{Dochtermann,PerkinsonDC,Yi}]\label{def:gcs}
Let $M$ be a matroid of corank $g$. A collection $\CC=\{C_1,\dots,C_g\}$ of cycles of $M$ is a
\emph{generalized cycle system} if for every nonempty $S\subseteq[g]$ the unique union
$U_S$ is either dependent or a basis of the restriction $M|\bigcup_{i\in S}C_i$.
\end{definition}

Every cycle system is a generalized cycle system. The condition is preserved by the two moves of
\cite[Proposition 3.6]{CycleSystems}: deleting an admissible $e\in C_i$ removes $C_i$ and leaves
every other unique union and every other union unchanged. Contracting a non-loop $e$ keeps
dependent sets dependent. For a basis $U_S$ of $\bigcup_{i\in S}C_i$ there are three cases. If
$e\in U_S$, the contraction removes $e$ from $U_S$ and leaves a basis. If $e$ lies outside the
closure of $U_S$, the set $U_S$ stays a basis of the union in $M/e$. If $e$ lies in that closure,
which is the closure of the union, $U_S$ becomes dependent in $M/e$. Under a generalized cycle system the
recursion of \cite{CycleSystems} therefore still terminates at nodes whose multicomplex is the single
monomial $1$. At a pair whose total unique union is empty, every element of the union of the
cycles lies in at least two of them, and the condition ``dependent or a basis'' forces that union
to have rank zero. The node then contributes the single monomial $1$, provided every circuit of the node
lies in the union, as it does for the fundamental circuits of a basis (Lemma~\ref{lem:invariants}(d) below).

\begin{example}\label{ex:biconed-gcs}
Let $G$ be biconed with dominating edge $01$ and let $B_0$ be the breadth-first tree from $0$
in which every vertex is attached to $0$ if possible and to $1$ otherwise. The fundamental
circuits of $B_0$ form a generalized cycle system of $M(G)$ \cite{Yi}. In the language of Section~\ref{sec:radius2}, every dead node of a
biconed graph with this tree has rank zero.
\end{example}

\begin{example}[A generalized cycle system, and a graph that has none from its tree]\label{ex:gcs-two}
Let $G_1$ be the biconed graph on $0,1,2,3,4$ with edges $01,02,03,14$ (the tree) and $12,24,34$
(red), dominated by the edge $01$ (Figure~\ref{fig:gcstwo}). Its only dead stratum is $\sigma=\{12,24,34\}$, with
$U_\sigma=\{12,24,34,03\}$ spanning all five vertices, so $U_\sigma$ is a basis of the union of
the three circuits: the fundamental circuits of $B_0$ form a generalized cycle system, as
Example~\ref{ex:biconed-gcs} predicts. In the running example $G_0$ of Figure~\ref{fig:running},
on the other hand, the stratum $\sigma=\{24,35,45\}$ has $U_\sigma$ a path on the five vertices
$0,2,3,4,5$ while the union of its three circuits spans all six vertices. So $U_\sigma$ is
independent but not a basis of $M|A_\sigma$, the fundamental circuits of the tree $B_0$ of
$G_0$ do not form a generalized cycle system, and the recursion of \cite{CycleSystems} stops at
$\sigma$ with a nontrivial node. The next section treats exactly this situation.
\end{example}

\begin{figure}[h]\centering
\begin{tikzpicture}[scale=1]
\node[ctr] (o) at (0,2.2) {$0$};
\node[hub] (h1) at (-2,1) {$1$}; \node[hub] (h2) at (0,1) {$2$}; \node[hub] (h3) at (2,1) {$3$};
\node[ch] (c4) at (-1,-0.3) {$4$};
\draw[tr] (o)--(h1); \draw[tr] (o)--(h2); \draw[tr] (o)--(h3); \draw[tr] (h1)--(c4);
\draw[rd] (h1)--(h2); \draw[rd] (h2)--(c4); \draw[rd] (h3)--(c4);
\node[rlab] at (-1,1.2) {$12$}; \node[rlab] at (-0.75,0.45) {$24$}; \node[rlab] at (0.75,0.2) {$34$};
\begin{scope}[xshift=6cm]
\node[ctr] (o) at (0,2.2) {$0$};
\node[hub] (h1) at (-2,1) {$1$}; \node[hub] (h2) at (0,1) {$2$}; \node[hub] (h3) at (2,1) {$3$};
\node[ch] (c4) at (-1,-0.3) {$4$};
\draw[uu] (o)--(h3); \draw[uu] (h1)--(h2); \draw[uu] (h2)--(c4); \draw[uu] (h3)--(c4);
\draw[tr] (o)--(h1); \draw[tr] (o)--(h2); \draw[tr] (o)--(h3); \draw[tr] (h1)--(c4);
\draw[rd] (h1)--(h2); \draw[rd] (h2)--(c4); \draw[rd] (h3)--(c4);
\node[lab] at (0,-1.0) {$U_\sigma=\{12,24,34,03\}$ shaded};
\end{scope}
\end{tikzpicture}
\caption{Example~\ref{ex:gcs-two}. Left: the biconed graph $G_1$ with its tree. Right: the
unique union of the dead stratum $\sigma=\{12,24,34\}$, shaded, a spanning tree of $G_1$.}\label{fig:gcstwo}
\end{figure}

We now recall the two facts announced in the introduction. The first is Theorem~\ref{thm:regular}: by
Seymour's decomposition theorem \cite{Seymour}, matroids with cycle systems are built by $1$-, $2$- and $3$-sums from graphic matroids, cographic matroids and $R_{10}$.
The cographic case is Merino's theorem, so we look for an extension among graphic matroids. The second
is the following.

\begin{proposition}[Perkinson \cite{PerkinsonDC}]\label{prop:nogcs}
The Wagner graph $V_8$ and the Petersen graph admit no generalized cycle system consisting of
circuits. More strongly, no set of $g$ circuits of either graph admits a completable
deletion--contraction tree under any choice of admissible element at any node: this holds for all
$118{,}755$ five-element sets of circuits of $V_8$ and for all $36{,}288{,}252$ six-element sets of
circuits of the Petersen graph.
\end{proposition}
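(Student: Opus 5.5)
The statement is computational. I would prove it by exhaustive search, organised so that the first claim follows from the second. As the discussion after Definition~\ref{def:gcs} shows, a generalized cycle system of circuits always yields a completable deletion--contraction tree. Under such a system every node either has an admissible element or is a terminal node contributing the monomial $1$, and the two moves of \cite[Proposition 3.6]{CycleSystems} preserve the condition. So it suffices to show that for $V_8$ (twelve edges, eight vertices, corank $g=5$) and for the Petersen graph (fifteen edges, ten vertices, corank $g=6$) no $g$-set of circuits admits a completable tree under any choice of admissible elements.

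The first step is to enumerate the circuits, meaning the cycles of each graph, and hence all $g$-element sets of them. The counts should be $\binom{n}{5}=118{,}755$ sets for $V_8$ and $36{,}288{,}252$ sets for the Petersen graph, where $n$ is the number of cycles of the graph.

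For each set $\CC$ I would run a recursive search on pairs $(M',\CC')$. At each node there are three possibilities.
\begin{enumerate}[label=(\roman*)]
\item If some element lies in exactly one member of $\CC'$, branch over every admissible element $e$. Declare the node completable if, for some such $e$, both the deletion child $(M'\setminus e,\CC'\setminus\{C_i\})$ and the contraction child $(M'/e,\{C_j\setminus e\})$ are completable.
\item If the total unique union is empty, the node is completable exactly when the union of the current cycles has rank zero and the node contributes the single monomial $1$. This requires in particular that no circuit of $M'$ escapes the cycles.
\item If no coordinates remain, the node is completable if and only if $M'$ has rank zero after removing loops.
\end{enumerate}
The claim is that the root is never completable. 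A useful global check is that any completed tree must have $\tau(G)$ leaves in total ($392$ for $V_8$ and $2000$ for Petersen). Branches whose partial counts are already inconsistent with this total can therefore be cut.

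The main obstacle is the size of the search, especially the $3.6\times10^7$ Petersen sets, each with a branching tree over choices of admissible element. To handle it I would do three things.
\begin{enumerate}[label=(\roman*)]
\item Quotient by the automorphism groups, of orders $16$ and $120$, so that only orbit representatives of $g$-sets are tested.
\item Memoise nodes $(M',\CC')$ by a canonical form of the minor together with the family of edge sets, since many branches reach isomorphic pairs.
\item Discard at the root every set whose cycles do not form a basis of the cycle space. Nonspanning sets cannot produce the right count, because a terminal node must have rank zero.
\end{enumerate}
I expect most surviving sets to die quickly: their first failure is a node whose total unique union is empty while the union still has positive rank. The final step is to record that every orbit representative fails, which proves both assertions.
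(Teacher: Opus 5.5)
The paper gives no proof of this proposition; it quotes Perkinson's exhaustive computation \cite{PerkinsonDC}. A search is therefore the right route, and the overall shape of yours (an AND--OR search over admissible choices, orbit representatives, memoisation) is appropriate. As specified, however, it would not certify the statement, because two of its node rules are wrong.

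\textbf{Leaves with no coordinates.} At every node the number of surviving coordinates equals the corank. An admissible element lies in a current cycle, so it is not a coloop, and deleting it lowers both numbers by one; contracting a non-loop changes neither. A node with no coordinates is therefore a free matroid, in general of positive rank (the leaves ``consisting of coloops'' of Example~\ref{ex:dctree}). Your rule (iii), ``rank zero after removing loops'', accepts only the empty matroid. The search would then reject every tree for every graph, including graphs with cycle systems, so its negative verdict would be an artefact. Rules (ii) and (iii) should be the single test that every element of $M'$ is a loop or a coloop, i.e.\ $h(M')=1$. A control run on a graph with a cycle system, such as a planar graph with its bounded faces, belongs in the certificate.

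\textbf{Admissible loops.} Rule (i) mishandles these, and they occur routinely: for a single triangle, contracting two edges leaves the third as an admissible loop. At a loop $h(M')=h(M'\setminus e)$, so there is no contraction child. Your phantom child $(M'\setminus e,(C_j\setminus e)_j)$ has two possible fates. It may be accepted, adding as many spurious leaves as $M'$ has bases, which your leaf-count test would then use to cut the tree. Or, when $C_i\ne\{e\}$, it may be rejected, which makes the search stricter than the recursion it is meant to model.

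\textbf{The leaf count and the spanning pruning.} With the deletion-only convention at loops, the leaf-count test is vacuous. Every expansion preserves the number of bases, and an accepted terminal node has exactly one. So it cannot justify your third speed-up either. That speed-up is nevertheless legitimate, for a different reason. Suppose $g$ circuits do not span the $g$-dimensional cycle space. Then they are dependent over $\mathrm{GF}(2)$, so some nonempty $S$ has $\sum_{i\in S}C_i=0$, and every element of $A_S$ lies in at least two $C_i$ with $i\in S$. Such elements are never admissible while $S$ survives. Hence no coordinate of $S$ is ever dropped and no element of $A_S$ is ever expanded. The branch of any tree that always follows the deletion child contracts nothing. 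It ends at a terminal node in which the circuits $C_i$, $i\in S$, are intact and of positive rank, so that node is not trivial.

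\textbf{The reduction of the first claim.} Deducing the first claim from the second needs more than the discussion after Definition~\ref{def:gcs}: it needs its proviso that every circuit of a stopping node lies in the union of the current cycles. The same $\mathrm{GF}(2)$ reasoning supplies it. In a generalized cycle system of circuits no $U_S$ is empty, because $\emptyset$ is neither dependent nor a basis of a nonempty union of circuits of a simple graph. So the circuits are independent over $\mathrm{GF}(2)$ and form a basis of the cycle space. Both moves carry a basis of the cycle space of a node to a basis of the cycle space of the child. Hence every circuit of a stopping node is a sum of its cycles and lies in their union.

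With these repairs the proposal is a correct specification. The counts are $\binom{29}{5}$ and $\binom{57}{6}$, for the $29$ cycles of $V_8$ and the $57$ of the Petersen graph. But the proposition is the output of running the search, and until it is run, as Perkinson did, the proposal remains a plan rather than a proof.
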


By Proposition~\ref{prop:nogcs}, generalized cycle systems of circuits do not cover all graphs. We design the relaxation of the next section with these graphs in mind, keeping the recursion of \cite{CycleSystems} on the strata
where it works and replacing the multicomplex only at the nodes where it stops.

\section{Relaxed coparking functions}\label{sec:model}

In this section $M$ is a matroid with a basis $B_0$, red set $R=E\setminus B_0$ and
fundamental circuits $C_i$. The reader may keep $M=M(G)$ and $B_0$ a spanning tree in mind, and
the running example is graphic, but only (F1) to (F3) and \eqref{eq:nullity} are used. Graphs
enter only in Section~\ref{sec:radius2}.

\subsection{Strata, coparking baselines and dead nodes}

\begin{definition}\label{def:strata}
A nonempty set $\sigma\subseteq R$ is a \emph{stratum}. It is \emph{dead} if $U_\sigma$ is
independent in $M$ (a forest of $G$ when $M=M(G)$) and \emph{live} otherwise. The words
independent and dependent are reserved for sets of elements of $M$, so ``$\sigma$ is dead'' is a
statement about $U_\sigma$, while ``$\sigma$ is a forest'' is a statement about $\sigma$ itself. The \emph{coparking baseline} of
$\sigma$, or baseline for short, is the vector $b_\sigma\in\N^\sigma$ with
\[
  b_\sigma(i)=|C_i\cap U_\sigma|\qquad(i\in\sigma).
\]
We identify $\N^\sigma$ with the set of vectors in $\N^R$ that vanish outside $\sigma$. As in Section~\ref{sec:coparking}, the stratum $\sigma$ \emph{can be fired} from a vector $a\in\N^R$ if $a|_\sigma\ge b_\sigma$
coordinatewise, and we then also say that $a$ \emph{dominates} $b_\sigma$.
\end{definition}

Since each element of $U_\sigma$ is counted in exactly one
$b_\sigma(i)$,
\begin{equation}\label{eq:bsize}
  |b_\sigma|:=\sum_{i\in\sigma}b_\sigma(i)=|U_\sigma| .
\end{equation}

Coparking baselines are weakly monotone in the stratum: shrinking $\sigma$ can make the baseline larger, because elements shared by several circuits of $\sigma$ may become uniquely owned inside a smaller stratum $\tau$. For
$\tau\subseteq\sigma$ write
\[
  \Delta^\sigma_\tau=U_\tau\setminus U_\sigma
  =\{e\in E: m_\tau(e)=1,\ m_\sigma(e)\ge2\},
  \qquad
  \exc_\tau=b_\tau-b_\sigma|_\tau\in\Z^\tau ,
\]
the elements owned in $\tau$ but not in $\sigma$, and the \emph{excess}. The excess is the room
left before $\tau$ can be fired: at the baseline $b_\sigma$ the stratum $\sigma$ can be fired but
$\tau$ is short by $\exc_\tau$, and $\tau$ can be fired from $b_\sigma+m$ exactly when the offset $m$ is large enough to
cover the excess, $m|_\tau\ge\exc_\tau$. The superscript on $\Delta$ is
suppressed when $\sigma$ is clear.

\begin{lemma}[Monotonicity of $b_\sigma$]\label{lem:mono}
Let $\sigma\subseteq R$ be a stratum and $\tau\subseteq\sigma$ a stratum inside it.
\begin{enumerate}[label=(\alph*)]
\item $b_\sigma(i)\ge1$ for every $i\in\sigma$. Consequently only strata $\tau$ inside the support of
      $a$ can be fired from $a$.
\item $U_\tau=(U_\sigma\cap A_\tau)\ \sqcup\ \Delta_\tau$.
\item $\exc_\tau(i)=|\Delta_\tau\cap C_i|\ge0$ for every $i\in\tau$, in particular
      $b_\sigma|_\tau\le b_\tau$.
\item The following are equivalent: $\tau$ can be fired from $b_\sigma$, $b_\sigma|_\tau=b_\tau$,
      $\Delta_\tau=\emptyset$, and $U_\tau\subseteq U_\sigma$. When they hold and $\sigma$ is
      dead, $\tau$ is dead.
\end{enumerate}
\end{lemma}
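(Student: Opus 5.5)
The plan is to argue elementwise with the multiplicities $m_\tau(e)\le m_\sigma(e)$, which hold because $\tau\subseteq\sigma$. All four parts are then routine bookkeeping, and only the last clause of (d) uses matroid structure.

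For (a), fact (F1) says that $i\in C_i$ and $i\notin C_j$ for $j\ne i$. Hence $m_\sigma(i)=1$, so $i\in U_\sigma\cap C_i$ and $b_\sigma(i)\ge1$. Now let $\tau$ be a stratum fired from $a$, and take any $i\in\tau$. Then $a_i\ge b_\tau(i)\ge1$, so $i$ lies in the support of $a$.

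For (b), take $e\in U_\tau$, so that $m_\tau(e)=1$ and in particular $e\in A_\tau$. Since $m_\sigma(e)\ge m_\tau(e)$, either $m_\sigma(e)=1$, in which case $e\in U_\sigma\cap A_\tau$, or $m_\sigma(e)\ge2$, in which case $e\in\Delta_\tau$. These two cases are disjoint. Conversely, suppose $e\in U_\sigma\cap A_\tau$. Then $1\le m_\tau(e)\le m_\sigma(e)=1$, so $e\in U_\tau$; and $\Delta_\tau\subseteq U_\tau$ holds by definition.

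For (c), fix $i\in\tau$ and intersect (b) with $C_i$. This gives $|C_i\cap U_\tau|=|C_i\cap U_\sigma\cap A_\tau|+|C_i\cap\Delta_\tau|$. Since $C_i\subseteq A_\tau$, the first term equals $|C_i\cap U_\sigma|=b_\sigma(i)$. Subtracting yields $\exc_\tau(i)=|\Delta_\tau\cap C_i|\ge0$.

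For (d), we use that $\tau$ can be fired from $b_\sigma$ exactly when $b_\sigma|_\tau\ge b_\tau$. Combined with (c), this is equivalent to $b_\sigma|_\tau=b_\tau$, which in turn is equivalent to $|\Delta_\tau\cap C_i|=0$ for all $i\in\tau$. Every element of $\Delta_\tau\subseteq A_\tau$ lies in some $C_i$ with $i\in\tau$, so this last condition says $\Delta_\tau=\emptyset$. By (b), that is equivalent to $U_\tau=U_\sigma\cap A_\tau$, which is equivalent to $U_\tau\subseteq U_\sigma$, since $U_\tau\subseteq A_\tau$ always holds. Finally, if these conditions hold and $\sigma$ is dead, then $U_\tau$ is a subset of the independent set $U_\sigma$, hence independent, so $\tau$ is dead.

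No step is a real obstacle. The only point needing care is in (d): using $A_\tau=\bigcup_{i\in\tau}C_i$ to pass from ``$\Delta_\tau$ meets no $C_i$ with $i\in\tau$'' to ``$\Delta_\tau=\emptyset$''.
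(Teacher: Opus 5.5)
Your proof is correct and follows the paper's argument essentially step by step. You split $U_\tau$ by whether $m_\sigma(e)$ is $1$ or at least $2$ for (b), and you intersect with $C_i\subseteq A_\tau$ for (c). For (d) you use that every element of $\Delta_\tau$ lies in some $C_i$ with $i\in\tau$, which the paper phrases as $\sum_{i\in\tau}|\Delta_\tau\cap C_i|=|\Delta_\tau|$; your detour through (b) to get $U_\tau\subseteq U_\sigma$ could be shortened, since $\Delta_\tau=U_\tau\setminus U_\sigma$ by definition.
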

\begin{proof}
Part (a) is (F1): $i\in C_i\cap U_\sigma$.

For (b), if $e \in U_\sigma \cap A_\tau$ then the unique owner of $e$ in $\sigma$ lies in $\tau$. Hence $U_\sigma \cap A_\tau$ is exactly the set of elements of $U_\tau$ with $m_\sigma(e) = 1$. As every $e$ in $U_\tau$ has $m_\sigma(e) \geq 1$, the set $\Delta_\tau$ is exactly the set of elements of $U_\tau$ with $m_\sigma(e) \geq 2$, so $U_\sigma \cap A_\tau$ and $\Delta_\tau$ partition $U_\tau$.

For (c), fix $i \in \tau$. From $C_i \subseteq A_\tau$, we get $C_i \cap U_\sigma \cap A_\tau = C_i \cap U_\sigma$. Together with (b) we get
$$b_\tau(i) = |C_i \cap U_\tau| = |C_i \cap U_\sigma| + |C_i \cap \Delta_\tau| = b_\sigma(i) + |C_i \cap \Delta_\tau|.$$

For (d), part (c) shows that $\tau$ can be fired from $b_\sigma$ if and only if $b_\sigma|_\tau = b_\tau$. By definition each $e \in \Delta_\tau$ lies in a unique $C_i$ with $i \in \tau$, so
$$\sum_{i\in\tau}|\Delta_\tau\cap C_i|=|\Delta_\tau|.$$
Hence the excess is $0$ if and only if $\Delta_\tau = \emptyset$, which by definition means $U_\tau \subseteq U_\sigma$. Finally, a subset of an independent set is independent.
\end{proof}

The converse of the last clause of Lemma~\ref{lem:mono}(d) is false: $\tau$ may be dead inside a dead $\sigma$ with $\exc_\tau\neq0$. In $G_0$ the strata $\sigma=\{12,24,35,45\}$ and $\tau=\{12,35,45\}$ are both dead. Their baselines are $b_\sigma=(1,1,2,1)$ and $b_\tau=(1,2,2)$, so $\exc_\tau=(0,0,1)$. The tree edge $14$ lies in $C_{24}$ and $C_{45}$, so inside $\tau$ it is owned by $45$ alone and $14\in\Delta_\tau$.

Every singleton $\{i\}$ is live because $U_{\{i\}}=C_i$ is a circuit. For a graph
(or any binary matroid) every pair $\{i,j\}$ is live as well, because $U_{\{i,j\}}=C_i\,\triangle\,C_j$
is a nonzero element of the cycle space. In a general matroid a pair can be dead, as
the uniform matroid $U_{2,4}$ with basis $\{1,2\}$ shows, where the unique union of $\{3,4\}$ is
$\{3,4\}$ itself. The live strata are the ones that must never be fired, as in the coparking condition of Section~\ref{sec:cs}, and
the dead strata are exactly where the recursion of \cite{CycleSystems} stops. At a dead
node every element lies in at least two of the circuits $C_i\setminus U_\sigma$ with $i\in\sigma$.
Definition~\ref{def:gcs} allows this situation only when the rank is zero, and the relaxation
of this paper allows any rank.

\begin{definition}\label{def:deadnode}
For a dead stratum $\sigma$ the \emph{dead node} is the matroid
\[
  K_\sigma=(M|A_\sigma)/U_\sigma
\]
on the ground set $A_\sigma\setminus U_\sigma$, the elements lying in at least two of the
circuits $C_i$, $i\in\sigma$.
\end{definition}

\begin{lemma}\label{lem:rank}
For a dead stratum $\sigma$, the matroid $K_\sigma$ has no coloops, so the degree of
$h(K_\sigma)$ is $r(K_\sigma)$, and
\[
  r(K_\sigma)=r(A_\sigma)-|U_\sigma|=|A_\sigma|-|\sigma|-|U_\sigma| ,
  \qquad\text{so}\qquad |b_\sigma|+r(K_\sigma)=r(A_\sigma).
\]
\end{lemma}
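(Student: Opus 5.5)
The proof splits into the rank formula, which is a direct computation, and the absence of coloops, which is the only substantive point.

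For the rank, I would first record that $U_\sigma\subseteq A_\sigma$ is independent, because $\sigma$ is dead. The rank of a contraction by an independent set then gives
\[
r(K_\sigma)=r_{M|A_\sigma}(A_\sigma)-r(U_\sigma)=r(A_\sigma)-|U_\sigma|.
\]
Substituting $r(A_\sigma)=|A_\sigma|-|\sigma|$ from \eqref{eq:nullity} gives the middle expression. Adding $|b_\sigma|=|U_\sigma|$ from \eqref{eq:bsize} gives $|b_\sigma|+r(K_\sigma)=r(A_\sigma)$.

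For the absence of coloops, fix $e\in A_\sigma\setminus U_\sigma$, so $m_\sigma(e)\ge2$. The plan is to find a circuit of $K_\sigma$ containing $e$. Circuits of a contraction $N/U$ are the minimal nonempty sets of the form $C\setminus U$ with $C$ a circuit of $N$. So it suffices to find a circuit $C$ of $M|A_\sigma$ with $e\in C$, since then $C\setminus U_\sigma$ is a union of circuits of $K_\sigma$ and one of them contains $e$. In fact $e\in C$ is more than needed. It is enough that $e$ lies in the closure of $A_\sigma\setminus\{e\}$ inside $K_\sigma$, which says exactly that $e$ is not a coloop of $K_\sigma$. Equivalently,
\[
r(A_\sigma\setminus e)=r(A_\sigma).
\]
This holds because $e$ is not a coloop of $M|A_\sigma$: it lies in a circuit $C_i\subseteq A_\sigma$. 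Coloops of a contraction by an independent set are coloops of the original restriction, since contracting preserves the rank drop $r(X)-r(X\setminus e)$ for $X\supseteq U\cup e$. So no case distinction between red and tree elements is needed, and in fact every element of $A_\sigma$ lies in some $C_i$.

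The last step, that $h(K_\sigma)$ has degree $r(K_\sigma)$, follows from the Preliminaries fact that the degree equals the rank minus the number of coloops. Contraction may create loops, but loops do not affect this. I expect no real obstacle. The one point to state carefully is the rank-drop argument, namely that
\[
r_{K_\sigma}(X)=r(X\cup U_\sigma)-|U_\sigma|,
\]
so that $e$ being a coloop of $K_\sigma$ would force
\[
r(A_\sigma)-r(A_\sigma\setminus e)=1,
\]
contradicting $e\in C_i\subseteq A_\sigma$. The argument needs $e\notin U_\sigma$, so that $U_\sigma\subseteq A_\sigma\setminus e$; this holds by the choice of $e$.
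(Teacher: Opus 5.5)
Your proof is correct and follows the paper's argument. The rank formula comes from contracting the independent set $U_\sigma$ together with \eqref{eq:nullity} and \eqref{eq:bsize}, and the absence of coloops comes from $r_{K_\sigma}(X)=r(X\cup U_\sigma)-|U_\sigma|$, which makes a coloop of $K_\sigma$ a coloop of $M|A_\sigma$; that is impossible because every element lies in some circuit $C_i\subseteq A_\sigma$ (your initial detour through circuits of the contraction is unnecessary but harmless).
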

\begin{proof}
Since $\sigma$ is dead, $U_\sigma$ is independent, and contracting an independent set lowers
the rank by exactly its size, which gives the first equality. The second equality is
\eqref{eq:nullity} and the last is \eqref{eq:bsize}. For the coloops, note that
$r_{K_\sigma}(X)=r(X\cup U_\sigma)-|U_\sigma|$. So an element $y$ is a coloop of $K_\sigma$ if and
only if it is a coloop of $M|A_\sigma$. But $y$ lies in the circuit $C_i$ of $M|A_\sigma$ for
some $i\in\sigma$, so it is not one. The degree of the $h$-vector of a
matroid is its rank minus the number of coloops (Section~\ref{sec:prelim}).
\end{proof}

\begin{example}[A dead node of rank one]\label{ex:deadnode}
In $G_0$ take $\sigma=\{24,35,45\}$ (Figure~\ref{fig:running}). The coparking baseline is
$b_\sigma=(b_{24},b_{35},b_{45})=(1,2,1)$: for instance $C_{35}\cap U_\sigma=\{35,03\}$. The
ground set of $K_\sigma$ is $A_\sigma\setminus U_\sigma=\{01,02,14,25\}$, the tree edges lying in
at least two of the three circuits. Contracting the path $U_\sigma$ identifies the vertices
$0,2,3,4,5$ into one vertex $w$ (Figure~\ref{fig:deadnode}): the edges $02$ and $25$ become
loops, and $01$ and $14$ become two parallel edges between $w$ and $1$. So $K_\sigma$ has rank
one, $h(K_\sigma)=(1,1)$, and $|b_\sigma|+r(K_\sigma)=4+1=5=r(A_\sigma)$ as
Lemma~\ref{lem:rank} says. The other two dead strata of $G_0$, namely $\{12,35,45\}$ and
the full set $\{12,24,35,45\}$, have dead nodes of rank zero.
\end{example}

\begin{figure}[h]\centering
\begin{tikzpicture}[scale=1.1]
\node[ctr] (o) at (0,2.2) {$0$};
\node[hub] (h1) at (-2,1) {$1$}; \node[hub] (h2) at (0,1) {$2$}; \node[hub] (h3) at (2,1) {$3$};
\node[ch] (c4) at (-1,-0.3) {$4$}; \node[ch] (c5) at (1,-0.3) {$5$};
\draw[uu] (o)--(h3); \draw[uu] (h2)--(c4); \draw[uu] (h3)--(c5); \draw[uu] (c4)--(c5);
\draw[tr] (o)--(h1); \draw[tr] (o)--(h2); \draw[tr] (o)--(h3); \draw[tr] (h1)--(c4); \draw[tr] (h2)--(c5);
\draw[rd] (h2)--(c4); \draw[rd] (h3)--(c5); \draw[rd] (c4)--(c5);
\node[lab] at (0,-1.0) {contract the shaded forest $U_\sigma$, delete the edge $12$ outside $A_\sigma$};
\draw[-{Latex[length=2.5mm]},thick] (2.9,0.8)--(3.7,0.8);
\begin{scope}[xshift=7.2cm]
\node[ctr] (S) at (0,0.8) {$w$};
\node[hub] (h1) at (-2.2,0.8) {$1$};
\draw[tr] (h1) to[bend left=25] node[above,lab] {$01$} (S);
\draw[tr] (h1) to[bend right=25] node[below,lab] {$14$} (S);
\draw[tr] (S) to[out=40,in=100,looseness=8] node[above right,lab] {$02$} (S);
\draw[tr] (S) to[out=-40,in=-100,looseness=8] node[below right,lab] {$25$} (S);
\node[lab] at (-1.1,-1.0) {$K_\sigma$: rank $1$, $h=(1,1)$};
\end{scope}
\end{tikzpicture}
\caption{Example~\ref{ex:deadnode}. The dead node $K_\sigma$ of $\sigma=\{24,35,45\}$ in $G_0$:
one parallel class of size two and two loops.}\label{fig:deadnode}
\end{figure}

\subsection{Fibres and the relaxed coparking functions}

At each dead stratum $\sigma$ we are going to choose a set $F(\sigma)$ of exponent vectors, the
\emph{fibre} at $\sigma$, which describes the vectors that are allowed to sit above the coparking
baseline $b_\sigma$. The definition is recursive along inclusion: when the fibre at
$\sigma$ is chosen, fibres have already been chosen at every dead $\tau\subsetneq\sigma$.

The compatibility condition uses the following notion. Let $\mathcal T$ be a family of strata at whose dead members fibres $F(\tau)$ have been chosen. A vector $a\in\N^R$ is \emph{coparking on $\mathcal T$} if every $\tau\in\mathcal T$ that can be fired from $a$
is dead, with $a|_\tau-b_\tau\in F(\tau)$. When the strata in $\mathcal T$ are live this is the coparking condition of Section~\ref{sec:coparking}, that none of them can be fired from $a$, and in general it is that condition relaxed at the dead strata by the fibres chosen there. For a stratum $\sigma$ write $\mathring\sigma$ for the set of proper strata $\tau\subsetneq\sigma$, the \emph{interior} of $\sigma$.

\begin{definition}\label{def:fibre}
Let $\sigma$ be a dead stratum and suppose fibres $F(\tau)$ have been chosen at all dead strata
$\tau\subsetneq\sigma$. A \emph{fibre} at $\sigma$ is a set $F(\sigma)\subseteq\N^\sigma$ of
\emph{offsets} such that
\begin{enumerate}[label=(\roman*)]
\item $F(\sigma)$ is a pure multicomplex with degree sequence $h(K_\sigma)$, in particular $0\in F(\sigma)$ and every maximal offset has degree $r(K_\sigma)$.
\item (\emph{coparking condition}) for every $m\in F(\sigma)$ the vector $b_\sigma+m$ is coparking on $\mathring\sigma$: every proper stratum $\tau\subsetneq\sigma$ that can be fired from $b_\sigma+m$ is dead, with $(b_\sigma+m)|_\tau-b_\tau\in F(\tau)$.
\end{enumerate}
\end{definition}

The vector $b_\sigma+m$ is supported in $\sigma$, so by Lemma~\ref{lem:mono}(a) only $\sigma$ and its subsets can be fired from it, and $\sigma$ itself is fired with offset $m\in F(\sigma)$. Condition (ii) therefore says that the fibre written down at $\sigma$ is compatible with the fibres already chosen inside $\sigma$, and it refers to nothing else. Whether a set is a fibre at $\sigma$ depends only on the fibres chosen strictly inside $\sigma$, and the fibres may be chosen in any order compatible with inclusion, for instance in order of increasing $|\sigma|$. Figure~\ref{fig:fibre} shows a fibre with $h(K_\sigma)=(1,2,2)$.

\begin{figure}[h]\centering
\begin{tikzpicture}[scale=0.85]
\begin{scope}
\fill[grey!18] (-0.5,-0.5) rectangle (4.5,1.5);
\fill[grey!18] (-0.5,1.5) rectangle (1.5,4.5);
\fill[pattern=north east lines,pattern color=redge!60] (2.5,2.5) rectangle (4.5,4.5);
\draw[redge,thick] (2.5,2.5) -- (4.5,2.5) (2.5,2.5) -- (2.5,4.5);
\foreach \x in {0,...,4} \foreach \y in {0,...,4} \fill[grey!60] (\x,\y) circle (1.2pt);
\draw[-{Stealth}] (-0.5,-0.5) -- (4.9,-0.5) node[right,lab] {$a_1$};
\draw[-{Stealth}] (-0.5,-0.5) -- (-0.5,4.9) node[above,lab] {$a_4$};
\foreach \x in {0,...,4} \node[lab,below] at (\x,-0.5) {$\x$};
\foreach \y in {0,...,4} \node[lab,left] at (-0.5,\y) {$\y$};
\draw[grey,dashed] (2,3) -- (3,2); \draw[grey,dashed] (2,4) -- (4,2);
\fill[black] (2,2) circle (3pt); \node[lab,below left=-1pt] at (2,2) {$b_\sigma$};
\foreach \p/\l/\pos in {(3,2)/$x_1$/below, (4,2)/$x_1^2$/below, (2,3)/$x_4$/left, (2,4)/$x_4^2$/left}
  {\fill[blue!70!black] \p circle (3pt); \node[lab,\pos=2pt] at \p {\l};}
\node[lab,align=left,fill=white,inner sep=2pt] at (3.5,3.5) {$\tau=\{e_1,e_4\}$ can be fired};
\node[lab,grey!50!black,align=center] at (3.0,0.5) {$\sigma$ cannot be fired};
\node[lab,grey!50!black,rotate=90] at (0.5,3.0) {$\sigma$ cannot be fired};
\node[lab] at (2.5,-1.4) {a fibre $F(\sigma)$ above $b_\sigma$ (Section~\ref{sec:petersen})};
\end{scope}
\end{tikzpicture}
\caption{A fibre at the stratum $\sigma=\word{ABCAB}$ of the Petersen root
(Section~\ref{sec:petersen}), with red edges $e_1,\dots,e_4$, coparking baseline
$b_\sigma=(2,1,1,2)$ and $h(K_\sigma)=(1,2,2)$, drawn in the coordinates $a_1,a_4$ of a vector
with $a_2=a_3=1$. From the vectors in the grey region $\sigma$ cannot be fired. The hatched region is excluded by condition (ii), since the live stratum
$\tau=\{e_1,e_4\}$ can be fired from its vectors. The blue points are the offsets from $b_\sigma$ forming the fibre $F(\sigma)=\{1,x_1,x_4,x_1^2,x_4^2\}$, a pure multicomplex with degree sequence $(1,2,2)$.}
\label{fig:fibre}
\end{figure}

A choice of a fibre at every dead stratum is a \emph{fibre system} $F$ on $(M,B_0)$. A dead stratum at which no fibre exists, for the fibres chosen at the dead strata strictly inside it, is called \emph{obstructed}. So $(M,B_0)$ carries a fibre system if and only if the fibres can be chosen along inclusion without meeting an obstructed stratum.

The following lemma gives a sufficient condition for a fibre that does not depend on the fibres
chosen below $\sigma$.

\begin{lemma}[Baseline-only lemma]\label{lem:baseline}
Let $\sigma$ be a dead stratum and let $F(\sigma)\subseteq\N^\sigma$ be a pure multicomplex with
degree sequence $h(K_\sigma)$ such that for every $m\in F(\sigma)$ and every nonempty $\tau\subsetneq\sigma$
that can be fired from $b_\sigma+m$, the stratum $\tau$ is dead and
$(b_\sigma+m)|_\tau=b_\tau$.
Then $F(\sigma)$ is a fibre at $\sigma$ for every fibre system on the strata below $\sigma$.
\end{lemma}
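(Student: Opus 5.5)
We check the two conditions of Definition~\ref{def:fibre} directly. Condition (i) is part of the hypothesis: $F(\sigma)$ is a pure multicomplex with degree sequence $h(K_\sigma)$. So the only thing to prove is the coparking condition (ii). We must show it holds whatever fibres $F(\tau)$ have been chosen at the dead strata $\tau\subsetneq\sigma$, subject only to those $F(\tau)$ being fibres themselves. In particular each such $F(\tau)$ satisfies condition (i) at $\tau$.

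Fix $m\in F(\sigma)$ and a proper stratum $\tau\subsetneq\sigma$ that can be fired from $b_\sigma+m$. We need two facts: that $\tau$ is dead, and that $(b_\sigma+m)|_\tau-b_\tau\in F(\tau)$. The first is given by the hypothesis. By the same hypothesis, $(b_\sigma+m)|_\tau=b_\tau$, so the offset $(b_\sigma+m)|_\tau-b_\tau$ is the zero vector of $\N^\tau$.

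It remains to see that $0\in F(\tau)$ for an arbitrary fibre $F(\tau)$. By Definition~\ref{def:fibre}(i), $F(\tau)$ is a pure multicomplex with degree sequence $h(K_\tau)$. Its degree-zero count is $h_0(K_\tau)=1$, so $F(\tau)$ is nonempty. A nonempty set of monomials closed under division contains $1$, that is, the zero exponent vector. The definition also records $0\in F(\tau)$ explicitly. Hence $b_\sigma+m$ is coparking on $\mathring\sigma$, and condition (ii) holds.

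Nowhere did the argument use the particular fibres below $\sigma$, beyond the fact that each contains $0$. Therefore $F(\sigma)$ is a fibre at $\sigma$ for every fibre system on the strata below $\sigma$. There is no real obstacle here. The only points needing care are that the vector $b_\sigma+m$ is supported in $\sigma$, so no stratum outside $\sigma$ enters, and that $h_0(K_\tau)=1$ forces the zero offset into every fibre.
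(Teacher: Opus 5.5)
Your proof is correct and follows the paper's proof: condition (i) is the hypothesis, and for (ii) every proper stratum that can be fired from $b_\sigma+m$ is dead with offset $0$, which lies in $F(\tau)$ for any fibre chosen there. Your extra justification that $0\in F(\tau)$, via $h_0(K_\tau)=1$ and closure under division, is a harmless elaboration of what Definition~\ref{def:fibre}(i) already states.
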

\begin{proof}
Condition (i) is the hypothesis. For (ii), let $m\in F(\sigma)$ and let $\tau\subsetneq\sigma$ be a proper stratum that can be fired from $b_\sigma+m$. By assumption $\tau$ is dead and the offset $(b_\sigma+m)|_\tau-b_\tau$ is $0$, which lies in $F(\tau)$ whatever fibre was chosen there. So $b_\sigma+m$ is coparking on $\mathring\sigma$.
\end{proof}

We call an $F(\sigma)$ satisfying the hypothesis of Lemma~\ref{lem:baseline} an
\emph{unconditional fibre} at $\sigma$: every proper stratum that can be fired from $b_\sigma+m$ is dead and is fired exactly, with offset $0$.

\begin{lemma}[Rank-zero strata]\label{lem:zerofibre}
Let $\sigma$ be a dead stratum with $r(K_\sigma)=0$. Then $\{0\}$ is an unconditional fibre at
$\sigma$, and it is the only fibre there.
\end{lemma}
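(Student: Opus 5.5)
Uniqueness is immediate, and the work lies in the unconditional part. If $r(K_\sigma)=0$ then $h(K_\sigma)=(1)$ by Lemma~\ref{lem:rank}. Condition (i) of Definition~\ref{def:fibre} then demands a pure multicomplex containing exactly one monomial, of degree $0$, so any fibre must be $\{0\}$. It remains to show that $\{0\}$ satisfies the hypothesis of Lemma~\ref{lem:baseline}. Concretely: for every nonempty proper $\tau\subsetneq\sigma$ that can be fired from $b_\sigma$, the stratum $\tau$ is dead and $b_\sigma|_\tau=b_\tau$.

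Lemma~\ref{lem:mono}(d) gives exactly this. If $\tau\subsetneq\sigma$ can be fired from $b_\sigma$, then $b_\sigma|_\tau=b_\tau$ (equivalently $U_\tau\subseteq U_\sigma$), and since $\sigma$ is dead, $\tau$ is dead as well. So $(b_\sigma+0)|_\tau-b_\tau=0$, and $\{0\}$ is an unconditional fibre by Lemma~\ref{lem:baseline}. Note that the hypothesis $r(K_\sigma)=0$ is not used for the unconditional property itself, only to force the fibre to be $\{0\}$. The offset $0$ is an unconditional element at every dead stratum.

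I expect no real obstacle. The only points to check are that $h(K_\sigma)=(1)$ when the rank is zero, which holds because the $h$-vector has degree $r(K_\sigma)=0$ and $h_0=1$, and that $\{0\}$ is indeed a pure multicomplex: it is closed under division, and its unique maximal element has degree $0=r(K_\sigma)$.
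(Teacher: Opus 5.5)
Your proof is correct and follows the paper's own argument: rank zero forces $h(K_\sigma)=(1)$, so $\{0\}$ is the only candidate, and it satisfies the hypothesis of Lemma~\ref{lem:baseline} because Lemma~\ref{lem:mono}(d) shows that any $\tau$ fired from $b_\sigma$ has $b_\sigma|_\tau=b_\tau$ and is dead. Your remark that the unconditional property does not use the rank hypothesis also matches the paper, which makes the same observation in Section~\ref{sec:truncated}.
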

\begin{proof}
As $r(K_\sigma)=0$ we have $h(K_\sigma)=(1)$, and $\{0\}$ is the only pure multicomplex with
that degree sequence, so it is the only candidate. It satisfies the hypothesis of
Lemma~\ref{lem:baseline}: if $\tau$ can be fired from $b_\sigma+0$ then $b_\sigma|_\tau=b_\tau$ and
$\tau$ is dead by Lemma~\ref{lem:mono}(d).
\end{proof}

On $G_0$ the two dead strata of rank zero receive the unconditional fibre $\{0\}$. The third
dead stratum $\sigma=\{24,35,45\}$ has $h(K_\sigma)=(1,1)$ by Example~\ref{ex:deadnode}, and
$\{0,\mathbf e_{35}\}$ is an unconditional fibre there. Indeed the singletons have baselines
$|C_i|\ge4$ and the pairs inside $\sigma$ have baselines $(2,3)$ on $\{35,45\}$, $(1,2)$ on
$\{24,45\}$ and $(3,3)$ on $\{24,35\}$, so no proper stratum can be fired from
$b_\sigma=(1,2,1)$ or from $b_\sigma+\mathbf e_{35}=(1,3,1)$.

\begin{definition}\label{def:relaxed}
Let $F$ be a fibre system on $(M,B_0)$. The set $\PP^*(F)\subseteq\N^R$ of \emph{relaxed coparking functions} consists of the vectors that are coparking on the set of all strata: the vectors $a$ such that every stratum $\tau$ that can be fired from $a$ is dead, with $a|_\tau-b_\tau\in F(\tau)$.
\end{definition}

On the live strata this is the coparking condition of \cite{CycleSystems}, that none of them can be fired from $a$, and on the dead strata it is the relaxation. The main theorem of the paper is the following.

\begin{theorem}\label{thm:main}
Let $M$ be a matroid with a basis $B_0$ and let $F$ be a fibre system on $(M,B_0)$. Then
$\PP^*(F)$ is a pure multicomplex with degree sequence $h(M)$. Consequently, if $(M,B_0)$
carries a fibre system, Stanley's conjecture holds for $M$.
\end{theorem}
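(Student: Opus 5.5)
The plan follows the three-part outline given in the introduction.

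\emph{Step 1: decomposition.} For $a\in\PP^*(F)$ let $\mathrm{Dom}(a)$ be the set of strata that can be fired from $a$. I will first show that $\mathrm{Dom}(a)$, if nonempty, has a largest element $\sigma=\max\mathrm{Dom}(a)$. The natural route is to show that the union of two firable strata is firable, using the decomposition $U_\tau=(U_\sigma\cap A_\tau)\sqcup\Delta_\tau$ of Lemma~\ref{lem:mono}(b). This closure property is where I expect to need care. Given it, write $a=(b_\sigma+m)+a'$ with $m\in F(\sigma)$ and $a'\in\N^{R\setminus\sigma}$. Then $a\mapsto(m,a')$ should be a bijection
\[
  \PP^*(F)\;\cong\;\{0\}\times\Ext(\emptyset)\ \sqcup\ \bigsqcup_{\sigma\text{ dead}}\bigl(b_\sigma+F(\sigma)\bigr)\times\Ext(\sigma),
\]
where $\Ext(\sigma)$ is the set of $a'$ such that no stratum meeting $R\setminus\sigma$ can be fired from $b_\sigma+m+a'$. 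The key claim (Lemma~\ref{lem:D}) is that $\Ext(\sigma)$ does not depend on $m\in F(\sigma)$. Strata inside $\sigma$ are already controlled by the coparking condition (ii) of Definition~\ref{def:fibre}. A stratum $\tau\not\subseteq\sigma$ that can be fired would make $\sigma\cup\tau$ firable, contradicting maximality. Its firing should therefore be detected on the coordinates outside $\sigma$ alone, via the contracted circuits $C_j\setminus A_\sigma$ (Lemma~\ref{lem:extcontract}).

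\emph{Step 2: extension sets are coparking functions of a contraction, and these are pure.} I will identify $\Ext(\sigma)$ with the ordinary coparking functions $\PP^*$ of the family $\{C_j\setminus A_\sigma: j\notin\sigma\}$, which are the fundamental circuits of the basis $B_0\setminus A_\sigma$ in the contraction $M/A_\sigma$. Then I will prove the burning theorem (Theorem~\ref{thm:burning}): for any matroid with a fixed basis, the coparking functions form a multicomplex all of whose maximal elements have degree $r(M)-|B_0|+\dots$, equal to the degree of the truncated $h$-polynomial. The Dhar-type argument goes as follows. Starting from $a$, repeatedly ``burn'' a red $i$ whose coordinate is below the number of elements of $C_i$ not yet burnt. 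Then $a$ is coparking iff everything burns, and a non-maximal $a$ can be raised at the coordinate that burns with slack. This yields purity: every maximal element is reached with zero slack, so its degree equals the fixed total $\sum_i(\text{unburnt count})$, which is independent of the order.

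\emph{Step 3: counting.} Theorem~\ref{thm:B} asserts
\[
  h(M;t)=E_\emptyset(t)+\sum_{\sigma\text{ dead}}t^{|b_\sigma|}h(K_\sigma;t)\,E_\sigma(t).
\]
I will prove it by running the deletion--contraction \eqref{eq:tutte} on admissible elements, as in \cite{CycleSystems}, and letting the recursion stop exactly at dead nodes. Contracting $U_\sigma$ gives the factor $t^{|b_\sigma|}$ by \eqref{eq:bsize}. The remaining matroid splits as $K_\sigma$ together with the contraction carrying the outside circuits, and its $h$-polynomial factors as the product; here Lemma~\ref{lem:rank} handles the coloops. Combining this with Step 1 and condition (i) gives the degree sequence.

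For purity, a maximal $a$ lies in one piece and is maximal in both factors. The factors are pure of degrees $r(K_\sigma)$ and $\deg E_\sigma$ respectively, so $|a|=r(A_\sigma)+\deg E_\sigma=r(M)-\#\text{coloops}$, which is the same for every piece. One must also check that a maximal element of a piece cannot be enlarged into another piece; this uses the maximality of $\sigma$. Closure under division follows directly from Definition~\ref{def:relaxed}, by the monotonicity of firing together with the fact that fibres are multicomplexes. I expect the main obstacle to be the independence of $\Ext(\sigma)$ from $m$ in Step 1, together with the existence of $\max\mathrm{Dom}(a)$.
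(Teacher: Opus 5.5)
Your Steps 1 and 2 and the final purity count follow the paper (Lemma~\ref{lem:D}, Lemma~\ref{lem:extcontract}, Theorem~\ref{thm:burning}, Lemma~\ref{lem:rank}). Two small corrections there. First, the burning degree should read $|A_R|-|R|=r(A_R)$. It is independent of the order because, for every burning order $s_1,\dots,s_g$, the sets $C_{s_m}\setminus A_{\{s_{m+1},\dots,s_g\}}$ partition $A_R$. Second, the obstacle you flag in Step 1 follows in two lines from $b_{\tau_1\cup\tau_2}|_{\tau_1}\le b_{\tau_1}$.

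The genuine gap is Step 3, the stratum identity, which carries the whole degree-sequence claim. The mechanism you describe does not exist. There is no node of the recursion at which the matroid splits into $K_\sigma$ and a contraction carrying the outside circuits whose $h$-polynomials multiply to $h(K_\sigma)E_\sigma(t)$. In fact $E_\sigma(t)$ is not an $h$-polynomial at all. It is the generating function of the ordinary coparking functions of $(M/A_\sigma,B_0\setminus A_\sigma)$, and it is strictly smaller than $h(M/A_\sigma)$ as soon as that pair has a dead stratum. For $U_{2,4}\oplus U_{2,4}$ with $\sigma$ the red pair of the first summand, $E_\sigma(t)=1+2t+2t^2$ while $h(M/A_\sigma)=1+2t+3t^2$.

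Nor does the recursion halt at a dead node. It branches on elements, not strata. The contractions of $U_\sigma$ are interleaved with expansions of elements of the circuits $C_j$, $j\notin\sigma$, and these must all eventually be deleted for $\sigma$ to be the surviving set. So the $\sigma$-term is spread over many leaves. Along the way, contracting an admissible $e\in C_j$ with $j\notin\sigma$ can change the minor on $A_\sigma$, since $(M/e)|A_\sigma\neq M|A_\sigma$ when $e$ lies in a circuit inside $A_\sigma\cup e$. An admissible element can also be a loop of the current minor.

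The paper closes this gap with a term-by-term induction. It attaches $B(M,\CC)=\sum_\tau t^{|b_\tau|}h(K_\tau)E_\tau(t)$ to every labelled minor, with deadness always measured in the root $M_0$. It then shows that each summand separately obeys $h(M)=h(M\setminus e)+t\,h(M/e)$, or $h(M)=h(M\setminus e)$ when $e$ is a loop. This rests on the invariants of Lemma~\ref{lem:invariants}: the contracted set is independent and each of its elements is owned at most once by the surviving coordinates; the red edges of a circuit avoiding the deleted set survive; an admissible element is not a coloop. It also rests on the two vanishing statements of Lemma~\ref{lem:E}.
\begin{enumerate}
\item If $e\in C_i$ with $i\notin\tau$ lies in a circuit inside $A_\tau\cup e$, then every $c\in\Ext(\tau)$ has $c_i=0$. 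So the contraction branch contributes nothing to the $\tau$-term, precisely in the case where $K_\tau(M/e)$ can differ from $K_\tau(M)$.
\item If $i\in\tau$ and $e$ is a loop, then $\Ext(\tau)=\emptyset$. This uses that $\tau$ is dead in $M_0$.
\end{enumerate}
The leaves need their own argument as well. With no admissible element, the surviving set $\sigma$ has $U_\sigma(M_0)$ inside the contracted set, so it is dead. Every term with $\tau\subsetneq\sigma$ vanishes, because $b_\sigma=0$ there forces $\Ext(\tau)=\emptyset$. Without this induction, or a substitute for it, your Step 3, and with it the degree-sequence part of the theorem, remains unproved.
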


The ingredients of the proof occupy the rest of this section: closure under division is Lemma~\ref{lem:C}, the degree sequence is Theorem~\ref{thm:B} together with Lemma~\ref{lem:D}, and purity is Corollary~\ref{cor:extpure}.

\subsection{Closure and the decomposition}

\begin{lemma}[Closure]\label{lem:C}
For every family $\mathcal T$ of strata, the vectors coparking on $\mathcal T$ are closed under division. In particular $\PP^*(F)$ is closed under division, and a multicomplex $F(\sigma)\subseteq\N^\sigma$ satisfies condition (ii) of Definition~\ref{def:fibre} if and only if $b_\sigma+m$ is coparking on $\mathring\sigma$ for every maximal $m\in F(\sigma)$.
\end{lemma}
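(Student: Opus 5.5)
The plan is to prove closure under division directly from the definitions, and then to read off the two stated consequences. Let $a$ be coparking on $\mathcal T$ and let $a'\le a$ coordinatewise. We must show that $a'$ is coparking on $\mathcal T$. Take $\tau\in\mathcal T$ that can be fired from $a'$, so that $a'|_\tau\ge b_\tau$. Since $a|_\tau\ge a'|_\tau\ge b_\tau$, the stratum $\tau$ can also be fired from $a$. The hypothesis on $a$ then says that $\tau$ is dead and that $a|_\tau-b_\tau\in F(\tau)$. We also have
\[
0\le a'|_\tau-b_\tau\le a|_\tau-b_\tau ,
\]
and $F(\tau)$ is a multicomplex by Definition~\ref{def:fibre}(i), hence closed under division. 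So $a'|_\tau-b_\tau\in F(\tau)$, and $a'$ is coparking on $\mathcal T$. The key point is that the lower bound $a'|_\tau-b_\tau\ge0$ comes from the firing condition itself, so the offset really is a monomial below one already in the fibre. For $\tau$ that cannot be fired from $a'$ there is nothing to check.

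Applying this with $\mathcal T$ the family of all strata gives at once that $\PP^*(F)$ is closed under division, by Definition~\ref{def:relaxed}.

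For the equivalence about condition (ii), the forward direction is trivial, since maximal elements are in particular elements of $F(\sigma)$. For the converse, take any $m\in F(\sigma)$. Since $F(\sigma)$ is finite, $m$ divides some maximal $m^\ast\in F(\sigma)$. Then $b_\sigma+m\le b_\sigma+m^\ast$, and by hypothesis $b_\sigma+m^\ast$ is coparking on $\mathring\sigma$. The first part with $\mathcal T=\mathring\sigma$ then shows that $b_\sigma+m$ is coparking on $\mathring\sigma$.

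The only subtle step is making sure the fibres invoked in the first part are multicomplexes. For $\mathcal T=\mathring\sigma$ the fibres $F(\tau)$ with $\tau\subsetneq\sigma$ were chosen earlier and satisfy (i), so this holds. For $\mathcal T$ arbitrary, the implicit assumption (as in the definition of ``coparking on $\mathcal T$'') is that the chosen fibres are multicomplexes, and I would state this explicitly. No obstacle beyond that is expected.
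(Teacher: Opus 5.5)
Your proof is correct and follows the paper's argument essentially verbatim: any stratum that can be fired from $a'$ can be fired from $a$, the offset $a'|_\tau-b_\tau$ lies between $0$ and $a|_\tau-b_\tau$, closure of $F(\tau)$ under division finishes it, and the equivalence follows by passing to a maximal $m'\ge m$. Your remark that only closure under division of the $F(\tau)$ is used, and not purity or the degree sequence, is what the paper relies on when it applies this lemma to the trivial system $F_0$ in Section~\ref{sec:truncated}.
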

\begin{proof}
Let $a$ be coparking on $\mathcal T$ and $a'\le a$. A stratum $\tau\in\mathcal T$ that can be fired from $a'$ can be fired from $a$, so it is dead with $a|_\tau-b_\tau\in F(\tau)$, and $0\le a'|_\tau-b_\tau\le a|_\tau-b_\tau$ lies in $F(\tau)$ because $F(\tau)$ is closed under division. Hence $a'$ is coparking on $\mathcal T$. The case in which $\mathcal T$ is the family of all strata says that $\PP^*(F)$ is closed under division. For the last claim, every $m\in F(\sigma)$ lies below a maximal $m'$, and $b_\sigma+m\le b_\sigma+m'$.
\end{proof}

The next lemma shows that membership in $\PP^*(F)$ is decided by a single stratum, the largest one
that can be fired from $a$.

\begin{lemma}\label{lem:D}
Let $a\in\N^R$ and let $\mathrm{Dom}(a)$ be the set of strata $\tau$ that can be fired from $a$.
\begin{enumerate}[label=(\alph*)]
\item $\mathrm{Dom}(a)$ is closed under union, so when it is nonempty it has a largest member
      $\max\mathrm{Dom}(a)$, the union of all of its members.
\item $a\in\PP^*(F)$ if and only if $\mathrm{Dom}(a)$ is empty, or $\mu=\max\mathrm{Dom}(a)$ is dead and
      $a|_{\mu}-b_{\mu}\in F(\mu)$.
\item For $\tau\in\mathrm{Dom}(a)$ and a nonempty $\rho\subseteq R\setminus\tau$, the stratum
      $\tau\cup\rho$ can be fired from $a$ if and only if $a|_\rho\ge b_{\tau\cup\rho}|_\rho$.
\end{enumerate}
\end{lemma}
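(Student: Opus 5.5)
Part (a) is the heart of the lemma, and I would prove it by comparing baselines through Lemma~\ref{lem:mono}(c) applied with the larger stratum $\tau\cup\tau'$.
Let $\tau,\tau'\in\mathrm{Dom}(a)$ and put $\sigma=\tau\cup\tau'$. For $i\in\tau$, Lemma~\ref{lem:mono}(c), applied to $\tau\subseteq\sigma$, gives $b_\sigma(i)\le b_\tau(i)\le a_i$. The same holds for $i\in\tau'$. Hence $a|_\sigma\ge b_\sigma$, so $\sigma\in\mathrm{Dom}(a)$. Since $\mathrm{Dom}(a)$ is a finite nonempty family closed under union, the union of all its members belongs to it and is its largest element $\max\mathrm{Dom}(a)$.

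For (b), the forward direction is immediate. If $a\in\PP^*(F)$ and $\mathrm{Dom}(a)\neq\emptyset$, then $\mu=\max\mathrm{Dom}(a)$ is a stratum that can be fired from $a$, so by Definition~\ref{def:relaxed} it is dead with $a|_\mu-b_\mu\in F(\mu)$.

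For the converse, first suppose $\mathrm{Dom}(a)$ is empty; then the condition of Definition~\ref{def:relaxed} holds vacuously. Otherwise, assume $\mu$ is dead with $m:=a|_\mu-b_\mu\in F(\mu)$, and let $\tau$ be any stratum that can be fired from $a$. Then $\tau\subseteq\mu$. If $\tau=\mu$ there is nothing to prove. If $\tau\subsetneq\mu$, I would invoke condition (ii) of Definition~\ref{def:fibre} at $\mu$. The vector $a|_\mu=b_\mu+m$ agrees with $a$ on $\tau$, so $\tau$ can be fired from $b_\mu+m$. Condition (ii) then says $\tau$ is dead and $(b_\mu+m)|_\tau-b_\tau=a|_\tau-b_\tau\in F(\tau)$, which is exactly the requirement at $\tau$. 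The one point needing care is that condition (ii) is stated for the vector $b_\mu+m$ supported on $\mu$ rather than for $a$ itself. Firability of $\tau\subseteq\mu$ depends only on coordinates in $\tau$, so the two vectors behave the same.

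For (c), write $\sigma=\tau\cup\rho$, a disjoint union. The stratum $\sigma$ can be fired from $a$ if and only if $a|_\tau\ge b_\sigma|_\tau$ and $a|_\rho\ge b_\sigma|_\rho$. The first inequality holds automatically: by Lemma~\ref{lem:mono}(c), $b_\sigma|_\tau\le b_\tau\le a|_\tau$ because $\tau\in\mathrm{Dom}(a)$. So only the condition on $\rho$ remains, which is the claim.

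I expect no real obstacle. The only substantive inputs are the monotonicity $b_\sigma|_\tau\le b_\tau$ for $\tau\subseteq\sigma$ and condition (ii) of the fibre, and the one subtle spot is the support restriction in (b) noted above.
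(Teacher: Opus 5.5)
Your proposal is correct and follows the paper's proof essentially step for step. Parts (a) and (c) both rest on the monotonicity $b_{\tau\cup\tau'}|_\tau\le b_\tau$ from Lemma~\ref{lem:mono}(c), and the converse of (b) uses $\tau\subseteq\mu$ together with condition (ii) of Definition~\ref{def:fibre} at $\mu$, applied to $b_\mu+m$ via $a|_\tau=(b_\mu+m)|_\tau$, exactly as you do.
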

\begin{proof}
For (a), let $\tau_1,\tau_2\in\mathrm{Dom}(a)$. For $i\in\tau_1$,
Lemma~\ref{lem:mono} applied to $\tau_1\subseteq\tau_1\cup\tau_2$ gives
$b_{\tau_1\cup\tau_2}(i)\le b_{\tau_1}(i)\le a_i$, and similarly for $i\in\tau_2$. So $\tau_1\cup\tau_2$ can be fired from $a$.

For (c), if $\tau\in\mathrm{Dom}(a)$ then
$b_{\tau\cup\rho}|_\tau\le b_\tau\le a|_\tau$ by Lemma~\ref{lem:mono}, so whether $\tau\cup\rho$ can be fired from $a$ is decided on $\rho$ alone.

For (b), if $\mathrm{Dom}(a)$ is empty then $a\in\PP^*(F)$ vacuously, so assume it is not and put $\mu=\max\mathrm{Dom}(a)$. If $a\in\PP^*(F)$ then Definition~\ref{def:relaxed} applied to $\mu$ says that $\mu$ is dead with $a|_\mu-b_\mu\in F(\mu)$. Conversely, put $m=a|_\mu-b_\mu\in F(\mu)$. For any $\tau\in\mathrm{Dom}(a)$ we have $\tau\subseteq\mu$ and $a|_\tau=(b_\mu+m)|_\tau$. If $\tau=\mu$ then $\tau$ is dead with offset $m\in F(\mu)$. Otherwise $\tau$ is a proper stratum of $\mu$ that can be fired from $b_\mu+m$, so condition (ii) at $\mu$ gives that $\tau$ is dead with $a|_\tau-b_\tau\in F(\tau)$. As this holds for every $\tau\in\mathrm{Dom}(a)$, $a\in\PP^*(F)$.
\end{proof}

\begin{example}\label{ex:dom}
Let $F$ be any fibre system on $(M(G_0),B_0)$, write vectors as $(a_{12},a_{24},a_{35},a_{45})$ and let $\sigma=\{24,35,45\}$, so that $b_\sigma=(1,2,1)$ and $b_R=(1,1,2,1)$. From $a=(1,1,2,1)$ both $\sigma$ and $R$ can be fired, so $\mathrm{Dom}(a)=\{\sigma,R\}$ and $\max\mathrm{Dom}(a)=R$, and $a\in\PP^*(F)$ as $a-b_R=0\in F(R)$. For $a'=(2,1,2,1)$ the strata that can be fired are the same, but $a'-b_R=\mathbf e_{12}\notin F(R)=\{0\}$, so $a'\notin\PP^*(F)$ although its offset at $\sigma$ is still $0\in F(\sigma)$. Part (c) with $\tau=\sigma$, $\rho=\{12\}$ says that when $\sigma$ can be fired from a vector, $R$ can be fired from it if and only if $a_{12}\ge b_R(12)=1$, which is why $\max\mathrm{Dom}$ is $\sigma$ for $(0,1,2,1)$ and $R$ for $(1,1,2,1)$.
\end{example}

\begin{definition}\label{def:ext}
For $\tau\subseteq R$ (possibly empty) the \emph{extension set} $\Ext(\tau)$ is the set of
$c\in\N^{R\setminus\tau}$ such that no stratum strictly containing $\tau$ can be fired from the
vector $(b_\tau,c)$, which agrees with $b_\tau$ on $\tau$ and with $c$ on $R\setminus\tau$. Its
generating function is $E_\tau(t)=\sum_{c\in\Ext(\tau)}t^{|c|}$.
\end{definition}

So $\Ext(\tau)$ is the set of coparking functions relative to $\tau$: sit at the baseline of
$\tau$ and ask which values on the remaining coordinates let no larger stratum be fired. By
Lemma~\ref{lem:D}(c), or by Definition~\ref{def:strata} when $\tau=\emptyset$, the condition
reads
\[
  \Ext(\tau)=\bigl\{\,c\in\N^{R\setminus\tau}:\ \text{no nonempty }\rho\subseteq R\setminus\tau
  \text{ has } c|_\rho\ge b_{\tau\cup\rho}|_\rho\,\bigr\},
\]
so it depends only on the coparking baselines of the supersets of $\tau$, not on any fibre.
Lemma~\ref{lem:D} gives
\begin{equation}\label{eq:decomp}
  \PP^*(F)=\bigsqcup_{\tau}\ \bigl(b_\tau+F(\tau)\bigr)\times\Ext(\tau),
\end{equation}
the disjoint union over $\tau=\emptyset$ and all dead strata $\tau$, where $a\in\PP^*(F)$ is
sent to the pair $(a|_\tau,a|_{R\setminus\tau})$ with $\tau=\max\mathrm{Dom}(a)$, and $\tau=\emptyset$ when $\mathrm{Dom}(a)$ is empty. The map is onto: for $m\in F(\tau)$ and $c\in\Ext(\tau)$ put $a=(b_\tau+m,c)$. If
$\tau=\emptyset$ then $a=c$, and $c\in\Ext(\emptyset)$ says that no stratum can be fired from $a$,
so $\mathrm{Dom}(a)$ is empty and $a\in\PP^*(F)$. Otherwise $\tau\in\mathrm{Dom}(a)$, and for
$\tau'\in\mathrm{Dom}(a)$ we put $\rho=\tau'\setminus\tau$. By Lemma~\ref{lem:D}(a) the union
$\tau\cup\rho=\tau\cup\tau'$ lies in $\mathrm{Dom}(a)$, so if $\rho$ were nonempty,
Lemma~\ref{lem:D}(c) would give $c|_\rho\ge b_{\tau\cup\rho}|_\rho$, which $c\in\Ext(\tau)$
forbids. Hence every member of $\mathrm{Dom}(a)$ lies inside $\tau$, so $\max\mathrm{Dom}(a)=\tau$
and $a\in\PP^*(F)$ by Lemma~\ref{lem:D}(b). In both cases $a$ is sent to $(b_\tau+m,c)$. Consequently
\begin{equation}\label{eq:degvec}
  \sum_{a\in\PP^*(F)}t^{|a|}=\sum_{\tau}t^{|b_\tau|}\,h(K_\tau)\,E_\tau(t),
\end{equation}
where $h(K_\emptyset)=1$ and $b_\emptyset=0$.

\begin{example}[An extension set]\label{ex:ext}
On $G_0$ the three dead strata all have $\Ext(\tau)=\{0\}$, because every red edge outside $\tau$ has coparking baseline $1$ in the full stratum, so $G_0$ itself shows nothing here and we enlarge it for this example. Add the red edge $04$ to $G_0$, keeping the
tree, so that $C_{04}=\{04,01,14\}$. The stratum $\tau=\{04,12,24\}$ has $U_\tau=\{04,12,24\}$, the path $1\,2\,4\,0$, so it is dead (Figure~\ref{fig:ext}), and $R\setminus\tau=\{35,45\}$. The three supersets give the conditions: $b_{\tau\cup\{35\}}(35)=3$ (as $C_{35}\cap U_{\tau\cup\{35\}}=\{35,25,03\}$),
$b_{\tau\cup\{45\}}(45)=2$ (as $C_{45}\cap U_{\tau\cup\{45\}}=\{45,25\}$), and
$b_{\tau\cup\{35,45\}}|_{\{35,45\}}=(2,1)$ (the edge $25$ is now shared by $C_{35}$ and $C_{45}$). Hence
\begin{align*}
  \Ext(\tau)&=\{(c_{35},c_{45}):\ c_{35}\not\ge 3,\ c_{45}\not\ge 2,\ (c_{35},c_{45})\not\ge(2,1)\}\\
  &=\{(0,0),(0,1),(1,0),(1,1),(2,0)\},
\end{align*}
and $E_\tau(t)=1+2t+2t^2$.
\end{example}

\begin{figure}[h]\centering
\begin{tikzpicture}[scale=0.85]
\begin{scope}[yshift=0.2cm]
\node[ctr] (o) at (0,2.2) {$0$};
\node[hub] (h1) at (-2,1) {$1$}; \node[hub] (h2) at (0,1) {$2$}; \node[hub] (h3) at (2,1) {$3$};
\node[ch] (c4) at (-1,-0.3) {$4$}; \node[ch] (c5) at (1,-0.3) {$5$};
\draw[uu] (h1)--(h2); \draw[uu] (h2)--(c4); \draw[uu] (o) to[bend right=40] (c4);
\draw[tr] (o)--(h1); \draw[tr] (o)--(h2); \draw[tr] (o)--(h3); \draw[tr] (h1)--(c4); \draw[tr] (h2)--(c5);
\draw[rd] (h1)--(h2); \draw[rd] (h2)--(c4); \draw[rd] (h3)--(c5); \draw[rd] (c4)--(c5); \draw[rd] (o) to[bend right=40] (c4);
\node[rlab] at (-1,1.2) {$12$}; \node[rlab] at (-0.75,0.45) {$24$}; \node[rlab] at (1.75,0.35) {$35$}; \node[rlab] at (0,-0.55) {$45$}; \node[rlab] at (-1.55,1.85) {$04$};
\node[lab] at (0,-1.3) {$\tau=\{04,12,24\}$, $U_\tau=\tau$ shaded};
\end{scope}
\begin{scope}[xshift=7.2cm,yshift=-0.9cm]
\fill[pattern=north east lines,pattern color=redge!60] (2.5,-0.5) rectangle (4.5,3.5);   
\fill[pattern=north east lines,pattern color=redge!60] (-0.5,1.5) rectangle (4.5,3.5);   
\fill[pattern=north east lines,pattern color=redge!60] (1.5,0.5) rectangle (4.5,3.5);    
\draw[redge,thick] (2.5,-0.5) -- (2.5,0.5) -- (1.5,0.5) -- (1.5,1.5) -- (-0.5,1.5);
\foreach \x in {0,...,4} \foreach \y in {0,...,3} \fill[grey!60] (\x,\y) circle (1.2pt);
\draw[-{Stealth}] (-0.5,-0.5) -- (4.9,-0.5) node[right,lab] {$c_{35}$};
\draw[-{Stealth}] (-0.5,-0.5) -- (-0.5,3.9) node[above,lab] {$c_{45}$};
\foreach \x in {0,...,4} \node[lab,below] at (\x,-0.5) {$\x$};
\foreach \y in {0,...,3} \node[lab,left] at (-0.5,\y) {$\y$};
\foreach \p in {(0,0),(0,1),(1,0),(1,1),(2,0)} \fill[blue!70!black] \p circle (3pt);
\node[lab,fill=white,inner sep=2pt] at (3.5,2.6) {$\{35\}$, $\{45\}$ or $\{35,45\}$ fires};
\node[lab] at (2.0,-1.4) {the extension set $\Ext(\tau)$ (Example~\ref{ex:ext})};
\end{scope}
\end{tikzpicture}
\caption{Example~\ref{ex:ext}. Left: $G_0$ with the red edge $04$ added and the dead stratum
$\tau=\{04,12,24\}$ shaded. Right: the extension set
$\Ext(\tau)$ in the coordinates $(c_{35},c_{45})$, with the excluded region hatched.}
\label{fig:ext}
\end{figure}

\subsection{Extension sets are coparking sets}\label{sec:extpure}

By definition,
$\Ext(\emptyset)=\PP^*(\CC)$ is the set of coparking functions of $\CC$ in the sense of
Section~\ref{sec:cs}, whether or not $\CC$ is a cycle system. We call a set of coparking functions of this kind a \emph{coparking set}, and the first lemma says that every
extension set is one.

\begin{lemma}[Extension sets by contraction]\label{lem:extcontract}
Let $\tau\subseteq R$ and put $M'=M/A_\tau$, $B_0'=B_0\setminus A_\tau$. Then $B_0'$ is a basis
of $M'$ with red set $R\setminus\tau$, its fundamental circuits are $\CC'=(C_j\setminus A_\tau)_{j\in R\setminus\tau}$, and
\[
  \Ext_{(M,B_0)}(\tau)=\PP^*(\CC').
\]
\end{lemma}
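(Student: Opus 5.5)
The plan has two parts. First I identify the contracted pair $(M',B_0')$ concretely. Then I check that the coparking thresholds of $\CC'$ coincide, coordinate by coordinate, with the thresholds appearing in the description of $\Ext(\tau)$ given after Definition~\ref{def:ext}. If $\tau=\emptyset$ there is nothing to do, so assume $\tau\neq\emptyset$.

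\emph{Step 1: the structure of $M'$.}

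Put $X=B_0\cap A_\tau=A_\tau\setminus\tau$. By \eqref{eq:nullity} and the argument following it, $X$ is independent and spans $A_\tau$. Contracting $A_\tau$ is therefore the same as contracting $X$ and then deleting the elements of $\tau$, which are loops of $M/X$. So
\[
M'=(M/X)\setminus\tau
\]
on the ground set $E\setminus A_\tau$.

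Since $X\subseteq B_0$ is independent, $B_0\setminus X=B_0'$ is a basis of $M/X$, and it is still a basis after deleting the loops $\tau$. By (F1) a red element $j\notin\tau$ lies in no $C_i$ with $i\in\tau$. Hence $R\cap A_\tau=\tau$, and the red set of $(M',B_0')$ is $R\setminus\tau$.

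For the fundamental circuits, fix $j\in R\setminus\tau$ and let $C'$ be the unique circuit of $M/X$ inside $B_0'\cup j$.
\begin{itemize}
\item The set $C_j\setminus X$ is dependent in $M/X$, since $(C_j\setminus X)\cup X\supseteq C_j$. It lies inside $B_0'\cup j$, so it contains $C'$.
\item Conversely, $C'\cup X$ is dependent in $M$ and lies inside $B_0\cup j$. Its only circuit is therefore $C_j$, giving $C_j\subseteq C'\cup X$, that is, $C_j\setminus X\subseteq C'$.
\end{itemize}
Thus $C'=C_j\setminus X$. This equals $C_j\setminus A_\tau$, because the only red element of $C_j$ is $j\notin A_\tau$.

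The one point needing care in this step is the passage from $M/A_\tau$ to $(M/X)\setminus\tau$, that is, that the red elements of $\tau$ become loops and disappear harmlessly. I expect this to be the main, though mild, obstacle.

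\emph{Step 2: comparing thresholds.}

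Write $C'_j=C_j\setminus A_\tau$, and for nonempty $\rho\subseteq R\setminus\tau$ let $U'_\rho=\ast\{C'_j:j\in\rho\}$ and $b'_\rho(j)=|C'_j\cap U'_\rho|$. By definition, $\PP^*(\CC')$ is the set of $c\in\N^{R\setminus\tau}$ such that no nonempty $\rho$ has $c|_\rho\ge b'_\rho$. Comparing with the displayed form of $\Ext(\tau)$, which came from Lemma~\ref{lem:D}(c), it suffices to show
\[
b_{\tau\cup\rho}(j)=b'_\rho(j)\qquad\text{for all } j\in\rho .
\]

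Take $e\in C_j$ with $j\in\rho$. Then:
\begin{itemize}
\item $e\in U_{\tau\cup\rho}$ if and only if $j$ is the only owner of $e$ in $\tau\cup\rho$;
\item equivalently, $e\notin A_\tau$ and $m_\rho(e)=1$;
\item equivalently, $e\in C'_j$ and $e$ lies in exactly one $C'_k$ with $k\in\rho$, since for $e\notin A_\tau$ membership in $C_k$ and in $C'_k$ agree;
\item equivalently, $e\in C'_j\cap U'_\rho$.
\end{itemize}
Hence $C_j\cap U_{\tau\cup\rho}=C'_j\cap U'_\rho$, and the thresholds agree. This gives $\Ext_{(M,B_0)}(\tau)=\PP^*(\CC')$.
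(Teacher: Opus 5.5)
Your proof is correct and follows essentially the paper's argument. You use the same containment $C_j\subseteq C'\cup(B_0\cap A_\tau)$ to identify the fundamental circuit of $j$ in $M/A_\tau$ as $C_j\setminus A_\tau$, and the same elementwise identity $C_j\cap U_{\tau\cup\rho}=(C_j\setminus A_\tau)\cap\ast\{C_k\setminus A_\tau:k\in\rho\}$ to match the thresholds in the displayed form of $\Ext(\tau)$. The only differences are cosmetic. You factor $M/A_\tau=(M/X)\setminus\tau$ and get the other containment from the dependence of $C_j\setminus X$ in $M/X$. The paper instead cites Oxley for the basis and uses the rank computation $r_{M'}(C_j\setminus A_\tau)=|C_j\setminus A_\tau|-1$.
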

\begin{proof}
By \eqref{eq:nullity}, $B_0\cap A_\tau$ is a basis of
$M|A_\tau$, so $B_0'=B_0\setminus A_\tau$ is a basis of $M'$ \cite[Prop.~3.1.7]{Oxley}, with red
set $R\setminus\tau$. For $j\in R\setminus\tau$, \eqref{eq:nullity} applied to $\tau$ and to
$\tau\cup\{j\}$ gives $r_{M'}(C_j\setminus A_\tau)=r(A_{\tau\cup\{j\}})-r(A_\tau)=|C_j\setminus A_\tau|-1$,
so $C_j\setminus A_\tau$ contains a unique circuit $C'$ of $M'$, which is the fundamental circuit
of $j$. As $C'\cup(B_0\cap A_\tau)$ is dependent in $M$ and lies in $B_0\cup\{j\}$, it contains
$C_j$, so $C'=C_j\setminus A_\tau$.

Finally let $j\in\rho\subseteq R\setminus\tau$. The set $C_j\cap U_{\tau\cup\rho}$ consists of the elements of $C_j$ that lie in no $C_i$ with $i\in\tau$ and in exactly one $C_{j'}$ with $j'\in\rho$. That is,
$$C_j\cap U_{\tau\cup\rho}=(C_j\setminus A_\tau)\cap\ast\{C_{j'}\setminus A_\tau:j'\in\rho\}.$$
So the coparking baselines of $(M',B_0')$ are the restrictions $b_{\tau\cup\rho}|_\rho$, and the two sets coincide.
\end{proof}

So an extension set is the set of coparking functions of a smaller matroid with a fixed basis, and it remains to establish purity and the maximal degree for such sets.

\begin{figure}[h]\centering
\begin{tikzpicture}
\begin{scope}
\draw[line width=5pt,grey!25,line cap=round] (0,2.2)--(0,1);
\draw[line width=5pt,grey!25,line cap=round] (0,2.2)--(2,1);
\draw[line width=5pt,grey!25,line cap=round] (0,1)--(1,-0.3);
\draw[line width=5pt,grey!25,line cap=round] (2,1)--(1,-0.3);
\node[ctr] (o) at (0,2.2) {$0$};
\node[hub] (h1) at (-2,1) {$1$}; \node[hub] (h2) at (0,1) {$2$}; \node[hub] (h3) at (2,1) {$3$};
\node[ch] (c4) at (-1,-0.3) {$4$}; \node[ch] (c5) at (1,-0.3) {$5$};
\draw[tr] (o)--(h1); \draw[tr] (o)--(h2); \draw[tr] (o)--(h3); \draw[tr] (h1)--(c4); \draw[tr] (h2)--(c5);
\draw[rd] (h1)--(h2); \draw[rd] (h2)--(c4); \draw[rd] (h3)--(c5); \draw[rd] (c4)--(c5);
\node[rlab] at (-1,1.2) {$12$}; \node[rlab] at (-0.75,0.45) {$24$}; \node[rlab] at (1.75,0.35) {$35$}; \node[rlab] at (0,-0.55) {$45$};
\node[lab] at (0,-1.2) {$G_0$ with $A_\tau=C_{35}$ shaded};
\end{scope}
\draw[-{Latex[length=2mm]},thick] (3.0,0.9) -- node[above,font=\footnotesize] {$/A_\tau$} (4.2,0.9);
\begin{scope}[xshift=7.2cm]
\node[ctr] (w) at (0,2.2) {$w$};
\node[hub] (h1) at (-1.6,0.8) {$1$};
\node[ch] (c4) at (0.4,-0.3) {$4$};
\draw[tr] (w)--(h1); \draw[tr] (h1)--(c4);
\draw[rd] (h1) to[bend left=35] (w);
\draw[rd] (c4) to[bend left=25] (w);
\draw[rd] (c4) to[bend right=25] (w);
\node[lab] at (-0.55,1.1) {$01$}; \node[lab] at (-0.95,0.05) {$14$};
\node[rlab] at (-1.45,2.0) {$12$}; \node[rlab] at (-0.25,0.7) {$24$}; \node[rlab] at (0.9,0.9) {$45$};
\node[lab] at (-0.5,-1.2) {$M(G_0)/A_\tau$, tree $\{01,14\}$};
\end{scope}
\end{tikzpicture}
\caption{Example~\ref{ex:extcontract}. Left: $G_0$ with $A_\tau=C_{35}$ shaded. Right: the contraction $M(G_0)/A_\tau$, in which $0,2,3,5$ have become one vertex $w$. Its tree is $\{01,14\}$.}\label{fig:contract}
\end{figure}

\begin{example}[Contracting $A_\tau$ on the running example]\label{ex:extcontract}
In $G_0$ take $\tau=\{35\}$, so $A_\tau=C_{35}=\{35,25,02,03\}$. Contracting $A_\tau$ merges the vertices $0,2,3,5$ into one vertex $w$ (Figure~\ref{fig:contract}) and leaves the five edges $01,14$ (the tree $B_0'$) and $12,24,45$ (red), with $12$ now joining $1$ to $w$ and $24,45$ joining $4$ to $w$. The fundamental circuits of $B_0'$ in $M'=M(G_0)/A_\tau$ are the $2$-cycle $\{12,01\}$ and the two triangles $\{24,14,01\}$, $\{45,14,01\}$, which are $C_{12}\setminus A_\tau$, $C_{24}\setminus A_\tau$, $C_{45}\setminus A_\tau$ as the lemma says. The coparking baselines of $(M',B_0')$ are the restrictions to $\rho$ of the baselines $b_{\{35\}\cup\rho}$ of $G_0$, for instance $b_{\{24,35,45\}}|_{\{24,45\}}=(1,1)$, so $\Ext_{G_0}(\{35\})=\Ext_{M'}(\emptyset)$: eight vectors, whose four maximal elements all have degree $2=|A_R\setminus A_\tau|-|R\setminus\tau|$.
\end{example}

\begin{lemma}[Deletion--contraction of coparking sets]\label{lem:dc-coparking}
Let $\CC=(C_j)_{j\in\sigma}$ be a family of sets, $\sigma\ne\emptyset$, and let $e$ lie
in $C_i$ only. Then
\[
  \PP^*(\CC)=\{a:\ a_i=0,\ a|_{\sigma\setminus i}\in\PP^*(\CC\setminus C_i)\}
  \ \sqcup\ \bigl(\mathbf e_i+\PP^*((C_j\setminus e)_{j\in\sigma})\bigr).
\]
If no element lies in exactly one $C_j$ then $\PP^*(\CC)=\emptyset$.
\end{lemma}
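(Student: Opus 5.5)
The plan is to compare, for each nonempty $S\subseteq\sigma$, the baseline of $S$ in the original family with its baseline in the two smaller families. For a general family of sets the coparking baselines are $b_S(j)=|C_j\cap\ast\{C_k:k\in S\}|$. A vector $a$ is coparking if no nonempty $S$ has $a|_S\ge b_S$. Since $e$ lies only in $C_i$, we have $e\in\ast\{C_k:k\in S\}$ exactly when $i\in S$, and then $e$ is counted in $b_S(i)$. Writing $\CC^e=(C_j\setminus e)_{j\in\sigma}$, we get:
\begin{itemize}
\item $b^{\CC^e}_S=b_S-\mathbf e_i$ when $i\in S$;
\item $b^{\CC^e}_S=b_S$ when $i\notin S$.
\end{itemize}
The second item also gives $b^{\CC\setminus C_i}_S=b_S$ for $i\notin S$, since deleting $C_i$ from the family does not change the unique unions of subfamilies avoiding $i$.

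The case $a_i=0$ is handled first. Every $S\ni i$ has $b_S(i)\ge1$, because $e\in C_i\cap U_S$, so no such $S$ can be fired from $a$. Hence $a$ is coparking for $\CC$ if and only if no nonempty $S\subseteq\sigma\setminus i$ can be fired. By the second item these baselines agree with those of $\CC\setminus C_i$. This gives the first piece of the union.

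The case $a_i\ge1$ is handled next; write $a=\mathbf e_i+a'$. For $S\ni i$, the condition $a|_S\ge b_S$ holds if and only if $a'|_S\ge b_S-\mathbf e_i=b^{\CC^e}_S$. For $S\not\ni i$ we have $a|_S=a'|_S$, and the baselines agree. So $a\in\PP^*(\CC)$ if and only if $a'\in\PP^*(\CC^e)$. The two pieces are disjoint because they are separated by whether $a_i$ is zero, and together they exhaust $\PP^*(\CC)$.

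For the last claim, suppose no element lies in exactly one $C_j$. Then the total unique union $\ast\{C_j:j\in\sigma\}$ is empty. So $b_\sigma=0$, and $S=\sigma$ can be fired from every $a\in\N^\sigma$, which means every vector fails the coparking condition. Hence $\PP^*(\CC)=\emptyset$.

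I expect no real obstacle. The only step needing care is the claim that removing $e$ lowers $b_S(i)$ by exactly one and changes no other baseline. This rests on $e$ lying in no $C_j$ with $j\ne i$, so that it contributes only to coordinate $i$ and only when $i\in S$. A second small point is that the index set of $\CC\setminus C_i$ is $\sigma\setminus i$. If $\sigma=\{i\}$ this index set is empty, and $\PP^*$ of the empty family should be read as $\{()\}$, the set containing only the empty vector.
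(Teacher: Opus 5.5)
Your proposal is correct and follows the paper's proof essentially step for step: you use that $e$ lies in the unique union of $S$ exactly when $i\in S$, so removing $e$ lowers only $b_S(i)$ for $S\ni i$; you split according to $a_i=0$ versus $a_i\ge1$; and for the last claim you use that an empty total unique union gives $b_\sigma=0$. Your remark that $\PP^*$ of the empty family should be read as $\{()\}$ when $\sigma=\{i\}$ makes explicit a convention the paper leaves implicit.
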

\begin{proof}
We have $e\in U_S$ if and only if $i\in S$, so removing $e$ from $C_i$ lowers $b_S(i)$ by one for every $S\ni i$ and changes no other coparking baseline, while removing $C_i$ leaves the baselines $b_S$ with $S\not\ni i$ unchanged.

Let $a\in\N^\sigma$. If $a_i=0$, then every $S\ni i$ has $b_S(i)\ge1$: the element $e$ lies in no other $C_j$, so $e\in C_i\cap U_S$. Thus $a_i=0<b_S(i)$ and no such $S$ can be fired. Therefore $a\in\PP^*(\CC)$ if and only if no $S\not\ni i$ can be fired, if and only if
$a|_{\sigma\setminus i}\in\PP^*(\CC\setminus C_i)$. If $a_i\ge1$, then for $S\not\ni i$ the firing condition is the same for $\CC$ and for $(C_j\setminus e)_{j\in\sigma}$, and for $S\ni i$ the condition $a|_S\ge b_S$ for $\CC$ is the condition $(a-\mathbf e_i)|_S\ge b'_S$ for $(C_j\setminus e)_{j\in\sigma}$, whose baselines $b'_S$ differ from $b_S$ only in $b'_S(i)=b_S(i)-1$.
Hence $a\in\PP^*(\CC)$ if and only if $a-\mathbf e_i\in\PP^*((C_j\setminus e)_{j\in\sigma})$.

Finally, if no element lies in exactly one $C_j$, then $U_\sigma=\emptyset$, so $b_\sigma=0$ and $\sigma$ can be fired
from every $a$.
\end{proof}

The characterisation in the next theorem is Dhar's burning algorithm \cite{Dhar}. On a graph
$G$ with a sink, burning identifies the superstable configurations. These are the $G$-parking
functions of Postnikov and Shapiro \cite{PostnikovShapiro}, and the maximal ones
correspond to acyclic orientations with a unique source. That the maximal $G$-parking functions
all have degree $|E|-|V|+1$ is proved this way in \cite{BCT}. See also \cite[Sec.~4]{PPW}. The
translation to a matroid with a fixed basis is as follows. The non-sink vertices are replaced by the
red edges $R$, the chips on a vertex by the coordinate $c_i$, and the edges at a vertex by the
elements of the fundamental circuit $C_i$. The edges from a vertex to the unburnt vertices become
the elements of $C_i$ lying in the circuit of some unburnt coordinate. A vertex burns when it
has fewer chips than edges to the burnt part, and a coordinate burns when
$c_i<|C_i\setminus A_{R'\setminus i}|$, with $R'$ the set of unburnt coordinates. The input is a
matroid together with a basis $B_0$, and the coparking functions depend on $B_0$ through its
fundamental circuits, as everything in this paper does. The same kind of choice is made for
$G$-parking functions when a sink is fixed.

\begin{theorem}[Burning algorithm for a matroid with a fixed basis]\label{thm:burning}
Let $M$ be a matroid with a basis $B_0$, red set $R$ of size $g$, and fundamental circuits
$\CC=(C_i)_{i\in R}$, so that a set $S\subseteq R$ can be fired from $c\in\N^R$ when
$c_i\ge|C_i\cap U_S|$ for every $i\in S$. Then the coparking functions $\PP^*(\CC)$ form a nonempty multicomplex, and every maximal
coparking function has degree
\[
  |A_R|-g=r(A_R),
\]
which is $r(M)$ minus the number of coloops of $M$. Moreover, $c\in\PP^*(\CC)$ if and only if
the coordinates can be burnt one at a time, that is, there is an ordering $s_1,\dots,s_g$ of
$R$ such that $\{s_m,\dots,s_g\}$ cannot be fired from $c$ at $s_m$ for each $m$:
\[
  c_{s_m}<\bigl|C_{s_m}\setminus A_{\{s_{m+1},\dots,s_g\}}\bigr|\qquad(1\le m\le g).
\]
\end{theorem}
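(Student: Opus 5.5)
We prove the burning characterisation first and then derive the multicomplex and degree statements from it. Closure under division is immediate from the definition, as noted after the definition of $\PP^*(\CC)$. Nonemptiness holds because $0\in\PP^*(\CC)$: by Lemma~\ref{lem:mono}(a) every baseline is positive.

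\textbf{Burning characterisation.} The key observation is the following. For $S\subseteq R$ and $i\in S$, every element of $C_i\setminus A_{S\setminus i}$ lies in $C_i$ and in no other $C_j$ with $j\in S$, so it lies in $U_S$. Hence $|C_i\setminus A_{S\setminus i}|\le b_S(i)$.

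\emph{Burning sequence implies coparking.} Suppose an ordering $s_1,\dots,s_g$ exists, and let $S$ be nonempty. Take the smallest $m$ with $s_m\in S$. Then $S\subseteq\{s_m,\dots,s_g\}$, so
\[
c_{s_m}<|C_{s_m}\setminus A_{\{s_{m+1},\dots\}}|\le|C_{s_m}\setminus A_{S\setminus s_m}|\le b_S(s_m).
\]
Thus $S$ cannot be fired, and $c\in\PP^*(\CC)$.

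\emph{Coparking implies a burning sequence.} Suppose $c$ is coparking and run the algorithm greedily. Let $R'$ be the unburnt set and suppose it is nonempty. Since $R'$ cannot be fired, some $i\in R'$ has $c_i<b_{R'}(i)=|C_i\cap U_{R'}|$. We need instead $c_i<|C_i\setminus A_{R'\setminus i}|$, and these two quantities differ by the tree elements of $C_i$ that lie in $U_{R'}$ but also in circuits of already burnt coordinates. To repair this, compare with $|C_i\setminus A_{R'\setminus i}|$ directly. Note that $C_i\setminus A_{R'\setminus i}\supseteq C_i\cap U_{R'}$, because an element of $U_{R'}$ lying in $C_i$ lies in no other $C_j$ with $j\in R'$. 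So $c_i<|C_i\cap U_{R'}|\le|C_i\setminus A_{R'\setminus i}|$, and $i$ burns. Hence the greedy algorithm never gets stuck, and both directions are almost formal.

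\textbf{Degree bound.} By the burning order, $\sum_m c_{s_m}\le\sum_m\bigl(|C_{s_m}\setminus A_{\{s_{m+1},\dots\}}|-1\bigr)$. The sets $C_{s_m}\setminus A_{\{s_{m+1},\dots,s_g\}}$ partition $A_R$, so this sum telescopes to $|A_R|-g$. Equality is attained by taking the maximal value at each step. Hence every coparking function has degree at most $|A_R|-g$, and this value is attained.

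\textbf{Purity.} This is the main obstacle: we must show every maximal $c$ reaches degree $|A_R|-g$. The plan is an induction on $g$ through Lemma~\ref{lem:dc-coparking}, mimicking the chip-firing proof of \cite{BCT}. Let $c$ be maximal and fix a burning order, with $s_1$ burnt first. If $c_{s_1}<|C_{s_1}\setminus A_{R\setminus s_1}|-1$, we claim $c+\mathbf e_{s_1}$ still burns with the same order. Indeed, later conditions involve only later coordinates, so they are unaffected. This contradicts maximality, so $s_1$ is tight.

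Next, restrict to $R\setminus s_1$. The conditions for $s_2,\dots,s_g$ are exactly the burning conditions in the contraction $M/C_{s_1}$ with circuits $C_j\setminus C_{s_1}$. This is the setting of Lemma~\ref{lem:extcontract} with $\tau=\{s_1\}$, where the set removed is $A_\tau=C_{s_1}$, and its basis structure is preserved. We then need $c|_{R\setminus s_1}$ to be maximal there.

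This is the delicate point. An increase $c'$ of $c|_{R\setminus s_1}$ in the contraction could fail to lift, because in $M$ it might instead be burnt in an order not starting with $s_1$. To handle this, we would argue via the characterisation: $(c_{s_1},c')$ has a burning sequence starting with $s_1$, since $c_{s_1}$ is below the bound computed against all of $R\setminus s_1$, and the remainder burns in $M/C_{s_1}$. This contradicts maximality of $c$.

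Induction on $g$ then gives the degree $(|C_{s_1}|-1)+(|A_R\setminus C_{s_1}|-(g-1))=|A_R|-g$. Finally, $r(A_R)=|A_R|-g$ by \eqref{eq:nullity}. Elements outside $A_R$ are exactly the tree elements in no fundamental circuit, which are the coloops of $M$. This gives the stated value $r(M)$ minus the number of coloops.
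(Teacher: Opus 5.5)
Your burning characterisation and the upper bound $|c|\le|A_R|-g$ are correct and agree with the paper. Your two inclusions in fact give $C_i\cap U_S=C_i\setminus A_{S\setminus i}$ exactly, so the remark that these quantities ``differ by'' elements of burnt circuits is mistaken, though harmless.

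The purity argument, however, fails at the reduction step. After $s_1$ is burnt, the remaining inequalities are $c_{s_m}<|C_{s_m}\setminus A_{\{s_{m+1},\dots,s_g\}}|$. These thresholds still count the elements of $C_{s_m}\cap C_{s_1}$. They are not the burning thresholds $|(C_{s_m}\setminus C_{s_1})\setminus A_{\{s_{m+1},\dots,s_g\}}|$ of $M/C_{s_1}$, which can be strictly smaller. For example, in $G_0$ with $s_1=12$ and $24$ burnt last, the threshold of $24$ is $|C_{24}|=4$ in $M$ but $|C_{24}\setminus C_{12}|=2$ in $M/C_{12}$.

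So $c|_{R\setminus s_1}$ need not be coparking in $M/C_{s_1}$ at all, and the induction hypothesis has nothing to act on. The final count is also inconsistent. You use $c_{s_1}=|C_{s_1}|-1$, whereas the tightness you proved is $c_{s_1}=|C_{s_1}\setminus A_{R\setminus s_1}|-1$. With that value the contraction route yields $|A_R|-g-|C_{s_1}\cap A_{R\setminus s_1}|$, not $|A_R|-g$. Lemma~\ref{lem:extcontract} describes the opposite situation, in which $\tau$ sits at its baseline and is fired rather than burnt.

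The residual problem after burning $s_1$ is the unchanged family $(C_j)_{j\ne s_1}$, that is, $(M\setminus s_1,B_0)$, whose union of circuits is $A_{R\setminus s_1}$. Restriction works there because the baselines of strata avoiding $s_1$ are unchanged. Lifting works because $c_{s_1}<b_R(s_1)\le b_S(s_1)$ for every $S\ni s_1$. Induction then gives $(|C_{s_1}\setminus A_{R\setminus s_1}|-1)+(|A_{R\setminus s_1}|-(g-1))=|A_R|-g$.

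But no induction is needed. Your own tightness argument for $s_1$ applies to every $m$, since each burning inequality involves only the single coordinate $c_{s_m}$, against a threshold independent of $c$. So the vector $c^s$ with $c^s_{s_m}=|C_{s_m}\setminus A_{\{s_{m+1},\dots,s_g\}}|-1$ is coparking by your converse direction. It dominates every $c$ that burns in the order $s$. A maximal $c$ therefore equals $c^s$, whose degree is $|A_R|-g$ by the partition you already used. This is the paper's argument.
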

\begin{proof}
We first prove the characterisation by orderings, following \cite[Theorem~5.15]{BDHOZ}.
Let $c$ be an element of $\PP^*(\CC)$. Since $R$ cannot be fired from $c$, there is some $s_1$ such that $c_{s_1}<b_R(s_1)$. Similarly, since $R\setminus s_1$ cannot be fired from $c$, there is some $s_2$ such that $c_{s_2}<b_{R\setminus s_1}(s_2)$. Repeating this process gives an ordering
$s_1,\dots,s_g$ of $R$ such that for all $m$,
\[
  c_{s_m}\ <\ b_{R_m}(s_m)=|C_{s_m}\setminus A_{R_m\setminus s_m}|,\qquad R_m=\{s_m,\dots,s_g\},
\]
with $R_{g+1}=\emptyset$. For the converse, suppose an ordering $s_1,\dots,s_g$ satisfies the displayed inequalities and let $\rho\subseteq R$ be nonempty. Let $s_m$ denote the smallest element of $\rho$ under that ordering, so that $\rho\subseteq R_m$ and $\rho\setminus s_m\subseteq R_{m+1}$. Then $C_{s_m}\setminus A_{\rho\setminus s_m}\supseteq C_{s_m}\setminus A_{R_{m+1}}$, so $b_\rho(s_m)\ge b_{R_m}(s_m)>c_{s_m}$, and $\rho$ cannot be fired from $c$. Hence $c\in\PP^*(\CC)$.
This is a burning algorithm in the usual sense. The threshold $b_{R'}(i)=|C_i\setminus A_{R'\setminus i}|$
can only increase as the unburnt set $R'$ shrinks (Lemma~\ref{lem:mono}(c)), so a coordinate that
can be burnt at some step can still be burnt at any later step. The ordering can therefore be found
greedily, burning at each step any $i\in R'$ with $c_i<b_{R'}(i)$. Then $c\in\PP^*(\CC)$ if and
only if this process burns all of $R$.

It remains to prove that $\PP^*(\CC)$ is a nonempty multicomplex and that its maximal elements have degree $r(A_R)$. It is closed under division (Section~\ref{sec:coparking}) and it contains $0$ because $b_S(i)\ge1$ for every $i\in S$ (Lemma~\ref{lem:mono}(a)). For an ordering $s=(s_1,\dots,s_g)$ of $R$ define $c^{s}\in\N^R$ by
$c^{s}_{s_m}=b_{R_m}(s_m)-1$ for every $m$. It satisfies the displayed inequalities, so
$c^{s}\in\PP^*(\CC)$ by the converse just proved. The sets $C_{s_m}\setminus A_{R_{m+1}}$,
$m=1,\dots,g$, partition $A_R$: an element of $A_R$ lies in the one indexed by the last $s_m$
in the ordering whose circuit contains it. Hence
\[
  |c^{s}|=\sum_{m=1}^{g}\bigl(|C_{s_m}\setminus A_{R_{m+1}}|-1\bigr)=|A_R|-g=r(A_R),
\]
the last equality by \eqref{eq:nullity}. Every $c\in\PP^*(\CC)$ satisfies $c\le c^{s}$ for the ordering $s$ constructed from $c$ above, so every maximal element of $\PP^*(\CC)$ is some $c^{s}$ and has degree $r(A_R)$. Finally, an element outside $A_R$ is a tree edge lying in no circuit by (F2), hence a coloop. A coloop lies in no $C_i$, so $r(A_R)$ is $r(M)$ minus the number of coloops of $M$.
\end{proof}

For a graph $G$ with sink $q$, Dhar's burning algorithm does three things \cite{Klivans}. It decides whether a configuration is superstable. It identifies the maximal superstable configurations with the acyclic orientations of $G$ having $q$ as unique source. Through the toppling matrix, it sits inside the theory of the sandpile group. Only the first survives in Theorem~\ref{thm:burning}, together with the purity that comes with it. Without a Laplacian there is no toppling matrix, hence no recurrent configurations, no duality and no sandpile group. The number of elements of $\PP^*(\CC)$ is the number of bases of $M$ only when $\CC$ is a cycle system (Theorem~\ref{thm:cdmpy}). In general it is smaller, and the difference is exactly what the fibres supply, one dead node at a time (Theorem~\ref{thm:B}). We also know no formula for the number of maximal elements of $\PP^*(\CC)$ beyond the cycle-system case, where it is the leading coefficient of $h(M)$.

The theorem is more general in one direction: an element may lie in any number of the $C_i$, whereas an edge of $G$ has two ends. Suppose every element of $A_R$ lies in at most two of the $C_i$, and let $G'$ be the graph on $R\cup\{q\}$ with an edge $ij$ for each element with owner set $\{i,j\}$ and an edge $iq$ for each element with owner set $\{i\}$. Then $\PP^*(\CC)$ is the set of $G'$-parking functions, and the theorem is the classical statement. Beyond that case the elements with three or more owners play the role of hyperedges, and $\PP^*(\CC)$ is a relative of the hypergraph parking functions of \cite{BDHOZ}, with thresholds defined differently, so that the two pure multicomplexes differ in general.

\begin{corollary}\label{cor:extpure}
For every $\tau\subseteq R$, $\Ext(\tau)$ is a nonempty multicomplex, and every maximal element
of $\Ext(\tau)$ has degree $|A_R\setminus A_\tau|-|R\setminus\tau|=r(A_R)-r(A_\tau)$.
\end{corollary}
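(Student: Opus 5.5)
The plan is to combine Lemma~\ref{lem:extcontract} with Theorem~\ref{thm:burning}. Fix $\tau\subseteq R$ and put $M'=M/A_\tau$ with the basis $B_0'=B_0\setminus A_\tau$. By Lemma~\ref{lem:extcontract}, $B_0'$ is a basis of $M'$ with red set $R\setminus\tau$ and fundamental circuits $\CC'=(C_j\setminus A_\tau)_{j\in R\setminus\tau}$, and $\Ext(\tau)=\PP^*(\CC')$. Theorem~\ref{thm:burning} applies to $(M',B_0')$ verbatim. It shows that $\PP^*(\CC')$ is a nonempty multicomplex, and that every maximal element has degree $|A'_{R\setminus\tau}|-|R\setminus\tau|$, where $A'_{R\setminus\tau}=\bigcup_{j\notin\tau}(C_j\setminus A_\tau)$. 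When $\tau=R$ the set $\Ext(\tau)$ is $\{0\}$ in $\N^\emptyset$, and the claimed degree is $0$, so this case is trivially consistent. When $\tau=\emptyset$ the statement is Theorem~\ref{thm:burning} itself.

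Next identify the degree. Every element of $A_R$ either lies in $A_\tau$ or lies in some $C_j$ with $j\notin\tau$ and not in $A_\tau$. Hence $A'_{R\setminus\tau}=A_R\setminus A_\tau$, which gives the first expression $|A_R\setminus A_\tau|-|R\setminus\tau|$. For the second expression, apply \eqref{eq:nullity} to $R$ and to $\tau$:
\[
r(A_R)-r(A_\tau)=(|A_R|-|R|)-(|A_\tau|-|\tau|)=|A_R\setminus A_\tau|-|R\setminus\tau|,
\]
using $A_\tau\subseteq A_R$ and $\tau\subseteq R$. When $\tau=\emptyset$ we take $A_\emptyset=\emptyset$, so $r(A_\emptyset)=0$.

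No real obstacle arises here. The only point requiring care is that Theorem~\ref{thm:burning} is stated with thresholds $|C_i\cap U_S|$ computed from the fundamental circuits of the matroid under consideration. Lemma~\ref{lem:extcontract} guarantees exactly this for $M'$: the sets $C_j\setminus A_\tau$ are genuine fundamental circuits of $B_0'$ in $M'$, and the baselines agree with the restrictions $b_{\tau\cup\rho}|_\rho$. So the burning theorem can be quoted without modification.
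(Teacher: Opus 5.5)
Your proposal is correct and follows the paper's proof: identify $\Ext(\tau)$ with the coparking set of $(M/A_\tau,B_0\setminus A_\tau)$ via Lemma~\ref{lem:extcontract}, note that the union of the circuits $C_j\setminus A_\tau$ is $A_R\setminus A_\tau$, and quote Theorem~\ref{thm:burning}. The only difference is cosmetic: you obtain $r(A_R)-r(A_\tau)$ by applying \eqref{eq:nullity} in $M$, whereas the paper computes the rank $r_{M/A_\tau}(A_R\setminus A_\tau)$ in the contraction; both are valid.
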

\begin{proof}
By Lemma~\ref{lem:extcontract}, $\Ext(\tau)=\PP^*(\CC')$ for the fundamental circuits $\CC'$ of
the basis $B_0\setminus A_\tau$ of $M/A_\tau$. Its red set is $R\setminus\tau$ and the union
of its circuits $C_j\setminus A_\tau$ is $A_R\setminus A_\tau$. Theorem~\ref{thm:burning} applied to $(M/A_\tau,B_0\setminus A_\tau)$
gives the claim, with $r_{M/A_\tau}(A_R\setminus A_\tau)=r(A_R)-r(A_\tau)$.
\end{proof}

\begin{example}\label{ex:extpure}
Take the stratum $\tau=\{04,12,24\}$ of Example~\ref{ex:ext} (Figure~\ref{fig:ext}). Contracting
$A_\tau=\{04,12,24,01,02,14\}$ identifies the vertices $0,1,2,4$ and leaves the tree $\{03,25\}$ with
the red edges $35,45$. The circuits are $\{35,03,25\}$ and $\{45,25\}$, and their thresholds are the
three conditions found there. The two orderings give $c^{(35,45)}=(|\{35,03\}|-1,\,|\{45,25\}|-1)=(1,1)$
and $c^{(45,35)}=(|\{35,03,25\}|-1,\,|\{45\}|-1)=(2,0)$, the two maximal elements, both of degree
$|\{03,25,35,45\}|-2=2=r(A_R)-r(A_\tau)=5-3$. On $G_0$ itself the $24$ orderings of $R$ produce
the $11$ maximal elements of $\Ext(\emptyset)$, all of degree $9-4=5=r(M(G_0))$.
\end{example}

\subsection{The stratum identity}

Equation \eqref{eq:degvec} expresses the degree sequence of $\PP^*(F)$ as a sum over the dead strata in which the fibres enter only through their degree sequences $h(K_\tau)$. The baselines, the dead nodes and the extension sets are all determined by $(M,B_0)$. So the degree sequence of $\PP^*(F)$ is the same for every fibre system, and Theorem~\ref{thm:main} reduces to the following identity, in which no fibre appears.

\begin{theorem}[Stratum identity]\label{thm:B}
For every matroid $M$ with a basis $B_0$,
\[
  h(M)=\sum_{\tau}t^{|b_\tau|}\,h(K_\tau)\,E_\tau(t),
\]
the sum over $\tau=\emptyset$ and all dead strata.
\end{theorem}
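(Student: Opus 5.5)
We prove the identity by the deletion--contraction recursion of \cite{CycleSystems}, run on a generalisation of the pair $(M,B_0)$ that is stable under both moves.

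\textbf{The general statement.} Let $M$ be a matroid and $\CC=(C_i)_{i\in R}$ a family of subsets with the following two properties.
\begin{enumerate}[label=(P\arabic*)]
\item Each $C_i$ contains a private element $i$.
\item The union $A_R$ spans and the rank formula \eqref{eq:nullity} holds: $r(A_S)=|A_S|-|S|$ for every $S\subseteq R$.
\end{enumerate}
We claim that
\[
  h(M|A_R)=\sum_\tau t^{|b_\tau|}h(K_\tau)E_\tau(t),
\]
where the sum runs over $\tau=\emptyset$ and all $\tau$ with $U_\tau$ independent. Since $h(M)=h(M|A_R)$ (the elements outside $A_R$ are coloops), this gives the theorem. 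The induction is on $|A_R|$.

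\textbf{Recursion step.} If some element $e$ lies in exactly one $C_i$ of the total family, then $e\in U_\tau$ precisely when $i\in\tau$. Split every term according to whether $i\in\tau$.
\begin{itemize}
\item For $\tau\ni i$, contracting $e$ lowers $|b_\tau|$ by one. It leaves $K_\tau$ unchanged, because $e\in U_\tau$ is contracted in both cases. It also leaves $E_\tau$ unchanged, because by Lemma~\ref{lem:extcontract} the extension set only sees $M/A_\tau$, and $e\in A_\tau$.
\item For $\tau\not\ni i$, the dead strata, baselines and dead nodes of $(M\setminus e,\CC\setminus C_i)$ are those of $(M,\CC)$ restricted to strata avoiding $i$. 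The extension set $\Ext(\tau)$ splits exactly as in Lemma~\ref{lem:dc-coparking} applied to the contracted family $(C_j\setminus A_\tau)$: the part with $c_i=0$ is the extension set in the deletion, and the part with $c_i\ge1$, shifted down by $\mathbf e_i$, is the extension set in the contraction $M/e$.
\end{itemize}
Collecting terms, the right-hand side splits as $\mathrm{RHS}(M\setminus e)+t\,\mathrm{RHS}(M/e)$. The left-hand side splits the same way by \eqref{eq:tutte}. Here $e$ is not a coloop since it lies in $C_i$, and it is not a loop unless $C_i=\{e\}$; that degenerate case is handled directly.

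We must also check that both children again satisfy (P1)--(P2). In the deletion, $C_i$ loses its private element $e$ and is dropped. In the contraction, $e$ is removed from $C_i$, and (P2) is preserved because $e$ lies in only one circuit.

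\textbf{Base case.} If no element lies in exactly one $C_j$, then $U_R=\emptyset$. So $R$ itself is dead (when $R\neq\emptyset$), with $b_R=0$ and $K_R=M|A_R$. Every $c$ lets $R$ be fired, so $\Ext(\tau)=\emptyset$ for every $\tau\ne R$, and $E_R=1$. The right-hand side is therefore $h(M|A_R)$. When $R=\emptyset$ the identity reads $1=1$.

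\textbf{Main obstacle.} The delicate step is the bookkeeping in the case $\tau\not\ni i$, for a dead $\tau$ that may become live or change its dead node after contracting $e$.

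When $e\notin A_\tau$, the node $K_\tau=(M|A_\tau)/U_\tau$ does not involve $e$, so it is unchanged in both children. Deadness of $\tau$ is also unaffected in the deletion. In the contraction, however, $U_\tau$ could become dependent if $e\in\mathrm{cl}(U_\tau)$. We plan to exclude this using (P2): $e$ is private to $C_i$, so $e\notin\mathrm{cl}(A_\tau)$ for $i\notin\tau$. This follows because $r(A_{\tau\cup i})=r(A_\tau)+|C_i\setminus A_\tau|-1$, and the elements of $C_i\setminus A_\tau$ other than one are independent over $A_\tau$. Choosing $e$ generically within $C_i\setminus A_\tau$ would handle the remaining case. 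If this runs into trouble, the fallback is to pick $e=i$ itself, the red element, which is private by (F1) and lies outside every $\mathrm{cl}(A_\tau)$ with $i\notin\tau$, since $A_\tau\cap B_0$ spans $A_\tau$ while $i$ needs $C_i\setminus A_\tau\ne\emptyset$.
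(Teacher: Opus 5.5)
Your overall plan, checking at an admissible $e\in C_i$ that each summand $t^{|b_\tau|}h(K_\tau)E_\tau(t)$ satisfies the recursion \eqref{eq:tutte}, is the paper's plan. Your treatment of $\tau\ni i$ and of the base case is essentially fine. The gap is exactly at the obstacle you flag: for $i\notin\tau$, the case $e\in\mathrm{cl}(A_\tau)$ cannot be avoided.

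Take $U_{3,6}$ with $B_0=\{1,2,3\}$. Every tree element lies in all the circuits $C_i=B_0\cup i$, so only the red elements are admissible. For $e=4$, the dead stratum $\tau=\{5,6\}$ has $A_\tau=\{1,2,3,5,6\}$, which spans $M$. Your rank count only says that some element of $C_i\setminus A_\tau$ is dependent over $A_\tau$ and the others, and that element may be $e$. The fallback is also false, since $i\in\mathrm{cl}(A_\tau)$ whenever $C_i\setminus i\subseteq A_\tau$.

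In this case $K_\tau$ can change in the contraction child, because $(M/e)|A_\tau=(M|(A_\tau\cup e))/e$. Here $h(K_\tau(M))=1+2t$, while $K_\tau(M/4)$ has rank zero. The same issue arises when $e$ is a loop, and contrary to your remark this occurs with $C_i(M)\ne\{e\}$. Take the star tree at $o$ with red triangle $uv,vw,uw$: after contracting $vw$ and then $uw$, the admissible element $uv$ is a loop with $C_{uv}(M)=\{uv,ou,ov\}$.

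The missing idea is that this case is harmless because the contraction part of $\Ext(\tau)$ is empty. If $i\notin\tau$ and $e$ lies in a circuit of $M$ inside $A_\tau\cup e$, then every $c\in\Ext_M(\tau)$ has $c_i=0$ (Lemma~\ref{lem:E}(a)). Otherwise $e$ is a coloop of $M|(A_\tau\cup e)$, and $K_\tau(M/e)=K_\tau(M)$.

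Proving this vanishing needs more than (P1)--(P2). It uses that every circuit $Z$ of the current minor is $C\setminus\mathrm{Con}$ for a root circuit $C$ whose red set $S$ survives, with $U_S(M)\subseteq Z$. This is Lemma~\ref{lem:invariants}(d), which rests on (F2), (F3) and the multiplicity invariants for $\mathrm{Con}$ and $\mathrm{Del}$. Then $\rho=S\setminus\tau$ has $b_{\tau\cup\rho}|_\rho\le\mathbf e_i$, so $c_i\ge1$ fires $\tau\cup\rho$.

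Your axiomatic framework is in fact neither closed under the moves nor sufficient for the identity. Contracting a private element destroys (P1); if (P1) persisted, your base case with $R\ne\emptyset$ would never be reached. And $C_1=\{a,b,x\}$, with $a,b$ parallel and $x$ a coloop, satisfies (P1)--(P2) but gives $1+t+t^2\ne h=1+t$, because an admissible element can be a coloop.

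This is why the paper works with labelled minors of the root and refers deadness to the root. Your current-matroid notion of deadness would also work, since $\tau$ can only stop being dead in $M/e$ when $e\in\mathrm{cl}(U_\tau)\subseteq\mathrm{cl}(A_\tau)$. But that only helps once the vanishing lemma is available.
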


The identity is proved by a deletion--contraction induction of the same shape as that of Theorem~\ref{thm:cdmpy}, carried out on the following objects. To avoid a clash of letters we write $M_0$ for the matroid of the theorem in this subsection and the next. A \emph{pair} $(M,\CC)$ is a matroid $M$ together with a list $\CC=(C_j)_{j\in\sigma}$ of subsets of its ground set indexed by a set $\sigma$ of \emph{surviving coordinates}. An element $e$ of $M$ is \emph{admissible}, as in Section~\ref{sec:coparking}, if it lies in
exactly one $C_i$. Its children are the pair $(M\setminus e,\CC\setminus\{C_i\})$, which drops the coordinate $i$, and the pair $(M/e,(C_j\setminus e)_{j\in\sigma})$, which keeps all coordinates. The second child exists only when $e$ is not a loop.

Starting from the root pair $(M_0,(C_j)_{j\in R})$ and expanding admissible elements produces the \emph{labelled minors}: pairs $(\mathrm{Del},\mathrm{Con})$ of disjoint subsets of the ground set of $M_0$ with
$M=(M_0\setminus\mathrm{Del})/\mathrm{Con}$, surviving coordinates the $j$ with
$C_j\cap\mathrm{Del}=\emptyset$, and $C_j(M)=C_j\setminus\mathrm{Con}$. As the elements of $M$ lie outside $\mathrm{Con}$,
\begin{equation}\label{eq:membership}
  y\in C_j(M)\ \text{ if and only if }\ y\in C_j\qquad(y\in E(M),\ j\in\sigma),
\end{equation}
so $m_\tau(y)$ is the same in $M$ and in $M_0$ for every $\tau\subseteq\sigma$ and every element $y$ of $M$. The definitions of $U_\tau$, $b_\tau$, $A_\tau$, $K_\tau$ and $\Ext(\tau)$ apply verbatim to a pair. The one convention is that a coordinate set $\tau$ is \emph{dead} when $U_\tau(M_0)$ is
independent in $M_0$, so deadness always refers to the root. For a pair we write
\[
  B(M,\CC)=\sum_\tau t^{|b_\tau|}\,h(K_\tau)\,E_\tau(t),
\]
and when the pair has to be named we write $b_\tau(M)$, $K_\tau(M)$ and $E_\tau(M)$ for its
baseline, dead node and generating function $E_\tau(t)$.

\begin{figure}[h]\centering
\begin{tikzpicture}[
  pair/.style={draw,rounded corners=2pt,fill=white,align=center,font=\footnotesize,inner sep=3pt},
  sub/.style={draw,isosceles triangle,isosceles triangle apex angle=50,shape border rotate=90,
              anchor=apex,minimum height=8mm,minimum width=12mm,inner sep=0pt,fill=grey!12},
  elab/.style={font=\footnotesize,fill=white,inner sep=1pt},
  plab/.style={font=\footnotesize,align=center}]
\begin{scope}[xshift=-5.2cm,yshift=1.3cm,scale=0.8]
\node[ctr] (o) at (0,2.2) {$0$};
\node[hub] (h1) at (-2,1) {$1$}; \node[hub] (h2) at (0,1) {$2$}; \node[hub] (h3) at (2,1) {$3$};
\node[ch] (c4) at (-1,-0.3) {$4$}; \node[ch] (c5) at (1,-0.3) {$5$};
\draw[tr] (o)--(h1); \draw[tr] (o)--(h2); \draw[tr] (o)--(h3); \draw[tr] (h1)--(c4); \draw[tr] (h2)--(c5);
\draw[rd] (h1)--(h2); \draw[rd] (h2)--(c4); \draw[rd] (h3)--(c5); \draw[rd] (c4)--(c5);
\node[rlab] at (-1,1.2) {$12$}; \node[rlab] at (-0.75,0.45) {$24$}; \node[rlab] at (1.75,0.35) {$35$}; \node[rlab] at (0,-0.55) {$45$};
\end{scope}
\node[pair] (n0) at (0,0.3)   {$G_0$\\ $\{12,24,35,45\}$};
\node[pair] (n1) at (0,-1.7)  {$G_0\setminus12$\\ $\{24,35,45\}$};
\node[pair] (n2) at (0,-3.7)  {$G_0\setminus12/03$\\ $\{24,35,45\}$};
\node[pair] (n3) at (0,-5.7)  {$G_0\setminus12/03,24$\\ $\{24,35,45\}$};
\node[pair] (n4) at (0,-7.7)  {$G_0\setminus12/03,24,35$\\ $\{24,35,45\}$};
\node[ctr,minimum size=5mm] (S) at (0.8,-9.9) {$S$};
\node[hub,minimum size=4.5mm] (h1d) at (-0.8,-9.9) {$1$};
\draw[tr] (h1d) to[bend left=25] (S);
\draw[tr] (h1d) to[bend right=25] (S);
\draw[tr] (S) to[out=40,in=100,looseness=7] (S);
\draw[tr] (S) to[out=-40,in=-100,looseness=7] (S);
\node[draw,rounded corners=2pt,fit=(S)(h1d),inner xsep=6pt,inner ysep=8pt] (n5) {};
\node[plab] at (0,-10.95) {dead node $K_\sigma$, $\sigma=\{24,35,45\}$};
\draw (n0)--node[elab]{$\setminus12$}(n1);
\draw (n1)--node[elab]{$/03$}(n2);
\draw (n2)--node[elab]{$/24$}(n3);
\draw (n3)--node[elab]{$/35$}(n4);
\draw (n4)--node[elab]{$/45$}(n5);
\foreach \i/\y/\e in {0/-0.7/{/12},1/-2.7/{\setminus03},2/-4.7/{\setminus24},3/-6.7/{\setminus35},4/-8.7/{\setminus45}}{
  \node[sub] (s\i) at (-2.6,\y) {};
  \draw (n\i)--node[elab]{$\e$}(s\i.apex);}
\node[plab,anchor=east] at (-3.4,-1.15) {$t(1+4t+8t^2+10t^3+8t^4)$\\[-1pt]{\scriptsize $31$ leaves, containing $K_{\{12,35,45\}}$ and $K_R$}};
\node[plab,anchor=east] at (-3.4,-3.15) {$1+2t+3t^2+3t^3+2t^4$\\[-1pt]{\scriptsize $11$ leaves, coordinates $\{24,45\}$}};
\node[plab,anchor=east] at (-3.4,-5.15) {$t(1+2t+3t^2+3t^3+2t^4)$\\[-1pt]{\scriptsize $11$ leaves, coordinates $\{35,45\}$}};
\node[plab,anchor=east] at (-3.4,-7.15) {$t^2(1+2t+2t^2+t^3)$\\[-1pt]{\scriptsize $6$ leaves, coordinates $\{24,45\}$}};
\node[plab,anchor=east] at (-3.4,-9.15) {$t^3(1+2t+2t^2)$\\[-1pt]{\scriptsize $5$ leaves, coordinates $\{24,35\}$}};
\node[plab,anchor=west] at (1.6,-9.9) {$t^4\,h(K_\sigma)=t^4(1+t)$};
\end{tikzpicture}
\caption{Example~\ref{ex:dctree}. The branch of the deletion--contraction tree of $G_0$ leading to the dead node $K_\sigma$, $\sigma=\{24,35,45\}$. Each box is a labelled minor with its surviving coordinates. Edges are labelled by the expanded element, $\setminus e$ for deletion
and $/e$ for contraction.}\label{fig:dctree}
\end{figure}

\begin{example}[The deletion--contraction tree of $G_0$]\label{ex:dctree}
Figure~\ref{fig:dctree} shows part of the tree of the root pair of $G_0$, expanding at each
node the first admissible element in the order $12,03,24,35,45,01,02,14,25$. Deleting $12$
drops the coordinate $12$, and contracting $03,24,35,45$, the four elements of $U_\sigma$ for
$\sigma=\{24,35,45\}$, leaves the pair with circuits $C_{24}=\{14,01,02\}$, $C_{35}=\{25,02\}$,
$C_{45}=\{14,01,02,25\}$. Every element now lies in at least two of them, so the node is the dead node
$K_\sigma$ of Example~\ref{ex:deadnode}, reached after $|U_\sigma|=4$ contractions, and it
contributes $t^4h(K_\sigma)=t^4(1+t)$. The other two dead nodes are at $\{12,35,45\}$ and at $R$, and both have rank zero and contribute $t^5$ each. The remaining leaves have minors consisting of coloops and supply $E_\emptyset(t)$.
\end{example}

Two lemmas about labelled minors are needed. Parts (a) to (c) of the first collect the invariants that are
preserved along the path from the root, and their proof is the only place where that path is traversed. The
remaining parts and the second lemma are local consequences, resting on (F2), (F3) and on the description of the circuits of a
contraction: the circuits of $(M_0\setminus\mathrm{Del})/\mathrm{Con}$ are the minimal nonempty sets
$C\setminus\mathrm{Con}$ with $C$ a circuit of $M_0\setminus\mathrm{Del}$ \cite[Prop.~3.1.10]{Oxley}.

\begin{lemma}[Labelled minors]\label{lem:invariants}
Let $(M,\CC)$ be a labelled minor with surviving coordinates $\sigma$. Then:
\begin{enumerate}[label=(\alph*)]
\item $\mathrm{Con}$ is independent in $M_0$.
\item Every $y\in\mathrm{Con}$ has $m_\sigma(y)\le1$ and every $y\in\mathrm{Del}$ has $m_\sigma(y)=0$.
      Hence $\mathrm{Con}\cap A_\tau(M_0)\subseteq U_\tau(M_0)$ and $\mathrm{Del}\cap A_\tau(M_0)=\emptyset$
      for every $\tau\subseteq\sigma$.
\item If $C$ is a circuit of $M_0$ with $C\cap\mathrm{Del}=\emptyset$, then every red edge of $C$
      survives.
\item Every circuit of $M$ has the form $C\setminus\mathrm{Con}$ with $C$ a circuit of $M_0$
      avoiding $\mathrm{Del}$ whose red edges all survive. If $S$ is the set of red edges of $C$, then $C\setminus\mathrm{Con}\subseteq\bigcup_{j\in S}C_j(M)$ and $U_S(M)\subseteq C\setminus\mathrm{Con}$. Every
      element of $M$ outside $A_\sigma(M)=\bigcup_{j\in\sigma}C_j(M)$ is a coloop.
\item An admissible element $e\in C_i(M)$ is not a coloop of $M$.
\end{enumerate}
\end{lemma}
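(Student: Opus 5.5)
I will prove (a)--(c) together by induction along the path of expansions from the root pair $(M_0,(C_j)_{j\in R})$ to $(M,\CC)$. At the root, $\mathrm{Del}=\mathrm{Con}=\emptyset$ and everything is vacuous. For the inductive step, let $e$ be admissible at a labelled minor $(M',\CC')$ with surviving set $\sigma'$, lying in $C_i$ only among the $C_j$, $j\in\sigma'$.

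If $e$ is deleted, then $\sigma=\sigma'\setminus i$. The element $e$ lies in no $C_j$ with $j\in\sigma$, so $m_\sigma(e)=0$. Multiplicities only drop when $\sigma$ shrinks, so (b) persists, and $\mathrm{Con}$ is unchanged, so (a) persists. For (c), let $C$ be a circuit avoiding $\mathrm{Del}\cup e$ with a red edge $j$. By induction $j\in\sigma'$. If $j=i$, then $e\in C_i$ would have to lie in $C$ by (F2) (through (F3), since $e$ has multiplicity one in the red set of $C$ relative to the $C_j$ it owns). The cleaner route: apply (F2) to the cocircuit/orthogonality with $D_e$ when $e$ is a tree edge, or note $e=i$ if $e$ is red. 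In both cases $i\in C$ forces $e\in C$, a contradiction, so $j\neq i$ and $j$ survives.

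If $e$ is contracted, $\sigma=\sigma'$ and $m_\sigma(e)=1$, which gives (b). For (a), $e$ is not a loop of $M'$, and (by induction) $\mathrm{Con}'$ is independent in $M_0\setminus\mathrm{Del}$. Hence $\mathrm{Con}'\cup e$ is independent in $M_0$, since the rank of $M'=(M_0\setminus \mathrm{Del})/\mathrm{Con}'$ equals $r(X\cup\mathrm{Con}')-|\mathrm{Con}'|$. Part (c) is unaffected. The consequences $\mathrm{Con}\cap A_\tau\subseteq U_\tau$ and $\mathrm{Del}\cap A_\tau=\emptyset$ follow from (b), since $m_\tau\le m_\sigma$ and elements of $A_\tau$ have $m_\tau\ge1$.

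For (d), I use the description of circuits of a contraction: a circuit of $M$ is a minimal nonempty $C\setminus\mathrm{Con}$ with $C$ a circuit of $M_0$ avoiding $\mathrm{Del}$. Its red edges survive by (c). By (F3), $C\subseteq A_S$, so $C\setminus\mathrm{Con}\subseteq\bigcup_{j\in S}C_j(M)$. Also by (F3), every element owned exactly once in $S$ lies in $C$, hence $U_S(M)\subseteq C\setminus\mathrm{Con}$ after using \eqref{eq:membership}. For the coloop claim, an element $y\notin A_\sigma(M)$ that lay in a circuit $C\setminus\mathrm{Con}$ of $M$ would lie in some $C_j(M)$ with $j\in S\subseteq\sigma$, which is impossible.

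For (e), I need a circuit of $M$ through $e$. I take $C_i(M)=C_i\setminus\mathrm{Con}$, which avoids $\mathrm{Del}$ by (b). Since $i$ survives and (b) gives $\mathrm{Con}\cap C_i\subseteq U$ with $i$ itself not contracted (red edges are contracted only when admissible, but $i\in C_i$), the set $C_i$ is a circuit of $M_0\setminus\mathrm{Del}$ not contained in $\mathrm{Con}$ (by (a)). So $C_i\setminus\mathrm{Con}$ is dependent in $M$ and contains a circuit of $M$, of the form $C\setminus\mathrm{Con}$ with $C\subseteq C_i\cup\mathrm{Con}$. By (a) the set $C$ must contain a red edge, which can only be $i$, so (F3) with $S=\{i\}$ gives $e\in U_{\{i\}}\subseteq C$, since $e\notin\mathrm{Con}$.

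The main obstacle is (e) together with the deletion case of (c): there I must carefully use the orthogonality (F2) and the independence of $\mathrm{Con}$ to control which red edges a circuit of the minor can involve.
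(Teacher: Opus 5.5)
\textbf{Verdict.} Parts (a)--(d) of your proposal are sound and follow the paper's route, but your argument for (e) has a genuine gap. For (a)--(c) you use induction along the expansions; your multiplicity-one argument in the deletion step matches the paper's use of (F2). For (d) you use Oxley's description of the circuits of a contraction together with (F3). The multiplicity-one argument in (c) works because $S\subseteq\sigma'$ by the induction hypothesis and $e$ is admissible. Your alternative ``cleaner route'' through (F2) alone is incomplete unless you add that second step.

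\textbf{The gap in (e).} You claim that $i$ is not contracted, and that the lift $C$ of a circuit of $M$ inside $C_i\setminus\mathrm{Con}$ has no red edge other than $i$. Both claims are false. By (F1) every surviving red edge is admissible, and contracting it keeps its coordinate, so $\mathrm{Con}$ may contain surviving red edges. In Example~\ref{ex:dctree}, for instance, $24$ and $35$ are contracted while their coordinates survive.

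\textbf{A counterexample to the step.} Take $B_0=\{x,e\}$ with $C_i=\{i,x,e\}$ and $C_j=\{j,x\}$, that is, a triangle with $j$ parallel to $x$, and contract $j$.
\begin{itemize}
\item Then $e$ is admissible, lying in $C_i(M)=\{i,x,e\}$ and not in $C_j(M)=\{x\}$.
\item However, $\{x\}$ is a circuit of $M$ contained in $C_i\setminus\mathrm{Con}$, and its lift $\{j,x\}$ has red edge $j\neq i$.
\item This circuit does not contain $e$.
\end{itemize}
So an arbitrary circuit of $M$ inside $C_i\setminus\mathrm{Con}$ need not pass through $e$, and your argument gives no way of selecting one that does. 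The statement itself still holds here, since $\{i,e\}$ is a circuit, but your proof does not establish it.

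\textbf{How to repair it.} Either of the following works.
\begin{itemize}
\item Argue as the paper does. Put $N=M_0\setminus\mathrm{Del}$ and use $e\notin\mathrm{Con}$. The formula $r_M(X)=r_N(X\cup\mathrm{Con})-r_N(\mathrm{Con})$ shows that $e$ is a coloop of $M$ if and only if it is a coloop of $N$. Since $C_i\cap\mathrm{Del}=\emptyset$, the set $C_i$ is a circuit of $N$ through $e$, so $e$ is not a coloop.
\item Keep your circuit-finding approach, but use cocircuits. The cocircuits of $M=N/\mathrm{Con}$ are the cocircuits of $N$ avoiding $\mathrm{Con}$. Each such cocircuit meets $C_i\setminus\mathrm{Con}$ in $|D\cap C_i|\neq1$ elements. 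Hence $C_i\setminus\mathrm{Con}$ is a union of circuits of $M$, and one of them contains $e$.
\end{itemize}
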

\begin{proof}
For (a) to (c) we argue by induction on the number of expansions leading from the root to the labelled minor.
At the root, $\mathrm{Del}=\mathrm{Con}=\emptyset$ and $\sigma=R$, so (a) and (b) hold
trivially and (c) says that every red edge survives, which is the case. For the inductive step,
let $(M,\CC)$ be a labelled minor satisfying (a), (b) and (c), let $e\in C_i(M)$ be admissible,
and let $(M',\CC')$ be one of the two children obtained by expanding $e$. We show that
$(M',\CC')$ satisfies (a), (b) and (c). The argument for (c) is that of \cite[Lemma~6.4]{Regular},
with (F2) in place of the unique-union representation of a circuit.

Suppose $e$ is contracted, so that $\mathrm{Con}'=\mathrm{Con}\cup\{e\}$ while $\mathrm{Del}$ and
$\sigma$ are unchanged. Since $e$ is not a loop of $M=(M_0\setminus\mathrm{Del})/\mathrm{Con}$, it
does not lie in the closure of $\mathrm{Con}$ in $M_0$, so $\mathrm{Con}\cup\{e\}$ is independent
in $M_0$, which is (a). For (b), the elements of $\mathrm{Con}$ have $m_\sigma\le1$ by hypothesis,
and $e$ lies in exactly one surviving circuit, so $m_\sigma(e)=1$ by \eqref{eq:membership}.
The second half of (b) and statement (c) refer only to $\mathrm{Del}$ and $\sigma$, which are unchanged.

Suppose $e$ is deleted, so that $\mathrm{Del}'=\mathrm{Del}\cup\{e\}$, $\sigma'=\sigma\setminus i$
and $\mathrm{Con}$ is unchanged. Statement (a) is unchanged, and (b) persists because
$m_{\sigma\setminus i}\le m_\sigma$, while $e$ itself has $m_{\sigma\setminus i}(e)=0$, since $C_i(M)$ is the only surviving circuit of $(M,\CC)$ containing $e$. For (c), let $C$ be a circuit of $M_0$ avoiding
$\mathrm{Del}\cup\{e\}$ and let $S$ be its set of red edges. In particular $C$ avoids
$\mathrm{Del}$, so by the induction hypothesis every $j\in S$ lies in $\sigma$. The deletion
drops the single coordinate $i$, so it remains to show that $i\notin S$. Suppose $i\in S$. As
$e\in C_i$ and $e\notin C$, we have $e\neq i$, so $e$ is a tree edge of $C_i$ outside $C$, and
by (F2) it lies in $C_j$ for some $j\in S$ with $j\neq i$. Since $j\in\sigma$ and
$e\notin\mathrm{Con}$, also $e\in C_j(M)$, a second surviving circuit of $(M,\CC)$ containing $e$,
contradicting the admissibility of $e$. Hence $i\notin S$ and $S\subseteq\sigma\setminus i=\sigma'$.

(d) By \cite[Prop.~3.1.10]{Oxley}, every circuit of $M=(M_0\setminus\mathrm{Del})/\mathrm{Con}$
has the form $Z=C\setminus\mathrm{Con}$ with $C$ a circuit of $M_0\setminus\mathrm{Del}$. Such a $C$ is a circuit of $M_0$ avoiding $\mathrm{Del}$, so by (c) every red
edge of $C$ survives. This is the first claim. For the second, let $y\in Z$. By (F3),
$y\in C_j$ for some $j\in S$, and $j$ survives, so $y\in C_j(M)$ by \eqref{eq:membership}.
Hence $Z\subseteq\bigcup_{j\in S}C_j(M)$. For the third claim, which is the argument of \cite[Lemma~3.5]{Regular}, an element $y$ of $U_S(M)$ lies in exactly one $C_j(M)$ with $j\in S$, hence by \eqref{eq:membership} in exactly one $C_j$ with $j\in S$, so $y\in C$ by (F3), and $y\notin\mathrm{Con}$ gives $y\in Z$. Finally, an element of $M$ outside $A_\sigma(M)$ lies in no circuit of $M$, so it is a coloop.

(e) Write $N=M_0\setminus\mathrm{Del}$, so that $M=N/\mathrm{Con}$ and
$r_M(X)=r_N(X\cup\mathrm{Con})-r_N(\mathrm{Con})$ for $X\subseteq E(M)=E(N)\setminus\mathrm{Con}$.
Since $e\notin\mathrm{Con}$, the element $e$ is a coloop of $M$ if and only if
$r_M(E(M)\setminus e)<r_M(E(M))$. This in turn is equivalent to
$r_N(E(N)\setminus e)<r_N(E(N))$, and also equivalent to $e$ being a coloop of $N$. Since $i$
survives, $C_i\cap\mathrm{Del}=\emptyset$, so $C_i$ is a circuit of $N$ containing $e$, and $e$
is not a coloop of $N$.
\end{proof}

The second lemma restricts the extension sets of a labelled minor in two situations. Both are
used in the proof of Theorem~\ref{thm:B} to make a term of $B(M,\CC)$ vanish.

\begin{lemma}\label{lem:E}
Let $(M,\CC)$ be a labelled minor with surviving coordinates $\sigma$, let $\tau\subseteq\sigma$, and let $e\in C_i(M)$ be admissible.
\begin{enumerate}[label=(\alph*)]
\item If $i\notin\tau$ and $e$ lies in a circuit of $M$ contained in $A_\tau(M)\cup\{e\}$, then every
      $c\in\Ext_M(\tau)$ has $c_i=0$.
\item If $i\in\tau$, $\tau$ is dead in $M_0$, and $e$ is a loop of $M$, then
      $\Ext_M(\tau)=\emptyset$.
\end{enumerate}
\end{lemma}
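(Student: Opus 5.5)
For part (a) I plan to exhibit a nonempty $\rho\subseteq\sigma\setminus\tau$ whose relative baseline $b_{\tau\cup\rho}(M)|_\rho$ is exactly $\mathbf e_i$. By the description of $\Ext_M(\tau)$ after Definition~\ref{def:ext}, any $c$ with $c_i\ge1$ then lets $\tau\cup\rho$ be fired, which forces $c_i=0$.

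Let $Z$ be the given circuit of $M$ with $e\in Z\subseteq A_\tau(M)\cup\{e\}$. By Lemma~\ref{lem:invariants}(d), $Z=C\setminus\mathrm{Con}$ for a circuit $C$ of $M_0$ whose red set $S$ consists of surviving coordinates. That lemma also gives $Z\subseteq\bigcup_{j\in S}C_j(M)$ and $U_S(M)\subseteq Z$. Since $e$ lies in $C_i(M)$ only, $i\in S$, and I put $\rho=S\setminus\tau$, which contains $i$ because $i\notin\tau$.

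Now take $j\in\rho$ and $y\in C_j(M)\cap U_{\tau\cup\rho}(M)$. Then $y$ lies in no $C_k(M)$ with $k\in\tau$, so $y\notin A_\tau(M)$. It also lies in no other $C_k(M)$ with $k\in\rho$. As $S\subseteq\tau\cup\rho$, this gives $m_S(y)=1$, so $y\in U_S(M)\subseteq Z$. Since $y\notin A_\tau(M)$ and $Z\subseteq A_\tau(M)\cup\{e\}$, we get $y=e$. Conversely, $e$ itself is owned by $i$ alone and lies outside $A_\tau(M)$, since admissibility and $i\notin\tau$ keep it out of every $C_k(M)$ with $k\in\tau$. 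Hence $b_{\tau\cup\rho}|_\rho=\mathbf e_i$, as required. Note that in a minor the baseline coordinates $j\ne i$ may be $0$; that does no harm here.

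For part (b) I apply the same mechanism with $\{e\}$ in place of $Z$. Since $e$ is a loop of $M$, $\{e\}$ is a circuit of $M$, so $\{e\}=C\setminus\mathrm{Con}$ for a circuit $C$ of $M_0$ avoiding $\mathrm{Del}$, with surviving red set $S$ and $U_S(M)\subseteq\{e\}$. Put $\rho=S\setminus\tau$.

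The main step, and the only place where deadness is used, is to show $\rho\neq\emptyset$. Suppose instead $S\subseteq\tau$. Then $C\subseteq A_S(M_0)\subseteq A_\tau(M_0)$ by (F3). By Lemma~\ref{lem:invariants}(b), $C\cap\mathrm{Con}\subseteq\mathrm{Con}\cap A_\tau(M_0)\subseteq U_\tau(M_0)$. The remaining element $e$ lies in $C_i$ only, with $i\in\tau$, by \eqref{eq:membership} and Lemma~\ref{lem:invariants}(b) for the deleted circuits. So $e\in U_\tau(M_0)$ too, and thus $C\subseteq U_\tau(M_0)$. This contradicts $\tau$ being dead, since then $U_\tau(M_0)$ is independent and cannot contain a circuit.

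With $\rho\ne\emptyset$, the argument of part (a) applies. Any $y\in C_j(M)\cap U_{\tau\cup\rho}(M)$ with $j\in\rho$ lies in $U_S(M)\subseteq\{e\}$. But $e$ is owned by $i\in\tau$, not by $j\in\rho$. So $b_{\tau\cup\rho}|_\rho=0$, and $\tau\cup\rho$ can be fired from $(b_\tau,c)$ for every $c$. Hence $\Ext_M(\tau)=\emptyset$.
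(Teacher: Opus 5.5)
Your proof is correct and follows the paper's argument essentially step for step. It uses the same circuit $C$ with red set $S$ from Lemma~\ref{lem:invariants}(d), the same $\rho=S\setminus\tau$ with $C_j(M)\cap U_{\tau\cup\rho}(M)\subseteq U_S(M)\setminus A_\tau(M)$, and the same use of deadness to rule out $S\subseteq\tau$ in (b). One small repair is needed in your justification of $e\in U_\tau(M_0)$. That step only needs $m_\tau(e)=1$, which follows from \eqref{eq:membership} because $\tau\subseteq\sigma$. Your claim that $e$ lies in $C_i$ only is false in $M_0$ in general, since $e$ may lie in dropped circuits, and Lemma~\ref{lem:invariants}(b) does not speak to that claim.
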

\begin{proof}
Let $Z$ be a circuit of $M$ containing $e$, namely the given one in (a) and $\{e\}$ in (b).
By Lemma~\ref{lem:invariants}(d) we have $Z=C\setminus\mathrm{Con}$ for a circuit $C$ of $M_0$
avoiding $\mathrm{Del}$ whose red set $S$ survives, and $e\in C_j(M)$ for some $j\in S$. Since $C_i(M)$ is the
only surviving circuit containing $e$, we get $i\in S$. Put $\rho=S\setminus\tau$ and let
$j\in\rho$. An element of $C_j(M)\cap U_{\tau\cup\rho}(M)$ lies in exactly one $C_k(M)$ with
$k\in\tau\cup\rho$, hence in exactly one with $k\in S$ and in none with $k\in\tau$. So by
Lemma~\ref{lem:invariants}(d)
\[
  C_j(M)\cap U_{\tau\cup\rho}(M)\subseteq U_S(M)\setminus A_\tau(M)\subseteq Z\setminus A_\tau(M)\qquad(j\in\rho),
\]
and the coparking baseline $b_{\tau\cup\rho}(M)(j)=|C_j(M)\cap U_{\tau\cup\rho}(M)|$ is at most the
number of elements of $Z\setminus A_\tau(M)$ in $C_j(M)$.

(a) As $i\notin\tau$ and $C_i(M)$ is the only surviving circuit containing $e$, we have
$e\notin A_\tau(M)$, so $Z\setminus A_\tau(M)=\{e\}$, and $e$ lies in $C_j(M)$ only for $j=i$.
Hence $b_{\tau\cup\rho}(M)|_\rho\le\mathbf e_i$, with $i\in\rho$. If some $c\in\Ext_M(\tau)$ had
$c_i\ge1$, then $c|_\rho\ge b_{\tau\cup\rho}(M)|_\rho$ and the stratum $\tau\cup\rho$ could be
fired from $(b_\tau,c)$, contradicting $c\in\Ext_M(\tau)$.

(b) As $Z=\{e\}$, $e\notin C_j(M)$ for $j\ne i$, and $i\in\tau$, we get
$b_{\tau\cup\rho}(M)|_\rho=0$. If $\rho\ne\emptyset$ then $\rho$ can be fired from $0$, so
$0\notin\Ext_M(\tau)$, and $\Ext_M(\tau)$ is empty because it is closed under division. It
remains to rule out $\rho=\emptyset$. Suppose $S\subseteq\tau$. By (F3), $C\subseteq A_S\subseteq A_\tau(M_0)$.
The elements of $C$ other than $e$ lie in $\mathrm{Con}$ because $C\setminus\mathrm{Con}=\{e\}$,
so Lemma~\ref{lem:invariants}(b) gives
\[
  C\setminus e\subseteq\mathrm{Con}\cap A_\tau(M_0)\subseteq U_\tau(M_0).
\]
As for $e$ itself, it lies in exactly one surviving circuit, namely $C_i(M)$ with $i\in\tau$. So
$m_\tau(e)=1$ in $M$ and by \eqref{eq:membership} also in $M_0$, that is, $e\in U_\tau(M_0)$. Hence
$C\subseteq U_\tau(M_0)$, so $U_\tau(M_0)$ is dependent, which contradicts the deadness of
$\tau$ in $M_0$.
\end{proof}

\subsection{Proofs of the stratum identity and the main theorem}
Using the tools developed in the previous subsections, we prove Theorems~\ref{thm:B} and~\ref{thm:main}.

\begin{proof}[Proof of Theorem~\ref{thm:B}]
We prove the stronger claim that $B(M,\CC)=h(M)$ for every labelled minor $(M,\CC)$, by induction from the leaves of the deletion--contraction tree towards its root. The theorem is the case of the root pair $(M_0,\CC)$. Throughout, $\mathrm{Con}$ is independent in $M_0$ by Lemma~\ref{lem:invariants}(a).

Given a labelled minor $(M,\CC)$ with surviving coordinates $\sigma$, suppose first that there is no admissible element. Then every element of $A_\sigma(M)$ lies in at least two of the $C_j(M)$, so $U_\sigma(M)=\emptyset$ and $K_\sigma(M)=M|A_\sigma$, while every element
outside $A_\sigma$ is a coloop by Lemma~\ref{lem:invariants}(d). Deleting coloops does not change $h$, so $h(M)=h(K_\sigma(M))$. Moreover $\sigma$ is dead in $M_0$, since $U_\sigma(M)=U_\sigma(M_0)\setminus\mathrm{Con}$ by Lemma~\ref{lem:invariants}(b), which gives $U_\sigma(M_0)\subseteq\mathrm{Con}$. For $B(M,\CC)$, the term $\tau=\sigma$ in the defining sum is $t^{0}\,h(K_\sigma)\,E_\sigma(t)$ with $E_\sigma(t)=1$, as $\Ext(\sigma)=\N^{\emptyset}=\{0\}$. For every $\tau\subsetneq\sigma$, the
set $\rho=\sigma\setminus\tau$ has $b_\sigma|_\rho=0$, so $\rho$ can be fired from $0$ and $\Ext(\tau)=\emptyset$. Hence $B(M,\CC)=h(K_\sigma(M))=h(M)$. (If $\sigma=\emptyset$ the same reading gives $B(M,\CC)=1=h(M)$, all elements of $M$ being coloops.)

Now suppose there is an admissible element $e\in C_i(M)$. If $e$ is a loop then $h(M)=h(M\setminus e)$. Otherwise $e$ is not a loop and not a coloop by Lemma~\ref{lem:invariants}(e), so \eqref{eq:tutte} gives $h(M)=h(M\setminus e)+t\,h(M/e)$. Write $$B_\tau(M)=t^{|b_\tau(M)|}\,h(K_\tau(M))\,E_\tau(M)$$ for the $\tau$-term of $B(M,\CC)$, with $B_\tau(M\setminus e)=0$ when $i\in\tau$, since the coordinate $i$ is dropped in $M\setminus e$. We show that each $B_\tau$ satisfies the same recursion as $h$, that is, $B_\tau(M)=B_\tau(M\setminus e)$ if $e$ is a loop and $B_\tau(M)=B_\tau(M\setminus e)+t\,B_\tau(M/e)$ otherwise. Summing over $\tau$ and applying the induction hypothesis to the children then gives $B(M,\CC)=h(M)$.

Consider first a dead $\tau\ni i$. As $B_\tau(M\setminus e)=0$, the relation to be matched is $B_\tau(M)=0$ if $e$ is a loop and $B_\tau(M)=t\,B_\tau(M/e)$ otherwise. If $e$ is a loop, $B_\tau(M)=0$ by
Lemma~\ref{lem:E}(b). Otherwise $e\in U_\tau(M)$, by admissibility and $i\in\tau$. Then $U_\tau(M/e)=U_\tau(M)\setminus e$, so
$|b_\tau(M/e)|=|b_\tau(M)|-1$ by \eqref{eq:bsize}. We also have $A_\tau(M/e)=A_\tau(M)\setminus e$, so
$K_\tau(M/e)=\bigl((M/e)|(A_\tau\setminus e)\bigr)/(U_\tau\setminus e)=(M|A_\tau)/U_\tau=K_\tau(M)$.
Moreover $\Ext_{M/e}(\tau)=\Ext_M(\tau)$, because $e\notin C_j(M)$ for $j\notin\tau$, so every
coparking baseline $b_{\tau\cup\rho}|_\rho$ with $\rho\subseteq\sigma\setminus\tau$ is the same
in $M$ and $M/e$. Hence $B_\tau(M)=t\,B_\tau(M/e)$.

Consider next a dead $\tau\not\ni i$. This includes $\tau=\emptyset$, for which $A_\emptyset=U_\emptyset=\emptyset$ and $h(K_\emptyset)=1$. As $e\notin A_\tau(M)$, the pairs $M$, $M\setminus e$ and $M/e$ (when $e$ is not a loop) have the same $A_\tau$, the same $U_\tau$ and the same $|b_\tau|$. Their dead nodes are $K_\tau(M)=(M|A_\tau)/U_\tau$, then $K_\tau(M\setminus e)=((M\setminus e)|A_\tau)/U_\tau=K_\tau(M)$, since deleting an element outside $A_\tau$ does not change the restriction to $A_\tau$, and $K_\tau(M/e)=((M/e)|A_\tau)/U_\tau$, where $(M/e)|A_\tau=(M|(A_\tau\cup e))/e$ may differ from $M|A_\tau$. From the last paragraph of the proof of Lemma~\ref{lem:extcontract}, which uses only the definition of the coparking baselines and applies to any family of sets, we get
$$\Ext_M(\tau)=\PP^*((C_j(M)\setminus A_\tau)_{j\in\sigma\setminus\tau}),$$
and in this family $e$ lies in $C_i(M)\setminus A_\tau$ only. So Lemma~\ref{lem:dc-coparking} gives $E_\tau(M)=E_\tau(M\setminus e)+t\,E'$, where $t\,E'$ is the generating function of the $c\in\Ext_M(\tau)$ with $c_i\ge1$, and $E'=E_\tau(M/e)$ when $e$ is not a loop.

The relation to be matched is $B_\tau(M)=B_\tau(M\setminus e)$ if $e$ is a loop and $B_\tau(M)=B_\tau(M\setminus e)+t\,B_\tau(M/e)$ otherwise, and we split according to whether $e$ lies in a circuit of $M$ contained in $A_\tau\cup e$, which is the case when $e$ is a loop. If it does, then by Lemma~\ref{lem:E}(a) every $c\in\Ext_M(\tau)$ has $c_i=0$, so $E'=0$ and $B_\tau(M)=B_\tau(M\setminus e)$. For a loop $e$ this is the relation to be matched. For $e$ not a loop, $E_\tau(M/e)=E'=0$ gives $B_\tau(M/e)=0$, so $B_\tau(M)=B_\tau(M\setminus e)+t\,B_\tau(M/e)$, again the relation to be matched. If $e$ lies in no circuit of $M$ inside $A_\tau\cup e$, then $e$ is not a loop, and it is a coloop of $M|(A_\tau\cup e)$, so $(M|(A_\tau\cup e))/e=M|A_\tau$ and $K_\tau(M/e)=K_\tau(M)$. Hence $E_\tau(M)=E_\tau(M\setminus e)+t\,E_\tau(M/e)$ gives $B_\tau(M)=B_\tau(M\setminus e)+t\,B_\tau(M/e)$, the relation to be matched. Together with the case $\tau\ni i$ this shows that every $B_\tau$ satisfies the recursion of $h(M)$, and the induction is complete.
\end{proof}

\begin{proof}[Proof of Theorem~\ref{thm:main}]
The set $\PP^*(F)$ is closed under division by Lemma~\ref{lem:C}, and its degree sequence is $h(M)$ by \eqref{eq:degvec} and Theorem~\ref{thm:B}. For purity let $a\in\PP^*(F)$, put $\tau=\max\mathrm{Dom}(a)$ (or $\tau=\emptyset$ if $\mathrm{Dom}(a)$ is empty), and write $a=(b_\tau+m,c)$ with $m\in F(\tau)$ and $c\in\Ext(\tau)$ as in \eqref{eq:decomp}. Choose
$m'\ge m$ maximal in $F(\tau)$ and $c'\ge c$ maximal in $\Ext(\tau)$, and let
$a'=(b_\tau+m',c')$, which lies in $\PP^*(F)$ with $\max\mathrm{Dom}(a')=\tau$ by the remark after \eqref{eq:decomp}. Thus every element of $\PP^*(F)$ lies below an element of degree
\[
  |b_\tau|+r(K_\tau)+\bigl(r(A_R)-r(A_\tau)\bigr)=r(A_R)
\]
by Definition~\ref{def:fibre}(i), Corollary~\ref{cor:extpure} and Lemma~\ref{lem:rank}. Therefore every maximal element of $\PP^*(F)$ has that degree, which is the degree of $h(M)$ and also $r(M)$ minus the number of coloops (Section~\ref{sec:prelim}).
\end{proof}

\subsection{Specialisation to cycle systems and generalized cycle systems}

\begin{proposition}\label{prop:gcs-trivial}
Let $M$ be a matroid with a basis $B_0$ and fundamental circuits $\CC=(C_i)_{i\in R}$. The
following are equivalent.
\begin{enumerate}[label=(\alph*)]
\item $\CC$ is a generalized cycle system.
\item Every dead node $K_\sigma$ has rank zero.
\item At every dead stratum $\sigma$ the trivial set $\{0\}$ is a fibre.
\end{enumerate}
When they hold, $\{0\}$ is the only fibre at every dead stratum, so $(M,B_0)$ carries
exactly one fibre system.
\end{proposition}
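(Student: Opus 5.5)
We prove (a)$\Leftrightarrow$(b) directly from the definitions and (b)$\Leftrightarrow$(c) from Lemma~\ref{lem:zerofibre} and the degree-sequence requirement in Definition~\ref{def:fibre}(i). The uniqueness statement then also comes from Lemma~\ref{lem:zerofibre}.

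For (a)$\Leftrightarrow$(b), fix a nonempty $\sigma\subseteq R$. By Definition~\ref{def:gcs}, condition (a) says that $U_\sigma$ is either dependent or a basis of $M|A_\sigma$. If $U_\sigma$ is dependent, $\sigma$ is live and imposes nothing in (b). If $U_\sigma$ is independent, $\sigma$ is dead, and by Lemma~\ref{lem:rank} we have $r(K_\sigma)=r(A_\sigma)-|U_\sigma|$. This is zero exactly when the independent set $U_\sigma\subseteq A_\sigma$ has full rank $r(A_\sigma)$, that is, when $U_\sigma$ is a basis of $M|A_\sigma$. So for each $\sigma$ separately, ``dependent or a basis of $M|A_\sigma$'' is equivalent to ``live, or dead with $r(K_\sigma)=0$''. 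Quantifying over all $\sigma$ gives (a)$\Leftrightarrow$(b).

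For (b)$\Rightarrow$(c), Lemma~\ref{lem:zerofibre} says that at a dead stratum of rank zero, $\{0\}$ is an unconditional fibre. By Lemma~\ref{lem:baseline} it is a fibre whatever fibres are chosen below. So choosing $\{0\}$ at every dead stratum, in order of increasing size, never meets an obstruction.

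For (c)$\Rightarrow$(b), condition (i) of Definition~\ref{def:fibre} requires $\{0\}$ to have degree sequence $h(K_\sigma)$. Hence $h(K_\sigma)=(1)$, so the degree of $h(K_\sigma)$ is $0$. By Lemma~\ref{lem:rank}, $K_\sigma$ has no coloops, so that degree equals $r(K_\sigma)$, which is therefore $0$.

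For uniqueness, once (b) holds Lemma~\ref{lem:zerofibre} says that $\{0\}$ is the only fibre at each dead stratum, independently of the choices below. The fibre system is therefore forced.

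The only point needing care is (c)$\Rightarrow$(b). Having degree sequence $(1)$ gives $r(K_\sigma)=0$ only because $K_\sigma$ has no coloops, and that is exactly what Lemma~\ref{lem:rank} supplies. The quantifier issue in (c) is harmless: under (b) every fibre is unconditional, so the order in which fibres are chosen does not matter.
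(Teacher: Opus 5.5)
Your proof is correct and follows the paper's own argument essentially step for step. For (a)$\Leftrightarrow$(b), live strata satisfy the generalized-cycle-system condition automatically, and for dead $\sigma$ the rank $r(K_\sigma)=r(A_\sigma)-|U_\sigma|$ vanishes exactly when $U_\sigma$ is a basis of $M|A_\sigma$; (b)$\Leftrightarrow$(c) and the uniqueness clause come from Definition~\ref{def:fibre}(i) and Lemma~\ref{lem:zerofibre}. Your care over (c)$\Rightarrow$(b) via the no-coloop clause of Lemma~\ref{lem:rank} is a point the paper leaves implicit, and it also follows from the clause of Definition~\ref{def:fibre}(i) that maximal offsets have degree $r(K_\sigma)$.
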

\begin{proof}
For a live stratum the condition of Definition~\ref{def:gcs} holds automatically. So $\CC$ is a generalized cycle system if and only if every dead $\sigma$ has $U_\sigma$ a basis of $M|A_\sigma$. For a dead $\sigma$ the set $U_\sigma$ is independent, so this says that $U_\sigma$ spans $A_\sigma$. On the other hand $r(K_\sigma)=0$ if and only if the
empty set is the only basis of $K_\sigma$, which again says that $U_\sigma$ spans $A_\sigma$. This proves (a)$\Leftrightarrow$(b).

If $\{0\}$ is a fibre at $\sigma$ then $h(K_\sigma)=(1)$ by condition (i), so $r(K_\sigma)=0$, giving (c)$\Rightarrow$(b). Conversely, if $r(K_\sigma)=0$ then $\{0\}$ is a fibre at $\sigma$, and the only one, by Lemma~\ref{lem:zerofibre}. This gives (b)$\Rightarrow$(c) and the final clause.
\end{proof}

\begin{corollary}\label{cor:specialise}
Let $M$ be a matroid with a basis $B_0$ and fundamental circuits $\CC=(C_i)_{i\in R}$.
\begin{enumerate}[label=(\alph*)]
\item If $\CC$ is a cycle system, there are no dead strata, the empty fibre system is
      the only one, and $\PP^*(F)=\PP^*(\CC)$ is the set of coparking functions of
      \cite{CycleSystems}.
\item If $\CC$ is a generalized cycle system, the unique fibre system of
      Proposition~\ref{prop:gcs-trivial} gives
      \[
        \PP^*(F)=\Bigl\{a\in\N^R:\
        \begin{array}{l}
          \text{no live stratum can be fired from }a,\\[2pt]
          a|_\tau=b_\tau\ \text{for every dead }\tau\text{ that can be fired from }a
        \end{array}\Bigr\}.
      \]
\end{enumerate}
In both cases $\PP^*(F)$ is a pure multicomplex with degree sequence $h(M)$, by
Theorem~\ref{thm:main}.
\end{corollary}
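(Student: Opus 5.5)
The plan is to read Corollary~\ref{cor:specialise} off Proposition~\ref{prop:gcs-trivial}, Definition~\ref{def:relaxed} and Theorem~\ref{thm:main}; no new combinatorics is required.

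\emph{Part (a).} Suppose $\CC$ is a cycle system. Then by Definition~\ref{def:cs} the unique union $U_S$ is dependent for every nonempty $S\subseteq R$. Hence every stratum is live in the sense of Definition~\ref{def:strata}, and there are no dead strata. A fibre system is a choice of fibre at each dead stratum, so the only fibre system is the empty one. In Definition~\ref{def:relaxed} the requirement ``every stratum that can be fired from $a$ is dead'' then reduces to ``no stratum can be fired from $a$''. This is exactly the coparking condition of Section~\ref{sec:coparking}, so $\PP^*(F)=\PP^*(\CC)=\Ext(\emptyset)$. Throughout, the cycles are the fundamental circuits $C_i$, indexed by $R$, and $|R|$ is the corank $g$.

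\emph{Part (b).} Suppose $\CC$ is a generalized cycle system. By Proposition~\ref{prop:gcs-trivial}, every dead node has rank zero and $F(\tau)=\{0\}$ is the unique fibre at each dead $\tau$. Substituting this into Definition~\ref{def:relaxed}, $a\in\PP^*(F)$ holds exactly when every stratum that can be fired from $a$ is dead with $a|_\tau-b_\tau\in\{0\}$, that is, with $a|_\tau=b_\tau$. Splitting this condition into live and dead strata gives the displayed set.

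The final sentence is Theorem~\ref{thm:main} applied to the fibre system in each case. In case (a) this recovers Theorem~\ref{thm:cdmpy}, which serves as a consistency check rather than a new input.

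I expect no real obstacle. The only point needing care is the convention that the unique union of a cycle system is computed from the family $\CC$, whereas deadness refers to the $U_\sigma$ of the fundamental circuits. These coincide here because $\CC$ is taken to be the fundamental circuits themselves.
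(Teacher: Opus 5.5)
Your proposal is correct and follows the paper's proof essentially step for step. In (a) you note that every unique union is dependent, so there are no dead strata and Definition~\ref{def:relaxed} reduces to the coparking condition. In (b) you substitute the unique trivial fibre of Proposition~\ref{prop:gcs-trivial} into Definition~\ref{def:relaxed}, split live from dead strata, and then invoke Theorem~\ref{thm:main}, exactly as the paper does.
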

\begin{proof}
(a) If $\CC$ is a cycle system then every $U_\sigma$ is dependent, so there are no dead strata and a fibre system has nothing to assign. Definition~\ref{def:relaxed} then says $a\in\PP^*(F)$ if and only if no nonempty $\tau$ can be fired from $a$, which is equivalent to $a \in \PP^*(\CC)$.

(b) By Proposition~\ref{prop:gcs-trivial} every dead stratum has the fibre $\{0\}$, so the condition $a|_\tau-b_\tau\in F(\tau)$ of Definition~\ref{def:relaxed} reads $a|_\tau=b_\tau$. Hence $a\in\PP^*(F)$ if and only if no live $\tau$ can be fired from $a$ and every dead
$\tau$ that can be fired from $a$ has $a|_\tau=b_\tau$, which is the displayed description.
\end{proof}

Part (b) is the pure multicomplex theorem for the generalized cycle systems formed by the fundamental circuits of a basis. This case had not been proved before, and whether it extends to an arbitrary collection of cycles is Question~\ref{q:cycles}.

\subsection{The truncated \texorpdfstring{$h$}{h}-polynomial}\label{sec:truncated}

By Lemma~\ref{lem:mono}(d), the trivial set $\{0\}$ satisfies condition (ii) of
Definition~\ref{def:fibre} at every dead stratum, whatever its rank. So the system $F_0$ with $F_0(\tau)=\{0\}$ at every dead $\tau$ is a
fibre system in every respect except condition (i). Lemmas~\ref{lem:C} and
\ref{lem:D} do not use (i), so they apply to $F_0$. Put
\[
  \PP^*_0(M,B_0)=\PP^*(F_0),
\]
the set displayed in Corollary~\ref{cor:specialise}(b), and
\[
  \tilde h(M,B_0)=\sum_{a\in\PP^*_0}t^{|a|},
\]
the \emph{truncated $h$-polynomial} of $(M,B_0)$: the coparking inequalities on every stratum,
with the fibre forced to be trivial wherever the recursion stops. The truncated $h$-polynomial
depends on $B_0$ and not only on $M$.

\begin{proposition}[The gap is the fibres]\label{prop:gap}
$\PP^*_0(M,B_0)$ is a multicomplex and $\PP^*_0\subseteq\PP^*(F)$ for every fibre system $F$. Summing over $\tau=\emptyset$ and all dead strata,
\[
  \tilde h(M,B_0)=\sum_{\tau}t^{|b_\tau|}E_\tau(t),\qquad
  h(M)-\tilde h(M,B_0)=\sum_{\tau\ne\emptyset}t^{|b_\tau|}\bigl(h(K_\tau)-1\bigr)E_\tau(t).
\]
The gap $h(M)-\tilde h(M,B_0)$ has nonnegative coefficients. Moreover it vanishes if and only if every dead node has rank zero. By Proposition~\ref{prop:gcs-trivial} this happens if and only if the fundamental circuits of $B_0$ form a
generalized cycle system.
\end{proposition}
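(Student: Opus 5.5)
The approach is to treat $F_0$ as a fibre system that satisfies everything except condition (i), and to reuse the decomposition \eqref{eq:decomp} for it.

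First, closure under division. Lemma~\ref{lem:C} does not use condition (i), and $F_0(\tau)=\{0\}$ is closed under division. Hence $\PP^*_0$ is a multicomplex.

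Second, the inclusion $\PP^*_0\subseteq\PP^*(F)$. By condition (i) every fibre $F(\tau)$ contains $0$. So whenever $a\in\PP^*_0$ and a dead $\tau$ can be fired from $a$, the offset is $a|_\tau-b_\tau=0\in F(\tau)$. Live strata are excluded in both definitions, so $a\in\PP^*(F)$.

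Third, the formula for $\tilde h$. I would rerun the argument giving \eqref{eq:decomp}. Lemma~\ref{lem:D}(a),(c) involve no fibres. Lemma~\ref{lem:D}(b) uses condition (ii) only, which holds for $F_0$ by Lemma~\ref{lem:mono}(d): a stratum firable from $b_\mu$ is dead and fired exactly. The surjectivity argument after \eqref{eq:decomp} likewise uses only these facts. This gives
\[
  \PP^*_0=\bigsqcup_\tau \{b_\tau\}\times\Ext(\tau),
\]
and therefore $\tilde h=\sum_\tau t^{|b_\tau|}E_\tau(t)$. Subtracting this from Theorem~\ref{thm:B} yields the stated gap formula. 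The $\tau=\emptyset$ term cancels because $h(K_\emptyset)=1$.

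Fourth, nonnegativity and vanishing. $E_\tau$ has nonnegative coefficients, and $h(K_\tau)-1$ has nonnegative coefficients because $h_0=1$. So the gap is nonnegative termwise, and it vanishes exactly when every summand does. By Corollary~\ref{cor:extpure}, $\Ext(\tau)$ is nonempty, so $E_\tau\ne0$; hence a summand vanishes iff $h(K_\tau)=1$. By Lemma~\ref{lem:rank}, $K_\tau$ has no coloops, so the degree of $h(K_\tau)$ is $r(K_\tau)$. Thus $h(K_\tau)=1$ iff $r(K_\tau)=0$. The final equivalence is Proposition~\ref{prop:gcs-trivial}.

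The main point of care is the third step: checking that the decomposition truly never invokes condition (i) for $F_0$. The only potentially delicate spot is Lemma~\ref{lem:D}(b), whose converse direction relies on condition (ii) at $\mu$, and this is supplied by Lemma~\ref{lem:mono}(d).
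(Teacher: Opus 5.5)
Your proposal is correct and follows essentially the same route as the paper. Both proofs obtain condition (ii) for the trivial system $F_0$ from Lemma~\ref{lem:mono}(d), reuse Lemmas~\ref{lem:C} and~\ref{lem:D} to get closure and the decomposition \eqref{eq:decomp}, subtract from Theorem~\ref{thm:B}, and then conclude with $E_\tau\ne0$ (Corollary~\ref{cor:extpure}) and Proposition~\ref{prop:gcs-trivial}. The only difference is that you justify $h(K_\tau)=1\iff r(K_\tau)=0$ explicitly through Lemma~\ref{lem:rank}, a step the paper leaves implicit.
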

\begin{proof}
By Lemma~\ref{lem:mono}(d) the trivial fibre satisfies (ii) at every dead stratum.
Lemmas~\ref{lem:C} and~\ref{lem:D} use only (ii) and closure under
division, so $\PP^*_0$ is a multicomplex and \eqref{eq:decomp} holds for $F_0$. As the degree
sequence of $F_0(\tau)=\{0\}$ is $1$, \eqref{eq:degvec} gives the first formula, and
subtracting it from Theorem~\ref{thm:B} gives the second. Since $0\in F(\tau)$ for every fibre
system $F$, $\PP^*_0\subseteq\PP^*(F)$. The gap has nonnegative coefficients because
$h_0(K_\tau)=1$, and since $E_\tau(t)\ne0$ by Corollary~\ref{cor:extpure} it vanishes if and only if every
$h(K_\tau)=1$, if and only if every dead node has rank zero, if and only if $\CC$ is a generalized cycle system by
Proposition~\ref{prop:gcs-trivial}.
\end{proof}

Purity of $\PP^*_0$ is not among the equivalent conditions. On $G_0$ with its tree $B_0$ the gap is $t^5$, yet
$\PP^*_0(M(G_0),B_0)$ is pure: its $13$ maximal elements all have degree $5$, the element $b_\sigma$ of
the rank-one stratum $\sigma=\{24,35,45\}$ lying below $b_R$ \cite{Notebook}. So the truncated
$h$-polynomial $\tilde h(M(G_0),B_0)=(1,4,10,17,20,13)$ is a pure $O$-sequence.

\subsection{Two criteria for fibres}\label{sec:criteria}

Theorem~\ref{thm:main} asks for a fibre at every dead stratum. Two criteria decide the
question in many cases without any recursion. The first is necessary for a fibre to exist,
the second sufficient, and both depend only on the coparking baselines below the stratum. Fix a dead stratum
$\sigma$. A live $\tau\subsetneq\sigma$ \emph{kills} an offset $m\in\N^\sigma$ if $\tau$ can be fired from $b_\sigma+m$,
which by Lemma~\ref{lem:mono}(c) happens if and only if
$m|_\tau\ge\exc_\tau$, and $m$ is \emph{spared} if no live proper stratum kills it. By
Definition~\ref{def:fibre}(i) and (ii), every fibre at $\sigma$ is a pure multicomplex of
spared offsets with degree sequence $h(K_\sigma)$. So if the spared offsets contain no such
multicomplex, then $\sigma$ is obstructed for every fibre system on $(M,B_0)$, whatever the fibres
below. At the dead strata that can be fired, condition (ii) can exclude further offsets through the choices made at lower strata,
which is why the criterion is not sufficient.

The sufficient criterion is Lemma~\ref{lem:baseline}, which is
equally choice-free: an unconditional fibre is a fibre for every fibre system below
$\sigma$. Its hypothesis says that every
$m\in F(\sigma)$ is spared and that $b_\sigma+m$ agrees with the baseline on every dead stratum that
can be fired from it.

In checking the hypothesis one needs only the \emph{thresholds} of the proper strata: for $\tau\subsetneq\sigma$
put $q_\tau=\prod_{j\in\tau}x_j^{\exc_\tau(j)}$. Then $\tau$ can be fired from $b_\sigma+m$ if and only
if $q_\tau$ divides $x^m$, and $\exc_\tau(j)=|\Delta_\tau\cap C_j|$ counts the shared elements
whose owner set meets $\tau$ exactly in $j$ (Lemma~\ref{lem:mono}). Only thresholds of degree at
most $r(K_\sigma)$ can matter, since every offset in a fibre has degree at most $r(K_\sigma)$ by Definition~\ref{def:fibre}(i).

\section{Combinatorial criteria on radius-two graphs}\label{sec:radius2}

The model of Section~\ref{sec:model} reduces Stanley's conjecture for $(G,B_0)$ to the existence
of fibres at the dead strata. In this section we specialise to graphs of radius two with the
breadth-first tree from a centre, and translate the deadness of a stratum and the rank of a dead node into
statements one can read off the graph. Sections~\ref{sec:roots} to~\ref{sec:pathnf} hold for every radius-two root, while Sections~\ref{sec:killers} and~\ref{sec:htwo} specialise to roots with three and with two fertile hubs. Together they are the tools of Section~\ref{sec:cases}, which does not need all of their generality. We have kept it because the original aim of the project was the conjecture for all radius-two graphs with the breadth-first tree (Section~\ref{sec:outlook}), and the general results of this section are the foundation for further work in that direction.

\subsection{Roots}\label{sec:roots}

\begin{definition}\label{def:root}
A \emph{radius-two root} is a pair $(G,B_0)$ where $G$ is a connected simple graph of radius at
most two with a distinguished vertex $o$ of eccentricity at most two, the \emph{centre}, and $B_0$ is a
breadth-first spanning tree of $G$ rooted at $o$. The neighbours of $o$ are the \emph{hubs} and
the vertices at distance two the \emph{outer vertices} or \emph{children}. The tree edges are the
\emph{spines} $s_h=oh$, one per hub, and the \emph{legs} $\ell_c=p(c)c$, one per child, where
$p(c)$ is the tree parent of $c$. A hub is \emph{fertile} if it has a child in $B_0$ and
\emph{barren} otherwise. We write $H$ for the number of fertile hubs, and the \emph{branch} of a hub
is $B_h=\{h\}\cup\{c:p(c)=h\}$.
\end{definition}

Every edge of $G$ joins two vertices whose distances from $o$ differ by at most one, so a red
edge joins two hubs, a hub and a child, or two children, and no red edge is incident with $o$.
The tree path of a red edge has at most four edges, so $|C_i|\le5$ for every $i\in R$.

\subsection{Multiplicities and dead strata}

For a stratum $\sigma$ write $\deg_\sigma(v)$ for the number of edges of $\sigma$ at a vertex
$v$ and $\out_\sigma(h)$ for the number of edges of $\sigma$ with exactly one end in the branch
$B_h$.

\begin{lemma}[Multiplicity rules]\label{lem:mult}
For every child $c$ and every hub $h$,
$m_\sigma(\ell_c)=\deg_\sigma(c)$ and $m_\sigma(s_h)=\out_\sigma(h)$ (Figure~\ref{fig:mult}).
\end{lemma}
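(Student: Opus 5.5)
The plan is to compute, for each tree edge $e$, the set of red edges $i$ whose fundamental circuit contains $e$. This set is exactly the red part of the fundamental cocircuit $D_e$ of (F2). In a graph, a tree edge $e$ lies on the tree path of a red edge $i=uv$ exactly when $e$ separates $u$ from $v$ in $B_0$. Equivalently, removing $e$ from $B_0$ leaves two components, and $i$ has exactly one end in the component not containing $o$. So $m_\sigma(e)$ counts the edges of $\sigma$ with exactly one end in the subtree cut off by $e$. The whole proof then reduces to identifying that subtree for the two kinds of tree edges in Definition~\ref{def:root}.

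For a leg $\ell_c=p(c)c$, the subtree below $\ell_c$ in $B_0$ is the single vertex $\{c\}$. This holds because $B_0$ is breadth-first from $o$ and $G$ has radius at most two from $o$, so a child at distance two has no tree children. An edge of $\sigma$ has exactly one end in $\{c\}$ precisely when it is incident with $c$. There are no loops since $G$ is simple, and $\sigma\subseteq R$ contains no tree edge. Hence $m_\sigma(\ell_c)=\deg_\sigma(c)$.

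For a spine $s_h=oh$, the subtree below $s_h$ is $h$ together with its tree children, which is the branch $B_h$ as defined. So $m_\sigma(s_h)$ is the number of edges of $\sigma$ with exactly one end in $B_h$, namely $\out_\sigma(h)$. It is worth noting that red edges inside $B_h$ (hub to own child, or child to child within the branch) are correctly excluded, since their tree path avoids $s_h$.

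The only step needing care is the justification that $e\in C_i$ iff $i$ crosses the cut of $B_0\setminus e$. This is the standard description of fundamental cocircuits in a graph: the tree path between the ends of $i$ uses $e$ iff those ends lie in different components of $B_0\setminus e$. It can be quoted directly from (F2), where $D_e$ is the fundamental cocircuit, i.e.\ the edge cut of $B_0\setminus e$. I expect no real obstacle beyond stating this cleanly.
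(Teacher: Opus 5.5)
Your proposal is correct and follows essentially the same route as the paper: identify the side of the tree cut off by each tree edge ($\{c\}$ for a leg, because children are leaves of the breadth-first tree, and $B_h$ for a spine), then count the edges of $\sigma$ with exactly one end on that side. The justification you flag, that the tree path of $i$ uses $e$ exactly when $e$ separates the ends of $i$ in $B_0$, is the same basic tree fact the paper uses implicitly.
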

\begin{proof}
Given a red edge $i$, the circuit $C_i$ contains $\ell_c=p(c)c$ if and only if $i$ has $c$ as an endpoint, since $c$ is a leaf of $B_0$. Similarly, $C_i$ contains $s_h=oh$ if and only if one end of $i$ lies in $B_h$ and the other does not, since $B_h$ is the set of vertices whose tree path to $o$ passes through $oh$. Counting over $i\in\sigma$ gives both formulas.
\end{proof}

Lemma~\ref{lem:mult} gives the following description of $U_\sigma$ and of the ground set of $K_\sigma$.

\begin{corollary}\label{cor:split}
$\ell_c\in U_\sigma$ if and only if $\deg_\sigma(c)=1$, $\ell_c\in A_\sigma\setminus U_\sigma$ if and only if
$\deg_\sigma(c)\ge2$, and $\ell_c\notin A_\sigma$ if and only if $\deg_\sigma(c)=0$, and likewise for $s_h$ with $\out_\sigma(h)$ in place of $\deg_\sigma(c)$. Hence
\[
  U_\sigma=\sigma\ \cup\ \{\ell_c:\deg_\sigma(c)=1\}\ \cup\ \{s_h:\out_\sigma(h)=1\},
\]
and with
\begin{align*}
  I(\sigma)&=\#\{c\text{ child}:\deg_\sigma(c)\ge2\}+\#\{h\text{ barren}:\out_\sigma(h)\ge2\},\\
  \phi(\sigma)&=\#\{h\text{ fertile}:\out_\sigma(h)\ge2\},
\end{align*}
the ground set of $K_\sigma$ has $I(\sigma)+\phi(\sigma)$ elements.
\end{corollary}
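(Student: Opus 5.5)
The plan is to read off Corollary~\ref{cor:split} from Lemma~\ref{lem:mult} by sorting every element of $E$ according to its multiplicity $m_\sigma$, using only the definitions $U_\sigma=\{e:m_\sigma(e)=1\}$ and $A_\sigma=\{e:m_\sigma(e)\ge1\}$. The ground set of the radius-two root consists of the red edges $R$, the legs $\ell_c$ (one per child $c$) and the spines $s_h$ (one per hub $h$), and these three kinds are treated separately.

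For the tree edges, Lemma~\ref{lem:mult} gives $m_\sigma(\ell_c)=\deg_\sigma(c)$ and $m_\sigma(s_h)=\out_\sigma(h)$. The three cases for $\ell_c$ are then immediate: multiplicity $1$ puts $\ell_c$ in $U_\sigma$, multiplicity at least $2$ puts it in $A_\sigma\setminus U_\sigma$, and multiplicity $0$ puts it outside $A_\sigma$. The same holds for $s_h$ with $\out_\sigma(h)$ in place of $\deg_\sigma(c)$. For a red edge $i$, fact (F1) says $i\in C_j$ exactly when $j=i$, so $m_\sigma(i)$ is $1$ if $i\in\sigma$ and $0$ otherwise. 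Hence the red part of $U_\sigma$ is $\sigma$, and no red edge lies in $A_\sigma\setminus U_\sigma$. Putting the three kinds together gives the displayed formula for $U_\sigma$.

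By Definition~\ref{def:deadnode}, the ground set of $K_\sigma$ is $A_\sigma\setminus U_\sigma$. By the above, this set is the legs $\ell_c$ with $\deg_\sigma(c)\ge2$ together with the spines $s_h$ with $\out_\sigma(h)\ge2$. Splitting the spines into barren and fertile hubs gives exactly $I(\sigma)+\phi(\sigma)$ elements, since legs and spines are distinct edges indexed bijectively by children and by hubs.

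There is no real obstacle here. The only point to state explicitly is that red edges never have multiplicity at least $2$, which is (F1). Otherwise this is pure bookkeeping on Lemma~\ref{lem:mult}. The grouping of barren-hub spines with the legs in $I(\sigma)$ is only notation and needs no argument.
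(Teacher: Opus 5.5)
Your proposal is correct and follows the paper's route: the corollary is read off from Lemma~\ref{lem:mult} by sorting the elements of $E$ by their multiplicity $m_\sigma$, with (F1) showing that the red part of both $U_\sigma$ and $A_\sigma$ is exactly $\sigma$. Your explicit treatment of the red edges, and the split of the spines into barren and fertile hubs to obtain $I(\sigma)+\phi(\sigma)$, are exactly the steps the paper leaves implicit.
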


\begin{figure}[h]\centering
\begin{tikzpicture}[scale=0.95]
\node[ctr] (o) at (0,2.2) {$o$};
\node[hub] (h) at (0,0.9) {$h$};
\node[ch] (c1) at (-1.4,-0.4) {$c$}; \node[ch] (c2) at (0.5,-0.4) {$c^{\prime}$};
\draw[tr] (o)--(h) node[midway,right,black,lab]{$s_h$};
\draw[tr] (h)--(c1) node[midway,left,black,lab]{$\ell_c$};
\draw[tr] (h)--(c2);
\draw[rd] (c1) to[bend right=20] (-3.0,-0.4); \draw[rd] (c1) to[bend left=20] (-3.0,0.6);
\node at (-3.4,-0.4) {$\cdot$}; \node at (-3.4,0.6) {$\cdot$};
\draw[rd] (c2) to[bend left=15] (2.4,-0.4); \node at (2.8,-0.4) {$\cdot$};
\node[anchor=west,align=left,lab] at (3.6,0.9)
 {$\deg_\sigma(c)=2\Rightarrow \ell_c\in A_\sigma\setminus U_\sigma$\\[2pt]
  $\mathrm{out}_\sigma(h)=3\Rightarrow s_h\in A_\sigma\setminus U_\sigma$\\[2pt]
  $\deg_\sigma(c^{\prime})=1\Rightarrow \ell_{c^{\prime}}\in U_\sigma$};
\end{tikzpicture}
\caption{Lemma~\ref{lem:mult}: a leg counts the red edges at its child, and a spine counts the red
edges leaving its branch. Here three red edges leave $B_h$, two at $c$ and one at $c'$.}
\label{fig:mult}
\end{figure}

The following construction is motivated by the \emph{blueprint} of \cite{Triconed}.

\begin{definition}
Let $\sigma$ be a forest. The \emph{attachment graph} $A(\sigma)$ is the multigraph whose
vertices are the connected components of $\sigma$ together with the vertices of $G$ not covered
by $\sigma$, and whose edges are the elements of $U_\sigma\setminus\sigma$, namely the legs $\ell_c$
with $\deg_\sigma(c)=1$ and the spines $s_h$ with $\out_\sigma(h)=1$, each drawn between the
images of its endpoints.
\end{definition}

\begin{theorem}[Dead strata]\label{thm:indep}
A stratum $\sigma$ is dead if and only if $\sigma$ is a forest and $A(\sigma)$ is a forest.
\end{theorem}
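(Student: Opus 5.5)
The plan is to use the standard fact that an edge set $X$ of a graph is independent (a forest) if and only if some subset $Y\subseteq X$ is a forest and the graph $G/Y$ restricted to $X\setminus Y$ is a forest. Equivalently, contracting a forest $Y$ and then asking whether the remaining edges form a forest in the quotient decides whether $Y\cup(X\setminus Y)$ is a forest. By Corollary~\ref{cor:split} we have $U_\sigma=\sigma\sqcup L\sqcup S$, where $L=\{\ell_c:\deg_\sigma(c)=1\}$ and $S=\{s_h:\out_\sigma(h)=1\}$. So $\sigma$ is dead if and only if this edge set is a forest of $G$. We take $Y=\sigma$.

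The forward direction comes first. If $U_\sigma$ is a forest, then its subset $\sigma$ is a forest. Contracting each component of $\sigma$ to a point gives the multigraph $G/\sigma$. Its vertices are exactly the components of $\sigma$ together with the uncovered vertices, which are the vertices of $A(\sigma)$. The images of the edges of $L\cup S$ in $G/\sigma$ are exactly the edges of $A(\sigma)$ as defined. In $G/\sigma$ a set of edges disjoint from $\sigma$ contains a cycle exactly when that set together with $\sigma$ contains a cycle in $G$. A loop in the quotient corresponds to an edge joining two vertices of one component of $\sigma$, which closes a cycle with the tree path in that component. A longer cycle in the quotient lifts by inserting paths inside the components. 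Hence $A(\sigma)$ is a forest.

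The converse follows by the same correspondence run backwards. Suppose $\sigma$ is a forest and $A(\sigma)$ is a forest, and suppose $U_\sigma$ contained a circuit $C$. Then $C\not\subseteq\sigma$, so $C\setminus\sigma$ is nonempty. Its image in $G/\sigma$ is a nonempty set of edges in which every quotient vertex has even degree, because each component of $\sigma$ meets $C$ in a union of paths whose endpoints pair up. This image therefore contains a cycle of $A(\sigma)$, or a loop when a single edge joins one component to itself, and that is a contradiction.

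I will state the contraction fact once in the form $r(\sigma\cup X)=r(\sigma)+r_{M/\sigma}(X)$, and in terms of graphs as the handshake/even-degree argument above. Then both directions are immediate.

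The only step needing care is the bookkeeping that the edges of $A(\sigma)$ are precisely the images of $U_\sigma\setminus\sigma$, including parallel edges and loops. An edge of $U_\sigma\setminus\sigma$ with both ends in one component of $\sigma$ must become a loop of $A(\sigma)$, which in particular makes $A(\sigma)$ not a forest. This must match the definition's phrase ``drawn between the images of its endpoints'', and I expect that reconciliation, rather than any real mathematics, to be the main obstacle. The radius-two structure is used only through Corollary~\ref{cor:split}, which identifies $U_\sigma\setminus\sigma$.
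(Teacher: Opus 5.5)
Your proposal is correct and is essentially the paper's proof. Like the paper, you contract the forest $\sigma$ inside $U_\sigma$, identify the image of $U_\sigma\setminus\sigma$ with $A(\sigma)$ via Corollary~\ref{cor:split}, and use that contracting a forest preserves both acyclicity and the presence of a cycle; your cycle-lifting and even-degree arguments spell out that last fact, and the loop bookkeeping you flag is settled directly by the definition of $A(\sigma)$, which draws each edge between the images of its endpoints.
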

\begin{proof}
If $\sigma$ is not a forest then neither is $U_\sigma\supseteq\sigma$. So assume $\sigma$ is a forest. Contracting the edges of $\sigma$ in the graph $(V,U_\sigma)$ identifies each component of $\sigma$ to a point and leaves the edges $U_\sigma\setminus\sigma$, which form the attachment graph $A(\sigma)$ by Corollary~\ref{cor:split}. Contracting a forest of edges preserves being a forest and, since every cycle of $U_\sigma$ uses an edge outside the forest $\sigma$, also preserves not being one. Hence $U_\sigma$ is a forest if and only if $A(\sigma)$ is.
\end{proof}

\begin{example}[The criterion on the running example]\label{ex:attach}
In $G_0$ take $\sigma=\{24,35,45\}$, the path $2\,4\,5\,3$. Its two leaves are the fertile hub $2$ and the
barren hub $3$, so no leg joins $U_\sigma$. The branches are $B_1=\{1,4\}$, $B_2=\{2,5\}$,
$B_3=\{3\}$, with $\out_\sigma(1)=2$ (edges $24,45$), $\out_\sigma(2)=3$ and $\out_\sigma(3)=1$,
so the only attachment is the spine $s_3=03$. The attachment graph (Figure~\ref{fig:attach},
left) is the single edge from the component $\{2,4,5,3\}$ to $0$, a forest: $\sigma$ is
dead, as found by hand in Example~\ref{ex:running}. Now take $\tau=\{24,45\}$, the path
$2\,4\,5$. The leaf $5$ is a child with $\deg_\tau(5)=1$, so its leg $\ell_5=25$ attaches, and since
$\out_\tau(1)=\out_\tau(2)=2$ no spine does. The leg $\ell_5$ joins $5$ to its
parent $2$, and both lie in the component. So $\ell_5$ is a loop of the attachment graph
(Figure~\ref{fig:attach}, right) and $\tau$ is live. Indeed $U_\tau=\{24,45,25\}$ is the
triangle $2\,4\,5$.
\end{example}

\begin{figure}[h]\centering
\begin{tikzpicture}[scale=1,every node/.style={font=\small},
  nd/.style={draw,circle,minimum size=6mm,inner sep=0pt,fill=white},
  cp/.style={draw,rounded corners,minimum height=6mm,inner sep=3pt,fill=white}]
\begin{scope}
\node[nd] (o) at (0,1.4) {$0$}; \node[nd] (h) at (2.4,1.4) {$1$};
\node[cp] (K) at (1.2,0) {$\{2,4,5,3\}$};
\draw[tr] (o)--(K) node[midway,above right=-1pt,black,lab]{$s_3=03$};
\node[lab] at (1.2,-1.5) {$\sigma=\{24,35,45\}$: a forest, so $\sigma$ is dead};
\end{scope}
\begin{scope}[xshift=7cm]
\node[nd] (o) at (0,1.4) {$0$}; \node[nd] (h1) at (2.4,1.4) {$1$}; \node[nd] (h3) at (2.4,0.3) {$3$};
\node[cp] (K) at (0.8,0) {$\{2,4,5\}$};
\draw[tr] (K) to[out=-40,in=-140,looseness=5] (K); \node[lab] at (2.0,-0.75) {$\ell_5=25$};
\node[lab] at (1.2,-1.5) {$\tau=\{24,45\}$: a loop, so $\tau$ is live};
\end{scope}
\end{tikzpicture}
\caption{Example~\ref{ex:attach}: attachment graphs in $G_0$. Deadness is decided after
contracting each red component to a point.}\label{fig:attach}
\end{figure}

\subsection{Rank, path normal form and the shape of a dead node}\label{sec:pathnf}

\begin{theorem}[Rank formula and bound]\label{thm:rank}
For a dead stratum $\sigma$ with $c(\sigma)$ components,
\[
  r(K_\sigma)=I(\sigma)+\phi(\sigma)-|\sigma|\qquad\text{and}\qquad
  r(K_\sigma)\le \phi(\sigma)-c(\sigma).
\]
\end{theorem}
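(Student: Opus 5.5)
The rank formula should come straight from Lemma~\ref{lem:rank} and Corollary~\ref{cor:split}, and the bound from a leaf count in the forest $\sigma$.

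\emph{Rank formula.} Since $\sigma$ is dead, Lemma~\ref{lem:rank} gives
\[
  r(K_\sigma)=|A_\sigma|-|\sigma|-|U_\sigma|=|A_\sigma\setminus U_\sigma|-|\sigma|,
\]
using $U_\sigma\subseteq A_\sigma$. The ground set $A_\sigma\setminus U_\sigma$ of $K_\sigma$ contains no red edge: a red edge of $A_\sigma$ lies in $\sigma$ by (F1), and $\sigma\subseteq U_\sigma$. So it consists of the legs $\ell_c$ with $\deg_\sigma(c)\ge2$ and the spines $s_h$ with $\out_\sigma(h)\ge2$. Corollary~\ref{cor:split} counts these as $I(\sigma)+\phi(\sigma)$, barren and fertile hubs being counted separately. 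Substituting gives $r(K_\sigma)=I(\sigma)+\phi(\sigma)-|\sigma|$.

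\emph{Bound.} By the formula, the bound is equivalent to $I(\sigma)\le|\sigma|-c(\sigma)$. By Theorem~\ref{thm:indep}, $\sigma$ is a forest. If $V(\sigma)$ denotes the set of vertices covered by $\sigma$, then $|\sigma|=|V(\sigma)|-c(\sigma)$, so it suffices to show $I(\sigma)\le|V(\sigma)|-2c(\sigma)$. I would identify $I(\sigma)$ with a set of vertices of $\sigma$.
\begin{itemize}
\item A child $c$ with $\deg_\sigma(c)\ge2$ is a vertex of $\sigma$ of $\sigma$-degree at least two.
\item A barren hub $h$ has branch $B_h=\{h\}$. Since $G$ is simple, every red edge at $h$ has exactly one end in $B_h$, so $\out_\sigma(h)=\deg_\sigma(h)$. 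A barren hub with $\out_\sigma(h)\ge2$ is therefore also a vertex of $\sigma$ of $\sigma$-degree at least two.
\end{itemize}
So $I(\sigma)$ is at most the number of vertices of $\sigma$ of degree at least two; fertile hubs are simply not counted, and the centre $o$ is never a vertex of $\sigma$. Every component of $\sigma$ is a tree with at least one edge and so has at least two leaves, and leaves have degree one. Hence each component contributes at most its number of vertices minus two, and summing over components gives $I(\sigma)\le|V(\sigma)|-2c(\sigma)$.

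The only delicate points are the identification $\out_\sigma(h)=\deg_\sigma(h)$ for barren hubs, which uses simplicity of $G$, and checking that no red edges enter the ground set of $K_\sigma$. Neither should be a real obstacle.
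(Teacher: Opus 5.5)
Your proposal is correct and follows the paper's proof essentially step for step. The rank formula comes from Lemma~\ref{lem:rank} together with $|A_\sigma\setminus U_\sigma|=I(\sigma)+\phi(\sigma)$ from Corollary~\ref{cor:split}, and the bound comes from $\out_\sigma(h)=\deg_\sigma(h)$ for barren hubs plus the fact that the forest $\sigma$ has $|\sigma|+c(\sigma)$ vertices and at least $2c(\sigma)$ leaves (the identity $\out_\sigma(h)=\deg_\sigma(h)$ only needs red edges to be non-loops rather than full simplicity, which is harmless).
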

\begin{proof}
Recall from Lemma~\ref{lem:rank} that $r(K_\sigma)=|A_\sigma|-|U_\sigma|-|\sigma|$, and from Corollary~\ref{cor:split} that $|A_\sigma\setminus U_\sigma|=I(\sigma)+\phi(\sigma)$. This proves the first claim.

For the second claim, note that for a barren hub $h$ the branch is $B_h=\{h\}$, so $\out_\sigma(h)=\deg_\sigma(h)$, and hence $I(\sigma)$ is at most the number of vertices of degree at least two in the forest $\sigma$. That forest has $|\sigma|+c(\sigma)$ vertices and at least $2c(\sigma)$ leaves, so at most $|\sigma|-c(\sigma)$ of its vertices have degree at least two.
Thus $I(\sigma)\le|\sigma|-c(\sigma)$, and the bound follows from the first claim.
\end{proof}

Theorem~\ref{thm:rank} has the following consequences.

\begin{corollary}\label{cor:rankH}
Every dead node of a root with $H\le2$ fertile hubs has rank at most one. A dead node of rank
two has $\phi(\sigma)\ge c(\sigma)+2\ge3$, so a rank-two stratum in a root with $H=3$ is connected. A connected stratum has $r(K_\sigma)\le H-1$.
\end{corollary}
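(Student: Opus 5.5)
The three claims all follow from the bound $r(K_\sigma)\le\phi(\sigma)-c(\sigma)$ of Theorem~\ref{thm:rank}, together with two observations. First, $\phi(\sigma)\le H$, because $\phi(\sigma)$ counts fertile hubs only. Second, $c(\sigma)\ge1$, because a dead stratum is nonempty and is a forest by Theorem~\ref{thm:indep}, so it has at least one component.

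For the first claim, suppose the root has $H\le2$ and let $\sigma$ be dead. Then $r(K_\sigma)\le\phi(\sigma)-c(\sigma)\le H-1\le1$.

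For the second claim, suppose $r(K_\sigma)=2$. The bound gives $2\le\phi(\sigma)-c(\sigma)$, that is, $\phi(\sigma)\ge c(\sigma)+2$, and since $c(\sigma)\ge1$ this is at least $3$. If the root has $H=3$, then $\phi(\sigma)\le3$, so $c(\sigma)+2\le3$. Hence $c(\sigma)=1$ and the stratum is connected.

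For the third claim, a connected dead stratum has $c(\sigma)=1$, so $r(K_\sigma)\le\phi(\sigma)-1\le H-1$.

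The only point needing care is that ``connected'' refers to the forest $\sigma$, whose component count $c(\sigma)$ is exactly the quantity in Theorem~\ref{thm:rank}. The statement implicitly restricts to dead strata, since only these have dead nodes. I expect no real obstacle: the whole argument is bookkeeping on the inequality $\phi(\sigma)\le H$.
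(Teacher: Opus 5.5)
Your proof is correct and takes the same approach as the paper, which states this corollary as an immediate consequence of the bound $r(K_\sigma)\le\phi(\sigma)-c(\sigma)$ of Theorem~\ref{thm:rank}. It uses the same two facts you do: $\phi(\sigma)\le H$, and $c(\sigma)\ge1$ because the stratum is nonempty.
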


\begin{proposition}[Path normal form]\label{prop:path}
Let $\sigma$ be a dead stratum with $r(K_\sigma)=d\in\{1,2\}$ and $\phi(\sigma)\le d+1$ (for
instance, any dead stratum of rank $d$ in a root with $H=d+1$). Then $\phi(\sigma)=d+1$ and
$\sigma$ is a path with no fertile hub as an internal vertex. Exactly $d+1$ fertile spines
lie in $A_\sigma\setminus U_\sigma$. The parent edge of every internal vertex $v$ of the
path lies in $A_\sigma\setminus U_\sigma$ with owner set exactly the two path edges at $v$.
\end{proposition}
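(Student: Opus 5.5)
Combine the two inequalities of Theorem~\ref{thm:rank} with the hypothesis, read off equality in every estimate of its proof, and translate those equalities into the structure of $\sigma$.

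\textbf{Step 1: $\sigma$ is a path and $\phi(\sigma)=d+1$.} By the bound of Theorem~\ref{thm:rank}, $d=r(K_\sigma)\le\phi(\sigma)-c(\sigma)\le d+1-c(\sigma)$. Since $\sigma$ is nonempty, $c(\sigma)\ge1$, so $c(\sigma)=1$ and $\phi(\sigma)=d+1$. The same chain then forces equality in the proof of the bound:
\[
  I(\sigma)=|\sigma|-c(\sigma)=|\sigma|-1 .
\]
That proof gives $I(\sigma)\le\#\{\text{vertices of degree}\ge2\text{ in }\sigma\}\le|\sigma|-1$, the second inequality using that a tree has at least two leaves. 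Equality in the second step means the tree $\sigma$ has exactly two leaves, so $\sigma$ is a path. (When $|\sigma|=1$ it is already a single edge, and in any case singletons are live.)

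\textbf{Step 2: internal vertices.} Equality in the first step means every internal vertex $v$ of the path is counted by $I(\sigma)$. So $v$ is either a child with $\deg_\sigma(v)\ge2$ or a barren hub with $\out_\sigma(v)\ge2$, and no other vertex is counted. In particular $v$ is never a fertile hub, and $v$ is not the centre, since no red edge meets $o$.

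The parent edge of $v$ is $\ell_v$ if $v$ is a child and $s_v$ if $v$ is a barren hub. In the first case Lemma~\ref{lem:mult} gives $m_\sigma(\ell_v)=\deg_\sigma(v)=2$. In the second $B_v=\{v\}$, so $m_\sigma(s_v)=\out_\sigma(v)=\deg_\sigma(v)=2$. In both cases the owner set consists of the red edges containing the relevant endpoint, which are exactly the two path edges at $v$. Hence the parent edge lies in $A_\sigma\setminus U_\sigma$ with that owner set.

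\textbf{Step 3: fertile spines.} The fertile spines in $A_\sigma\setminus U_\sigma$ are, by Corollary~\ref{cor:split}, those $s_h$ with $h$ fertile and $\out_\sigma(h)\ge2$. Their number is $\phi(\sigma)=d+1$ by Step 1.

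\textbf{Parenthetical claim.} For a root with $H=d+1$ we have $\phi(\sigma)\le H=d+1$ trivially, so the hypothesis holds.

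The only delicate point is Step 2. The proof of the bound in Theorem~\ref{thm:rank} must inject the counted elements of $I(\sigma)$ into the internal vertices of the forest. This injection sends a counted child $c$ to $c$ itself and a counted barren hub $h$ to $h$. Its image must be checked to be internal, since $\deg_\sigma\ge2$ in both cases. Once that is verified, equality $I(\sigma)=|\sigma|-1$ makes the injection onto the internal vertices, which is what excludes a fertile hub as an internal vertex.
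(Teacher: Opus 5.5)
Your proof is correct and is essentially the paper's argument: the bound of Theorem~\ref{thm:rank} forces $c(\sigma)=1$, $\phi(\sigma)=d+1$ and $I(\sigma)=|\sigma|-1$. Equality throughout the chain $I(\sigma)\le\#\{v\text{ a child or barren hub}:\deg_\sigma(v)\ge2\}\le\#\{v:\deg_\sigma(v)\ge2\}\le|\sigma|-1$ then gives the path shape, the absence of internal fertile hubs and, via Lemma~\ref{lem:mult}, the owner sets of the parent edges. The only cosmetic difference is that you get $I(\sigma)=|\sigma|-1$ from equality in the proof of the bound, whereas the paper substitutes into the exact formula $r(K_\sigma)=I(\sigma)+\phi(\sigma)-|\sigma|$; this is the same computation.
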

\begin{proof}
By Theorem~\ref{thm:rank}, $d=r(K_\sigma)\le \phi(\sigma)-c(\sigma)\le d+1-c(\sigma)$, so $c(\sigma)=1$ and $\phi(\sigma)=d+1$. The first formula of Theorem~\ref{thm:rank} then gives $I(\sigma)=d+|\sigma|-(d+1)=|\sigma|-1$. On the other hand, a barren hub $h$ has $B_h=\{h\}$ and so $\out_\sigma(h)=\deg_\sigma(h)$, and the tree $\sigma$ has $|\sigma|+1$ vertices and at least two leaves, so
\[
  I(\sigma)\ \le\ \#\{v:\ \deg_\sigma(v)\ge2,\ v\text{ a child or a barren hub}\}
  \ \le\ \#\{v:\ \deg_\sigma(v)\ge2\}\ \le\ |\sigma|-1 .
\]
Hence all three inequalities are equalities. The last says the tree $\sigma$ has exactly two leaves, so it is a path. The middle says no fertile hub has degree at least two in $\sigma$, so no fertile hub is an internal vertex. The first says every internal vertex $v$ is counted in $I(\sigma)$. Its parent edge (the leg $\ell_v$ if $v$ is a child, the spine $s_v$ if $v$ is a
barren hub) lies in $A_\sigma\setminus U_\sigma$, and by Lemma~\ref{lem:mult} its owner set is the set of $\sigma$-edges at $v$, namely the two path edges at $v$. Finally $\phi(\sigma)=d+1$ says exactly $d+1$ fertile spines lie in $A_\sigma\setminus U_\sigma$.
\end{proof}

A stratum as in Proposition~\ref{prop:path} can therefore be recorded as a word, one letter per path vertex: a letter for each fertile hub, used for the children of that hub, and a letter $\word A$ for the barren hubs. The fertile hubs themselves may appear only at the two ends. The two endpoint \emph{anchors} of the path are the attachments of the two leaves: the parent for a leaf child, $o$ for a leaf barren hub, and the hub itself for a leaf fertile hub. Suppose from now on that the
root has exactly $d+1$ fertile hubs, so that $\out_\sigma(h)\ge2$ for every fertile hub $h$
and no fertile spine lies in $U_\sigma$. The same notion applies to a subpath $\tau$ of $\sigma$, whose anchors are the attachments of its two ends.

\begin{lemma}\label{lem:excess}
Let $\sigma$ be as in Proposition~\ref{prop:path}, in a root with exactly $d+1$ fertile hubs, and let $\tau\subseteq\sigma$.
\begin{enumerate}[label=(\alph*)]
\item Call a vertex an \emph{internal leaf} of $\tau$ if it is internal in $\sigma$ and has degree one in $\tau$. Then $\Delta_\tau$ consists of the parent edges of the internal leaves of $\tau$ and the spines $s_h$ of the fertile hubs $h$ with $\out_\tau(h)=1$. For an edge $e$ of $\tau$,
\[
\begin{split}
  \exc_\tau(e)
    &= \#\{v\text{ an end of }e:\ v\text{ an internal leaf of }\tau\} \\
    &\quad + \#\{h\text{ fertile}:\ \out_\tau(h)=1,\
       e\text{ the edge of }\tau\text{ leaving }B_h\}.
\end{split}
\]
\item If $\tau$ is a subpath with $\out_\tau(h)\ne1$ for every fertile hub $h$, for instance $\tau=\sigma$, then $U_\tau$ consists of $\tau$ and the attachments of its two ends, and $\tau$ is dead if and only if its two anchors differ.
\end{enumerate}
\end{lemma}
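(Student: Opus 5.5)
The plan is to read everything off the multiplicity rules. By Lemma~\ref{lem:mult}, $m_\tau(\ell_c)=\deg_\tau(c)$ and $m_\tau(s_h)=\out_\tau(h)$ for every $\tau$, and red edges always have multiplicity one in any stratum containing them. By definition $\Delta_\tau=U_\tau\setminus U_\sigma$ is the set of elements with $m_\tau=1$ and $m_\sigma\ge2$. Red edges of $\tau$ have $m_\sigma=1$, so they never lie in $\Delta_\tau$ and only tree edges remain. This leaves three kinds of tree edge to check, each against Proposition~\ref{prop:path}.

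\emph{Parent edges of path vertices.} Take a vertex $v$ that is a child or a barren hub. Its parent edge has $m_\sigma=\deg_\sigma(v)$, since for a barren hub $\out_\sigma(v)=\deg_\sigma(v)$. We need $m_\sigma\ge2$, which means $v$ is internal in $\sigma$. We also need $m_\tau=1$, which means $\deg_\tau(v)=1$. Together these say $v$ is an internal leaf of $\tau$.

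\emph{Fertile spines.} In a root with exactly $d+1$ fertile hubs, Proposition~\ref{prop:path} gives $\phi(\sigma)=d+1$. So every fertile hub has $\out_\sigma(h)\ge2$, and its spine lies in $\Delta_\tau$ exactly when $\out_\tau(h)=1$.

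\emph{Legs of fertile hubs that are leaves.} An endpoint of $\sigma$ has $\deg_\sigma=1$, so its parent edge is not in $\Delta_\tau$. Fertile hubs themselves contribute only their spines, which were handled above.

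This gives the description of $\Delta_\tau$. The excess formula then follows from Lemma~\ref{lem:mono}(c): $\exc_\tau(e)=|\Delta_\tau\cap C_e|$. Each element of $\Delta_\tau$ has exactly one owner in $\tau$, so we only need to identify that owner. The parent edge of an internal leaf $v$ is owned by the unique $\tau$-edge at $v$, which contributes one to each edge of $\tau$ for each of its ends that is an internal leaf. The spine of a fertile hub with $\out_\tau(h)=1$ is owned by the unique $\tau$-edge leaving $B_h$.

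For (b), we use Corollary~\ref{cor:split} applied to $\tau$. The set $U_\tau$ is $\tau$ together with the legs of children of $\tau$-degree one and the spines of hubs with $\out_\tau(h)=1$. For a subpath, the vertices of $\tau$-degree one are exactly its two ends. We treat them by type.

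\emph{Leaf child.} It contributes its leg, which is its attachment.

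\emph{Leaf barren hub.} Here $\out_\tau=\deg_\tau=1$, so it contributes its spine, whose other end is $o$, again the attachment.

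\emph{Leaf fertile hub.} By hypothesis $\out_\tau(h)\ne1$, so it contributes nothing, and the attachment is the hub itself.

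Internal vertices have $\tau$-degree two, so their parent edges are not in $U_\tau$. Barren hubs off the path have $\out_\tau=0$, and fertile spines are excluded by hypothesis. So $U_\tau$ is the path $\tau$ plus at most two pendant edges toward the anchors.

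Deadness is then decided by Theorem~\ref{thm:indep}. $\tau$ is a path, hence a forest. The attachment graph has the single component $\tau$, with one edge to each anchor that is not itself a path vertex (an anchor equal to a fertile-hub endpoint adds no edge). It contains a cycle exactly when the two attachment edges join the component to the same vertex. When an anchor is the path vertex itself, a cycle appears only if the other attachment edge returns into the path, which again forces the two anchors to coincide.

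The main obstacle is exactly this bookkeeping when anchors lie on the path. Suppose a leaf child's parent is a fertile hub that is the other endpoint, or is on the path. Then its leg is a loop in $A(\tau)$. We must check this coincides with "anchors equal". Here we use that fertile hubs appear only as endpoints, and that children of a fertile hub cannot be adjacent in $B_0$ to other path vertices except through their parent. This is the point I expect to require the most care, and I would treat it by a short case list on the types of the two endpoints.
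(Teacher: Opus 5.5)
Your proposal is correct and follows the paper's proof essentially step for step. For (a) you combine Lemma~\ref{lem:mult} with the conditions $m_\sigma\ge2$ and $m_\tau=1$, then count the unique $\tau$-owner of each element of $\Delta_\tau$ via Lemma~\ref{lem:mono}(c); for (b) you apply Corollary~\ref{cor:split} to $\tau$ and Theorem~\ref{thm:indep} to the attachment graph. The case check you defer is as short as you expect, though your phrase ``one edge to each anchor that is not itself a path vertex'' should read ``one edge for each end that is not a fertile hub'': a fertile hub on $\tau$ can only be an end of $\tau$ and $o$ is never on $\tau$, so an attachment edge is a loop exactly when it is the leg of a child end whose parent is the other end, and otherwise the at most two attachment edges go to anchors outside the component, parallel precisely when those anchors coincide.
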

\begin{proof}
(a) Recall that $\exc_\tau(e)=b_\tau(e)-b_\sigma|_\tau(e)=|C_{e}\cap U_\tau|-|C_{e}\cap U_\sigma|$,
which by Lemma~\ref{lem:mono}(c) equals $|\Delta_\tau\cap C_{e}|$, the number of edges of $C_{e}$ that are uniquely owned in $\tau$ but shared in $\sigma$.

Since $\Delta_\tau\subseteq A_\sigma\setminus U_\sigma$, we first describe the latter set. By Corollary~\ref{cor:split} it consists of the legs $\ell_c$ with $\deg_\sigma(c)\ge2$ and the spines $s_h$ with $\out_\sigma(h)\ge2$. As $\sigma$ is a path with no fertile hub internal (Proposition~\ref{prop:path}), and $\out_\sigma(h)=\deg_\sigma(h)$ for a barren hub $h$ because $B_h=\{h\}$, these are the parent edges of the internal vertices of $\sigma$
together with the $d+1$ fertile spines.

An element $y$ of $A_\sigma\setminus U_\sigma$ lies in $\Delta_\tau$ if and only if $m_\tau(y)=1$. By Lemma~\ref{lem:mult}, the parent edge of an internal vertex $v$ has $m_\tau=\deg_\tau(v)$ (for a barren hub $v$ the parent edge is the spine $s_v$ and $\out_\tau(v)=\deg_\tau(v)$), and the spine $s_h$ of a fertile hub has $m_\tau=\out_\tau(h)$. So $\Delta_\tau$ consists of the parent edges of the internal leaves of $\tau$ and the fertile spines with $\out_\tau(h)=1$, as claimed.

It remains to count, for an edge $e$ of $\tau$, the elements of $\Delta_\tau$ that lie in $C_e$. The parent edge of an internal leaf $v$ lies in $C_e$ if and only if $e$ is the unique edge of $\tau$ at $v$, that is, $v$ is an end of $e$. The spine $s_h$ with $\out_\tau(h)=1$ lies in $C_e$ if and only if $e$ is the edge of $\tau$ leaving $B_h$. Both facts come from the proof of Lemma~\ref{lem:mult}, and they give the two terms.

(b) No fertile spine lies in $U_\tau$, since $\out_\tau(h)\ne1$, and a vertex internal to the path $\tau$ has degree two in $\tau$, so by Corollary~\ref{cor:split} the elements of $U_\tau\setminus\tau$ are the attachments at the ends of $\tau$ that are not fertile hubs. This is the first claim. For the second, recall that the attachment graph $A(\tau)$ has as vertices the single component $\tau$ and the vertices of $G$ not covered by $\tau$, and as edges the elements of $U_\tau\setminus\tau$. Here these edges are the leg to the parent hub for an end that is a child and the spine to $o$ for an end that is a barren hub, each joining the component to the anchor of that end. An end that is a fertile hub contributes no edge and is anchored at itself, a vertex of the component. So $A(\tau)$ has at most two edges, and it has a cycle if and only if the two anchors coincide, a loop when one anchor is the other end and a pair of parallel edges otherwise. By Theorem~\ref{thm:indep}, $\tau$ is dead if and only if its two anchors differ.
\end{proof}

So $U_\sigma$ is the path together with the attachments of its two leaves, $\sigma$ is dead if and only if its two anchors differ, and the dead node is read off the anchors.

\begin{proposition}[Dead nodes of a path]\label{prop:startri}
Let $\sigma$ be as in Proposition~\ref{prop:path}, in a root with exactly $d+1$ fertile hubs, with
distinct anchors. Contracting $U_\sigma$
collapses the path and its endpoint attachments to a single vertex $w$, identified with the two
anchors, and the elements of $K_\sigma$ are the $d+1$ fertile spines and the parent edges of the
internal vertices, the latter each joining the parent to $w$. Write $n_h$ for the number of internal
vertices of $\sigma$ that are children of the fertile hub $h$, and $n_{\word A}$ for the number
of internal vertices of $\sigma$ that are barren hubs.
\begin{enumerate}[label=(\alph*)]
\item If $d=1$ with fertile hubs $1,2$ and anchors $o$ and $1$, then $K_\sigma$ is a parallel
      class of size $1+n_2$ plus loops, and $h(K_\sigma)=(1,n_2)$.
\item If $d=1$ with anchors $1$ and $2$, then $K_\sigma$ is a parallel class of size
      $2+n_{\word A}$ plus loops, and $h(K_\sigma)=(1,1+n_{\word A})$.
\item If $d=2$ with fertile hubs $1,2,3$ and anchors $o$ and $1$, the simplification of
      $K_\sigma$ is a path of two parallel classes, of sizes $p=1+n_2$ and $q=1+n_3$, and
      $h(K_\sigma)=(1,p+q-2,(p-1)(q-1))$.
\item If $d=2$ with anchors $1$ and $2$ and third hub $3$, the simplification of $K_\sigma$ is a
      triangle with parallel classes of sizes $2+n_{\word A}$, $1$ and $n_3$. Writing the three
      sizes as $(1,p,q)$, $h(K_\sigma)=(1,p+q-1,pq)$ (a \emph{triangle node}).
\end{enumerate}
\end{proposition}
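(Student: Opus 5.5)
Proposition~\ref{prop:startri} is an explicit computation of the graph $G/U_\sigma$ restricted to $A_\sigma$, followed by $h$-vectors of small rank-one and rank-two graphic matroids. First I describe $K_\sigma$ as a graph. By Lemma~\ref{lem:excess}(b), $U_\sigma$ is the path $\sigma$ together with the attachments of its two ends: a leg to the parent hub for an end child, and a spine to $o$ for an end barren hub. A fertile-hub end contributes nothing and is its own anchor. Contracting $U_\sigma$, a tree because the anchors differ, merges all path vertices and both anchors into one vertex $w$. By Corollary~\ref{cor:split} and Proposition~\ref{prop:path}, the ground set $A_\sigma\setminus U_\sigma$ consists of the $d+1$ fertile spines $s_h=oh$ and the parent edges of the internal vertices. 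An internal vertex $v$ lies in $w$, so its parent edge joins $p(v)$ to $w$: it goes to the fertile hub $h$ when $v$ is a child of $h$, and to $o$ when $v$ is a barren hub. The vertices of $K_\sigma$ are therefore $w$, $o$, and the fertile hubs, with some of them identified with $w$ when they are anchors.

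I then run the four cases. In (a), the anchors $o$ and $1$ are merged into $w$, so $s_1$ becomes a loop, the parent edges at hub $1$ and at barren hubs become loops, and the only non-loop edges join $2$ to $w$: the spine $s_2$ (with $o=w$) and the $n_2$ legs. This is one parallel class, and $h=(1,|P|-1)$ with $|P|=1+n_2$. Here I need to double-check the indexing: with $1+n_2$ parallel elements the $h$-vector of a rank-one class is $(1,n_2)$. In (b), hubs $1$ and $2$ are merged into $w$ and $o$ is separate. The non-loop edges joining $o$ to $w$ are $s_1$, $s_2$, and the $n_{\word A}$ barren-hub spines, giving a class of size $2+n_{\word A}$ and $h=(1,1+n_{\word A})$.

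In (c), the vertices are $w=\{o,1\}$, $2$, and $3$, with the $p=1+n_2$ edges $s_2$ and the legs to $2$ running between $w$ and $2$, and the $q$ edges between $w$ and $3$. This is a path, hence a coloop-free direct sum of two rank-one uniform pieces, so $h=(1,p-1)(1,q-1)$ multiplied as polynomials, which gives $(1,p+q-2,(p-1)(q-1))$. In (d), the vertices are $w=\{1,2\}$, $o$, and $3$. The classes are $w$--$o$ of size $2+n_{\word A}$, $o$--$3$ of size $1$ (namely $s_3$), and $w$--$3$ of size $n_3$ (the legs to $3$). For a triangle with parallel classes $(1,p,q)$ I compute $h$ by deletion--contraction on the single edge $s_3$ using \eqref{eq:tutte}. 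Deleting it leaves a path of classes $p,q$, which contributes $(1,p-1)(1,q-1)$. Contracting it leaves a single class of size $p+q$, which contributes $t(1,p+q-1)$. Summing gives $(1,p+q-1,pq)$.

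The main obstacle is the bookkeeping. I must check that loops do not affect $h$, and I must confirm that in (c) and (d) the third hub is distinct from the anchors, with the statement's $(1,p,q)$ ordering matching the computed sizes. If $n_3=0$ in (d), $s_3$ becomes a coloop and the formula must still read correctly. That case needs a separate check, or a remark that it is excluded, since $K_\sigma$ has no coloops by Lemma~\ref{lem:rank}, so $n_3\ge1$ is forced.
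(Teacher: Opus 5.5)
Your proposal is correct and follows essentially the same route as the paper. You contract the tree $U_\sigma$ to a single vertex $w$, sort the fertile spines and the parent edges of the internal vertices into loops and parallel classes according to which anchors are merged into $w$, read off the $h$-vectors (via \eqref{eq:tutte} on $s_3$ in (d)), and use Lemma~\ref{lem:rank} to force $n_3\ge1$. The only cosmetic difference is in (c), where you use the multiplicativity of $h$ over a direct sum rather than \eqref{eq:tutte}, which amounts to the same computation.
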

\begin{proof}
By Proposition~\ref{prop:path}, $U_\sigma$ consists of the path $\sigma$ and its endpoint
attachments, which gives the description of the contraction. After the contraction, a spine $s_h$ joins $o$
to $h$ and the parent edge of an internal vertex joins its parent, a hub or $o$, to $w$. An
element is a loop exactly when both of its ends are merged into $w$.

If one anchor is $o$ and the other is the fertile hub $1$, then $w$ is merged with $o$ and $1$.
The spine $s_1$, the legs of the internal children of $1$ and the spines of the internal barren
hubs are loops. For each other fertile hub $h$, its spine together with the legs of its internal children forms a parallel class $w$--$h$ of size $1+n_h$. For $d=1$ this is one class, giving
(a), and for $d=2$ two classes on the path $2\,w\,3$, giving (c).

If the anchors are the fertile hubs $1$ and $2$, then $w$ is merged with $1$ and $2$. The legs of
the internal children of $1$ and $2$ are loops, while $s_1$, $s_2$ and the spines of the internal
barren hubs join $o$ to $w$, a parallel class of size $2+n_{\word A}$. For $d=1$ this is all of
$K_\sigma$, giving (b). For $d=2$ the third hub $3$ contributes the spine $s_3$ from $o$ to $3$
and the legs of its internal children from $3$ to $w$, which form a class of size $n_3$. So the
simplification is the triangle $o\,w\,3$ with classes $2+n_{\word A}$, $1$, $n_3$, giving (d).

Since $K_\sigma$ has no coloops by Lemma~\ref{lem:rank}, a class that is a bridge of the simplification has size at least two, so $n_2\ge1$ in (a) and $n_2,n_3\ge1$ in (c). In (d) the class of size $n_3$ is nonempty, since otherwise $s_3$ would be a coloop, so $n_3\ge1$. The $h$-vectors are those of the small graphs of Figures~\ref{fig:rankone} and~\ref{fig:startri}: a parallel class of size $n$ plus loops has $h=(1,n-1)$, and (c) and (d) follow from \eqref{eq:tutte}.
\end{proof}

\begin{figure}[h]\centering
\begin{tikzpicture}[scale=0.9,every node/.style={font=\small},
  nd/.style={draw,circle,minimum size=6mm,inner sep=0pt,fill=white}]
\begin{scope}
\node at (2.1,2.3) {\bfseries (a) one anchor is $o$};
\draw[rd] (0,1.4)--(4.2,1.4);
\foreach \x in {0,1.05,2.1,3.15,4.2} \fill[redge] (\x,1.4) circle (2pt);
\node[lab] at (0,1.75) {anchor $o$}; \node[lab] at (4.2,1.75) {anchor $1$};
\node[nd] (S) at (1.1,0) {$w$}; \node[nd] (j) at (3.1,0) {$2$};
\draw (S) to[bend left=30] (j); \draw (S) -- (j); \draw (S) to[bend right=30] (j);
\draw (S) to[out=150,in=210,looseness=8] (S);
\node at (2.1,0.65) {$1+n_2$};
\node[lab] at (2.1,-0.8) {$h=(1,n_2)$};
\end{scope}
\begin{scope}[xshift=8cm]
\node at (2.1,2.3) {\bfseries (b) two fertile anchors};
\draw[rd] (0,1.4)--(4.2,1.4);
\foreach \x in {0,1.05,2.1,3.15,4.2} \fill[redge] (\x,1.4) circle (2pt);
\node[lab] at (0,1.75) {anchor $1$}; \node[lab] at (4.2,1.75) {anchor $2$};
\node[nd] (o) at (1.1,0) {$o$}; \node[nd] (S) at (3.1,0) {$w$};
\draw (o) to[bend left=30] (S); \draw (o) -- (S); \draw (o) to[bend right=30] (S);
\draw (S) to[out=30,in=-30,looseness=8] (S);
\node at (2.1,0.65) {$2+n_{\mathsf A}$};
\node[lab] at (2.1,-0.8) {$h=(1,1+n_{\mathsf A})$};
\end{scope}
\end{tikzpicture}
\caption{Proposition~\ref{prop:startri}(a) and (b): the rank-one dead nodes at $\phi\le2$. The
anchors of the path decide the dead node, a single parallel class plus loops.}\label{fig:rankone}
\end{figure}

\begin{figure}[h]\centering
\begin{tikzpicture}[scale=0.9,every node/.style={font=\small},
  nd/.style={draw,circle,minimum size=6mm,inner sep=0pt,fill=white}]
\begin{scope}
\node at (2.1,2.3) {\bfseries star: one anchor is $o$};
\draw[rd] (0,1.4)--(4.2,1.4);
\foreach \x in {0,1.05,2.1,3.15,4.2} \fill[redge] (\x,1.4) circle (2pt);
\node[lab] at (0,1.75) {anchor $o$}; \node[lab] at (4.2,1.75) {anchor $1$};
\node[nd] (a) at (0.4,0) {}; \node[nd] (b) at (2.1,0) {}; \node[nd] (c) at (3.8,0) {};
\draw (a) to[bend left=25] (b); \draw (a) to[bend right=25] (b);
\draw (b) to[bend left=25] (c); \draw (b) to[bend right=25] (c);
\node at (1.25,0.55) {$p$}; \node at (2.95,0.55) {$q$};
\node[lab] at (2.1,-0.8) {$h=(1,p+q-2,(p-1)(q-1))$};
\end{scope}
\begin{scope}[xshift=8cm]
\node at (2.1,2.3) {\bfseries triangle: two fertile anchors};
\draw[rd] (0,1.4)--(4.2,1.4);
\foreach \x in {0,1.05,2.1,3.15,4.2} \fill[redge] (\x,1.4) circle (2pt);
\node[lab] at (0,1.75) {anchor $1$}; \node[lab] at (4.2,1.75) {anchor $2$};
\node[nd] (x) at (2.1,0.5) {}; \node[nd] (y) at (1.1,-0.7) {}; \node[nd] (z) at (3.1,-0.7) {};
\draw (x)--(y); \draw (x) to[bend left=22] (z); \draw (x) to[bend right=22] (z);
\draw (y) to[bend left=20] (z); \draw (y) to[bend right=20] (z);
\node at (1.35,0.0) {$1$}; \node at (3.1,0.1) {$q$}; \node at (2.1,-1.15) {$p$};
\node[lab] at (2.1,-1.7) {$h=(1,p+q-1,pq)$};
\end{scope}
\end{tikzpicture}
\caption{Proposition~\ref{prop:startri}(c) and (d): the rank-two dead nodes at $\phi\le3$. As in
Figure~\ref{fig:rankone}, the anchors decide the dead node. On the Petersen root all twelve
rank-two strata are triangle nodes.}\label{fig:startri}
\end{figure}

\subsection{Killers on a path}\label{sec:killers}

The following parity fact is used repeatedly for subpaths of a path stratum.

\begin{lemma}[Parity]\label{lem:parity}
Let $\tau$ be a path in $G$ and $h$ a hub. Then $\out_\tau(h)$ is odd if and only if exactly
one of the two ends of $\tau$ lies in $B_h$.
\end{lemma}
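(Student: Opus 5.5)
The natural route is a handshake (degree-sum) argument inside the branch $B_h$. Here $\tau$ is a path in $G$, that is, a set of edges forming a simple path with two distinct ends. Every vertex of $\tau$ has $\deg_\tau(v)=2$ if it is internal and $\deg_\tau(v)=1$ if it is one of the two ends.

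Summing degrees over the vertices of $\tau$ lying in the branch gives
\[
  \sum_{v\in B_h}\deg_\tau(v)=2\,\#\{e\in\tau:\ \text{both ends in }B_h\}+\out_\tau(h).
\]
Each edge with both ends in $B_h$ is counted twice on the left. Each edge with exactly one end in $B_h$ is counted once, and these are exactly the edges counted by $\out_\tau(h)$ in the definition before Lemma~\ref{lem:mult}. Edges with no end in $B_h$ do not appear.

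Reducing modulo two, $\out_\tau(h)\equiv\sum_{v\in B_h}\deg_\tau(v)$. Internal vertices contribute $2$, vertices of $G$ not on $\tau$ contribute $0$, and each end of $\tau$ in $B_h$ contributes $1$. Hence $\out_\tau(h)$ is congruent modulo two to the number of ends of $\tau$ lying in $B_h$. That number is $0$, $1$ or $2$, so $\out_\tau(h)$ is odd exactly when one end lies in $B_h$.

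No step is a real obstacle. The only point to check is that $\tau$ is a genuine path with two distinct ends of degree one. This holds for the subpaths of path strata used later, since $\sigma$ is a path by Proposition~\ref{prop:path}. If the statement were meant to allow a single vertex as a degenerate path, $\tau$ would be empty, hence not a stratum, so that case never arises.
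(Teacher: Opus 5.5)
Your proof is correct, but it takes a different route from the paper. The paper walks along $\tau$ from one end to the other. It notes that the edges counted by $\out_\tau(h)$ are exactly the steps at which the walk crosses the boundary of $B_h$, so the walk ends on the side where it started if and only if it crosses an even number of times.

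You replace the walk by the handshake identity summed over $B_h$. That uses only the degrees $\deg_\tau(v)$, not the order of the edges along the path, and it buys generality. The same computation shows that for any edge set $\tau$ of $G$, $\out_\tau(h)$ has the parity of the number of vertices of $B_h$ with odd $\deg_\tau$. So it covers a disjoint union of subpaths in one step, whereas in Section~\ref{sec:killers} the paper applies the lemma to each component and adds, using that $\out$ is additive over components.

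The walk argument is the more visual one, and it shows directly why only the sides of the two ends matter. Beyond the path structure, your count needs only that no edge of $\tau$ is a loop, which holds because $G$ is simple.
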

\begin{proof}
Walking along $\tau$, the edges with exactly one end in $B_h$ are the ones at which the walk
crosses the boundary of $B_h$, and the walk ends on the side it started from if and only if it
crosses an even number of times.
\end{proof}

Fix a root with three fertile hubs and a dead stratum $\sigma$ with $r(K_\sigma)=2$, which is a path by
Proposition~\ref{prop:path}. Let $\tau\subsetneq\sigma$ be a nonempty proper subset of its
edges, a disjoint union of subpaths which we call the components of $\tau$. By
Lemma~\ref{lem:excess}(a), the set $\Delta_\tau$ consists of the parent edges
of the internal leaves of $\tau$ and the spines $s_h$ of the fertile hubs with $\out_\tau(h)=1$.
The threshold $q_\tau$ has degree $|\Delta_\tau|$, so only a $\tau$ with at most two internal
leaves can kill an offset of degree at most two. Such a $\tau$ is live if and only if its components, closed by
the attachments of their endpoints and by the spines in $\Delta_\tau$, contain a cycle
(Theorem~\ref{thm:indep}). The two configurations met in Sections~\ref{sec:cases}
and~\ref{sec:outlook} are the following. A \emph{return} is a single component whose two
ends have the same anchor. When the component is a prefix or a suffix of the path it kills the
linear $x_i$, where $e_i$ is the edge of $\tau$ at its internal leaf, and otherwise it kills the
product $x_ix_j$ of the edges at its two internal leaves ($x_i^2$ when the component is a single
edge). A \emph{double bridge} is a prefix and a suffix joining the same pair of anchors (two
components of any other kind have at least three internal leaves). It kills the product
$x_ix_j$ of the edges at its two internal leaves and neither square. In both configurations
$\out_\tau(h)$ is even for every fertile hub $h$ by Lemma~\ref{lem:parity}, applied to each component,
so no fertile spine lies in $\Delta_\tau$ and the thresholds are as stated. For such a stratum, with
$h(K_\sigma)=(1,h_1,h_2)$, the necessary condition of Section~\ref{sec:criteria} reads: $\sigma$
admits a fibre for some fibre system only if some set $L$ of $h_1$ spared coordinates carries
$h_2$ spared quadratics supported on $L$ and covering every element of $L$.

\subsection{Rank-one nodes at \texorpdfstring{$H\le2$}{H at most 2}: the spared linears}\label{sec:htwo}

In this subsection the root has exactly two fertile hubs, which we call $1$ and $2$, and
$\sigma$ is a dead stratum with $r(K_\sigma)=1$. By Proposition~\ref{prop:path},
$\sigma$ is a path $v_0v_1\cdots v_m$ with edges $e_i=v_{i-1}v_i$, no fertile hub is internal,
and $\out_\sigma(1)\ge2$, $\out_\sigma(2)\ge2$. Colour a vertex $v$ of the path by
$\kappa(v)=h$ if $v$ is a child of the fertile hub $h$ or $v=h$, and by $\kappa(v)=\word A$ if $v$ is a
barren hub, and call $\kappa(v_0)\kappa(v_1)\cdots \kappa(v_m)$ the \emph{colour word} of $\sigma$. Since
$B_h$ is the set of vertices coloured $h$, for $\tau\subseteq\sigma$ the number $\out_\tau(h)$
is the number of edges of $\tau$ whose two ends are coloured differently, one of them $h$. The
anchor of an endpoint is $h$ if it is coloured $h$ and $o$ if it is coloured $\word A$, and
$\sigma$ is dead if and only if its two anchors differ (Lemma~\ref{lem:excess}(b)). For
$1\le j\le m$ write $P_j=\{e_1,\dots,e_j\}$ and $S_j=\{e_j,\dots,e_m\}$ for the prefix and the
suffix at $j$. The anchors of the path $P_j$ are those of $v_0$ and $v_j$, and the anchors of
$S_j$ are those of $v_{j-1}$ and $v_m$.

\begin{figure}[h]\centering
\begin{tikzpicture}[scale=0.82]
\begin{scope}
\node[ctr] (o) at (3.0,3.4) {$0$};
\node[hub] (h1) at (-0.6,1.8) {$1$}; \node[hub] (h2) at (6.6,1.8) {$2$};
\node[hub,dashed] (p) at (0.9,1.8) {$p$}; \node[hub,dashed] (q) at (2.3,1.8) {$q$};
\node[hub,dashed] (r) at (3.7,1.8) {$r$}; \node[hub,dashed] (s) at (5.1,1.8) {$s$};
\node[ch] (a) at (0.0,0.2) {$a$}; \node[ch] (b) at (1.75,0.2) {$b$};
\node[ch] (a2) at (4.25,0.2) {$a'$}; \node[ch] (b2) at (6.0,0.2) {$b'$};
\foreach \x in {h1,h2,p,q,r,s} \draw[tr] (o)--(\x);
\draw[tr] (h1)--(a); \draw[tr] (h1) to[bend right=40] (a2); \draw[tr] (h2)--(b2); \draw[tr] (h2) to[bend left=14] (b);
\draw[rd] (a)--(p); \draw[rd] (p)--(b); \draw[rd] (b)--(q); \draw[rd] (q)--(r); \draw[rd] (r)--(a2); \draw[rd] (a2)--(s); \draw[rd] (s)--(b2);
\node[rlab] at (0.25,1.1) {$e_1$}; \node[rlab] at (1.55,1.1) {$e_2$}; \node[rlab] at (2.25,0.95) {$e_3$};
\node[rlab] at (3.0,2.05) {$e_4$}; \node[rlab] at (3.8,0.95) {$e_5$}; \node[rlab] at (4.45,1.1) {$e_6$}; \node[rlab] at (5.8,1.1) {$e_7$};
\node[lab] at (3.0,-0.6) {$G_2$: hubs $1,2$ fertile, $p,q,r,s$ barren (dashed), red path $a\,p\,b\,q\,r\,a'\,s\,b'$};
\end{scope}
\begin{scope}[xshift=8.6cm,yshift=1.2cm]
\foreach \i/\c in {0/1,1/{\mathsf A},2/2,3/{\mathsf A},4/{\mathsf A},5/1,6/{\mathsf A},7/2}
  \node[draw,circle,inner sep=1.5pt,font=\small,fill=white] (w\i) at (\i*0.85,0) {$\c$};
\foreach \i/\j/\lab/\st in {0/1/e_1/spared,1/2/e_2/spared,2/3/e_3/killed,3/4/e_4/spared,4/5/e_5/killed,5/6/e_6/spared,6/7/e_7/spared}{
  \draw[rd] (w\i)--(w\j);}
\foreach \i/\lab in {0/e_1,1/e_2,3/e_4,5/e_6,6/e_7} \node[font=\scriptsize] at (\i*0.85+0.425,0.35) {$\lab$};
\foreach \i/\lab in {2/e_3,4/e_5} \node[font=\scriptsize,grey] at (\i*0.85+0.425,0.35) {$\lab$};
\foreach \i in {2,4} \draw[black,line width=1.1pt] (\i*0.85+0.425-0.11,-0.11)--(\i*0.85+0.425+0.11,0.11) (\i*0.85+0.425-0.11,0.11)--(\i*0.85+0.425+0.11,-0.11);
\draw[decorate,decoration={brace,mirror,amplitude=4pt}] (0.85*0.7,-0.45)--(0.85*1.3,-0.45) node[midway,below=4pt,font=\scriptsize] {surplus};
\draw[decorate,decoration={brace,mirror,amplitude=4pt}] (0.85*2.7,-0.45)--(0.85*4.3,-0.45) node[midway,below=4pt,font=\scriptsize] {deficit};
\draw[decorate,decoration={brace,mirror,amplitude=4pt}] (0.85*5.7,-0.45)--(0.85*6.3,-0.45) node[midway,below=4pt,font=\scriptsize] {surplus};
\node[lab] at (3.0,-1.5) {colour word $1\,\mathsf A\,2\,\mathsf A\mathsf A\,1\,\mathsf A\,2$, killed edges crossed out};
\end{scope}
\end{tikzpicture}
\caption{The root $G_2$ used in the examples of this subsection, and the colour word of its full
red path $\sigma=\{e_1,\dots,e_7\}$, with the killed edges crossed out and the blocks of barren
hubs marked.}\label{fig:htwo}
\end{figure}

\begin{example}\label{ex:htwo-setup}
Figure~\ref{fig:htwo} shows a root $G_2$ with centre $0$, fertile hubs $1$ (children $a,a'$)
and $2$ (children $b,b'$), barren hubs $p,q,r,s$, and the red path
$\sigma=\{e_1,\dots,e_7\}$ through $a\,p\,b\,q\,r\,a'\,s\,b'$. Its colour word is
$1\,\word A\,2\,\word A\word A\,1\,\word A\,2$ and its anchors are $1$ and $2$. As
$\out_\sigma(1)=\out_\sigma(2)=3$, the stratum $\sigma$ is a dead rank-one stratum of type (b) in
Proposition~\ref{prop:startri}: $K_\sigma$ is a parallel class of size $2+4$ plus loops and
$h(K_\sigma)=(1,5)$, with $b_\sigma=(2,1,1,1,1,1,2)$.
\end{example}

\begin{lemma}\label{lem:htwo-dom}
Let $1\le j\le m$ and let $\tau\subsetneq\sigma$ be a nonempty proper stratum. Then
$\tau$ can be fired from $b_\sigma+\mathbf e_j$ if and only if $\tau$ is the prefix $P_j$ with $j<m$ or
the suffix $S_j$ with $j>1$, and $\out_\tau(1)\ne1\ne\out_\tau(2)$. In that case
$b_\tau=b_\sigma|_\tau+\mathbf e_j$, so the offset of $b_\sigma+\mathbf e_j$ at $\tau$ is $0$.
\end{lemma}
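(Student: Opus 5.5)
By Lemma~\ref{lem:mono}(c), $\tau$ can be fired from $b_\sigma+\mathbf e_j$ exactly when $\mathbf e_j|_\tau\ge\exc_\tau$. So the whole statement reduces to computing the excess vector from Lemma~\ref{lem:excess}(a), with $d=1$ and the two fertile hubs $1,2$. The plan is to determine all proper $\tau$ with $|\exc_\tau|\le1$ and to locate the support of the excess when it has size one.

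The first step is the case $j\notin\tau$. Then firing requires $\exc_\tau=0$, that is $\Delta_\tau=\emptyset$ by Lemma~\ref{lem:mono}(d). I claim this is impossible for a nonempty proper $\tau$. Since $\tau\ne\sigma$, some component of $\tau$ has an end $v$ that is an endpoint of a missing edge. If $v$ is internal in $\sigma$, it is an internal leaf of $\tau$, and its parent edge lies in $\Delta_\tau$. Otherwise $v$ is an endpoint of $\sigma$, and then $v$ is not an end of a missing edge, a contradiction. So $\exc_\tau\ne0$, and $\tau$ can only be fired when $j\in\tau$ and $\exc_\tau=\mathbf e_j$.

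The second step counts the internal leaves. Every component of $\tau$ has its non-$\sigma$-endpoint ends as internal leaves. A proper $\tau$ with $|\Delta_\tau|\le1$ therefore has exactly one internal leaf and no fertile spine with $\out_\tau(h)=1$. Exactly one internal leaf forces $\tau$ to be a single component containing one endpoint of $\sigma$, so $\tau=P_k$ or $\tau=S_k$.

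For $\tau=P_k$ with $k<m$, the internal leaf is $v_k$, and its parent edge lies in $C_{e_k}$ only. Hence $\exc_\tau=\mathbf e_k$ plus the contributions of the fertile spines with $\out_\tau(h)=1$. Firing from $b_\sigma+\mathbf e_j$ then requires $k=j$ and $\out_\tau(1)\ne1\ne\out_\tau(2)$. Conversely, under these conditions $\exc_\tau=\mathbf e_j$. The suffix case is symmetric: the internal leaf is $v_{j-1}$, its parent edge lies in $C_{e_j}$, and we need $j>1$.

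The restrictions $j<m$ and $j>1$ come out automatically. $P_m=\sigma$ is not proper, and $S_1=\sigma$ is not proper either. The final claim follows because $\exc_\tau=\mathbf e_j$ means $b_\tau=b_\sigma|_\tau+\mathbf e_j$, which is exactly the restriction of $b_\sigma+\mathbf e_j$, so the offset is $0$.

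The main obstacle is the bookkeeping in the second step. We must check that a fertile spine with $\out_\tau(h)=1$ always adds a second unit of excess and can never be absorbed into coordinate $j$. If $\out_\tau(h)=1$, then $s_h\in\Delta_\tau$ together with the internal leaf's parent edge gives $|\exc_\tau|\ge2$, so $\tau$ cannot be fired whatever coordinate that spine sits on. This is what yields the condition $\out_\tau(1)\ne1\ne\out_\tau(2)$ exactly.
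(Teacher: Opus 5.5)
Your proposal is correct and follows the paper's own argument. Both reduce firing to $\exc_\tau\le\mathbf e_j|_\tau$, hence $|\Delta_\tau|\le1$, then count internal leaves plus fertile spines with $\out_\tau(h)=1$ via Lemma~\ref{lem:excess}(a) to force $\tau$ to be a single prefix or suffix, and finally locate the unique element of $\Delta_\tau$ in $C_{e_j}$. Your separate first step for $j\notin\tau$ only makes explicit what the paper gets from the fact that a proper single subpath has an endpoint internal to $\sigma$.
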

\begin{proof}
By definition of the excess, $\tau$ can be fired from $b_\sigma+\mathbf e_j$ if and only if
$\exc_\tau\le\mathbf e_j|_\tau$. Since $\sum_{e_i\in\tau}\exc_\tau(e_i)=|\Delta_\tau|$ by
Lemma~\ref{lem:mono}(b,c), this says that $|\Delta_\tau|\le1$. If $\Delta_\tau$ has an element then that element lies in $C_{e_j}$ with $e_j\in\tau$. Summing
Lemma~\ref{lem:excess}(a) over $e_i\in\tau$ gives
\[
  |\Delta_\tau|=\#\{v\text{ internal in }\sigma:\ \deg_\tau(v)=1\}
  +\#\{h\in\{1,2\}:\ \out_\tau(h)=1\},
\]
since an internal vertex with $\deg_\tau(v)=1$ is an end of exactly one edge of $\tau$ and a
spine with $\out_\tau(h)=1$ is left by exactly one edge of $\tau$.

The edge set $\tau$ is a disjoint union of paths inside $\sigma$, and the vertices with
$\deg_\tau(v)=1$ are the endpoints of these paths. Since only $v_0$ and $v_m$ are not internal in
$\sigma$, two or more paths would have at least two endpoints internal in $\sigma$. So
$|\Delta_\tau|\le1$ forces $\tau$ to be a single path of which at least one endpoint is an endpoint of $\sigma$. Since $\tau\ne\sigma$, the other endpoint is internal in $\sigma$, so the first term equals $1$ and the second term must be $0$, that is $\out_\tau(h)\ne1$ for both fertile hubs. Thus $\tau=P_i$ with $i<m$ or $\tau=S_i$ with $i>1$. The unique element of $\Delta_\tau$ is the parent edge of the internal
endpoint, whose only edge of $\tau$ is $e_i$, so it lies in $C_{e_j}$ if and only if $i=j$.

Conversely, for $\tau=P_j$ ($j<m$) or $S_j$ ($j>1$) with $\out_\tau(1)\ne1\ne\out_\tau(2)$ the
same count gives $\exc_\tau=\mathbf e_j$, so $b_\tau=b_\sigma|_\tau+\mathbf e_j$ and the
offset of $b_\sigma+\mathbf e_j$ at $\tau$ is $0$.
\end{proof}

\begin{example}\label{ex:htwo-dom}
In $G_2$ exactly one proper stratum can be fired from $b_\sigma+\mathbf e_2$, the suffix
$S_2=\{e_2,\dots,e_7\}$, which has $\out_{S_2}(1)=2$ and $\out_{S_2}(2)=3$. The
prefix $P_2=\{e_1,e_2\}$ cannot be fired from it. Indeed $\out_{P_2}(1)=1$ by the edge $e_1$ and $\out_{P_2}(2)=1$ by the
edge $e_2$, so $\exc_{P_2}=\mathbf e_1+2\mathbf e_2$. No proper stratum at all can be fired from $b_\sigma+\mathbf e_1$:
$\out_{P_1}(1)=1$, and $S_1=\sigma$ is not proper. Likewise none can be fired from $b_\sigma+\mathbf e_4$,
since $\out_{P_4}(1)=1$ by the edge $e_1$ and $\out_{S_4}(2)=1$ by the edge $e_7$.
\end{example}

Lemma~\ref{lem:htwo-dom} leaves only prefixes and suffixes to check. Call the index $j$ \emph{spared} if the offset $\mathbf e_j$ is spared, that is, no live $\tau\subsetneq\sigma$ can be fired from $b_\sigma+\mathbf e_j$, and \emph{killed} otherwise. We say that the linear $x_j$ is spared if the index $j$ is.

\begin{lemma}[Spared linears]\label{lem:htwo-cert}
The index $j$ is killed if and only if at least one of the following holds:
\begin{enumerate}[label=(\alph*)]
\item $j<m$ and the anchors of $v_0$ and $v_j$ coincide,
\item $j>1$ and the anchors of $v_{j-1}$ and $v_m$ coincide.
\end{enumerate}
Consequently, if $L$ is a set of spared indices with $|L|=h_1(K_\sigma)$, then $\{0\}\cup\{\mathbf e_j:j\in L\}$ is an unconditional fibre at $\sigma$. This is the multicomplex spanned by the variables $x_j$ with $j\in L$.
\end{lemma}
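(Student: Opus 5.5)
By Lemma~\ref{lem:htwo-dom}, the only proper strata that can be fired from $b_\sigma+\mathbf e_j$ are the prefix $P_j$ (when $j<m$) and the suffix $S_j$ (when $j>1$), and each of these only under the side condition $\out_\tau(1)\ne1\ne\out_\tau(2)$. In that case the firing is exact, with offset $0$. So $j$ is killed if and only if one of these two candidates satisfies the side condition and is live. The plan is to show that, for a prefix or suffix, the pair ``side condition holds and $\tau$ is live'' is equivalent to ``the two anchors of $\tau$ coincide''. The anchors of $P_j$ are those of $v_0$ and $v_j$, and those of $S_j$ are those of $v_{j-1}$ and $v_m$, which gives (a) and (b).

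To prove the equivalence, take $\tau=P_j$; the suffix case is symmetric. Use the colour word. By Lemma~\ref{lem:parity}, $\out_\tau(h)$ is odd exactly when exactly one end of $\tau$ lies in $B_h$, that is, exactly one end is coloured $h$.

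First suppose the anchors of $v_0$ and $v_j$ coincide. Then the two ends carry the same colour, so each $\out_\tau(h)$ is even and the side condition holds. Moreover Lemma~\ref{lem:excess}(b) applies, and since the anchors coincide, $\tau$ is live. So $j$ is killed.

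Conversely, suppose the anchors differ and the side condition holds, so that $\out_\tau(h)\in\{0\}\cup\{2,3,\dots\}$ for $h=1,2$. Lemma~\ref{lem:excess}(b) requires only $\out_\tau(h)\neq1$, which we have. It then says $\tau$ is dead, so the firing does not kill $j$.

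The main obstacle is checking that Lemma~\ref{lem:excess}(b) really applies to a subpath here. Its proof reads deadness off the attachment graph via Theorem~\ref{thm:indep}. It uses only that interior vertices of $\tau$ have degree two and that no fertile spine lies in $U_\tau$, and the side condition guarantees both. One further point needs care: an end of $\tau$ that is an internal child of $\sigma$ contributes its leg to $U_\tau$ and is anchored at its parent hub, while a fertile hub at the end is anchored at itself. These are consistent with the colour-based anchors defined above, so the two notions of anchor agree.

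For the consequence, let $F=\{0\}\cup\{\mathbf e_j:j\in L\}$. It is a pure multicomplex with degree sequence $(1,|L|)=h(K_\sigma)$, because $r(K_\sigma)=1$. We check the hypothesis of Lemma~\ref{lem:baseline}. For the offset $0$, Lemma~\ref{lem:mono}(d) applies. For $\mathbf e_j$ with $j\in L$, every proper stratum that can be fired is one of $P_j,S_j$. It is fired with offset $0$ by Lemma~\ref{lem:htwo-dom}, and it is dead because $j$ is spared. Hence $F$ is an unconditional fibre.
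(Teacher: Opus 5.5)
Your proof is correct and follows the paper's argument. You reduce via Lemma~\ref{lem:htwo-dom} to $P_j$ and $S_j$ under the side condition, use Lemma~\ref{lem:parity} to get the side condition from coinciding anchors, and use Lemma~\ref{lem:excess}(b) for the liveness equivalence. For the consequence you apply Lemma~\ref{lem:baseline}, with Lemma~\ref{lem:mono}(d) handling the zero offset. The extra checks you flag, that Lemma~\ref{lem:excess}(b) covers subpaths and that the two notions of anchor agree, are already part of that lemma's statement, so they need no separate argument.
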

\begin{proof}
From Lemma~\ref{lem:htwo-dom}, the only $\tau \subsetneq \sigma$ that can be fired from $b_\sigma+\mathbf e_j$ are $P_j$ and $S_j$ with $\out_\tau(1)\ne1\ne\out_\tau(2)$, and $j$ is killed if and only if one of them is live. We show that for $\tau=P_j$ with $j<m$ or $\tau=S_j$ with $j>1$, the condition $\out_\tau(1)\ne1\ne\out_\tau(2)$ together with liveness is equivalent to the two anchors of $\tau$ coinciding. When the two anchors coincide, the two ends of $\tau$ lie in the same fertile branch or in none, so $\out_\tau(1)$ and $\out_\tau(2)$ are even by Lemma~\ref{lem:parity} and the condition on $\out$ holds. Under the condition on $\out$, Lemma~\ref{lem:excess}(b) says that such a $\tau$ is live if and only if its two anchors coincide.

For the second claim, $\{0\}\cup\{\mathbf e_j:j\in L\}$ is a pure multicomplex with degree sequence $(1,|L|)=h(K_\sigma)$. Every proper $\tau$ that can be fired from $b_\sigma+0$ is dead and has offset $0$ by Lemma~\ref{lem:mono}(d). Every proper $\tau$ that can be fired from $b_\sigma+\mathbf e_j$ has offset $0$ by Lemma~\ref{lem:htwo-dom}, and is dead because $j$ is spared. So Lemma~\ref{lem:baseline} applies.
\end{proof}

\begin{example}\label{ex:htwo-cert}
In $G_2$ the index $3$ is killed: $S_3$ goes from $b$ to $b'$, both children of $2$, so its
anchors coincide, and $\out_{S_3}(1)=\out_{S_3}(2)=2$. The index $5$ is killed by $P_5$, which goes
from $a$ to $a'$, both children of $1$. The index $2$ is spared although $S_2$ can be fired from
$b_\sigma+\mathbf e_2$: that path has anchors $o$ (at the barren hub $p$) and $2$, so it is dead,
and the offset there is $0$. The spared set is $\{1,2,4,6,7\}$.
\end{example}

\begin{lemma}[Counting spared linears]\label{lem:htwo-count}
At least $h_1(K_\sigma)$ indices are spared.
\end{lemma}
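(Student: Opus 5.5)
The plan is to count killed indices directly from the characterisation in Lemma~\ref{lem:htwo-cert}, charging each killed index to an internal vertex of the path, and then to compare the number of uncharged indices with the value of $h_1(K_\sigma)$ given by Proposition~\ref{prop:startri}(a),(b).

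First I will extend the notion of anchor to every vertex of the path, using the colouring: a vertex coloured $h\in\{1,2\}$ gets anchor $h$, and a vertex coloured $\word A$ gets anchor $o$. This agrees with the anchors of the endpoints, and it is exactly the notion used in conditions (a) and (b) of Lemma~\ref{lem:htwo-cert}. Write $\alpha$ and $\beta$ for the anchors of $v_0$ and $v_m$; since $\sigma$ is dead, $\alpha\ne\beta$ by Lemma~\ref{lem:excess}(b). Lemma~\ref{lem:htwo-cert} then says the following.
\begin{itemize}
\item Condition (a) kills the index $j$ exactly when $j<m$ and $v_j$ has anchor $\alpha$. So an \emph{internal} vertex $v_j$ with anchor $\alpha$ kills the edge $e_j$ on its left, and nothing else is killed through (a).
\item Condition (b) kills $j$ exactly when $j>1$ and $v_{j-1}$ has anchor $\beta$. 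So an internal vertex $v_{j-1}$ with anchor $\beta$ kills the edge $e_j$ on its right, and nothing else is killed through (b).
\end{itemize}
Hence every killed index is charged to some internal vertex $v_1,\dots,v_{m-1}$ whose anchor lies in $\{\alpha,\beta\}$, and each such vertex is charged at most once because $\alpha\ne\beta$.

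Counting then gives, among the $m$ indices and $m-1$ internal vertices,
\[
  \#\{\text{spared indices}\}\ \ge\ m-\#\{\text{internal }v:\ \text{anchor}(v)\in\{\alpha,\beta\}\}
  \ =\ 1+\#\{\text{internal }v:\ \text{anchor}(v)\notin\{\alpha,\beta\}\}.
\]
It remains to identify the last count in the two cases of Proposition~\ref{prop:startri}, using that no fertile hub is internal (Proposition~\ref{prop:path}).
\begin{itemize}
\item In case (a) the anchors are $o$ and $1$. The internal vertices with the third anchor $2$ are exactly the internal children of hub $2$, so there are $n_2$ of them. This gives at least $1+n_2>n_2=h_1(K_\sigma)$ spared indices.
\item In case (b) the anchors are $1$ and $2$. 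The internal vertices with the third anchor $o$ are the internal barren hubs, so there are $n_{\word A}$ of them. This gives at least $1+n_{\word A}=h_1(K_\sigma)$ spared indices.
\end{itemize}
By the symmetry between the two fertile hubs and the reversal of the path, these two cases cover all dead rank-one strata here.

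The only delicate point is reading the conditions of Lemma~\ref{lem:htwo-cert} exactly at the boundary indices $j=1$ and $j=m$. There I must check that the endpoints $v_0$ and $v_m$ never charge a kill, which is what the restrictions $j<m$ and $j>1$ guarantee. As a sanity check, on $G_2$ (Example~\ref{ex:htwo-cert}) the bound gives exactly $1+4=5$ spared indices, matching the set $\{1,2,4,6,7\}$ found there.
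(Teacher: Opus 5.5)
Your proof is correct, and it takes a different route from the paper's.

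\textbf{The paper's route.} It treats the two types of Proposition~\ref{prop:startri} separately. It first orients the path: from the barren-hub end in type~(a), from the end coloured $1$ in type~(b). It then characterises a killed edge by the colours of its two ends and decomposes the colour word into maximal blocks of the counted colour, classified as deficit, surplus or neutral. Deficits are balanced against surpluses by counting the factors $\mathsf{A}2$ and $2\mathsf{A}$ of the word left after the counted colour is deleted. Finally it exhibits a matching of the counted vertices to distinct spared edges.

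\textbf{Your route.} You charge each killed index to an internal vertex whose anchor is one of the two endpoint anchors, which is exactly what Lemma~\ref{lem:htwo-cert} says. The killed indices are therefore at most the internal vertices carrying those two anchors. The bound $1+\#\{\text{internal vertices with the third anchor}\}$ then follows from comparing the $m$ edges with the $m-1$ internal vertices. This handles both types at once, with no case split and no choice of orientation. It also explains the extra unit uniformly: it is spare in type~(a) and needed in type~(b).

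\textbf{How the two compare.} If you refine your count by adding back the indices killed through both (a) and (b), you get an exact formula. In the paper's type~(a) reading these are precisely the edges coloured $2\mathsf{A}$, so you recover the paper's identity that the spared edges number $n_1+1$ plus the number of $2\mathsf{A}$ edges. What the block argument gives in addition is the explicit injection of counted vertices into spared edges drawn in Figure~\ref{fig:htwocount}.
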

\begin{proof}
We treat the cases (a) and (b) of Proposition~\ref{prop:startri} in turn.

Suppose first that one of the two anchors is $o$. We read the path starting from the barren hub end, so $\kappa(v_0)=\word A$ and $\kappa(v_m)=h$, where the other endpoint $v_m$ is a child
of the hub $h$ or the hub $h$ itself. Without loss of generality $h=2$, so $1$ is the other hub. In this case an edge $e_k=v_{k-1}v_k$ is killed if and only if $\kappa(v_k)=\word A$
or $\kappa(v_{k-1})=2$, so the spared edges are exactly those coloured $11$, $\word A1$, $12$, $\word A2$.
Indeed, by Lemma~\ref{lem:htwo-cert} the anchors of $P_k$ coincide if and only if $\kappa(v_k)=\word A$ and those of $S_k$ coincide if and only if $\kappa(v_{k-1})=2$.

Call a maximal set of consecutive vertices coloured $1$ a $1$-block, and write a $1$-block of
$k$ vertices together with its two neighbours on the path as $x\,1^k\,y$, where $x,y\in\{\word A,2\}$
since the two ends of the path are coloured $\word A$ and $2$. Its $k-1$ internal edges are
coloured $11$ and spared, its entering edge is spared if and only if $x=\word A$ and its leaving edge if and only if
$y=2$. So a block $2\,1^k\,\word A$ (a \emph{deficit block}) accounts for $k-1$ spared edges,
a block $\word A\,1^k\,2$ (a \emph{surplus block}) for $k+1$, and a block $\word A\,1^k\,\word A$
or $2\,1^k\,2$ for $k$, and the spared edges touching no $1$-block are those coloured
$\word A2$. Deleting the vertices coloured $1$ leaves a word in $\word A,2$ from $\word A$ to
$2$, which has one more factor $\word A2$ than factors $2\word A$, and its factors $\word A2$
are the surplus blocks and the spared $\word A2$ edges while its factors $2\word A$ are the
deficit blocks and the killed $2\word A$ edges. Hence the number of spared edges is at least
$\bigl(\sum_{\text{blocks}}k\bigr)+1$, and $\sum_{\text{blocks}}k=n_1$, because the vertices
coloured $1$ are exactly the internal children of the hub $1$, while $n_1=h_1(K_\sigma)$ by
Proposition~\ref{prop:startri}(a) with the two hubs interchanged. The count can also be realised by
matching the vertices coloured $1$ to distinct spared edges. Match each vertex of a block to
the edge leaving it, except the last vertex of a block $x\,1^k\,\word A$, whose leaving edge is
killed. That vertex is matched to the entering edge of the block if $x=\word A$, and to the spared
edge of the next factor $\word A2$ if $x=2$. The first factor $\word A2$ is then left over
(Figure~\ref{fig:htwocount}, bottom).

Suppose now that the anchors are $1$ and $2$. We read the path starting from the endpoint coloured $1$, so $\kappa(v_0)=1$ and $\kappa(v_m)=2$. In this case an edge $e_k=v_{k-1}v_k$ is killed if and only if $\kappa(v_k)=1$ or $\kappa(v_{k-1})=2$, so the spared edges are exactly those that do not start with $2$ and do not end with $1$.

Consider now the $\word A$-blocks of the path, the maximal sets of consecutive vertices coloured $\word A$, written $x\,\word A^k\,y$ with $x,y\in\{1,2\}$. This time a deficit block is $2\,\word A^k\,1$ and a surplus block is $1\,\word A^k\,2$. Deleting the vertices coloured $\word A$ leaves a word in $1,2$ that starts with $1$ and ends with $2$, and the rest of the argument goes exactly as in the previous case, with the factors $12$ and $21$ in place of $\word A2$ and $2\word A$. The number of spared edges is at least $\bigl(\sum_{\text{blocks}}k\bigr)+1=n_{\word A}+1=h_1(K_\sigma)$ by Proposition~\ref{prop:startri}, since the vertices coloured $\word A$ are the internal barren hubs. This time the left-over factor $12$ is needed, for the additional unit in $h_1(K_\sigma)$ (Figure~\ref{fig:htwocount}, top).
\end{proof}

\begin{figure}[h]\centering
\begin{tikzpicture}[x=1cm,y=1cm,
  wv/.style={circle,draw,inner sep=1.4pt,minimum size=5.5mm,font=\small},
  cw/.style={font=\small\sffamily},
  el/.style={font=\scriptsize},
  killed/.style={rd,dash pattern=on 2pt off 1.6pt},
  spared/.style={rd},
  inj/.style={-{Stealth[length=2mm]},thick,shorten >=1.5pt},
  extra/.style={font=\scriptsize\itshape}]
\begin{scope}[xshift=-2.2cm]
  \node[el,anchor=west] at (-0.4,1.35) {type (b): $G_2$, anchors $1$ and $2$, counted letter $\word A$, $h_1(K_\sigma)=1+n_{\word A}=5$};
  \foreach \i/\n/\c in {0/a/1,1/p/A,2/b/2,3/q/A,4/r/A,5/a'/1,6/s/A,7/b'/2}{
    \node[wv] (v\i) at (1.35*\i,0) {$\n$};
    \node[cw,below=1.5pt of v\i] {$\c$};
  }
  \foreach \i/\j/\s in {0/1/spared,1/2/spared,2/3/killed,3/4/spared,4/5/killed,5/6/spared,6/7/spared}{
    \draw[\s] (v\i) -- (v\j);
    \pgfmathtruncatemacro{\k}{\j}
    \coordinate (m\k) at ($(v\i)!0.5!(v\j)$);
    \node[el,below=1pt] at (m\k) {$e_{\k}$};
  }
  \draw[inj] (v1.north) to[out=90,in=90,looseness=1.6] (m2);
  \draw[inj] (v3.north) to[out=90,in=90,looseness=1.6] (m4);
  \draw[inj] (v4.north) to[out=60,in=90,looseness=1.0] (m6);
  \draw[inj] (v6.north) to[out=90,in=90,looseness=1.6] (m7);
  \node[extra,above=3pt] at (m1) {$+1$};
  \draw[decorate,decoration={brace,mirror,amplitude=3pt}] ($(v1.south west)+(0,-0.85)$) -- ($(v1.south east)+(0,-0.85)$)
     node[midway,below=3pt,el] {surplus $1\word A2$};
  \draw[decorate,decoration={brace,mirror,amplitude=3pt}] ($(v3.south west)+(0,-0.85)$) -- ($(v4.south east)+(0,-0.85)$)
     node[midway,below=3pt,el] {deficit $2\word A\word A1$};
  \draw[decorate,decoration={brace,mirror,amplitude=3pt}] ($(v6.south west)+(0,-0.85)$) -- ($(v6.south east)+(0,-0.85)$)
     node[midway,below=3pt,el] {surplus $1\word A2$};
\end{scope}
\begin{scope}[yshift=-5.4cm]
  \node[el,anchor=west] at (-7.0,2.1) {type (a): $G_3$, anchors $o$ and $2$, counted letter $1$, $h_1(K_\sigma)=n_1=2$};
  \begin{scope}[xshift=-6.3cm,yshift=-1.45cm,scale=0.72]
    \node[ctr] (o) at (2.4,3.4) {$0$};
    \node[wh] (hp) at (-0.6,1.8) {$p$}; \node[hub] (h1) at (0.9,1.8) {$1$};
    \node[wh] (hq) at (2.9,1.8) {$q$}; \node[hub] (h2) at (5.4,1.8) {$2$};
    \node[ch] (cb) at (0.9,0.1) {$b$}; \node[ch] (ca) at (2.0,0.5) {$a$};
    \node[ch] (cA) at (3.4,0.5) {$a'$}; \node[ch] (cB) at (4.7,0.1) {$b'$};
    \foreach \x in {h1,h2,hp,hq} \draw[tr] (o)--(\x);
    \draw[tr] (h1)--(ca); \draw[tr] (h1)--(cA);
    \draw[tr] (h2)--(cB); \draw[tr] (h2)--(cb);
    \draw[rd] (hp)--(cb); \draw[rd] (cb)--(ca); \draw[rd] (ca)--(hq); \draw[rd] (hq)--(cA); \draw[rd] (cA)--(cB);
    \node[rlab] at (-0.05,0.95) {$e_1$}; \node[rlab] at (1.45,0.1) {$e_2$}; \node[rlab] at (2.3,1.3) {$e_3$};
    \node[rlab] at (3.45,1.3) {$e_4$}; \node[rlab] at (4.2,0.1) {$e_5$};
  \end{scope}
  \foreach \i/\n/\c in {0/p/A,1/b/2,2/a/1,3/q/A,4/a'/1,5/b'/2}{
    \node[wv] (w\i) at (1.35*\i-0.2,0) {$\n$};
    \node[cw,below=1.5pt of w\i] {$\c$};
  }
  \foreach \i/\j/\s in {0/1/spared,1/2/killed,2/3/killed,3/4/spared,4/5/spared}{
    \draw[\s] (w\i) -- (w\j);
    \pgfmathtruncatemacro{\k}{\j}
    \coordinate (n\k) at ($(w\i)!0.5!(w\j)$);
    \node[el,below=1pt] at (n\k) {$e_{\k}$};
  }
  \draw[inj] (w2.north) to[out=60,in=90,looseness=1.0] (n4);
  \draw[inj] (w4.north) to[out=90,in=90,looseness=1.6] (n5);
  \node[extra,above=3pt] at (n1) {unused};
  \draw[decorate,decoration={brace,mirror,amplitude=3pt}] ($(w2.south west)+(0,-0.85)$) -- ($(w2.south east)+(0,-0.85)$)
     node[midway,below=3pt,el] {deficit $21\word A$};
  \draw[decorate,decoration={brace,mirror,amplitude=3pt}] ($(w4.south west)+(0,-0.85)$) -- ($(w4.south east)+(0,-0.85)$)
     node[midway,below=3pt,el] {surplus $\word A12$};
\end{scope}
\end{tikzpicture}
\caption{Lemma~\ref{lem:htwo-count} on the two types of Proposition~\ref{prop:startri}: the type~(b) root $G_2$ of Figure~\ref{fig:htwo} (top) and the type~(a) root $G_3$ of Example~\ref{ex:htwo-count-a} (bottom).
Killed edges are dashed, and the arrows match the vertices of the count to distinct spared edges. The left-over spared edge accounts for the additional unit in $h_1(K_\sigma)=1+n_{\word A}$ in $G_2$ and is not needed in $G_3$, where $h_1(K_\sigma)=n_1$.}\label{fig:htwocount}
\end{figure}

\begin{example}[Type (a)]\label{ex:htwo-count-a}
Let $G_3$ be the root with centre $0$, fertile hubs $1$ (children $a,a'$) and $2$ (children
$b,b'$), barren hubs $p,q$, and red path $\sigma=\{e_1,\dots,e_5\}$ through $p\,b\,a\,q\,a'\,b'$
(Figure~\ref{fig:htwocount}, bottom). Its colour word is $\word A\,2\,1\,\word A\,1\,2$ and its
anchors are $o$ and $2$. As $\out_\sigma(1)=4$ and $\out_\sigma(2)=3$, the stratum $\sigma$ is a
dead rank-one stratum of type~(a) with $h(K_\sigma)=(1,n_1)=(1,2)$ and $b_\sigma=(2,1,1,1,2)$.
The index $2$ is killed by the suffix $S_2$ from $b$ to $b'$, with anchors $2$ and $2$, and the
index $3$ by the prefix $P_3$ from $p$ to $q$, with anchors $o$ and $o$. The spared indices are
$1,4,5$, three of them for $h_1(K_\sigma)=2$, and each of $\{0,\mathbf e_1,\mathbf e_4\}$,
$\{0,\mathbf e_1,\mathbf e_5\}$, $\{0,\mathbf e_4,\mathbf e_5\}$ is a fibre by
Lemma~\ref{lem:htwo-cert}.
\end{example}

\begin{example}[Type (b)]\label{ex:htwo-count}
In $G_2$ the $\word A$-blocks of the colour word $1\,\word A\,2\,\word A\word A\,1\,\word A\,2$
are $\{p\}$ and $\{s\}$, both surplus blocks, and the deficit block $\{q,r\}$. So the count of
the proof is $\bigl(\sum_{\text{blocks}}k\bigr)+1=4+1=5=h_1(K_\sigma)$
(Figure~\ref{fig:htwocount}, top). The spared indices are exactly $1,2,4,6,7$
(Example~\ref{ex:htwo-cert}), so
$F(\sigma)=\{0,\mathbf e_1,\mathbf e_2,\mathbf e_4,\mathbf e_6,\mathbf e_7\}$ is the only fibre
at this stratum of the form in Lemma~\ref{lem:htwo-cert}.
\end{example}

The spared set is read off the colour word alone. In particular, red edges between two
children of the same hub, red edges at a hub, and paths with a fertile hub as an endpoint need
no separate treatment.

\section{Applications}\label{sec:cases}

We now verify that a fibre system exists in each of the classes announced in the introduction. The
numerical statements of this section and of Section~\ref{sec:outlook} are checked in the
companion notebook \cite{Notebook}, which enumerates the relaxed coparking functions directly
from Definitions~\ref{def:fibre} and~\ref{def:relaxed}.

\subsection{Coned, biconed and triconed graphs}

A set of vertices \emph{dominates} a graph if every vertex outside it is adjacent to one of
its members. A graph is \emph{coned} \cite{Kook} if it is dominated by a vertex $0$, \emph{biconed}
\cite{Biconed} if it is dominated by an edge $01$ with no parallel edges at $0$ or $1$, and
\emph{triconed} \cite{Triconed} if it is dominated by a path $1\,0\,2$ with no parallel edges
at $0$, $1$ or $2$. We assume there are no loops or coloops, which can be removed without changing $h$ or leaving
the class \cite[Lemma 3.3]{Triconed}. The canonical tree of \cite{Triconed} attaches every
vertex to $0$ if possible, else to $1$, else to $2$. With its canonical tree, a triconed graph is
therefore a radius-two root with fertile hubs among $1,2$, hence $H\le2$. A biconed graph with its
canonical tree is one with $H\le1$, and a coned graph one with $H=0$, the tree being the star at
$0$. Section~\ref{sec:radius2} assumes a simple graph, but the parallel edges between vertices
other than $0,1,2$ that \cite{Triconed} allows cause no difficulty: a stratum containing two
of them is not a forest, so it is live. Now let a stratum contain at most one edge of each
parallel class, and keep one edge of each parallel class, the edge of the stratum where there is one. In the resulting simple graph the stratum and its subsets have the same unique unions, baselines and dead nodes as before. Every parallel edge is red, since the tree edges all meet $0$, $1$ or $2$, so the tree is unchanged and the
results of Section~\ref{sec:radius2} apply to it.

\begin{theorem}[Rank one at $H\le2$]\label{thm:rankone}
In a radius-two root with at most two fertile hubs, every dead node has rank at most one and
every dead stratum admits an unconditional fibre.
\end{theorem}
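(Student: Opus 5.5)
The plan is to split by the rank of the dead node and to assemble results already proved. First, Corollary~\ref{cor:rankH} gives $r(K_\sigma)\le1$ for every dead stratum $\sigma$ of a root with $H\le2$. It follows from the bound $r(K_\sigma)\le\phi(\sigma)-c(\sigma)$ of Theorem~\ref{thm:rank}, since $\phi(\sigma)\le H\le2$ and $c(\sigma)\ge1$.

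If $r(K_\sigma)=0$, Lemma~\ref{lem:zerofibre} supplies the unconditional fibre $\{0\}$, and nothing more is needed.

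If $r(K_\sigma)=1$, then $1\le\phi(\sigma)-c(\sigma)\le 2-1$, so $\phi(\sigma)=2$. Hence $H=2$ exactly, and Proposition~\ref{prop:path} applies with $d=1$: $\sigma$ is a path with no fertile hub as an internal vertex, and both fertile spines lie in $A_\sigma\setminus U_\sigma$. This is precisely the setting of Section~\ref{sec:htwo}, which assumes a root with exactly two fertile hubs and a dead stratum of rank one. Since $\sigma$ is dead, its two anchors differ by Lemma~\ref{lem:excess}(b). Up to relabelling and reversing the path, the anchors are therefore either $o$ and a fertile hub, which is case~(a) of Proposition~\ref{prop:startri}, or the two fertile hubs, which is case~(b). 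The anchors cannot both be $o$, because they differ.

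Lemma~\ref{lem:htwo-count} shows that at least $h_1(K_\sigma)$ indices are spared. Choosing any set $L$ of $h_1(K_\sigma)$ spared indices, Lemma~\ref{lem:htwo-cert} shows that $\{0\}\cup\{\mathbf e_j:j\in L\}$ is an unconditional fibre at $\sigma$.

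The main point to check is that the hypotheses of Section~\ref{sec:htwo} really hold for every rank-one dead stratum. In particular, $H$ must equal $2$ rather than merely being at most $2$; this is forced because $\phi(\sigma)=2$. The case split of Lemma~\ref{lem:htwo-count} must also exhaust all anchor configurations; it does, because the case with both anchors equal to $o$ is excluded by deadness. Once these are verified, the existence of the required spared indices is exactly the counting lemma, and the theorem follows by combining the two rank cases.
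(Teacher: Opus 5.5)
Your proposal is correct and follows the paper's proof. It takes the rank bound from Corollary~\ref{cor:rankH} and the fibre $\{0\}$ at rank zero from Lemma~\ref{lem:zerofibre}; at rank one it forces $\phi(\sigma)=2=H$ through Theorem~\ref{thm:rank} and then uses Lemmas~\ref{lem:htwo-count} and~\ref{lem:htwo-cert}. Your extra checks, that Proposition~\ref{prop:path} applies and that deadness excludes two anchors at $o$, spell out what the paper leaves implicit in ``Section~\ref{sec:htwo} applies''.
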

\begin{proof}
The rank bound is Corollary~\ref{cor:rankH}. A dead stratum of rank zero admits the fibre $\{0\}$
by Lemma~\ref{lem:zerofibre}. A dead stratum of rank one has $\phi(\sigma)\ge c(\sigma)+1\ge2$ by Theorem~\ref{thm:rank}. So the root has exactly two fertile hubs and $\phi(\sigma)=2$, and Section~\ref{sec:htwo} applies. Lemma~\ref{lem:htwo-count} provides
$h_1(K_\sigma)$ spared indices, and by Lemma~\ref{lem:htwo-cert} the corresponding linears
together with $0$ form an unconditional fibre.
\end{proof}

\begin{theorem}\label{thm:triconed}
Let $G$ be a triconed graph with its canonical tree $B_0$. Then every choice of fibres along
inclusion meets no obstructed stratum, and for every fibre system $F$ on $(M(G),B_0)$ the set
$\PP^*(F)$ is a pure multicomplex with degree sequence $h(M(G))$. In particular Stanley's conjecture
holds for triconed, biconed and coned graphs. For a coned graph
every stratum is live, so the fundamental triangles form a cycle system and the relaxed
coparking functions are the coparking functions of \cite{CycleSystems}.
\end{theorem}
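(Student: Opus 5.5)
The plan is to reduce everything to Theorem~\ref{thm:rankone} and Theorem~\ref{thm:main}, using the preliminary discussion at the start of this subsection to place a triconed graph in the radius-two setting. First I dispose of loops and coloops, which change neither $h$ nor membership in the class \cite[Lemma 3.3]{Triconed}. Then I observe that the canonical tree is breadth-first from $0$: every vertex adjacent to $0$ is a hub, and every other vertex is attached to $1$ or $2$, so it is a child at distance two. Hence $(G,B_0)$ is a radius-two root with fertile hubs among $\{1,2\}$, so $H\le2$. Parallel edges not at $0,1,2$ are handled by the reduction already described: a stratum containing two parallel edges is not a forest, so it is live, and otherwise one passes to a simple graph with the same unique unions, baselines and dead nodes for that stratum and its subsets. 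Since deadness, baselines, $K_\sigma$ and the offsets relevant to condition (ii) are all computed inside $A_\sigma$ and on subsets of $\sigma$, the results of Section~\ref{sec:radius2} apply stratum by stratum.

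Next I show that no choice of fibres along inclusion meets an obstructed stratum. By Theorem~\ref{thm:rankone}, every dead stratum $\sigma$ admits an unconditional fibre. By Lemma~\ref{lem:baseline}, an unconditional fibre is a fibre for every fibre system chosen on the strata below $\sigma$. So whatever fibres were chosen earlier, inductively along inclusion, $\sigma$ is never obstructed. A fibre system therefore exists, and every partial choice extends. Theorem~\ref{thm:main} then gives, for every fibre system $F$, that $\PP^*(F)$ is a pure multicomplex with degree sequence $h(M(G))$. Stanley's conjecture follows for triconed graphs, and hence for biconed and coned graphs, which are the special cases $H\le1$ and $H=0$ of the same root description.

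For the coned statement, $H=0$, so $\phi(\sigma)=0$ for every stratum. If some $\sigma$ were dead, Theorem~\ref{thm:rank} would give $0\le r(K_\sigma)\le\phi(\sigma)-c(\sigma)=-c(\sigma)<0$, which is a contradiction. So every stratum is live, the fundamental triangles form a cycle system by Definition~\ref{def:cs}, and Corollary~\ref{cor:specialise}(a) identifies $\PP^*(F)$ with the coparking functions of \cite{CycleSystems}.

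The main obstacle is the parallel-edge reduction. One must check that condition (ii) at $\sigma$ only involves subsets of $\sigma$, so that a stratum-by-stratum simplification is legitimate even though different strata use different simple representatives. I would verify that baselines $b_\tau$ for $\tau\subseteq\sigma$ depend only on the circuits $C_i$ with $i\in\tau$, which is clear since removing a red edge outside $\sigma$ does not alter these circuits or the tree.
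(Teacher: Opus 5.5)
Your proposal is correct and follows the paper's proof. You place the triconed graph with its canonical tree among the radius-two roots with $H\le2$ using the same parallel-edge reduction, take the unconditional fibres of Theorem~\ref{thm:rankone}, which by Lemma~\ref{lem:baseline} are fibres whatever was chosen below, and conclude with Theorem~\ref{thm:main} and Corollary~\ref{cor:specialise}(a). The one deviation is in the coned case: you rule out dead strata through the bound $r(K_\sigma)\le\phi(\sigma)-c(\sigma)<0$ of Theorem~\ref{thm:rank} with $\phi=0$, whereas the paper exhibits the cycle in $U_\sigma$ directly, namely the path in $\sigma$ between two leaves of a component closed by their spines to $0$. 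Both arguments are valid, and yours is the same leaf count packaged through $I(\sigma)\le|\sigma|-c(\sigma)$.
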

\begin{proof}
By the discussion above, a triconed, biconed or coned graph with its canonical tree is a
radius-two root with $H\le2$. By Theorem~\ref{thm:rankone} every dead stratum admits an unconditional fibre, so every choice of fibres along inclusion is a fibre
system with no obstructed stratum, and Theorem~\ref{thm:main} gives the pure multicomplex with
degree sequence $h(M(G))$.

For a coned graph every vertex other than $0$ is a barren hub, so by Lemma~\ref{lem:mult} the
spine $0h$ lies in $U_\sigma$ exactly when $\deg_\sigma(h)=1$. If $\sigma$ contains a cycle it
is live. Otherwise two leaves $h,h'$ of one component of the forest $\sigma$ have their spines
in $U_\sigma$, and the path from $h$ to $h'$ in $\sigma$ closed by $0h$ and $0h'$ is a cycle in
$U_\sigma$. So every stratum is live, and the rest is Corollary~\ref{cor:specialise}(a).
\end{proof}

This recovers the theorem of David, Lai, Oh and Wu \cite{Triconed} by a different proof. That
paper constructs an explicit bijection from spanning trees to weighted forests of the reduced
graph. Here the triconed case is a corollary of a general theorem, and the only input specific
to the class is the rank bound of Corollary~\ref{cor:rankH} together with the colour-word
analysis of Section~\ref{sec:htwo}. For biconed graphs the model has
only rank-zero dead nodes (Example~\ref{ex:biconed-gcs}), and Corollary~\ref{cor:specialise}(b)
gives the multicomplex directly: it is the set of vectors that satisfy the coparking condition on
live strata and, whenever a dead stratum can be fired from them,
meet its coparking baseline exactly. For coned graphs the cycle system of the theorem is that of
\cite[Section 3.1, Example (3)]{CycleSystems}, and Theorem~\ref{thm:main} reproves Kook's
theorem \cite{Kook} through it.

\subsection{The Wagner graph}\label{sec:wagner}

The Wagner graph $V_8$ is the $8$-cycle $0\,1\,2\,3\,4\,5\,6\,7$ with the four long diagonals
$v\,(v+4)$ (Figure~\ref{fig:wagner}). It is triconed, dominated by the path $1\,0\,7$.

\begin{figure}[h]\centering
\begin{tikzpicture}[scale=1.25]
\foreach \i in {0,...,7} \coordinate (v\i) at ({90-45*\i}:2);
\draw[tr] (v0)--(v1); \draw[tr] (v0)--(v4);
\draw[tr] (v0)--(v7); \draw[tr] (v1)--(v2);
\draw[tr] (v1)--(v5); \draw[tr] (v7)--(v6);
\draw[tr] (v7)--(v3);
\draw[rd] (v2)--(v3); \draw[rd] (v3)--(v4); \draw[rd] (v4)--(v5); \draw[rd] (v5)--(v6); \draw[rd] (v2)--(v6);
\node[ctr] at (v0) {$0$}; \node[hub] at (v1) {$1$}; \node[hub] at (v7) {$7$}; \node[wh] at (v4) {$4$};
\node[ch] at (v2) {$2$}; \node[ch] at (v3) {$3$}; \node[ch] at (v5) {$5$}; \node[ch] at (v6) {$6$};
\end{tikzpicture}
\caption{The Wagner graph $V_8$ with the canonical tree of the dominating path $1\,0\,7$,
drawn as in Figure~\ref{fig:running}: tree edges grey, red edges red. Hub $4$ (dashed) is
barren, so $H=2$.}\label{fig:wagner}
\end{figure}

\begin{proposition}\label{prop:wagner}
$V_8$ admits no generalized cycle system consisting of circuits (Proposition~\ref{prop:nogcs}),
but it carries a fibre system with the canonical tree of the path $1\,0\,7$, so
Stanley's conjecture holds for $V_8$ with an explicit pure multicomplex of relaxed coparking
functions.
\end{proposition}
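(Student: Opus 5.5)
The plan is to deduce the proposition from results already proved, with only a small amount of checking specific to $V_8$. The first clause, that $V_8$ admits no generalized cycle system of circuits, is Perkinson's computation, Proposition~\ref{prop:nogcs}, and needs nothing further. For the second clause I will show that $V_8$ with the canonical tree of the dominating path $1\,0\,7$ is a triconed graph in the sense of Section~\ref{sec:cases}. Theorem~\ref{thm:triconed} then gives a fibre system, and Theorem~\ref{thm:main} turns it into the explicit pure multicomplex $\PP^*(F)$ with degree sequence $h(M(V_8))$.

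First I check the triconed hypotheses. Every vertex is adjacent to one of $0,1,7$. The neighbours of $0$ are $1,7,4$. Vertices $2$ and $5$ are adjacent to $1$, since $1$ is adjacent to $0,2,5$. Vertices $6$ and $3$ are adjacent to $7$, since $7$ is adjacent to $6,0,3$. The graph $V_8$ is simple, has no loops, and is $3$-connected, so it has no coloops.

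Next I identify the canonical tree, which is the breadth-first tree of Figure~\ref{fig:wagner}. Vertex $0$ is the centre and $1,7,4$ are its hubs. Vertices $2,5$ are attached to $1$ and $3,6$ to $7$. Hub $4$ has no children, since its neighbours $3$ and $5$ are already attached to $7$ and $1$. So the fertile hubs are exactly $1$ and $7$, and $H=2$. The tree edges are $01,07,04,12,15,76,73$. This leaves the five red edges $23,34,45,56,26$, matching the corank $12-8+1=5$.

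We are then in the setting of Theorem~\ref{thm:rankone}. Every dead node has rank at most one by Corollary~\ref{cor:rankH}. Rank-zero dead strata take the fibre $\{0\}$ by Lemma~\ref{lem:zerofibre}. A rank-one dead stratum is a path in normal form (Proposition~\ref{prop:path}), and Lemmas~\ref{lem:htwo-cert} and~\ref{lem:htwo-count} supply $h_1(K_\sigma)$ spared linears, which form an unconditional fibre.

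Since the fibres are unconditional, any choice made along inclusion is a fibre system. To make the multicomplex explicit, I would list the dead strata using Theorem~\ref{thm:indep} and read off their colour words. The red forest is the path $3\,2\,6\,5\,4$ together with the edge $34$, which closes the $5$-cycle $2\,3\,4\,5\,6$. So the dead strata are the subforests of this $5$-cycle whose attachment graphs are forests, and only the proper subpaths can have rank one. For each rank-one path I would compute its anchors and its killed indices from Lemma~\ref{lem:htwo-cert}, and then fix one spared set $L$.

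The main obstacle is not logical, since the theorems already carry the argument. It is confirming that the canonical tree really is a radius-two root with $H=2$ and no parallel-edge complications, and then producing the explicit list of fibres. For that bookkeeping I would cross-check against the companion computation \cite{Notebook}, checking that the resulting $\PP^*(F)$ has $|\PP^*(F)|$ equal to the spanning-tree count $392$ of $V_8$ and maximal degree $7$.
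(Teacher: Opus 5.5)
Your proposal is correct and is essentially the paper's proof: cite Proposition~\ref{prop:nogcs}, check that $V_8$ is dominated by the path $1\,0\,7$ and is therefore triconed, and invoke Theorem~\ref{thm:triconed}, whose content via Theorem~\ref{thm:rankone} you simply unpack. Your extra bookkeeping (the canonical tree, the five red edges, $H=2$, the count $392$) is accurate but not needed, apart from a small slip: you call the red edges a forest, when they in fact form the $5$-cycle $2\,3\,4\,5\,6$.
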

\begin{proof}
The first claim is Perkinson's computation, Proposition~\ref{prop:nogcs}. For the second, the vertices $2$
and $5$ are adjacent to $1$, the vertices $3$ and $6$ to $7$, and $4$ to $0$. So $V_8$ is
triconed with dominating path $1\,0\,7$ and Theorem~\ref{thm:triconed} applies.
\end{proof}

\subsection{The Petersen graph}\label{sec:petersen}

Label the Petersen graph as in Figure~\ref{fig:petersen}, with outer cycle $0\,1\,2\,3\,4$,
inner vertices $5,\dots,9$ with $v$ adjacent to $v+5$, and inner edges $57,79,96,68,85$. It has
diameter two and every vertex has exactly one common neighbour with every non-adjacent vertex, so from any centre the
breadth-first tree is unique. By vertex-transitivity we take the centre $0$. The hubs are
$1,4,5$, each with two children: $1$ has $2$ and $6$, $4$ has $3$ and $9$, and $5$ has $7$ and $8$. The tree is
$B_0=\{01,04,05,12,16,43,49,57,58\}$ and the six red edges form the hexagon
\[
  2-3-8-6-9-7-2 ,
\]
whose vertices are coloured by their parents $\word A,\word B,\word C,\word A,\word B,\word C$ with $\word A=1$, $\word B=4$, $\word C=5$: every hub
occurs exactly twice and no two consecutive vertices share a hub. Since the root has no barren hubs, we depart here from the convention of Section~\ref{sec:htwo} and write the three fertile hubs as $\word A,\word B,\word C$. Since all six red edges join children, no stratum has a hub as a path vertex.

\begin{figure}[h]\centering
\begin{tikzpicture}[scale=1.35]
\foreach \i in {0,...,4} { \coordinate (O\i) at ({90+72*\i}:2); \coordinate (I\i) at ({90+72*\i}:1); }
\foreach \i/\j in {0/1,1/2,2/3,3/4,4/0} \draw[grey!60] (O\i)--(O\j);
\foreach \i in {0,...,4} \draw[grey!60] (O\i)--(I\i);
\foreach \i/\j in {0/2,2/4,4/1,1/3,3/0} \draw[grey!60] (I\i)--(I\j);
\draw[tr] (O0)--(O1); \draw[tr] (O0)--(O4); \draw[tr] (O0)--(I0);
\draw[tr] (O1)--(O2); \draw[tr] (O1)--(I1); \draw[tr] (O4)--(O3); \draw[tr] (O4)--(I4);
\draw[tr] (I0)--(I2); \draw[tr] (I0)--(I3);
\draw[rd] (O2)--(O3); \draw[rd] (O3)--(I3); \draw[rd] (I3)--(I1); \draw[rd] (I1)--(I4); \draw[rd] (I4)--(I2); \draw[rd] (I2)--(O2);
\node[ctr] at (O0) {$0$};
\node[hub] at (O1) {$1$}; \node[hub] at (O4) {$4$}; \node[hub] at (I0) {$5$};
\node[ch] at (O2) {$2$}; \node[ch] at (O3) {$3$}; \node[ch] at (I1) {$6$}; \node[ch] at (I2) {$7$}; \node[ch] at (I3) {$8$}; \node[ch] at (I4) {$9$};
\node[lab,redge] at ($(O2)+(-0.35,0.15)$) {$A$}; \node[lab,redge] at ($(O3)+(0.35,0.15)$) {$B$};
\node[lab,redge] at ($(I3)+(0.3,-0.2)$) {$C$}; \node[lab,redge] at ($(I1)+(-0.3,-0.2)$) {$A$};
\node[lab,redge] at ($(I4)+(0.3,0.2)$) {$B$}; \node[lab,redge] at ($(I2)+(-0.3,0.2)$) {$C$};
\end{tikzpicture}
\caption{The Petersen root from the centre $0$. The tree $B_0$ is the set of non-red edges, and the six red edges
form the red hexagon $2\,3\,8\,6\,9\,7$. Each outer vertex is coloured by its parent
($\word A=1$, $\word B=4$, $\word C=5$).}\label{fig:petersen}
\end{figure}

\begin{lemma}[Strata of the Petersen root]\label{lem:petersen-strata}
Of the $63$ nonempty sets of red edges, exactly $24$ are dead:
\begin{center}
\begin{tabular}{@{}lccc@{}}
\toprule
selected red edges & shape in the hexagon & number & $r(K_\sigma)$ \\
\midrule
three & a path of two edges and a disjoint edge & $12$ & $0$ \\
four  & a path of four edges & $6$ & $2$ \\
five  & a path of five edges & $6$ & $2$ \\
\bottomrule
\end{tabular}
\end{center}
\end{lemma}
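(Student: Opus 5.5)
Every stratum of the Petersen root is a set of edges of the red hexagon $2\,3\,8\,6\,9\,7$. I will classify the $63$ strata by their shape in that hexagon, decide deadness with Theorem~\ref{thm:indep}, and compute ranks with the first formula of Theorem~\ref{thm:rank}. All three hubs are fertile and no hub is a path vertex. Lemma~\ref{lem:mult} therefore gives $m_\sigma(\ell_c)=\deg_\sigma(c)$ and $m_\sigma(s_h)=\out_\sigma(h)$, where $\out_\sigma(h)$ counts the edges of $\sigma$ with exactly one end coloured $h$. Since no two consecutive hexagon vertices share a colour, every edge of $\sigma$ leaves exactly two branches. Hence
\[
  \sum_h\out_\sigma(h)=2|\sigma| .
\]
In the notation of Corollary~\ref{cor:split}, $I(\sigma)$ is the number of vertices of degree two in $\sigma$, because there are no barren hubs, and $\phi(\sigma)=\#\{h:\out_\sigma(h)\ge2\}$.

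The first step disposes of the live shapes. The full hexagon is a cycle and hence live. A single edge is live, and so is any pair, since the matroid is graphic. For a forest $\sigma$, the attachment graph $A(\sigma)$ has as edges the legs at the leaves of $\sigma$ and the spines $s_h$ with $\out_\sigma(h)=1$. A path component whose two ends have the same colour closes a cycle through their common hub, so such a $\sigma$ is live. Two ends of the same colour occur exactly at distance three along the hexagon: a path of three edges, or two opposite vertices.

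Next I enumerate by $|\sigma|$, using the rotational symmetry of order six together with the colour pattern $\word{ABCABC}$. For three edges the shapes are a path of three edges (live), three disjoint edges $\{e_1,e_3,e_5\}$, and a path of two edges plus a disjoint edge, of which there are $12$. For four edges the shapes are a path of four edges (six of them) and two disjoint paths of two edges (three of them, the opposite pairs). For five edges the only shape is a path of five edges, again six of them. For each shape I compute the $\out_\sigma(h)$ and then look for a cycle in $A(\sigma)$. Such a cycle is either a loop or a double edge produced by leaves sharing a hub, or a longer cycle through $o$ using spines with $\out=1$.

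As examples, take three disjoint edges: each hub has $\out=2$, no spine attaches, and the six legs join the three components to the three hubs in the pattern of a hexagon, a $6$-cycle, so the stratum is live. For two disjoint $2$-paths, $\sigma=\{e_1,e_2,e_4,e_5\}$, the leaves are $2,8$ and $6,7$, coloured $\word A,\word C$ and $\word A,\word C$. The two components and the hubs $\word A,\word C$ then carry a $4$-cycle, so this shape is live too. For a path of two edges plus a disjoint edge, one hub gets $\out=1$ and the attachments form a tree. For the rank I use
\[
  r(K_\sigma)=I(\sigma)+\phi(\sigma)-|\sigma| .
\]
This gives $1+1-3$ plus a correction for the $2$-path case; I expect $\phi=2$ there, so the rank is $1+2-3=0$. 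A path of four edges gives $3+3-4=2$, and a path of five edges gives $4+3-5=2$. For the path of five edges the anchors are the end colours, which differ (for instance $\word A$ and $\word C$), so it is dead. The same holds for the path of four edges, whose end colours are those of vertices at distance four, again different.

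The main obstacle is bookkeeping rather than ideas. I must correctly determine deadness for every shape, especially the non-path ones, where Lemma~\ref{lem:excess}(b) no longer applies and the attachment graph has to be built by hand, including spines with $\out_\sigma(h)=1$ that can route a cycle through $o$. I would carry out each non-path shape on one representative and transfer the result by symmetry. As a check, the counts should total $24$ dead strata, and summing the contributions $t^{|b_\sigma|}h(K_\sigma)E_\sigma(t)$ in Theorem~\ref{thm:B} should reproduce the $2000$ spanning trees of the Petersen graph.
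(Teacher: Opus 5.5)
Your route is the paper's: sort the strata by size and shape in the hexagon, decide deadness with Theorem~\ref{thm:indep}, and read the rank off $r(K_\sigma)=I(\sigma)+\phi(\sigma)-|\sigma|$. The paper only compresses $A(\sigma)$ further, to a multigraph on $o,\word A,\word B,\word C$. As written, however, your case analysis has two gaps.

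First, your four-edge list covers only $6+3=9$ of the $\binom64=15$ strata. You omit the six strata made of a path of three edges and a disjoint edge, which are the complements of two edges separated by a single edge. These are live by your own remark: the three-edge path has both ends on the same hub, and its two legs form a double edge in $A(\sigma)$. So the total of $24$ survives, but the case has to be listed, or ``exactly $24$'' is not proved.

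Second, for the twelve three-edge strata you only say you ``expect'' $\phi=2$ and that the attachments form a tree. That is exactly the verification the lemma needs, for deadness and rank alike. You can supply it as follows. Let the two-edge path $v_0v_1v_2$ have colours $X,Y,Z$. The remaining three hexagon vertices carry $X,Y,Z$ in the same order, and the disjoint edge cannot touch the path. So it is coloured $XY$ or $YZ$, never $XZ$.

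In the $XY$ case, $\out_\sigma(X)=2$, $\out_\sigma(Y)=3$ and $\out_\sigma(Z)=1$. Exactly the spine $s_Z$ attaches, and $A(\sigma)$ is the path $Y$--$K_E$--$X$--$K_P$--$Z$--$o$, where $K_P,K_E$ are the two components. This is a tree, and the $YZ$ case is symmetric. Hence $I=1$, $\phi=2$ and $r(K_\sigma)=0$.

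For the paths of four and five edges, also record $\out_\sigma=(3,3,2)$ on $\word{ABCAB}$ and $(3,4,3)$ on $\word{ABCABC}$. Then no spine attaches, and $A(\sigma)$ consists of two legs to distinct hubs, which settles deadness directly. Citing anchors through Lemma~\ref{lem:excess}(b) instead would be circular, since that lemma presupposes a stratum already known to be dead of rank two.
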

\begin{proof}
The hexagon itself is live, and every other stratum $\sigma$ is a forest whose components
$P_1,\dots,P_c$ are paths in the hexagon. Every outer vertex is a child, so by
Corollary~\ref{cor:split} the edges of $A(\sigma)$ are the two legs from the endpoints of each
$P_j$ to their parent hubs and the spines $s_h$ of the hubs with $\out_\sigma(h)=1$. Each $P_j$
has degree two in $A(\sigma)$. Contracting one of the two edges at each $P_j$ does not change whether the graph is a forest, so $A(\sigma)$ is a forest if and only if
the multigraph $A'(\sigma)$ on the vertices $o,\word A,\word B,\word C$ is a forest, where $A'(\sigma)$ has an edge
joining the colours of the two endpoints of each $P_j$ and an edge $oh$ for each hub with
$\out_\sigma(h)=1$. By Theorem~\ref{thm:indep} this decides whether $\sigma$ is dead. Around the hexagon the colours read $\word{ABCABC}$, so opposite
vertices share a colour and vertices at distance one or two do not.

When $|\sigma|\le2$, $\sigma$ is live, as is every stratum with at most two elements in a
binary matroid (the remark following Lemma~\ref{lem:mono}). When $|\sigma|=3$, a path of three
edges has opposite endpoints and gives a loop of $A'(\sigma)$, and three disjoint edges give the
triangle $\word{ABC}$. A path of two edges with a disjoint edge (twelve strata) gives two edges
sharing one hub together with one spine, the spine of the one hub with $\out_\sigma(h)=1$, and
these three edges form a path. So these twelve strata are dead, and $r(K_\sigma)=I(\sigma)+\phi(\sigma)-|\sigma|=1+2-3=0$
by Theorem~\ref{thm:rank}. When $|\sigma|=4$, two paths of two edges (three strata) have the
same pair of endpoint colours and give two parallel edges, and a path of three edges with a
disjoint edge (six strata) gives a loop. A path of four edges (six strata) has endpoints at
distance two, hence of different colours, and $\out_\sigma(h)\ge2$ for every hub, so
$A'(\sigma)$ is a single edge. These six strata are dead with $r(K_\sigma)=3+3-4=2$. When
$|\sigma|=5$, a path of five edges (six strata) has adjacent endpoints, again of different
colours and with $\out_\sigma(h)\ge2$ for every hub, so it is dead with
$r(K_\sigma)=4+3-5=2$.
\end{proof}

The twelve rank-zero strata have the fibre $\{0\}$ by Lemma~\ref{lem:zerofibre}. The
twelve rank-two strata are, up to the symmetries of the hexagon and renaming of the hubs, the paths
$\word{ABCAB}$ (four edges) and $\word{ABCABC}$ (five edges). Both have their anchors at
two fertile hubs, so by Proposition~\ref{prop:startri} their dead nodes are triangle nodes, with
parallel classes $(2,1,1)$ and $h(K_\sigma)=(1,2,2)$ for $\word{ABCAB}$, and parallel classes $(2,1,2)$ and $h(K_\sigma)=(1,3,4)$ for $\word{ABCABC}$. Number the red edges of the path
$e_1,\dots,e_k$ in path order, with vertices $v_0,\dots,v_k$ and $e_i=v_{i-1}v_i$ as in Section~\ref{sec:htwo}, so that the variable $x_i$ in the tables below belongs to $e_i$.

\begin{lemma}[Unconditional fibres of the Petersen root]\label{lem:petersen-cert}
The following are unconditional fibres:
\begin{center}
\begin{tabular}{@{}llll@{}}
\toprule
path & coparking baseline $b_\sigma$ & $h(K_\sigma)$ & fibre $F(\sigma)$ \\
\midrule
$\word{ABCAB}$  & $(2,1,1,2)$   & $(1,2,2)$ & $\{1,\ x_1,\ x_4,\ x_1^2,\ x_4^2\}$ \\
$\word{ABCABC}$ & $(2,1,1,1,2)$ & $(1,3,4)$ & $\{1,\ x_1,\ x_2,\ x_5,\ x_1^2,\ x_1x_2,\ x_1x_5,\ x_5^2\}$ \\
\bottomrule
\end{tabular}
\end{center}
\end{lemma}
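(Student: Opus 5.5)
The plan is to check, for each of the two tables rows, the two hypotheses of Lemma~\ref{lem:baseline}. First, the listed set is a pure multicomplex with degree sequence $h(K_\sigma)$. Second, every proper stratum that can be fired from $b_\sigma+m$ is dead and is fired exactly.

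\textbf{Step 1: multicomplex and baselines.} The multicomplex part is immediate. $\{1,x_1,x_4,x_1^2,x_4^2\}$ is closed under division, has maximal elements $x_1^2,x_4^2$ and degree sequence $(1,2,2)$. The second set has maximal elements $x_1^2,x_1x_2,x_1x_5,x_5^2$, is closed under division and has degree sequence $(1,3,4)$. The baselines I would read off Lemma~\ref{lem:excess}(b): $U_\sigma$ is the path together with the two end legs, so $b_\sigma(e_i)=1$ for the internal edges and $2$ at the two end edges.

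\textbf{Step 2: reduce to maximal offsets.} It suffices to test the maximal offsets $m'$. Suppose $\tau$ is fired from $b_\sigma+m$ with $m\le m'$. Then $\exc_\tau\le m|_\tau\le m'|_\tau$. If exactness $m'|_\tau=\exc_\tau$ has been shown at $m'$, it forces $m|_\tau=\exc_\tau$, that is, offset $0$. Deadness of $\tau$ does not depend on $m$.

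\textbf{Step 3: enumerate the relevant $\tau$.} A proper $\tau\subsetneq\sigma$ is fired from $b_\sigma+m'$ exactly when $\exc_\tau\le m'|_\tau$. By Lemma~\ref{lem:excess}(a), $\exc_\tau$ counts two kinds of elements. The first is the parent edges of the internal leaves of $\tau$, charged to the edge of $\tau$ at that leaf. The second is the spines of hubs $h$ with $\out_\tau(h)=1$, charged to the edge leaving $B_h$. Since $|m'|=2$, only $\tau$ with $|\Delta_\tau|\le2$ matter, and all charged edges must lie in the support of $m'$. This leaves only a few shapes: a prefix $P_j$, a suffix $S_j$, a middle subpath, or a prefix together with a suffix. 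The internal leaves of these are $v_j$, $v_{j-1}$, or a pair.

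\textbf{Step 4: the square offsets.} For an offset $x_1^2$, every charged element must lie in $C_{e_1}$. The only internal leaf that can be charged to $e_1$ is $v_1$, which forces $\tau\ni e_1$ with $v_1$ a leaf. A longer $\tau$ would have another internal leaf charged elsewhere, so $\tau=\{e_1\}$. But then $e_1$ joins two different colours, so both of its hubs have $\out=1$ and $|\Delta_\tau|=3>2$. Hence nothing is fired. The cases $x_4^2$ and $x_5^2$ are symmetric under reversing the path.

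\textbf{Step 5: the mixed offsets of $\word{ABCABC}$.} For $x_1x_2$ and $x_1x_5$, the candidates are $\{e_2\}$, $P_2$, $S_2$ and $P_1\cup S_3$ for the first, and $P_1\cup S_5$ together with the relevant suffixes and prefixes for the second. For each candidate I compute the $\out_\tau(h)$ from the colour word $\word{ABCABC}$ using Lemma~\ref{lem:parity}. Any candidate with a hub having $\out_\tau(h)=1$ acquires an extra spine in $\Delta_\tau$ and is discarded; this disposes of $\{e_2\}$ and $P_2$. The survivors should be fired with $\exc_\tau=m'|_\tau$ exactly. For example, $S_2$ runs from $v_1$ (colour $\word B$) to $v_5$ (colour $\word C$). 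It has $\out_{\word B}=\out_{\word C}=3$ and $\Delta_{S_2}$ is the leg at $v_1$, so $\exc=\mathbf e_2$ and the offset is $0$. Its anchors $\word B\ne\word C$ make it dead by Lemma~\ref{lem:excess}(b).

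For two-component candidates such as $P_1\cup S_3$ I would apply Theorem~\ref{thm:indep} through the reduced graph $A'$ on $o,\word A,\word B,\word C$ from the proof of Lemma~\ref{lem:petersen-strata}. Such a candidate either picks up an extra spine, so that it is not fired, or is a dead configuration fired exactly.

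\textbf{Expected obstacle.} The main difficulty is this finite but fiddly case check at $x_1x_2$ and $x_1x_5$. There one must make sure that no live double bridge or return (Section~\ref{sec:killers}) kills the product. The fibre was chosen precisely to avoid $x_2x_5$-type products killed by returns such as $S_2\cup\cdots$. If doing this by hand becomes error-prone, I would cross-check against the enumeration in \cite{Notebook}.
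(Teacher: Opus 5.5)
Your proposal is correct and follows essentially the paper's argument. Both verify the hypotheses of Lemma~\ref{lem:baseline} in the same way: restrict to proper strata with $|\Delta_\tau|\le2$ (prefix, suffix, middle subpath, or prefix plus suffix), discard those with some $\out_\tau(h)=1$, and compare the remaining thresholds with the fibre monomials. The only difference is organisation: the paper tabulates the thresholds once (live $x_3,x_2,x_1x_4$ for $\word{ABCAB}$; live $x_3,x_2x_4,x_3x_5,x_1x_3$ and dead $x_4,x_2$ for $\word{ABCABC}$), while you check offset by offset after reducing to maximal offsets.

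There are two small slips, neither of which affects the conclusion:
\begin{itemize}
\item $P_1\cup S_3$ is not a candidate at $x_1x_2$. It is live with threshold $x_1x_3$, so your ``extra spine or dead and exact'' dichotomy misdescribes it, although your own support filter from Step~3 already excludes it.
\item $x_2x_5$ is kept out of an unconditional fibre not by a live return. It is excluded because the dead suffix $S_2$ would be fired from $b_\sigma+\mathbf e_2+\mathbf e_5$ with nonzero offset $\mathbf e_5$.
\end{itemize}
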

\begin{proof}
By Section~\ref{sec:criteria}, only proper strata $\tau$ with $|\Delta_\tau|\le2$ can kill an
offset of degree at most two, and by Section~\ref{sec:killers} such a $\tau$ has at most two
internal leaves, so it is a prefix, a suffix, a subpath avoiding both ends, or the union of a
prefix and a suffix. Those with $\out_\tau(h)=1$ for some hub
$h$ have $|\Delta_\tau|\ge3$ (a direct count on the list) and are discarded. The rest are listed below, with the threshold $q_\tau$ read
off the colours and whether $\tau$ is dead or live from Theorem~\ref{thm:indep}:
\begin{center}
\begin{tabular}{@{}lll@{}}
\toprule
path & proper strata $\tau$ & $q_\tau$, dead or live \\
\midrule
$\word{ABCAB}$ & $\{e_1,e_2,e_3\}$ & $x_3$, live (return to $\word A$) \\
               & $\{e_2,e_3,e_4\}$ & $x_2$, live (return to $\word B$) \\
               & $\{e_1,e_4\}$   & $x_1x_4$, live (double bridge $\word A$--$\word B$) \\
\midrule
$\word{ABCABC}$ & $\{e_1,e_2,e_3\}$, $\{e_3,e_4,e_5\}$ & $x_3$, live \\
                & $\{e_2,e_3,e_4\}$, $\{e_1,e_2,e_4,e_5\}$ & $x_2x_4$, live \\
                & $\{e_1,e_2,e_3,e_5\}$ & $x_3x_5$, live \\
                & $\{e_1,e_3,e_4,e_5\}$ & $x_1x_3$, live \\
                & $\{e_1,e_2,e_3,e_4\}$ & $x_4$, dead \\
                & $\{e_2,e_3,e_4,e_5\}$ & $x_2$, dead \\
\bottomrule
\end{tabular}
\end{center}
A proper $\tau$ can be fired from $b_\sigma+m$ if and only if $q_\tau$ divides $x^m$. No monomial of either
multicomplex is divisible by a live threshold of the table. The dead threshold $x_4$ divides no
monomial of the second multicomplex, and $x_2$ divides exactly $x_2$ and $x_1x_2$, whose
restriction to $\tau=\{e_2,e_3,e_4,e_5\}$ is $x_2=q_\tau$, so the offset at $\tau$ is $0$. Both
multicomplexes are closed under division and pure, with degree sequences $(1,2,2)$ and
$(1,3,4)=h(K_\sigma)$, so Lemma~\ref{lem:baseline} applies.
\end{proof}

\begin{theorem}\label{thm:petersen}
The Petersen graph admits no generalized cycle system consisting of circuits
(Proposition~\ref{prop:nogcs}), but it carries a fibre system with its unique breadth-first tree from any vertex: the fibres $\{0\}$ at the twelve rank-zero strata and the unconditional fibres of Lemma~\ref{lem:petersen-cert} at the twelve rank-two strata form a fibre system. Hence Stanley's conjecture holds for the Petersen graph, with an explicit pure multicomplex of relaxed coparking functions.
\end{theorem}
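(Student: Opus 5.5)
The argument will be assembled from results already established; the only real work is checking that the pieces fit. The first claim is Proposition~\ref{prop:nogcs}. For the second, I use that the Petersen graph is vertex-transitive and that from any centre the breadth-first tree is unique. Transporting along an automorphism, it is therefore enough to treat the root with centre $0$ and tree $B_0$ of Figure~\ref{fig:petersen}. By Lemma~\ref{lem:petersen-strata}, the dead strata of this root are exactly the twelve rank-zero strata and the twelve rank-two paths of four and five edges. So a fibre system is an assignment of fibres to these $24$ strata, chosen along inclusion.

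At each rank-zero stratum I put $\{0\}$. This is a fibre, in fact the only one, by Lemma~\ref{lem:zerofibre}, and it is unconditional. At each rank-two stratum I use the symmetry of the hexagon together with renaming of the hubs $\word A,\word B,\word C$ to reduce to one of the two words $\word{ABCAB}$ and $\word{ABCABC}$, and I transport the fibre of Lemma~\ref{lem:petersen-cert} along that symmetry.

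Two points need checking for this transport.
\begin{enumerate}[label=(\roman*)]
\item The symmetries are induced by automorphisms of the root, or at least preserve every quantity entering Definition~\ref{def:fibre}: the baselines, deadness and the dead nodes of all subsets. This holds because these quantities are computed from the colour word alone, via Lemmas~\ref{lem:mult} and~\ref{lem:excess} and Theorem~\ref{thm:indep}. Reversing the path corresponds to reversing the indices, which is why the fibre must be transported and not copied verbatim.
\item The transported fibres are unconditional, so the hypothesis of Lemma~\ref{lem:baseline} holds.
\end{enumerate}

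Because every fibre in the assignment is unconditional, Lemma~\ref{lem:baseline} shows that it is a fibre at its stratum whatever has been chosen strictly inside it. Choosing in order of increasing $|\sigma|$ therefore never meets an obstructed stratum, and the assignment is a fibre system on $(M,B_0)$. Theorem~\ref{thm:main} then gives that $\PP^*(F)$ is a pure multicomplex with degree sequence $h(M)$, which proves Stanley's conjecture for the Petersen graph.

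The main obstacle is step (i): making sure that every rank-two stratum really is equivalent, as a word with its anchors, to one of the two normal forms. I would check this directly. The six four-edge paths in the hexagon coloured $\word{ABCABC}$ all read $XYZXY$ for some permutation $XYZ$ of the hubs, and the six five-edge paths all read $XYZXYZ$. Since the table in the proof of Lemma~\ref{lem:petersen-cert} depends only on this pattern, relabelling the hubs carries it over.
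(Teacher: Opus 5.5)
Your proposal is correct and takes essentially the same approach as the paper: Proposition~\ref{prop:nogcs} for the first claim, Lemma~\ref{lem:petersen-strata} for the list of dead strata, the fibre $\{0\}$ at the rank-zero strata by Lemma~\ref{lem:zerofibre}, the unconditional fibres of Lemma~\ref{lem:petersen-cert} at the rank-two paths, fibres chosen in order of $|\sigma|$, and then Theorem~\ref{thm:main}. The only difference is that you spell out the transport along symmetries of the hexagon and renamings of the hubs, which the paper leaves implicit when it reads each path as $\word{ABCAB}$ or $\word{ABCABC}$ ``once a direction is chosen and the hubs are named in the order in which they occur along the path.''
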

\begin{proof}
The first claim is Proposition~\ref{prop:nogcs}. By
Lemma~\ref{lem:petersen-strata} the dead strata are twelve of rank zero, which have the fibre
$\{0\}$ by Lemma~\ref{lem:zerofibre}, and twelve paths of four or five edges, each of which
reads $\word{ABCAB}$ or $\word{ABCABC}$ once a direction is chosen and the hubs are named in the
order in which they occur along the path. Lemma~\ref{lem:petersen-cert} gives an unconditional fibre at each
of them, so choosing the fibres in order of $|\sigma|$ gives a fibre
system, and Theorem~\ref{thm:main} applies.
\end{proof}

\subsection{Matroids of corank two}\label{sec:coranktwo}

Everything so far is graphic. Since Theorem~\ref{thm:main} holds for any matroid with a fixed basis,
we close the section with three classes of matroids not contained in the graphic matroids. The
first is dictated by corank rather than rank.

\begin{theorem}\label{thm:coranktwo}
Let $M$ be a matroid of corank two and let $B_0$ be any basis. Then the red pair $R$ is the only
stratum that can be dead, and when it is dead every pure multicomplex in $\N^R$ with degree
sequence $h(K_R)$ is an unconditional fibre at $R$, and such a multicomplex exists. Consequently every basis of a matroid of
corank two carries a fibre system, and Stanley's conjecture holds for these matroids.
\end{theorem}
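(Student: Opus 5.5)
Corank two means $|R|=2$, say $R=\{i,j\}$, so there are exactly three strata: $\{i\}$, $\{j\}$ and $R$. The singletons are live, since $U_{\{i\}}=C_i$ is a circuit (the remark after Lemma~\ref{lem:mono}). Hence $R$ is the only stratum that can be dead. Assume it is dead.

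To apply Lemma~\ref{lem:baseline} to an arbitrary pure multicomplex $F\subseteq\N^R$ with degree sequence $h(K_R)$, I must show that no proper stratum can be fired from $b_R+m$ for any $m\in F$. The proper strata are $\{i\}$ and $\{j\}$, with baselines $b_{\{i\}}(i)=|C_i|$. So the task is to bound the coordinates of $m$: every $m\in F$ should satisfy $b_R(i)+m_i<|C_i|$, and likewise for $j$. The excess is $\exc_{\{i\}}(i)=|C_i\cap(A_R\setminus U_R)|=|C_i\cap C_j|$, since the shared elements are exactly $C_i\cap C_j$. Thus I need $m_i\le |C_i\cap C_j|-1$ for every offset in the fibre.

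Every offset has degree at most $r(K_R)$, so it suffices to show $r(K_R)\le|C_i\cap C_j|-1$. By Lemma~\ref{lem:rank}, $r(K_R)=|A_R|-2-|U_R|=|C_i\cap C_j|-2$, because $A_R\setminus U_R=C_i\cap C_j$. Hence $m_i\le|m|\le|C_i\cap C_j|-2<\exc_{\{i\}}(i)$, and neither singleton can be fired. Lemma~\ref{lem:baseline} then shows that $F$ is an unconditional fibre, and the same bound handles $j$ symmetrically.

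It remains to show that some pure multicomplex in two variables with degree sequence $h(K_R)$ exists. The matroid $K_R$ lives on $C_i\cap C_j$, has no coloops (Lemma~\ref{lem:rank}), and has corank $|C_i\cap C_j|-r(K_R)=2$. So $h(K_R)$ is again the $h$-vector of a corank-two matroid, or of a smaller-corank matroid after removing loops. This is the step I expect to be the main obstacle.

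My plan is to induct on the number of elements, in the spirit of the paper. Pick any basis $B_0'$ of $K_R$. If its red pair is live, or dead with a fibre by induction, then Theorem~\ref{thm:main} realises $h(K_R)$ as a pure multicomplex in $\N^2$, because the red set of $K_R$ has two elements. The lower-corank cases also fit in two variables:
\begin{itemize}
\item Corank one: $h=(1,\dots,1)$, realised by the powers of a single variable.
\item Corank zero: $h=(1)$.
\end{itemize}
The induction terminates because $|C_i\cap C_j|<|E|$: the elements $i,j$ lie in $U_R$ and not in $C_i\cap C_j$.

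Finally, choosing this fibre at $R$ gives a fibre system on $(M,B_0)$, and Theorem~\ref{thm:main} yields Stanley's conjecture for $M$.
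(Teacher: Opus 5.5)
Your proof is correct and follows the paper's argument. Each singleton has excess $|C_i\cap C_j|>r(K_R)=|C_i\cap C_j|-2$, so Lemma~\ref{lem:baseline} makes every candidate an unconditional fibre, and existence follows by induction because $K_R$ is again of corank two on fewer elements. The difference is bookkeeping: the paper first removes loops and coloops and computes the same quantities through the parallel classes of the rank-two dual $M^*$, inducting on the number of classes (which also shows that $R$ is dead exactly when there are at least four), whereas you read them directly off Lemmas~\ref{lem:mono} and~\ref{lem:rank} and induct on $|E|$, so no reduction is needed. Since $K_R$ has corank exactly two, your side remarks on coranks one and zero are also unnecessary.
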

\begin{proof}
We first remove loops and coloops. A loop $a$ of $M$ is red with $C_a=\{a\}$, so it lies in $U_\tau$ for every stratum $\tau\ni a$ and makes every such stratum live, and the only stratum not containing it is a singleton, which is live. So if $M$ has a loop every stratum is live and Theorem~\ref{thm:main} applies with nothing to choose. A coloop lies in no $C_i$, so no stratum is affected by deleting it, and we delete it. We therefore assume $M$ has neither. The dual $M^*$ then
has rank two and no loops or coloops, so it is a set of $k\ge2$ parallel classes
$P_1,\dots,P_k$, and a basis of $M$ is the complement of a pair from two different classes. Write
$B_0=E\setminus\{a,b\}$ with $a\in P_a$, $b\in P_b$, $P_a\ne P_b$. We argue by induction on $k$.

For a tree edge $e$, the set $(B_0\cup a)\setminus e$ is a basis if and only if $\{e,b\}$ is a basis of
$M^*$ if and only if $e\notin P_b$, so $C_a=E\setminus P_b$ and likewise $C_b=E\setminus P_a$. Hence
$U_R=C_a\,\triangle\,C_b=P_a\cup P_b$, and $A_R=E$. A set is independent in $M$ if and only if its
complement spans $M^*$, so $R$ is dead if and only if the classes other than $P_a,P_b$ span the rank-two
matroid $M^*$. A set spans a rank-two matroid without loops if and only if it meets at least
two parallel classes, so this happens if and only if there are at least two classes other than
$P_a,P_b$, that is, if and only if $k\ge4$. For $k\le3$ every stratum is live, there is nothing to choose, and
Theorem~\ref{thm:main} gives the claim.

Let $k\ge4$. Then $K_R=M/(P_a\cup P_b)$, whose dual $M^*\setminus(P_a\cup P_b)$ has rank two,
no loops and the $k-2$ remaining classes, so $K_R$ has corank two and no coloops. The only proper
strata are the singletons, with $b_{\{a\}}(a)=|C_a|$ and $b_R(a)=|C_a\cap U_R|=|P_a|$. So
$\{a\}$ can be fired from $b_R+m$ only if $m_a\ge|C_a|-|P_a|=|C_a\cap C_b|$. This is the number of elements of
$K_R$ and exceeds $r(K_R)\ge|m|$. So no proper stratum can be fired from $b_R+m$ for an offset $m$ of degree at most $r(K_R)$,
and by Lemma~\ref{lem:baseline} every pure multicomplex in $\N^R$ with degree sequence
$h(K_R)$ is an unconditional fibre.

By induction, for any basis of $K_R$ and any fibre system on it, the relaxed coparking functions of $K_R$
form a pure multicomplex in two variables with degree sequence $h(K_R)$. Renaming the variables
$x_a,x_b$ gives a fibre at $R$, and Theorem~\ref{thm:main} gives the claim.
\end{proof}

This recovers the theorem of De Loera, Kemper and Klee \cite{DLKK} for corank two. The
induction shows why the class suits the model: the dead node of a corank-two matroid is again
of corank two, with two fewer parallel classes in the dual, and every fibre along the way is
unconditional.

\subsection{Matroids of rank at most four}\label{sec:rankfour}

In rank at most four two kinds of dead strata have a fibre to find. The dead pairs have dead
nodes of corank two and receive their fibres from Theorem~\ref{thm:coranktwo}. The dead triples
whose dead node has rank one have fibres made of linears, as at the rank-one nodes of
Section~\ref{sec:htwo}, but without any colour word to read. For a red element $i$ we write
$T_i=C_i\setminus i$ for the tree part of its circuit, a subset of $B_0$.

\begin{theorem}\label{thm:rankfour}
Let $M$ be a matroid of rank at most four and $B_0$ any basis. Then every dead pair admits an
unconditional fibre and every dead triple admits a fibre. Consequently every basis of a matroid
of rank at most four carries a fibre system, and Stanley's conjecture holds for these matroids.
\end{theorem}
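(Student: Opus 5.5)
The plan is to run through the dead strata $\sigma$ by size, using only the identity $|b_\sigma|+r(K_\sigma)=r(A_\sigma)\le r(M)\le4$ of Lemma~\ref{lem:rank}. By Lemma~\ref{lem:mono}(a), $|b_\sigma|\ge|\sigma|$, so $r(K_\sigma)\le 4-|\sigma|$. Recall that singletons are always live. Hence a dead stratum with $|\sigma|\ge4$ has rank zero and receives $\{0\}$ by Lemma~\ref{lem:zerofibre}. A dead triple has rank at most one, and a dead pair has rank at most two. So only pairs and rank-one triples need work, and the fibres are chosen in order of $|\sigma|$: first pairs, then triples.

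\emph{Dead pairs.} Let $\sigma=\{i,j\}$. The restriction $M|A_\sigma$ has nullity $|\sigma|=2$ by \eqref{eq:nullity}, i.e.\ corank two. Contracting the independent set $U_\sigma$ preserves nullity, so $K_\sigma$ has corank two, and it has no coloops by Lemma~\ref{lem:rank}. Theorem~\ref{thm:coranktwo}, applied to any basis of $K_\sigma$, then gives a pure multicomplex in two variables with degree sequence $h(K_\sigma)$; we rename its variables $x_i,x_j$. The only proper strata are the singletons. Since $U_\sigma=C_i\triangle C_j$, the excess of $\{i\}$ is $|C_i\setminus U_\sigma|=|C_i\cap C_j|=|E(K_\sigma)|$. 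Because $K_\sigma$ has no coloops, $|E(K_\sigma)|>r(K_\sigma)\ge|m|$ for every offset $m$ in the multicomplex. So no singleton can be fired from $b_\sigma+m$, and Lemma~\ref{lem:baseline} makes the fibre unconditional. This is verbatim the argument in the proof of Theorem~\ref{thm:coranktwo}.

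\emph{Dead triples of rank one.} Here $|b_\sigma|+1\le4$ forces $b_\sigma=(1,1,1)$, so $U_\sigma=\sigma$. Every tree element of $A_\sigma$ then lies in at least two of $T_i,T_j,T_k$. Also $r(A_\sigma)=4$, so $|A_\sigma|=7$ and $K_\sigma$ is a rank-one matroid on the four elements of $A_\sigma\cap B_0$. We have $h(K_\sigma)=(1,n-1)$, where $n$ is the number of non-loops of $K_\sigma$. The fibre will be $\{0\}\cup\{\mathbf e_k:k\in L\}$ for a set $L\subseteq\sigma$ of $n-1\le3$ coordinates.

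Condition (ii) only needs checking at $b_\sigma+\mathbf e_k$. Consider a proper $\tau$ fired from that vector:
\begin{itemize}
\item A singleton $\{i\}$ is fired only if $|T_i|\le1$, with $i=k$.
\item A pair $\tau=\{i,j\}$ is fired only if $|\Delta_\tau|\le1$. In that case its offset is $0$ (if $\Delta_\tau=\emptyset$) or $0$ again (if the single element of $\Delta_\tau$ lies in $C_k$), exactly as in Lemma~\ref{lem:htwo-dom}.
\end{itemize}
Thus every firing happens with offset $0$, and the fibre is unconditional as soon as every $k\in L$ is spared, meaning no live singleton or pair kills $\mathbf e_k$.

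The main step, which I expect to be the obstacle, is the count showing that at least $n-1$ of the three coordinates are spared. I would translate everything into the incidence of the four tree elements with their owner sets. Each tree element is in at least two of $T_i,T_j,T_k$, and $|T_i\cup T_j\cup T_k|=4$.
\begin{itemize}
\item A killer of $\mathbf e_k$ is a live $\tau$ with $\Delta_\tau$ at most one element, lying in $C_k$. Its unique union is then a small circuit-containing set, which forces an element of $B_0\cap A_\sigma$ to become a loop of $K_\sigma=M|A_\sigma/\sigma$. Concretely, the set $U_\tau\setminus\sigma$ has at most one element and $U_\tau$ is dependent, so that element lies in the closure of $\sigma$, i.e.\ it is a loop of $K_\sigma$.
\item So I would aim to show that each killed coordinate can be charged injectively to a distinct loop of $K_\sigma$. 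Then the number of killed coordinates is at most $4-n$, and the number spared is at least $3-(4-n)=n-1$. This is the ``count of loops at the rank-one nodes'' mentioned in the introduction.
\end{itemize}
Injectivity is the delicate part. Two coordinates might be killed via the same loop element $y$, for instance $y$ shared by $T_i$ and $T_j$ and isolated by both pairs $\{i,k\}$ and $\{j,k\}$. I would first try to rule this out by a case analysis on the owner sets of $y$, sizes two or three, using the dependence of $U_\tau$. If injectivity fails in some configuration, the fallback is to allow non-unconditional fibres: I would choose the fibres at the dead pairs first so that they contain the needed linear offsets, and use condition (ii) with nonzero offsets.

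Once pairs and triples are settled, choosing fibres in order of $|\sigma|$ meets no obstructed stratum, and Theorem~\ref{thm:main} gives the claim.
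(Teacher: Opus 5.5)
Your treatment of dead pairs is correct and is the paper's argument. The triple case, however, has a genuine gap: your claim that every proper stratum fired from $b_\sigma+\mathbf e_k$ is fired with offset $0$ is false.

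If $\Delta_\tau=\emptyset$ and $k\in\tau$, then $b_\tau=b_\sigma|_\tau$, so the offset is $\mathbf e_k|_\tau=\mathbf e_k\neq0$. For a pair $\tau=\{k,j\}$ this happens exactly when $T_k=T_j$. In that case $U_\tau=\{k,j\}\subseteq\sigma$ is independent, $\tau$ is dead, and condition (ii) demands $\mathbf e_k\in F(\{k,j\})$. The analogy with Lemma~\ref{lem:htwo-dom} breaks down here: on a path every proper stratum has an internal leaf, so $\Delta_\tau\neq\emptyset$ there.

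Nor can you repair this by choosing the pair fibres first. The matroid $K_{\{k,j\}}$ has ground set $T_k$ and rank $|T_k|-2$, so $h_1(K_{\{k,j\}})=2-\ell$, where $\ell$ is the number of elements of $B_0$ in $\operatorname{cl}\{k,j\}$. Hence every fibre at the pair omits exactly $\ell$ of $x_k,x_j$, whatever the choice.

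A concrete instance: over $\mathbb R$ take $B_0$ to be the standard basis $y_1,\dots,y_4$ and red vectors $i=(1,1,1,1)$, $j=(1,1,1,2)$, $k=(1,2,0,0)$.
\begin{itemize}
\item $\sigma=\{i,j,k\}$ is dead with $b_\sigma=(1,1,1)$, the only element of $B_0$ in $\operatorname{cl}(\sigma)$ is $y_4$, and $h(K_\sigma)=(1,2)$.
\item All three pairs are dead. The live singletons have thresholds $5,5,3$, so no live proper stratum is fired from $b_\sigma+\mathbf e_i$, $b_\sigma+\mathbf e_j$ or $b_\sigma+\mathbf e_k$. Thus all three indices are spared in your sense.
\item But $K_{\{i,j\}}$ is $U_{2,3}$ plus the loop $y_4$, with $h=(1,1,1)$. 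So $F(\{i,j\})$ is $\{1,x_i,x_i^2\}$ or $\{1,x_j,x_j^2\}$, and your fibre with $L=\{i,j\}$ violates (ii).
\end{itemize}
In fact $\sigma$ has no unconditional fibre at all. Both $\mathbf e_i$ and $\mathbf e_j$ fire the dead pair $\{i,j\}$ with nonzero offset, while $h_1(K_\sigma)=2$ forces at least one of them into any fibre. This is why the theorem asserts only that dead triples admit a fibre.

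The missing idea is to count these exclusions together with your live killers. The offset $\mathbf e_i$ violates (ii) exactly when one of the following holds:
\begin{itemize}
\item $T_i=\{y\}$;
\item $T_i=T_j$ and $\mathbf e_i\notin F(\{i,j\})$;
\item $T_i=T_j\cup y$ and $\{i,j,y\}$ is a circuit.
\end{itemize}
In the middle case, match the $\ell$ omitted linears at the pair to the $\ell$ loops $y$ of $K_{\{i,j\}}$. For each such $y$, $\{i,j,y\}$ is a circuit, since a circuit $\{i,y\}$ or $\{j,y\}$ would force $T_i=T_j=\{y\}$ and make $i,j$ parallel. So every violating offset is charged to a loop $y$ of $K_\sigma$ that lies in a circuit of size at most three consisting of $y$ and elements of $\sigma$.

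Injectivity, which you leave as a plan, then follows uniformly. Two violations charged to the same $y$ through different such circuits put three elements of $\sigma$ into a set of rank at most two, contradicting the independence of $\sigma$. In the configuration you worry about, this is $i,j\in\operatorname{cl}\{k,y\}$. Two violations through the same circuit $\{i,j,y\}$ are excluded by the matching when $T_i=T_j$, and by the asymmetry of $T_i=T_j\cup y$ otherwise.

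Hence at most $\lambda$ offsets violate (ii). You must choose $L$ among the non-violating indices, not merely the spared ones, and the resulting fibre is in general conditional on the fibres chosen at the dead pairs.
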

\begin{proof}
Every singleton is live. For a dead stratum $\sigma$, Lemma~\ref{lem:rank} and
$\sigma\subseteq U_\sigma$ give $r(K_\sigma)=r(A_\sigma)-|U_\sigma|\le 4-|\sigma|$. So a dead
stratum with at least four elements has rank zero and the fibre $\{0\}$ by
Lemma~\ref{lem:zerofibre}, a dead pair has rank at most two and a dead triple has rank at most
one. By \eqref{eq:nullity} and Lemma~\ref{lem:rank}, $|A_\sigma\setminus U_\sigma|-r(K_\sigma)=|\sigma|$: the corank of $K_\sigma$ is $|\sigma|$.

We first analyse the pairs. Let $\sigma=\{i,j\}$ be dead. Then $K_\sigma$ has corank two and no coloops, so
by Theorem~\ref{thm:coranktwo} and Theorem~\ref{thm:main}, applied to $K_\sigma$ with any of its
bases, the relaxed coparking functions of $K_\sigma$ form a pure multicomplex in two variables
with degree sequence $h(K_\sigma)$. Renaming the variables $x_i,x_j$ gives a pure multicomplex in
$\N^\sigma$ with degree sequence $h(K_\sigma)$. Any such multicomplex is an unconditional fibre.
The only proper strata are the live singletons, with $b_{\{i\}}(i)=|C_i|$. The ground set of
$K_\sigma$ is $C_i\cap C_j$, so $|C_i\cap C_j|=r(K_\sigma)+2$ and
$b_\sigma(i)=|C_i|-|C_i\cap C_j|=|C_i|-r(K_\sigma)-2$. An offset $m$ of the multicomplex has
$m_i\le|m|\le r(K_\sigma)$, so $b_\sigma(i)+m_i\le|C_i|-2<b_{\{i\}}(i)$, and likewise for $j$.
No proper stratum can be fired from $b_\sigma+m$, and Lemma~\ref{lem:baseline} applies.

We now study the triples. A dead triple of rank zero has the fibre $\{0\}$, so let $\sigma=\{i,j,k\}$ be dead with $r(K_\sigma)=1$. The bound above is an
equality, so $|U_\sigma|=3$ and $r(A_\sigma)=4=r(M)$. Hence $U_\sigma=\sigma$, and by
\eqref{eq:nullity} $|A_\sigma|=7$, so $A_\sigma=B_0\cup\sigma$. Every tree edge therefore lies in
at least two of $T_i,T_j,T_k$, the baseline is $b_\sigma=(1,1,1)$, and
$K_\sigma=(M|(B_0\cup\sigma))/\sigma$ is a matroid of rank one on $B_0$ without coloops. Its
loops are the tree edges in the closure of $\sigma$. Writing $\lambda$ for their number, the
non-loops form a parallel class of size $4-\lambda\ge2$ and $h(K_\sigma)=(1,3-\lambda)$.

We determine the proper strata that can be fired from $b_\sigma+\mathbf e_i=(2,1,1)$, the $2$ in
the coordinate $i$. A singleton $\{j\}$ with $j\ne i$ has $b_{\{j\}}(j)=|C_j|\ge2$, since $j$ is
not a loop, and cannot be fired. The singleton $\{i\}$ can be fired if and only if $|C_i|=2$,
that is $T_i=\{y\}$ for a single tree edge $y$. The pair $\{j,k\}$ has baseline
$(|T_j\setminus T_k|+1,\ |T_k\setminus T_j|+1)$ and can be fired from $(1,1)$ if and only if
$T_j=T_k$. It is then dead, because $U_{\{j,k\}}=\{j,k\}$ is independent, and the offset is $0$.
The pair $\{i,j\}$ can be fired from $(2,1)$ if and only if $T_j\subseteq T_i$ and
$|T_i\setminus T_j|\le1$. If $T_j=T_i$ the pair is dead for the same reason and the offset is
$\mathbf e_i$, so condition (ii) asks that $\mathbf e_i\in F(\{i,j\})$. If $T_i=T_j\cup y$ the
offset is $0$ and $U_{\{i,j\}}=\{i,j,y\}$. When this set is independent the pair is dead and
nothing is asked. When it is dependent it is a circuit, because $i$ and $j$ are not parallel and
$y\notin C_j$, and the pair is live. The same holds with $k$ in place of $j$.

By the above, the offset $\mathbf e_i$ violates the coparking condition (ii) in exactly three
situations, each of which is due to a tree edge $y$:
\begin{enumerate}[label=(R\arabic*)]
\item $T_i=\{y\}$, so that $\{i,y\}$ is a circuit;
\item $T_j=T_i$ for some $j\in\sigma\setminus i$ and $\mathbf e_i\notin F(\{i,j\})$;
\item $T_i=T_j\cup y$ for some $j\in\sigma\setminus i$ and $\{i,j,y\}$ is a circuit.
\end{enumerate}
In (R2) put $T=T_i=T_j$. The dead node $K_{\{i,j\}}=(M|(T\cup\{i,j\}))/\{i,j\}$ has ground set
$T$, rank $|T|-2$ and no coloops, so $h_1(K_{\{i,j\}})=2-\ell$ with $\ell$ the number of its
loops, the tree edges $y\in T$ in the closure of $\{i,j\}$. For such a $y$ the set $\{i,j,y\}$
is a circuit, since $y$ parallel to $i$ would force $T=\{y\}$ and $i$ parallel to $j$. The fibre $F(\{i,j\})$ is a pure
multicomplex in $x_i,x_j$ with $h_1(K_{\{i,j\}})$ linears, so exactly
$2-h_1(K_{\{i,j\}})=\ell$ of the two offsets $\mathbf e_i,\mathbf e_j$ violate the coparking
condition at this pair, and we match them to the $\ell$ loops, one each. In all three situations
the tree edge $y$ lies in a circuit of size at most three with elements of $\sigma$, so
$y\in\operatorname{cl}(\sigma)$ and $y$ is a loop of $K_\sigma$.

Distinct violating offsets are due to distinct loops. Suppose $\mathbf e_i$ and $\mathbf e_{i'}$
with $i'\ne i$ both violate the coparking condition due to the same $y$, through circuits $Z$ and $Z'$ of size at most three, each consisting of $y$ and elements
of $\sigma$. If $Z=\{i,y\}$ and $Z'=\{i',y\}$ then $i$ and $i'$ are parallel. If $Z=\{i,y\}$ and
$|Z'|=3$ then $Z'$ does not contain $i$, since a circuit does not properly contain another, so
$i\in\operatorname{cl}(y)\subseteq\operatorname{cl}(Z'\setminus y)$. If $|Z|=|Z'|=3$ and
$Z\ne Z'$ then the two circuits share $y$ and one element of $\sigma$, and the third elements
lie in the closure of those two. If $Z=Z'=\{i,j,y\}$ then both violations come from the pair $\{i,j\}$, which is impossible: in
(R2) the violations are matched to distinct loops and in (R3) only $\mathbf e_i$ violates. In every case three elements
of $\sigma$ lie in a set of rank at most two, contradicting the independence of $\sigma$. Hence
at most $\lambda$ of the offsets $\mathbf e_i$ violate the coparking condition, and at least
$3-\lambda=h_1(K_\sigma)$ do not.

Let $L$ be a set of $h_1(K_\sigma)$ indices $i$ for which $\mathbf e_i$ does not violate the
coparking condition. Then $F(\sigma)=\{0\}\cup\{\mathbf e_i:i\in L\}$ is a pure multicomplex
with degree sequence $h(K_\sigma)$, and it satisfies (ii): for the offset $0$ by
Lemma~\ref{lem:mono}(d), and for the offsets $\mathbf e_i$ by the choice of $L$. So $F(\sigma)$
is a fibre.
\end{proof}

For a matroid of rank at most three no dead triple has rank one and a dead pair has rank at most
one, so all fibres are linears and unconditional. This is the theorem of H\`a, Stokes and Zanello
\cite{HSZ}. Rank four is the theorem of Klee and Samper \cite{KleeSamper}, here with an explicit
pure multicomplex for every basis.

\subsection{Uniform matroids}\label{sec:uniform}

The rank-four case has linear fibres at its triples because those dead nodes have rank one. Uniform matroids show the other extreme: for $M=U_{d,n}$ the dead nodes are again uniform, of every rank up to $d-2$, and the
fibres are full simplices of monomials, compatible with one another rather than unconditional.

\begin{theorem}\label{thm:uniform}
Let $M=U_{d,n}$ with $d\ge2$ and let $B_0$ be any basis. Then $C_i=B_0\cup i$ for every red
element $i$, the dead strata are the sets $\sigma\subseteq R$ with $2\le|\sigma|\le d$, each with
$U_\sigma=\sigma$, $b_\sigma=(1,\dots,1)$ and $K_\sigma\cong U_{d-|\sigma|,d}$, and
\[
  F(\sigma)=\{m\in\N^\sigma:\ |m|\le d-|\sigma|\}
\]
is a fibre system. Hence Stanley's conjecture holds for uniform matroids.
\end{theorem}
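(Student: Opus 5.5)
The plan is to compute everything explicitly from the uniform structure, check conditions (i) and (ii) of Definition~\ref{def:fibre}, and then invoke Theorem~\ref{thm:main}. In $U_{d,n}$ the circuits are exactly the $(d+1)$-subsets. Since $B_0\cup i$ is such a set, it is the unique circuit in $B_0\cup i$, so $C_i=B_0\cup i$.

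\emph{Step 1: the strata.} For a stratum $\sigma$ with $k=|\sigma|\ge2$, every tree edge lies in all $k$ circuits $C_i$, $i\in\sigma$, and each red element lies only in its own circuit by (F1). Hence $U_\sigma=\sigma$ and $A_\sigma=B_0\cup\sigma$.
\begin{itemize}
\item The set $\sigma$ is independent exactly when $k\le d$, so the dead strata are those with $2\le k\le d$.
\item Singletons are live, since $U_{\{i\}}=C_i$ is a circuit.
\item Each $i\in\sigma$ meets $U_\sigma$ only in $i$, so $b_\sigma=(1,\dots,1)$.
\item The dead node is $K_\sigma=(M|(B_0\cup\sigma))/\sigma$. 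The restriction is $U_{d,d+k}$, and contracting the $k$ independent elements of $\sigma$ gives $U_{d-k,d}$ on the ground set $B_0$.
\end{itemize}

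\emph{Step 2: the $h$-vector of the dead node.} I need $h(U_{r,N})$. The standard formula, obtainable from \eqref{eq:tutte} by induction or from internal passivity counts, is $h_i=\binom{N-r+i-1}{i}$ for $0\le i\le r$. For $K_\sigma\cong U_{d-k,d}$ we have $N-r=k$, so $h_i(K_\sigma)=\binom{k+i-1}{i}$ for $i\le d-k$. This is exactly the number of monomials of degree $i$ in $k$ variables.

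It follows that $F(\sigma)=\{m\in\N^\sigma:|m|\le d-k\}$ is closed under division, has degree sequence $h(K_\sigma)$, and is pure, since every monomial of smaller degree divides one of degree $d-k$. This gives condition (i). I expect this identification of $h(U_{r,N})$ to be the only step needing real care; everything else is bookkeeping.

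\emph{Step 3: compatibility.} Let $m\in F(\sigma)$ and let $\tau\subsetneq\sigma$ be a proper stratum fired from $b_\sigma+m=\mathbf 1+m$. There are two cases.
\begin{itemize}
\item If $\tau=\{i\}$, firing requires $1+m_i\ge|C_i|=d+1$, so $m_i\ge d$. But $m_i\le|m|\le d-k<d$, so singletons are never fired.
\item If $|\tau|\ge2$, then $\tau$ is dead because $|\tau|<k\le d$, and $b_\tau=\mathbf 1$. So $\tau$ is always fired, with offset $m|_\tau$. This offset satisfies $|m|_\tau|\le|m|\le d-k\le d-|\tau|$, so it lies in $F(\tau)$.
\end{itemize}
Thus condition (ii) holds. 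The fibres are not unconditional, but they are nested simplices, so the compatibility is automatic whatever order of inclusion is used.

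Hence these sets form a fibre system on $(U_{d,n},B_0)$, and Theorem~\ref{thm:main} yields Stanley's conjecture for $U_{d,n}$. The degenerate cases with few red elements are covered as well, since the argument only uses strata that actually exist.
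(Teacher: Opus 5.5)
Your proposal is correct and follows the paper's proof step by step. It computes $C_i=B_0\cup i$, $U_\sigma=\sigma$ and $K_\sigma\cong U_{d-|\sigma|,d}$, and identifies $h(K_\sigma)$ with the count of monomials of degree at most $d-|\sigma|$; you make this explicit via $h_i(U_{r,N})=\binom{N-r+i-1}{i}$, where the paper simply asserts it. It then checks that singletons are never fired, while proper strata of size at least two are fired with offset $m|_\tau\in F(\tau)$.

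One small imprecision, in a side remark only: the fibres fail to be unconditional exactly when $3\le|\sigma|<d$. For $|\sigma|=2$ and for $|\sigma|=d$ they are unconditional.
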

\begin{proof}
In a uniform matroid every $d$ elements are independent and every $d+1$ are dependent, so the
fundamental circuit of $i\in R$ is $C_i=B_0\cup i$. For $|\sigma|\ge2$ every element of $B_0$
lies in all $|\sigma|$ circuits of $\sigma$, so $U_\sigma=\sigma$ and $A_\sigma=B_0\cup\sigma$.
Hence $\sigma$ is dead if and only if $2\le|\sigma|\le d$, with $b_\sigma(i)=|C_i\cap\sigma|=1$ for all
$i\in\sigma$, and $K_\sigma=(M|A_\sigma)/\sigma=U_{d,d+|\sigma|}/\sigma\cong U_{d-|\sigma|,d}$
on the ground set $B_0$. Its $h$-vector counts the monomials of degree $k\le d-|\sigma|$ in
$|\sigma|$ variables, so $F(\sigma)=\{m\in\N^\sigma:|m|\le d-|\sigma|\}$ is a pure multicomplex
with degree sequence $h(K_\sigma)$, which is condition (i).

For (ii), let $m\in F(\sigma)$. The only live proper strata inside $\sigma$ are the singletons, with
$b_{\{i\}}(i)=|C_i|=d+1$, and the vector $b_\sigma+m$ has $i$-coordinate $1+m_i\le1+d-|\sigma|<d+1$, so
they cannot be fired. The proper strata that can be fired from $b_\sigma+m$ are therefore
exactly those of size at least two contained in $\sigma$, and indeed all of them can, as $b_\tau=(1,\dots,1)\le(b_\sigma+m)|_\tau$. Each such $\tau$ is
dead, and the offset there is $m|_\tau$, of degree at most
$d-|\sigma|\le d-|\tau|$, hence in $F(\tau)$. So $b_\sigma+m$ is coparking on $\mathring\sigma$ and $F$ is a
fibre system.
\end{proof}

The fibre at $\sigma$ is not unconditional when $3\le|\sigma|<d$: for $m=\mathbf e_j$ and $i\ne j$ the dead stratum $\sigma\setminus i$ can be fired from $b_\sigma+m$ with the nonzero offset $\mathbf e_j$. What makes
the system work is that the fibres are nested in the right way, $F(\sigma)|_\tau\subseteq F(\tau)$
for $\tau\subseteq\sigma$. Uniform matroids are paving, lattice-path and cotransversal matroids and positroids, so this case of Stanley's conjecture is
also covered by the theorems of \cite{MNRV}, \cite{Schweig}, \cite{OhCotransversal} and \cite{Positroid}.

\section{Limitations of the model and further directions}\label{sec:outlook}

Let $G_4$ be the radius-two root with three
fertile hubs $1,2,3$, no barren hubs, and eight children $v_0,\dots,v_7$, with $v_0,v_3,v_6$
children of $1$, $v_1,v_4,v_7$ children of $2$ and $v_2,v_5$ children of $3$. Its red edges are
the path edges $e_i=v_{i-1}v_i$, so that, extending the colour words of Section~\ref{sec:htwo}
to three fertile hubs, the path reads $\word{12312312}$. Its full stratum $\sigma$ is dead with
anchors $1$ and $2$, a triangle node with $h(K_\sigma)=(1,3,4)$ by
Proposition~\ref{prop:startri}. Every prefix ending at a child of $1$ and every suffix starting at a child
of $2$ is a return, killing $x_2,x_3,x_5,x_6$ and every quadratic they divide. Every prefix
ending at a child of $2$ together with a suffix starting at a child of $1$ is a double bridge,
killing $x_1x_4$, $x_1x_7$ and $x_4x_7$ (Section~\ref{sec:killers}). So the only quadratic offsets
that can be spared are $x_1^2,x_4^2,x_7^2$. Three spared quadratics
cannot fill $h_2=4$, so $\sigma$ is obstructed for every fibre system on $G_4$, and $G_4$ does
not carry a fibre system.

The same count works along the family $\word{(123)^q12}$, whose full stratum is a triangle
node with $h(K_\sigma)=(1,q+1,2q)$ by Proposition~\ref{prop:startri}(d), since the third hub has
$q$ internal children. The returns leave the $q+1$ linears $x_1,x_4,\dots,x_{3q+1}$ and the double
bridges kill every product of two of them, so at most $q+1$ quadratics are spared for
$h_2=2q$, a deficit of $q-1$. Consider the roots with three fertile hubs, no barren hubs and a red
path through at most eight children. Up to reversal of the path and renaming of the
hubs, $G_4$ is the only one that carries no fibre system \cite{Notebook}. The Petersen path
$\word{ABCABC}$ has the same dead node $h(K_\sigma)=(1,3,4)$ and admits a fibre, so the failure
is not a property of the dead node but of the way the stratum sits in its graph.

The original goal of this project was to settle the conjecture for all radius-two graphs with
this model, and $G_4$ is where that goal failed: once a graph contains several dead strata
whose dead nodes have rank two or more, we found no way to analyse them together. Which classes
of radius-two graphs beyond the triconed ones carry a fibre system with the breadth-first tree
is open.

Four questions are left open by this paper.

When a stratum $\sigma$ is obstructed, one can still put $F(\sigma)=\{0\}$ there, as in
Section~\ref{sec:truncated}, and continue choosing fibres above it. By the proof of
Proposition~\ref{prop:gap}, $\PP^*(F)$ is then a multicomplex, and the identity of
Theorem~\ref{thm:B} says exactly how much is missing and where, namely
$t^{|b_\sigma|}(h(K_\sigma)-1)E_\sigma(t)$ for each such $\sigma$. A stratum is obstructed
because condition (ii) of Definition~\ref{def:fibre} leaves too few offsets, so the
natural question is whether that condition can be weakened.

\begin{question}
Can the coparking condition (ii) of Definition~\ref{def:fibre}
be relaxed, at the price of a modified Definition~\ref{def:relaxed}, so that
Theorem~\ref{thm:main} still holds and the root $G_4$ carries a fibre system?
\end{question}

With the trivial fibre at every dead stratum, the truncated $h$-polynomial $\tilde h(M,B_0)$ is the
degree sequence of $\PP^*_0$ and hence an $O$-sequence. For $G_0$ with its tree it is even a pure one
(Section~\ref{sec:truncated}). Since $h_1=|E|-r(M)$ for a matroid without loops or coloops, a
graphic matroid with $h$-vector $\tilde h(M(G_0),B_0)=(1,4,10,17,20,13)$ would come from a connected
multigraph on six vertices with nine edges, and none of these has it \cite{Notebook}. So if
$\tilde h(M(G_0),B_0)$ is a matroid $h$-vector at all, the matroid is not graphic.

\begin{question}
Is $\tilde h(M,B_0)$ the $h$-vector of a matroid?
\end{question}

Sections~\ref{sec:coranktwo}
to~\ref{sec:uniform} give three classes on which every basis carries a fibre system.

\begin{question}
Is there a natural class of non-graphic matroids of unbounded rank, beyond the uniform
matroids, in which every basis carries a fibre system?
\end{question}

The relaxation was set up for the fundamental circuits of a basis, and the private
elements (F1) are used throughout.

\begin{question}\label{q:cycles}
Does the model extend to an arbitrary collection of $g$ cycles, the setting of the recursion of
\cite{CycleSystems} and of Definition~\ref{def:gcs}?
\end{question}

\section*{Acknowledgements}
The author thanks Anton Dochtermann, Evan Huang, Kai Mawhinney and Yuchen Xu, the other members of the 2025 REU group at Texas State University, for the discussions on cycle systems that led to the idea of relaxing the coparking functions at the strata where the recursion stops. Many of the tools used here to follow circuits and unique unions through deletion and contraction originated in that project \cite{Regular}, among them the systematic use of the circuit--cocircuit intersection property (F2), used there in its binary form, and the arguments behind Lemmas~\ref{lem:invariants} and~\ref{lem:E}. Over those discussions and others, Anton Dochtermann in particular generously shared many of the ideas and much of the philosophy behind cycle systems. He also raised the question of whether Dhar's burning algorithm has an analogue for cycle systems, which Theorem~\ref{thm:burning} answers. The author also thanks David Perkinson for sharing his notes on deletion--contraction trees beyond cycle systems and Lixing Yi for the observation on biconed
graphs recorded in Example~\ref{ex:biconed-gcs}.

\section*{Use of AI tools}
The author used two AI tools in preparing this paper, Claude (Anthropic, mainly the models Fable 5 and Opus 4.8) and ChatGPT (OpenAI, mainly the model GPT-5.6 Sol), and describes below what each contributed, including to the mathematical reasoning.

AI contributed to the mathematical reasoning in four places. Claude formalised the conditions that a fibre at a dead stratum has to satisfy, Definition~\ref{def:fibre}, and the extension sets of Definition~\ref{def:ext}, from the author's idea of relaxing the coparking conditions at dead strata. Following the author's suggestion that an extension set is the coparking set
of the complement, Claude formalised the reduction to a contraction
(Lemma~\ref{lem:extcontract}) and found the burning-algorithm characterisation that Theorem~\ref{thm:burning} states. These replaced an earlier deletion--contraction argument of the author's, modelled on the proof in \cite{CycleSystems}. ChatGPT found the root $G_4$ of Section~\ref{sec:outlook} on which the model fails, and observed that the Petersen path and $G_4$ share a dead node. Asked by the author to look for classes of matroids in which every basis carries a fibre system, Claude identified the matroids of rank at most three and the uniform matroids as such classes and stated Theorem~\ref{thm:uniform} and the rank-three case of Theorem~\ref{thm:rankfour}.

AI served further supporting roles. The statements of this paper were formulated by Claude, and those of Section~\ref{sec:radius2} by Claude and ChatGPT. Claude drafted the manuscript in two versions, one with proofs and one without, including the examples and figures. The author worked from the version without proofs and wrote all proofs independently. Claude then read the proofs and pointed out gaps and missing cases, which the author closed, after which Claude was used to optimise and reword them. The whole text was revised, edited and checked line by line by the author. Every computation reported in the companion notebook was carried out by Claude or ChatGPT, which wrote and ran the Python code that enumerated strata, fibres, extension sets and relaxed coparking functions. The author specified each computation and the instances it ran over, and reran the code locally or on Google Colab.

The author provided the main ideas of this paper: the direction of generalising cycle systems to radius-two graphs, the idea of relaxing the coparking conditions at dead strata, the idea of unconditional fibres, and the tools developed in the 2023 REU on hypergraph parking functions \cite{BDHOZ} and the 2025 REU on cycle systems \cite{Regular}. The proofs in this paper are the author's, written as described above.

The author checked all mathematical content and all computational results, and takes full responsibility for the paper.

\bibliographystyle{amsplain}
\bibliography{bib}

\end{document}